\documentclass[leqno]{amsart}

\usepackage{amssymb}
\usepackage{amsfonts}
\usepackage{amsthm}
\usepackage{amsmath}
\usepackage{bbm,bm}

\usepackage{xcolor,graphicx}
\usepackage[mathscr]{eucal}

\usepackage{enumitem}
\usepackage[unicode,bookmarks=false]{hyperref}
\usepackage{subcaption}

\newtheorem{theorem}{Theorem}[section]
\newtheorem{proposition}[theorem]{Proposition}

\newtheorem{lemma}[theorem]{Lemma}
\newtheorem{corollary}[theorem]{Corollary}

\theoremstyle{definition}

\newtheorem*{examples}{Examples}
\theoremstyle{remark}
\newtheorem{remark}[theorem]{Remark}

\numberwithin{equation}{section}

\newcommand{\bbone}{{\mathbbm{1}}}
\newcommand{\R}{\mathbb{R}}

\newcommand{\N}{\mathbb{N}}
\newcommand{\Z}{\mathbb{Z}}
\newcommand{\C}{\mathbb{C}}

\renewcommand{\hat}{\widehat}
\newcommand{\eps}{\varepsilon}

\newcommand{\scriptB}{\mathcal{B}}

\newcommand{\scriptF}{\mathcal{F}}

\newcommand{\scriptH}{\mathcal{H}}

\newcommand{\scriptO}{\mathcal{O}}

\newcommand{\scriptS}{\mathcal{S}}

\newcommand{\jp}[1]{\langle{#1}\rangle}

\DeclareMathOperator*{\supp}{supp}

\newcommand{\Be}{\begin{equation}}
\newcommand{\Ee}{\end{equation}}
\newcommand\lc{\lesssim}
\newcommand\gc{\gtrsim}
\newcommand\la{{\lambda}}

\newcommand\ka{{\kappa}}

\newcommand\Coi{{C^\infty_{\mathrm c}}}

\newcommand\bbC{{\mathbb {C}}}

\newcommand\bbH{{\mathbb {H}}}

\newcommand\bbN{{\mathbb {N}}}

\newcommand\bbR{{\mathbb {R}}}

\newcommand\cS{{\mathcal {S}}}

\newcommand\sK{{\mathscr {K}}}

\newcommand\ve{{\varepsilon}}
\newcommand\de{{\delta}}

\newcommand{\chr}{\bbone}
\newcommand{\g}{\mathfrak{g}}

\begin{document}

\title{Bochner--Riesz means on the Heisenberg group}

\author[D. Müller]{Detlef Müller}
\address{D. Müller, Mathematisches Seminar, C.A.-Universität Kiel, Heinrich-Hecht-Platz 6, 24118 Kiel, Germany}
\email{mueller@math.uni-kiel.de}

\author[L. Niedorf]{Lars Niedorf}
\address{L. Niedorf, Department of Mathematics, University of Wisconsin-Madison, 480 Lincoln Drive, Madison, WI 53706, USA}
\email{niedorf@wisc.edu}

\author[A. Seeger]{Andreas Seeger}
\address{A. Seeger, Department of Mathematics, University of Wisconsin-Madison, 480 Lincoln Drive, Madison, WI 53706, USA}
\email{seeger@math.wisc.edu}

\author[B. Stovall]{Betsy Stovall}
\address{B. Stovall, Department of Mathematics, University of Wisconsin-Madison, 480 Lincoln Drive, Madison, WI 53706, USA}
\email{stovall@math.wisc.edu}

\subjclass{42B15, 43A80, 35L05, 35S30}

\keywords{Bochner-Riesz means,  Heisenberg group,  subelliptic Laplacian, wave equation, cinematic curvature}

\thanks{Research supported in part by  DFG grant MU 761/12-1 (D.M.), NSF grant 2348797 (A.S.) and Simons Foundation grant  SFI-MPS-SFM-00011865  (B.S.)}   

\begin{abstract} We prove new $L^p$ boundedness results for Bochner--Riesz means associated with the spectral decomposition of the sub-Laplacian on the  Heisenberg group $\mathbb H_n$. Our results hold for a range $1\le p\le p_n$ where $p_n\to 2$ as $n\to\infty$. As shown by the first named author in 1990 a Stein--Tomas type Fourier restriction theorem fails to hold on $\mathbb H_n$  and thus  previous  results  based on the approach by Fefferman and Stein from the Euclidean setting  only allowed to cover the cases $p=1$ and $p=\infty$. Our results on Bochner--Riesz means follow from a more general $p$-sensitive spectral multiplier theorem which is the main result of this article. This  is obtained as a  consequence of  $L^p$ estimates for square functions associated with the Heisenberg wave operator. 
\end{abstract}

\date{\today}
\maketitle

\tableofcontents

\section{Introduction}

Denote by  $\bbH_n$  the Heisenberg group of dimension $d=2n+1$. On $\bbH_n$ we are using  coordinates $(x,u)\in \bbR^{2n}\times \bbR$ and the group law 
\begin{equation}\label{eq:grouplaw} 
(x,u)\cdot  (x',u')= (x+x', u+u'+\tfrac 12 (Jx)^\intercal  x'),
\end{equation}
where $J=J_n$ denotes  the $2n\times 2n$ block-diagonal matrix $\sum_{k=1}^n (e_{2k} e_{2k-1}^\intercal - e_{2k-1} e_{2k}^\intercal)  $, i.e., the matrix with $n$  blocks  of the form $J_1=( \begin{smallmatrix} 0&-1\\ 1&0\end{smallmatrix})$. 
The purpose of this paper is to   establish  new spectral multiplier theorems for the \textit{sub--Laplacian} 
 \[L=-\sum_{j=1}^{2n} X_j^2\] 
on the Heisenberg group, one of the most basic subelliptic, non-elliptic examples of a  Hörmander sum of squares operator.
The vector fields $X_1,\dots,X_{2n}$ are given by
\[
X_{2k-1}=\partial_{x_{2k-1}}-\tfrac12 x_{2k}\partial_u,
\qquad
X_{2k}=\partial_{x_{2k}}+\tfrac12 x_{2k-1}\partial_u,
\qquad
k=1,\dots,n.
\]
The vector fields $X_1,\dots, X_{2n}$ and $U=\partial_u$ span the Lie algebra of $\bbH_n,$
 and the only non-trivial commutation relations among them are 
\[
 [X_{2k-1},X_{2k}]=U, \qquad k=1,\dots,n.
\]
In particular, they satisfy Hörmander's bracket condition for hypoellipticity of a sum of squares operator. 
Moreover, the sum of squares operator $L$ is essentially self-adjoint on $C^\infty_0(\bbH_n)$ (as is established  by the methods in \cite{NelsonStinespring}).  By the spectral theorem there is a functional calculus which defines the operator $m(\sqrt L)$  as a bounded operator on $L^2(\bbH_n)$ for any continuous bounded function $m$ on $[0,\infty)$. In $L^p$ spectral multiplier theory we care about conditions on $m$ under which  the operator $m(\sqrt L)$ extends to $L^p$ for $p\neq 2$.  Using the automorphic dilation structure \[ D_r: (x,u)\mapsto (rx, r^2 u)\] on the Heisenberg group the $L^p\to L^p$ operator norms of $m(r\sqrt L)$ for $r>0$ are independent of $r$ and therefore it is natural to formulate dilation invariant assumptions for multiplier transformations.

An effective $L^p$-result on spectral multipliers for the standard positive Laplacian $- \sum_{k=1}^d \partial_{x_k}^2$ in Euclidean space $\bbR^d,$ or for the Laplace--Beltrami operator  $-\Delta$ on a $d$-dimensional compact manifold,   is given by the classical Hörmander--Mikhlin multiplier theorem. One version of it states that  the multiplier transformations $m(r\sqrt{-\Delta}) $ extend  for any $r>0$  to $L^p$  for $1<p<\infty$,  with operator norm independent of $r$,  if the localized Sobolev conditions
\begin{equation}  \label{eq:L2Sob-loc} 
\sup_{t>0}  \| \eta \, m(t\cdot)\|_{L^2_\alpha} <\infty
\end{equation} 
are satisfied for $\alpha>d/2$; here $\eta\neq 0$ is a  $C^\infty$ function with compact support in $(0,\infty)$. The space of functions on $(0,\infty)$ defined by the finiteness of the norm in  \eqref{eq:L2Sob-loc} does  not depend on  the choice of $\eta$ (see e.g. \cite{CarberyGasperTrebels-JAT}).  By an interpolation argument  one can also replace in \eqref{eq:L2Sob-loc}  the space $L^2_{(d/2)+\eps}$ with $L^q_{(d/q)+\eps}$, where $\frac 1q=|\frac 1p-\frac 12|$ and $q>2$.

The above  Hörmander--Mikhlin-type result also holds for functions $m(\sqrt L)$ of the sub-Laplacian   on the Heisenberg group, with $d=2n+1$,  but this is  harder to prove. It was independently established  by the first author and Stein  \cite{MuellerStein-mult}   and by Hebisch  \cite{He93}, improving for  Heisenberg groups earlier results on general stratified groups by Christ \cite{Ch91} and  Mauceri--Meda \cite{MaMe90} that  involved in place of the Euclidean dimension $d$ the larger  homogeneous dimension $Q$ (e.g.  $d=2n+1$ for the group  $\bbH_n$,  but  $Q=2n+2$).

In radial Fourier multiplier theory on Euclidean spaces and spectral multiplier theory on compact  manifolds  sharper $p$-sensitive results are known for certain $p>1$ that cannot be obtained by interpolation between $p=1$  and $p=2$, and cover for  suitable $p$-ranges  essentially sharp $L^p$-bounds  on  Riesz means $(I-R^{-1} \sqrt{-\Delta})_+^\nu $  and on  the closely  related Bochner--Riesz means $(I+R^{-2} \Delta)_+^\nu$.
 On general compact manifolds such essentially optimal results for $m(\sqrt{-\Delta})$ are known for the  range  $1<p\le \frac{2(d+1)}{d+3}$ and its dual range, which corresponds to the range of the Stein--Tomas Fourier restriction theorem (\cite{tomas, SteinBeijing})  and its analogue by Sogge \cite{Sogge-spectral1988} on compact manifolds.
Specifically,  if $1<p<\frac{2(d+1)}{d+3}$ and \eqref{eq:L2Sob-loc} holds for $\alpha>d\,|\frac 1p-\frac 12|$ then $m(\sqrt{-\Delta})$ is bounded on $L^p$ (see \cite{SeegerSogge-mult}).  
 We remark that much better $p$-ranges beyond the  Stein--Tomas theory are known to hold in the Euclidean case. In particular,  the optimal multiplier results under condition \eqref{eq:L2Sob-loc} are directly  related via \cite{CarberyGasperTrebels} to results on a square function by Stein involving Bochner--Riesz means, for which the optimal results in two dimensions are due to Carbery \cite{Carbery1983}, and the current best results in higher dimensions are in  Gan--Oh--Wu \cite{GanOhWu}. The latter paper contains many more references. 

Turning to the analogous  question about  $p$-sensitive multiplier theorems for the sub-Laplacian  on the Heisenberg groups, no such optimal   improvements beyond interpolation have been  known for any $p\in (1,2).$ The reason for this state of affairs can be traced back to a result by the  first author in \cite{Mueller-Annals} which is  concerned with restriction and spectral projection  results for the sub-Laplacian. Specifically,   it was shown in \cite{Mueller-Annals} that  for large $M$
\begin{equation}\label{eq:Muller-equiv}
\big\| \bbone_{[1-M^{-1},1]}  (\sqrt L)\big\|_{L^p(\mathbb H_n) \to L^2(\mathbb H_n)} \sim M^{\frac 12-\frac 1p} , \qquad 1\le p\le 2,
\end{equation} 
while a Stein--Tomas type restriction theorem for $L$ would imply the stronger bound $O(M^{-\frac{1}{2}})$ for some $p>1$. 

In view of this outcome there cannot be, even for compactly supported multipliers,  an effective approach to sharp  $L^p$ multiplier theorems for $p>1$  based on spectral projection bounds \eqref{eq:Muller-equiv}, such as is suggested by the method established  by  Fefferman \cite{Fefferman-thesis}, with simplification by Stein \cite{FeffermanBR}. See also \cite{ChristProc, Seeger-Crelle1986} and then  \cite{SoggeBR, SeegerSogge-mult} for results on compact manifolds, with further extensions in \cite{ChenOuhabazSikoraYan}.   The first author used weaker  restriction theorems on the Heisenberg group to prove estimates for Riesz means in certain mixed norm spaces \cite{Mueller-Riesz-means}, but no sharp  results in $L^p$ spaces have been known for $p>1$. We note that the failure of the restriction estimate rules out a certain standard  method of proof and does not disprove  any   sharp Bochner--Riesz result itself  (contrary to  what might be  inferred from the title of  \cite{Tao-Duke99}),  but it shows that  a different approach is needed.

Here we present  such an approach and establish  a satisfactory multiplier result for functions of the sub--Laplacian $L$ in a range $1<p<p(n)$ where $p(n)\to 2$ as $n\to \infty$. As the operators $m(\sqrt{L})$ are self-adjoint  we also have $\smash{L^{p'}}$ boundedness for $p'=\frac{p}{p-1} $. In what follows, keep in mind that \[d=2n+1\] denotes the topological dimension of $\bbH_{n}$.  

We would like to point out that, at least by our approach, the $p$-range in our results below is determined more by the dimension $d_1=2n$ of the first, horizontal layer of the Heisenberg group than by the dimension $d$; see the discussion below. The $p$-range in  our next theorem would then read $1<p\le \frac{2(d_1+2)}{d_1+5}.$ We expect that an analogue of Theorem \ref{thm:Lp-mult} will hold  on groups of Heisenberg type in the same range (see the according discussion at the end of this section). 

\begin{theorem}\label{thm:Lp-mult}
Let $n\ge 1$, and let $p$ be in the range $1<p\le \frac{2(d+1)}{d+4}$, or its dual range $\frac{2(d+1)}{d-2}\le p<\infty$. Assume that $\alpha>d\,|\frac 1p-\frac12|$. 
Then, for all $f\in L^p(\bbH_n)$,
\[
\|m(\sqrt{L})f\|_{L^p(\bbH_n)}
 \le C_{p,\alpha,\eta}\,  \sup_{t>0} \|\eta \, m(t\cdot) \|_{L^2_\alpha} \, \|f\|_{L^p(\bbH_n)} .
\]   
\end{theorem}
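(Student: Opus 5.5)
We plan to follow the route indicated in the abstract: derive the multiplier theorem from $L^p$ square function estimates for the Heisenberg wave operator $e^{it\sqrt L}$. The first step is standard. We use a Littlewood--Paley decomposition $m=\sum_k m\,\eta(2^{-k}\cdot)$. By the Hörmander--Mikhlin theorem for $L$ (Müller--Stein, Hebisch), the usual Calderón--Zygmund and Littlewood--Paley machinery on $\bbH_n$ (Rubio de Francia type arguments, using the dilations $D_r$) reduces matters to a single dyadic piece: a multiplier $m$ supported in $[1/2,2]$. For such $m$ we need $\|m(\sqrt L)\|_{L^p\to L^p}\lesssim \|m\|_{L^2_\alpha}$ with $\alpha>d(\frac1p-\frac12)$, uniformly after rescaling. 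We further decompose $m=\sum_{j\ge0} m_j$, where $\widehat{m_j}$ is localized to $|\tau|\sim 2^j$. Then
\[m_j(\sqrt L)=\int \widehat{m_j}(\tau)\, e^{i\tau\sqrt L}\,\chi(\sqrt L)\,d\tau,\]
and by finite propagation speed its kernel is supported in a Korányi ball of radius $\lesssim 2^j$. Summing in $j$ with the gain $2^{-j\varepsilon}$ coming from $\alpha>d(\frac1p-\frac12)$ then reduces the theorem to an estimate for each $j$.

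The core estimate, for $R=2^j$, is
\[\|m_j(\sqrt L)f\|_p\lesssim R^{d(\frac1p-\frac12)+\varepsilon}\,\big(R^{-1}\textstyle\int|\widehat{m}(\tau)|^2\chi(\tau/R)\,d\tau\big)^{1/2}\|f\|_p .\]
Because of the finite propagation speed we can localize $f$ to a ball $B$ of radius $R$ and the output to its double. Hölder's inequality on that ball reduces us to an $L^p\to L^2$-type estimate, but \eqref{eq:Muller-equiv} shows that spectral projection bounds are too weak. Instead we write
\[m_j(\sqrt L)f=\int\widehat{m_j}(\tau)\,e^{i\tau\sqrt L}\chi(\sqrt L)f\,d\tau .\]
Cauchy--Schwarz in $\tau$ then gives control by $\|\widehat{m_j}\|_{L^2}$ times the square function
\[\Big(\int_{R}^{2R}|e^{i\tau\sqrt L}\chi(\sqrt L)f|^2\,d\tau\Big)^{1/2}.\]
So it suffices to prove the local square function bound
\[\Big\|\Big(\int_{R}^{2R}|e^{i\tau\sqrt L}\chi(\sqrt L)f|^2 d\tau\Big)^{1/2}\Big\|_{L^p(2B)}\lesssim R^{d(\frac1p-\frac12)+\frac12+\varepsilon}\|f\|_{L^p(B)}\]
in the stated range. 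For $1<p\le2$ we dualize to a $p'\ge \frac{2(d+1)}{d-2}$ estimate of the adjoint space-time operator, which is an $L^{p'}(L^2)$ local-smoothing type bound.

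To prove this square function estimate we would represent $e^{i\tau\sqrt L}\chi(\sqrt L)$ as a Fourier integral operator. Taking the Fourier transform in the central variable $u$ with dual variable $\mu$, $L$ becomes the twisted Laplacian (a scaled Hermite operator) on $\bbR^{2n}$. For $|\mu|\lesssim R^{-1}$ the operator behaves like a Euclidean wave operator on $\bbR^{2n}$; for larger $|\mu|$ the geodesic flow closes up at times $\sim|\mu|^{-1}$, and the kernel concentrates near conjugate points. We decompose dyadically in $|\mu|R$ and, on each piece, use parabolic/Heisenberg rescaling to reduce to a variable coefficient wave problem on $\bbR^{2n}\times\bbR$ satisfying a cinematic curvature condition in the $2n$ horizontal variables. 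There we apply $L^2$-based square function/local smoothing bounds of Stein--Tomas type in dimension $d_1=2n$, i.e.\ variable coefficient $TT^*$ oscillatory integral estimates (Stein/Mockenhaupt--Seeger--Sogge). These rely on the kernel decay $|\tau|^{-(d_1-1)/2}$ away from the caustic set, and we would sum the pieces in $\mu$ geometrically.

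The main obstacle is the large-$|\mu|$ regime near the conjugate/caustic points. There the phase degenerates, and the failure of restriction in \eqref{eq:Muller-equiv} originates precisely from this degeneracy. Our first attempt is a further decomposition according to the distance to the caustic (equivalently, to the $u$-direction of the cut locus), with separate $L^2$ bounds via Plancherel in $\mu$ on the near-caustic pieces. We would then track the resulting loss, which we expect to be what limits the range to $p\le\frac{2(d+1)}{d+4}$ instead of the Stein--Tomas exponent. The dual range follows by self-adjointness of $m(\sqrt L)$.
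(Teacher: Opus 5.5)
Your outer reduction (Littlewood--Paley to compactly supported $m$, Fourier inversion in $\tau$, dyadic pieces $|\tau|\sim R$, Cauchy--Schwarz in $\tau$) is exactly the paper's Proposition~\ref{prop:compact}. However, the exponents as you wrote them do not reach the theorem. With $\|\widehat{m_j}\|_{L^2}\lesssim R^{-\alpha}\|m\|_{L^2_\alpha}$, your square-function target $R^{d(\frac1p-\frac12)+\frac12+\varepsilon}$ only yields $\alpha>d|\frac1p-\frac12|+\frac12$. Your displayed ``core estimate'' is, in turn, too strong by a factor $R$; test it at $p=2$. The half-derivative loss is intrinsic to applying Cauchy--Schwarz on the side $p\le 2$: already on $L^2$ the square function has norm $\sim R^{1/2}$.

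The paper applies Cauchy--Schwarz for $q=p'\ge\frac{2(d+1)}{d-2}$. There it needs $\|(\int_R^{2R}|e^{i\tau\sqrt L}\chi(\sqrt L)f|^2\,d\tau)^{1/2}\|_q\lesssim R^{d(\frac12-\frac1q)}\|f\|_q$, which is essentially Theorem~\ref{thm:main-square} after rescaling. The range $p\le\frac{2(d+1)}{d+4}$ then follows by duality of $m(\sqrt L)$ itself, not by dualizing the square function. Note also that the bound with an extra factor $R^{1/2}$ already follows from the fixed-time estimate \eqref{eq:fixedtime} and Minkowski's inequality, which lose only $R^{1/q}$. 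So beating the fixed-time bound by $R^{1/q}$ is the whole point.

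The genuine gap is that this square function estimate, which is the entire content of the theorem, is not proved, and the tool you plan to invoke is not available. The Heisenberg wave FIOs satisfy only a rank $d_1-1$ cone condition (Appendix~\ref{cinecurv}). Mockenhaupt--Seeger--Sogge give $L^2\to L^p(L^p)$ bounds in lower rank, but not the $L^2\to L^p(L^2)$ square function bounds. Moreover, near the horizontal points $\tau=k\pi+\frac\pi2$ the cinematic-curvature picture breaks down. This already affects the regime $|\mu|\lesssim R^{-1}$, which is the paper's $k=0$ piece and is not a Euclidean wave operator on $\mathbb{R}^{2n}$. You correctly guess that each piece should obey a Stein--Tomas bound in dimension $d_1$, but the paper needs three ingredients you do not supply.
\begin{enumerate}
\item A decomposition, via subordination to $e^{isL}$ and the explicit twisted kernel, by each individual conjugate time $\tau\approx k\pi$ (a dyadic block in $|\mu|R$ contains many singularities of $\tau\cot\tau$, one per zigzag) and by the distance $2^{-\ell}$ to it. Kernels are localized to boxes $D^{k,\ell}$ of size $(2^{-\ell}/k,\,2^{-\ell}/k^2)$, times to intervals of length $\delta=2^{1-\ell}/k$, and everything is rescaled to unit scale on $\mathbb H_{n,r}$.
\item A $TT^*$ argument with one horizontal variable frozen ($x_1$, or $x_2$ near horizontal points). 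It combines the pointwise bound $\lambda^{d-1}\langle\lambda|x_1-x_1'|\rangle^{-\frac{d_1-1}{2}}$ with $L^2$ bounds for the frozen operators and Hardy--Littlewood--Sobolev in the frozen variable.
\item For $k\ge1$, the observation that the relevant Monge--Amp\`ere determinant is only $O(\delta^2)$. The paper compensates, obtaining an $L^2$ bound $\delta^{-1}$, by Plancherel in $u$ and the rescaling Lemma~\ref{scaleins} in $s$.
\end{enumerate}
It is this non-uniform $L^2$ loss, not Plancherel on near-caustic pieces, that produces the factor $(2^\ell k)^{\frac{d+1}{p}-\frac d2}$ in Theorem~\ref{mainTklthm} and forces $p>\frac{2(d+1)}{d-2}$ when summing in $k,\ell$. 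The regime $2^\ell k\gtrsim\lambda$ needs separate $L^1$ bounds and orthogonality in $\mu$ (Proposition~\ref{prop:near}). Your ``first attempt \ldots\ we expect'' contains none of this, so the key estimate remains unproved in your proposal.
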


Note that the functions $\tau\mapsto(1-|\tau|)_+^\nu$  belong to the Sobolev space $L^2_\alpha$ if $\alpha <\nu+\frac 12$.  
This, together with standard arguments can be used to deduce $L^p$ convergence for the Riesz means  
and the Bochner--Riesz means of an $L^p$ function. Martini \cite{Martini25} showed that $L^p$ boundedness for $(1-L)_+^\nu$ fails for $\nu<d\,|\frac 1p-\frac 12|-\frac 12$.

Thus, we get an essentially sharp result in the range $p\le \frac{2(d+1)}{d+4}$ (and, by duality, in the corresponding dual range):

\begin{corollary} \label{BR-cor}
Let $p$ be in the range $1\le p\le \frac{2(d+1)}{d+4}$, or its dual range, and let $\nu>d\,|\frac 1p-\frac 12|-\frac 12$.
Then the Bochner--Riesz means $(1-R^{-2} L)_+^\nu$ and the Riesz means $(1-R^{-1} \sqrt L)_+^\nu, \ R>0,$ are uniformly bounded on $L^p(\bbH_n)$.
\end{corollary}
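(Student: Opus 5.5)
The corollary follows from Theorem~\ref{thm:Lp-mult} once we check the Sobolev condition for suitable multipliers. The endpoint $p=1$ is handled separately.

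\textbf{Riesz means, $1<p\le \frac{2(d+1)}{d+4}$ (and the dual range).} Put $m_R(\tau)=(1-R^{-1}\tau)_+^\nu$. Since $m_R(t\cdot)=m_1(tR^{-1}\cdot)$, the quantity $\sup_{t>0}\|\eta\, m_R(t\cdot)\|_{L^2_\alpha}$ does not depend on $R$. It therefore suffices to show that it is finite for $m=m_1$.

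Fix $\eta$ supported in $[1/2,2]$. Then $\eta\, m(t\cdot)$ vanishes unless $t\le 2$. For $t\le 2$ it equals $\eta(\tau)(1-t\tau)_+^\nu$.
\begin{itemize}
\item If $t\le 1/4$, then $1-t\tau\ge 1/2$ on the support, so the function is smooth with uniformly bounded derivatives.
\item If $1/4\le t\le 2$, it is $\eta$ times the rescaled function $(1-\sigma)_+^\nu$, $\sigma=t\tau$, with $t$ in a compact range. Since $(1-\sigma)_+^\nu$ localized lies in $L^2_\alpha$ whenever $\alpha<\nu+\frac12$, and dilations by $t\in[1/4,2]$ and multiplication by a fixed $C^\infty_c$ function are bounded on $L^2_\alpha$, the norm is uniformly bounded.
\end{itemize}
Given $\nu>d|\frac1p-\frac12|-\frac12$, choose $\alpha$ with $d|\frac1p-\frac12|<\alpha<\nu+\frac12$. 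Theorem~\ref{thm:Lp-mult} then gives uniform $L^p$ bounds. The argument is identical for the dual range.

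\textbf{Bochner--Riesz means, same range.} Write $(1-R^{-2}\tau^2)_+^\nu=(1-R^{-1}\tau)_+^\nu\,(1+R^{-1}\tau)^\nu$. After the same scaling reduction, $(1+\tau)^\nu$ is smooth on the relevant compact range, and multiplication by a smooth function preserves the local $L^2_\alpha$ bound. Alternatively, one can verify directly that $\eta(\tau)(1-t^2\tau^2)_+^\nu$ is uniformly in $L^2_\alpha$ for $t$ in compacta, since the singularity at $t\tau=1$ is again of type $(1-\sigma)_+^\nu$ times a smooth nonvanishing factor.

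\textbf{The endpoint $p=1$ (and $p=\infty$).} This is the only step not covered by Theorem~\ref{thm:Lp-mult}, which requires $p>1$. Here the condition reads $\nu>\frac{d-1}{2}$. By dilation invariance it suffices to treat $R=1$ and show that the convolution kernel of $(1-L)_+^\nu$ lies in $L^1(\bbH_n)$.

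Decompose dyadically near the singularity: write $(1-\tau^2)_+^\nu=\sum_{j\ge0}2^{-j\nu}\phi_j(\tau)$, where $\phi_j$ is supported where $1-\tau\sim 2^{-j}$ and has derivatives of order $k$ bounded by $2^{jk}$. For each piece, use finite propagation speed of the wave equation for $L$: the kernel of $\phi_j(\sqrt L)$ is essentially supported in a Carnot--Carathéodory ball of radius $2^{j}$ (up to Schwartz tails). Its $L^2$ norm is controlled by Plancherel together with the $L^1\to L^2$ bound from \eqref{eq:Muller-equiv} at $p=1$, giving $\lesssim 2^{-j/2}$.

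Applying Cauchy--Schwarz on the ball naively would use the homogeneous dimension $Q=d+1$ and yield only $\nu>\frac{Q-1}{2}=\frac d2$. To reach the sharp threshold $\frac{d-1}{2}$ I expect one must invoke the weighted Plancherel estimate (the improvement from $Q$ to $d$ used in \cite{MuellerStein-mult,He93}), i.e.\ the $L^1$ bound with $\alpha>d/2$ for compactly supported multipliers. This endpoint is the main obstacle. Alternatively, if the paper's later results include $p=1$ in this sharp form, cite them directly. Summing the geometric series in $j$ finishes the proof.
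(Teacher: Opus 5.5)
For $1<p\le\frac{2(d+1)}{d+4}$, and for the dual range with $p<\infty$, your argument is correct, but it takes a different route from the paper. You feed the full multipliers $(1-\tau)_+^\nu$ and $(1-\tau^2)_+^\nu$ into Theorem~\ref{thm:Lp-mult} and check the scale-invariant Sobolev condition. The paper instead splits $(1-\rho)_+^\nu=m_0(\rho)+m_1(\rho)$ with $m_1$ supported in $[\frac12,1]$. It then
\begin{itemize}
\item handles $m_1$ by Proposition~\ref{prop:compact};
\item handles $m_0$ by Hulanicki's theorem;
\item for the Riesz means, expands $m_0(\tau)=m_0(0)\chi_0(\tau)+\tau g(\tau)$, because $\rho\mapsto m_0(\sqrt\rho)$ is not smooth at $0$.
\end{itemize}
Your route is shorter and absorbs that low-frequency issue automatically. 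However, it rests on the non-compactly-supported Theorem~\ref{thm:Lp-mult}, whose deduction from Proposition~\ref{prop:compact} the paper only sketches. The paper's route needs only Proposition~\ref{prop:compact} and Hulanicki's theorem, and it covers $p=1$ and $p=\infty$ at the same time.

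The genuine gap is the endpoint $p=1$ (and $p=\infty$). The corollary includes it, and Theorem~\ref{thm:Lp-mult} does not cover it. As you note, Cauchy--Schwarz over Carnot--Carath\'eodory balls of radius $2^j$ loses $2^{j/2}$ and yields only $\nu>\frac d2$. The sharp step is then announced rather than proved, so the final ``summing the geometric series'' does not give the stated threshold.

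What you are missing is that no new kernel estimate is needed. The proof of Proposition~\ref{prop:compact} works verbatim at $p=\infty$. There, hypothesis \eqref{eq:sqfct-ass} with $\alpha_0=\frac d2$ follows from Minkowski's inequality in $t$ and the fixed-time bound $\|e^{it\sqrt L}\beta(t\sqrt L/\la)\|_{L^\infty\to L^\infty}\lesssim\la^{\frac{d-1}2}$. This is the $L^1$ bound for the frequency-localized wave kernel, cf.\ \cite{MuellerSeeger2015}. At $p=\infty$ the exponents $d(\frac12-\frac1p)-\frac12$ and $(d-1)(\frac12-\frac1p)$ coincide, so no square-function gain is needed.

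Hence the singular piece $m_1(\sqrt L)$ is bounded on $L^\infty$, and by duality on $L^1$, whenever $m_1\in L^2_\alpha$ with $\alpha>\frac d2$. Choosing $\frac d2<\alpha<\nu+\frac12$ gives exactly $\nu>\frac{d-1}2$. Equivalently, you can cite the sharp $L^1$ estimate for compactly supported multipliers with $\alpha>\frac d2$ that underlies \cite{MuellerStein-mult,He93}.

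Your $p=1$ paragraph also treats only $(1-L)_+^\nu$. For the Riesz means, the remaining piece $m_0(\sqrt L)=h(L)$ with $h(\rho)=m_0(\sqrt\rho)$ is not smooth at $\rho=0$, so Hulanicki's theorem does not apply. A Mikhlin-type theorem gives only weak type $(1,1)$, so it cannot absorb this piece either. This is exactly where the paper's decomposition $m_0(\tau)=m_0(0)\chi_0(\tau)+\tau g(\tau)$ is needed, together with summing the bounds $\|\eta(2^\ell L)\psi(L)\|_{1\to1}=O(2^{-\ell/2})$.
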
 
 
The  essential tool in the proof of Theorem~\ref{thm:Lp-mult} is a square function estimate for the half wave operator $e^{it\sqrt L} $. This approach is motivated by the equivalence of square function bounds for Bochner--Riesz means and for derivatives of spherical averages in the Euclidean case,  as demonstrated by Kaneko and Sunouchi \cite{KanekoSunouchi}.

\begin{theorem}\label{thm:main-square}
Assume that $I$ is a compact subset of $\bbR\setminus \{0\}$ and $p> \frac{2(d+1)}{d-2}$.
Then for  $\beta\in C^\infty_c(\bbR)$ and $\la\ge 1,$
\begin{equation}\label{eq:main-square}
\Big\| \Big( \int_I \big| e^{i t\sqrt{L}} \beta(\tfrac{t\sqrt{L}}{\la}) f \big|^2 \, dt \Big)^{\frac 12} \Big\|_{L^p(\bbH_n)} \le C_{p,\beta}  \la^{d(\frac 12-\frac 1p)-\frac 12} \|f\|_{L^p(\bbH_n)}
\end{equation}
for all $f\in L^p(\bbH_n)$. 
\end{theorem}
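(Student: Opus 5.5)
We plan to prove Theorem~\ref{thm:main-square} by writing the localized half-wave operator as a Fourier integral operator on $\bbH_n$ and then using $L^2$ orthogonality in $t$ together with a local smoothing/decoupling type estimate adapted to the Heisenberg geometry. The first step is a reduction. Since $t\in I$ stays away from $0$ and the frequency localization $\beta(t\sqrt L/\la)$ restricts $\sqrt L$ to $\sim\la$, finite propagation speed for $\cos(t\sqrt L)$ with respect to the Carnot--Carathéodory metric lets us write the kernel of $e^{it\sqrt L}\beta(t\sqrt L/\la)$ as a piece concentrated in a CC-ball of radius $O(1)$ plus a Schwartz tail with $O(\la^{-N})$ bounds. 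Left invariance and a partition of $\bbH_n$ into unit CC-balls (bounded overlap) then reduce matters to $f$ supported in a fixed unit ball, with the output also localized to a fixed ball.

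The second step is a dyadic decomposition of the kernel according to the size of the central frequency $\mu$ (the Fourier dual of $u$) relative to $\la$. We write $\beta(t\sqrt L/\la)=\sum_{j}\beta(t\sqrt L/\la)\chi_j(\la^{-1}2^{j} U/i)$, so that on the piece $j$ the Hermite/Laguerre eigenvalue $(2k+n)|\mu|\sim\la^2$ involves $k\sim 2^{j}\la$ and $|\mu|\sim \la 2^{-j}$. For $2^{j}\gtrsim \la$ (small $|\mu|$) the operator behaves like the Euclidean wave operator on $\bbR^{2n}$ with a mild twist, and there the kernel, via the Müller--Seeger style oscillatory representation of $e^{it\sqrt L}$ on $\bbH_n$, is an FIO whose canonical relation satisfies the cinematic curvature condition with $d_1=2n$ nonvanishing principal curvatures. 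For the pieces with $|\mu|\lesssim\la$ but $2^j$ small, the kernel is supported near the conjugate-point (cut locus) region of the sub-Riemannian geodesic flow, where the phase degenerates. We would treat each piece with the square-function estimate in $t$ coming from the variable-coefficient local smoothing / square function theorem for FIOs with cinematic curvature (Mockenhaupt--Seeger--Sogge type, for the $L^p$, $p\ge\frac{2(d+1)}{d-2}$ range that such Stein--Tomas-based arguments give). This yields, for the $j$-th piece, a bound $\la^{d(\frac12-\frac1p)-\frac12}2^{-j\epsilon(p)}$ with $\epsilon(p)>0$ when $p>\frac{2(d+1)}{d-2}$, by comparing the $L^2\to L^2$ bound (Plancherel in $t$, gaining $\la^{-1/2}$ relative to one-time estimates) with an $L^\infty$ or $L^{p_0}$ bound for the kernel pieces whose size is controlled by explicit Laguerre asymptotics. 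Summing in $j$ is then a geometric series.

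The main obstacle will be the pieces near the degenerate part of the canonical relation, i.e. where $|\mu|$ is comparable to $\la$ and $t|\mu|$ approaches multiples of $2\pi$ (the periodic refocusing of the Heisenberg wave, which is what underlies the failure \eqref{eq:Muller-equiv} of Stein--Tomas). There the fold/blowdown singularities destroy the uniform cinematic curvature. We would try first to rescale anisotropically in $x$ at scale adapted to $2^{-j}$, so that on each rescaled piece curvature is restored with constants uniform in $j$, and to accept a loss that is a power of $2^{j}$ which the $p$-dependent gain from the $L^2$ estimate exactly offsets once $p>\frac{2(d+1)}{d-2}$; the endpoint exponent should arise precisely from this balance, with $d_1+2=d+1$ appearing as the effective dimension.
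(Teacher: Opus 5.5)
Your first step (finite propagation speed plus left-invariant localization) is fine and matches the paper. The analytic core, however, has a genuine gap. You plan to get the $L^2\to L^p(L^2)$ bound for each piece from a Mockenhaupt--Seeger--Sogge type square function theorem for FIOs with cinematic curvature, and you assert that the canonical relation has $d_1=2n$ non-vanishing principal curvatures. That is false. The cone generated by the phase $\sigma(t^2-|x|^2\tau\cot\tau+4u\tau)$ has only $d_1-1=2n-1$ non-vanishing curvatures (Appendix~\ref{cinecurv}), so only a rank $d-2$ cone condition holds. The Mockenhaupt--Seeger--Sogge $L^2\to L^p(L^2)$ estimate needs maximal rank $d-1$; at lower rank that theory yields only $L^2\to L^p(L^p)$ Strichartz bounds.

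Your fallback cannot replace it. Interpolating the trivial $L^2$ bound with $L^1$/$L^\infty$ kernel bounds only reproduces the fixed-time exponent $(d-1)(\frac12-\frac1p)$, which exceeds the target by $\lambda^{1/p}$. That interpolation suffices only in the far regime $2^\ell k\gg\lambda$ (Proposition~\ref{prop:near}).

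What is missing is the paper's hand-made substitute, which has four ingredients:
\begin{itemize}
\item a $TT^*$ argument with one horizontal variable frozen;
\item an $L^1(L^2_s)\to L^\infty(L^2_s)$ dispersive bound $\lambda^{d-1}\langle\lambda|x_1-x_1'|\rangle^{-(d_1-1)/2}$, proved by successive stationary phase;
\item uniform $L^2$ bounds for the frozen operators, from a Monge--Ampère determinant;
\item Hardy--Littlewood--Sobolev in the frozen variable.
\end{itemize}
Per piece this gives the range $p\ge\frac{2d}{d-2}$, the Stein--Tomas exponent for $d_1$, not $\frac{2(d+1)}{d-2}$. Even the choice of frozen variable is dictated by geometry: near the horizontal points $\tau\in\frac\pi2+\pi\mathbb Z$ the determinant degenerates if $x_1$ is frozen, and one must freeze $x_2$ instead.

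Your decomposition also misplaces the degeneracies and does not produce the threshold. The zigzags of the singular support, which approach the $u$-axis, come from $|\mu|\gtrsim\lambda$, all the way up to $|\mu|\sim\lambda^2$. The region $|\mu|\lesssim\lambda$ (the paper's $k=0$) is the outer part of the singular support, where the only special feature is the horizontal points.

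Each unit step of $\tau=2\pi\varsigma t^2\mu/\lambda$ near $k\pi$ is a separate zigzag, so dyadic blocks in $|\mu|/\lambda$ are too coarse. Each zigzag also needs a further decomposition $|\tau-k\pi|\sim 2^{-\ell}$, which localizes at distance $2^{-\ell}/k$ from the $u$-axis. After parabolic rescaling by $1/k$, isotropic rescaling by $2^{1-\ell}$ (which changes the group law), and restriction to time intervals of length $\delta=2^{1-\ell}/k$, curvature is \emph{not} restored uniformly. The Monge--Ampère determinant is only of size $\delta^2$. The paper recovers an $L^2$ bound with loss $\delta^{-1}$ only through Plancherel in $u$ and the rescaling in $s$ of Lemma~\ref{scaleins}.

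This $\delta^{-1}$ loss, combined with $\lambda\mapsto\lambda\delta$ and the $\sim\delta^{-1}$ time intervals, gives the per-piece bound $(2^\ell k)^{\frac{d+1}{p}-\frac d2}\lambda^{d(\frac12-\frac1p)-\frac12}$. Summability over $k$ and $\ell$ is exactly the condition $p>\frac{2(d+1)}{d-2}$. Your decay $2^{-j\epsilon(p)}$ and the claim that the loss is ``exactly offset'' are asserted rather than derived, so the mechanism that actually yields the theorem is absent from the proposal.
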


Note that the endpoint $p=\frac{2(d+1)}{d-2}$ is excluded from Theorem~\ref{thm:main-square}, but is included in Theorem~\ref{thm:Lp-mult} and Corollary~\ref{BR-cor}. Nevertheless, since the regularity conditions on $\alpha$ and $\nu$ above are given by strict inequalities, the corresponding endpoint estimates can be recovered by interpolation with the $L^2$ bounds, which follow directly from the spectral theorem.

The result in Theorem~\ref{thm:main-square} involves a better exponent than  the sharp fixed time estimate 
\begin{equation}\label{eq:fixedtime} \big\|e^{i t\sqrt{L}} \beta(\tfrac{t\sqrt{L}}{\la}) f\big\|_{L^p(\bbH_n)} \le C_{\beta,s} \, \la^{(d-1)(\frac 12-\frac 1p) } \|f\|_{L^p(\bbH_n)}
\end{equation}
which holds for $2\le p<\infty$; this follows from a result in \cite{MuellerSeeger2015}  (for previous non-endpoint results see also \cite{MuellerStein-wave}).   Analogs of \eqref{eq:main-square} are well known in the Euclidean case and follow from corresponding $L^2\to L^p$ inequalities for the square function. In contrast  the $L^2\to L^p$ versions for the sub-Laplacian  are not effective enough to yield Theorem~\ref{thm:main-square}. 
The reason is that the singularities for the convolution kernel of the wave kernel are far worse than in the Euclidean case, due to the complex underlying sub-Riemannian geometry associated with the sub-Laplacian. 
The  singular support of the convolution kernel 
is known to coincide with the geodesic sphere of radius $|t|$ with respect to the sub-Riemannian metric associated to $L$. This (together with asymptotics of the kernel in some regimes) was first described by Nachman \cite{Na82}.
The geodesic sphere  is  invariant under vertical reflections and horizontal rotations in $\bbR^{2n}$. Its $(|x|,|u|)$-profile for the case $t=1$ is shown in Figure \ref{fig1}.

\begin{figure}[ht]
 \centering
 \includegraphics[width = 0.38\linewidth]{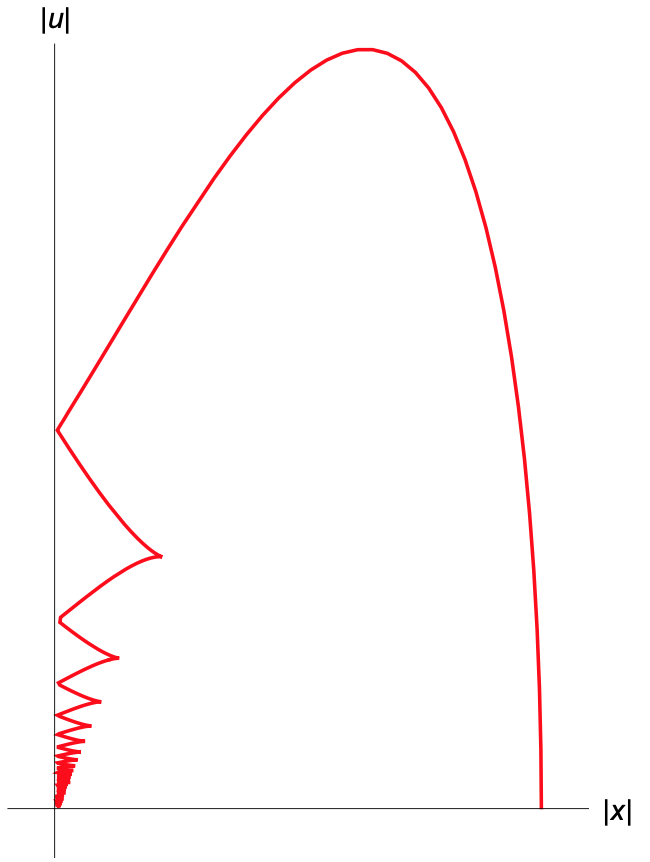}
 \caption{A plot of the singular support  $\Sigma_1$ of the convolution kernel of $e^{i\sqrt L}$. The curve admits infinitely many zigzags near the origin.}
 \label{fig1}
\end{figure}

Our analysis uses a more precise parametrix derived by the first and third authors in \cite{MuellerSeeger2015}; this involves a subordination argument using the Schrödinger type operators $e^{itL}$, and a decomposition in the joint spectrum of $L$ and $i\partial_u$. To begin with, we note that the proof of Theorem~\ref{thm:main-square} can easily be reduced to the case where $\beta$ is supported in $[\frac12,2]$ (cf. Section \ref{conseq}). Assuming this,  we have, for Schwartz functions $f\in \cS(\bbH_n)$,
\begin{equation}\label{eq:wavedec} e^{i t\sqrt{L}} \beta(\tfrac{t\sqrt{L}}{\la}) f= T^0_{\la,t}f +
\sum_{\substack {0<|k|\le 8\lambda} } \sum_{\ell\ge 1} T^{k,\ell}_{\la,t} f,
\end{equation}
where the  operators $T^0_{\la,t}$, $T^{k,\ell}_{\la,t}$ with convolution kernels $\sK^0_{\la,t}$, $\sK^{k,\ell}_{\la,t}$ are defined in  \eqref{Tklat-definition}, \eqref{Kkllat}.
The convolution kernels $\sK^0_{\la,t}$ are concentrated near the outer part in Figure \ref{fig1}. For the analysis of $T^{k,\ell}_{\la,t}$ we may assume that $k>0$, by symmetry considerations. 
For $k>0$,  the convolution kernels  $\sum_{\ell\ge 1} \sK_{\la,t}^{k,\ell}$  are concentrated  in a
$(Ck^{-1}, Ck^{-2} )$ box at distance $\sim k^{-1}$  from the horizontal plane; it corresponds to the $k$th zigzag. Moreover, for $\ell\ge 1$,  $\sK_{\la,t}^{k,\ell}$ is concentrated in a  part of this box at distance $\sim 2^{-\ell} k^{-1}$ from the $|u|$-axis (these are the boxes $D^{k,\ell}$ in Figure \ref{fig2} below). 
When acting on functions supported near the origin we may  consider $T^{k,\ell}_{\la,t}$ as a parabolically rescaled Fourier integral operator with non-homogeneous phase (see \eqref{eq:Klatkl-formula}), with the additional complication that the phase functions  become very singular for large values of $\ell$. The following theorem 
is  the key auxiliary result in the proof of Theorem~
\ref{thm:main-square}. 
\begin{theorem} \label{mainTklthm}
Assume $p>\frac{2d}{d-2}$.  Then 
\begin{equation} \label{eq:T0la-sq}
\Big\| \Big(\int_1^2 \big| T^{0}_{\la,t} f\big|^2 dt\Big)^{\frac 1 2} \Big\|_p \lc  \la^{d(\frac 12-\frac 1p)-\frac 12} \|f\|_p.
\end{equation}
Moreover, for $|k|\le 8\la$, $2^\ell|k|\lc \la$,
\begin{equation} \label{eq:Tkla-sq}
\Big\| \Big(\int_1^2 \big| T^{k,\ell}_{\la,t} f\big|^2 dt\Big)^{\frac 1 2} \Big\|_p \lc \big[ (2^\ell |k|)^{\frac{d+1}{p}-\frac d2} \la^{d(\frac 12-\frac 1p)-\frac 12}  +(2^\ell |k|^{-1}) \big] \, \|f\|_p.
\end{equation}
\end{theorem}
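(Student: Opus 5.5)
The plan is to prove the two estimates by interpolating between an $L^2$ bound and a weighted $L^\infty$ (or $L^{p_0}$) bound for the square functions. The key tool is a local smoothing type square function argument for Fourier integral operators with cinematic curvature, applied after parabolic rescaling. The main inequality I aim for in each case is an $L^2\to L^2$ estimate together with an $L^{p}\to L^{p}$ estimate at a large exponent. The large-exponent estimate will come from a $TT^*$-type kernel bound on the square function, viewed as a vector-valued operator $f\mapsto (T_{\la,t}f)_{t\in[1,2]}$ into $L^p(L^2_t)$. For \eqref{eq:T0la-sq} the kernel $\sK^0_{\la,t}$ lives near the outer, nondegenerate part of $\Sigma_t$. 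There the parametrix from \cite{MuellerSeeger2015} represents $T^0_{\la,t}$, after localization, as a Fourier integral operator of order $-(d-1)/2$ in $\la$-frequency. Its canonical relation satisfies a cinematic curvature condition in the variables $(x,u,t)$. I would first decompose $f$ spatially into unit pieces, using the compact support of the kernel, so that the estimate becomes local. Then I would apply the Mockenhaupt--Seeger--Sogge square function machinery: an $L^2$ estimate gaining $\la^{-1/2}$ after integrating in $t$ (Plancherel in $t$ against the phase), combined with an $L^2\to L^p$ estimate from oscillatory integral bounds of Stein--Tomas/Hörmander type in the $(d+1)$-dimensional variables $(x,u,t)$. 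For $p$ above the Stein--Tomas exponent $\frac{2(d+1)}{d-1}$ this yields the sharp exponent. The weaker hypothesis $p>\frac{2d}{d-2}$ leaves room, and I would simply use this argument.

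For \eqref{eq:Tkla-sq}, I take $k>0$. The kernel $\sK^{k,\ell}_{\la,t}$ is supported in the box $D^{k,\ell}$, which has horizontal size about $2^{-\ell}k^{-1}$ and vertical size $k^{-2}$. I rescale with the Heisenberg dilation $D_{k}$ (or $D_{2^\ell k}$ in the horizontal directions, combined with an anisotropic rescaling) so that the box becomes of unit size. In these coordinates $T^{k,\ell}_{\la,t}$ is a Fourier integral operator with effective frequency $\la/(2^\ell k)$, and the phase becomes uniformly nondegenerate up to factors of $2^\ell$. The first term should come from applying the argument of the previous paragraph to the rescaled operator with frequency parameter $\mu=\la/(2^\ell k)$. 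This gives $\mu^{d(\frac12-\frac1p)-\frac12}$ times a Jacobian factor from the rescaling on $L^p$. Bookkeeping the kernel's size (amplitude) from the parametrix should produce exactly $(2^\ell k)^{\frac{d+1}p-\frac d2}\la^{d(\frac12-\frac1p)-\frac12}$. The second term $2^\ell k^{-1}$ would come from the error terms of the parametrix, and from the region where the phase degenerates. On those pieces I would use the crude bound: integrate the $L^1$ norm of the kernel in $t$ and apply Young's inequality, without any oscillation.

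The main obstacle is the uniformity of the cinematic curvature and $L^2$ local smoothing constants in $\ell$, since the phase functions become singular as $\ell$ grows. I would first try to write the rescaled phase as a smooth perturbation of a model phase, with derivatives bounded uniformly after factoring out a power of $2^\ell$. I would then check that the rotational curvature and the cone condition hold with constants independent of $k,\ell$ on the range $2^\ell k\lesssim\la$. If that fails, I would instead decompose further in $\ell$-dependent angular sectors and accept the loss, which is absorbed into the $2^\ell k^{-1}$ term.
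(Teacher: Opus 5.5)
There is a genuine gap in both parts of your plan.

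\textbf{The estimate for $T^0_{\lambda,t}$.} This part rests on a false geometric claim. The canonical relation of the Heisenberg wave operator does \emph{not} satisfy the maximal-rank cinematic curvature (cone) condition. By Lemma~\ref{cine}, each fiber $\Gamma_{(x,u,t)}$ is an open piece of the cone $\{\sigma(t\omega,\tau,t)\}$. That cone is flat in the central frequency direction $\tau$ and has only $d_1-1=d-2$ nonvanishing principal curvatures. Consequently:
\begin{itemize}
\item The Mockenhaupt--Seeger--Sogge $L^2\to L^p(L^2)$ square function theorem is unavailable; under lower rank it only yields Strichartz-type $L^p(L^p)$ bounds.
\item A Stein--Tomas argument in the $d+1$ variables $(x,u,t)$ with exponent $\frac{2(d+1)}{d-1}$ is likewise unavailable.
\item The hypothesis $p>\frac{2d}{d-2}=\frac{2(d_1+1)}{d_1-1}$ is not slack. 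It is exactly the Stein--Tomas exponent of the horizontal dimension $d_1$.
\end{itemize}
The paper instead runs a $TT^*$ argument with one horizontal variable frozen. It proves a pointwise bound $\lambda^{d-1}\langle\lambda|x_1-x_1'|\rangle^{-(d_1-1)/2}$ for the frozen $TT^*$ kernel, by stationary phase in $\omega$, $\rho$ and $\beta$. It proves a uniform $L^2$ bound for the frozen operators from a nonvanishing Monge--Amp\`ere determinant (Lemma~\ref{lem:0-det}). It then interpolates and applies Hardy--Littlewood--Sobolev in $x_1$.

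This $L^2$ step breaks down at the horizontal points $\tau=\pm\tfrac\pi2$, which lie inside the support for $k=0$. There $g(\tfrac\pi2)=0$, so $\Psi_{\underline y\underline x'}=-2\sigma'(gI+\tau'\underline J)$ becomes singular (Remark~\ref{breakdown}), and one must freeze $x_2$ instead. These horizontal pieces carry the full size of the kernel. So the crude $L^1$/Young bound you propose for ``degenerate regions'' cannot produce the stated power of $\lambda$.

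\textbf{The estimates for $T^{k,\ell}_{\lambda,t}$.} The missing idea is how to handle a non-uniformity that is real, and that produces the exponent in \eqref{eq:Tkla-sq}.

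First, the essential support of the kernel moves with time. Its vertical center $u_c(s,\tau)=s\,u_c(\tau)$, with $|u_c|\sim k^{-1}$, drifts by about $k^{-1}$. That is far more than the box height $2^{-\ell}k^{-2}$. So before any rescaling you must split time into about $\delta^{-1}$ pieces of length $\delta=2^{1-\ell}/k$. In addition, the isotropic rescaling by $2^{1-\ell}$ is not an automorphism; it leads to the group with the degenerating form $2^{1-\ell}J$.

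After these rescalings the phase contains the large term $(\delta^{-1}-1+s)/(\pi+\delta\varphi_\ell(\tau))$. The Monge--Amp\`ere determinant of the frozen $TT^*$ phase is then only of size $\delta^2$, so the standard $L^2$ argument fails. The paper recovers an $L^2$ bound $\lesssim\delta^{-1}$, not $O(1)$, by four steps:
\begin{itemize}
\item freezing $\xi$;
\item Fourier transforming in $u$;
\item rescaling $s\mapsto\delta^{-1}s$ via Lemma~\ref{scaleins};
\item conjugating away $e^{i\mu s/(\delta\pi)}$.
\end{itemize}
After interpolation this costs $\delta^{-1/p}\sim(2^\ell k)^{1/p}$. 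That loss is exactly why the first term is $(2^\ell|k|)^{\frac{d+1}p-\frac d2}$. Uniform bounds with the same bookkeeping would give the better factor $(2^\ell|k|)^{\frac dp-\frac d2}$.

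So your expectation that the phase is ``uniformly nondegenerate up to factors of $2^\ell$'', and that bookkeeping then yields the stated factor, is unfounded. Your fallback of absorbing losses into the second term also fails. A loss in the $L^2$ estimate multiplies the oscillatory first term; it does not add to the second. In the paper the second term comes only from the kernel tails outside $D^{k,\ell}$ (Proposition~\ref{prop:decay-away}).
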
 

For $2^\ell|k|\gtrsim \la$, there are also  more elementary  square function estimates which rely  on  $L^1$, or equivalently $L^\infty$, boundedness results from \cite{MuellerSeeger2015},
\begin{equation} \label{eq:Linftybounds} \|T^{k,\ell}_{\la,t} \|_{L^\infty\to L^\infty} \lc  (2^\ell|k|)^{-1} \la^{\frac{1}{2} }, 
\end{equation} and  additional cancellations for the $L^2$ square function bounds; see Proposition~\ref{prop:near}. We note that \eqref{eq:Linftybounds} leads to  favorable bounds in the range not covered by Theorem~\ref{mainTklthm}, namely $2^\ell |k|\gc \la$, and indeed even in a somewhat larger $p$-range. 

The estimates \eqref{eq:Tkla-sq} must be summed over $k,\ell$ in the relevant ranges. To arrive at the conclusion \eqref{eq:main-square} we then need the more restrictive condition $\smash{\frac{d+1}p-\frac d2<-1}$, i.e., $p>\frac{2(d+1)}{d-2}$, which is the  $p$-range assumed in Theorem~\ref{thm:main-square}.

\subsubsection*{On the Fourier integral methods used  in the proof of Theorem~\ref{mainTklthm}.} 

The estimates for the above mentioned Fourier integral operators rely on curvature conditions in the frequency variables formulated in \cite[Sec.\ 2]{Mockenhaupt-See-So93}  and \cite[Prop.\ 3.4]{Mockenhaupt-See-So93}; we refer to this  
as the \textit{rank $r$ cone condition} where $r\le d-1$. These are homogeneous versions of the Carleson-Sjölin condition (\cite{CarlesonSjolin, Hoermander1973, SteinBeijing}). For averages over curves in the plane the cone condition  corresponds to the \textit{cinematic curvature condition} which was proposed previously by Sogge in his  work on maximal functions for variable averages in two dimensions \cite{Sogge91} (for recent extensions see also   \cite{LeeLeeOh25, ChenGuoYang}). For wave equations associated with elliptic Laplacians,  the square function estimates of the form \eqref{eq:main-square} are consequences of an $L^2\to L^p(L^2)$  estimate \cite[Thm.\ 3.2]{Mockenhaupt-See-So93}, which  covers   classes of Fourier integral operators satisfying  the maximal rank $(d-1)$ cone condition;   in this case one has  estimates in    the Stein--Tomas range $p\ge \frac{2(d+1)}{d-1}$ associated to the space dimension $d$.

In contrast, for our  Fourier integral operators on the Heisenberg group the rank $(d-1)$ cone condition is not satisfied; instead only a rank $d_1-1=d-2$ cone condition holds (see the discussion in Appendix~\ref{cinecurv}), where $d_1$ denotes the dimension $d_1=2n$ of the first, horizontal layer of the Heisenberg group. While \cite{Mockenhaupt-See-So93} covers Strichartz $L^2\to L^p(L^p)$ estimates for  lower rank cone conditions it does not cover optimal  $L^2\to L^p(L^2)$ estimates in these situations. Indeed, the proof of square function results under the same  lower rank cone condition poses substantial technical difficulties  as  our proof of  \eqref{eq:T0la-sq}  for the best behaved family $\{T^0_{\la,t}\}$ already illustrates. Additional complications arise for the square functions associated with the operators $\smash{T^{k,\ell}_{\la,t}}$, for $k,\ell\ge 1$. Note that, for each fixed $\smash{T^{k,\ell}_{\la,t}}$, the square function estimate \eqref{eq:Tkla-sq} holds in the Stein--Tomas range $p>\frac{2(d_1+1)}{d_1-1}$ associated with the dimension $d_1$. However, the constants in these estimates grow with $k$ and $\ell$, leading to the slightly smaller $p$-range in Theorem~\ref{thm:main-square}.

In principle, after suitable rescalings, we adapt in all cases the $TT^*$ method from \cite{Kapitanski, Ginibre-Velo92, Mockenhaupt-See-So93}. This   uses  certain variants of $TT^*$ where we  freeze one of the space   variables. It turns out that in our problem, unlike in the classical applications, the standard canonical graph assumptions that lead  to optimal $L^2$ bounds for these frozen operators are not satisfied uniformly in $k$ and $\ell$. Consequently, the resulting constants in \eqref{eq:Tkla-sq} are  less favorable for large $k$ and $\ell$ than one might hope for. The lack of uniformity in $k$ requires more sophisticated $L^2$ estimates,  which make use of the special structure of the phase functions. 

\begin{figure}[ht]
  \centering
  \begin{subfigure}{0.49\textwidth}
    \centering
    \includegraphics[width=\linewidth]{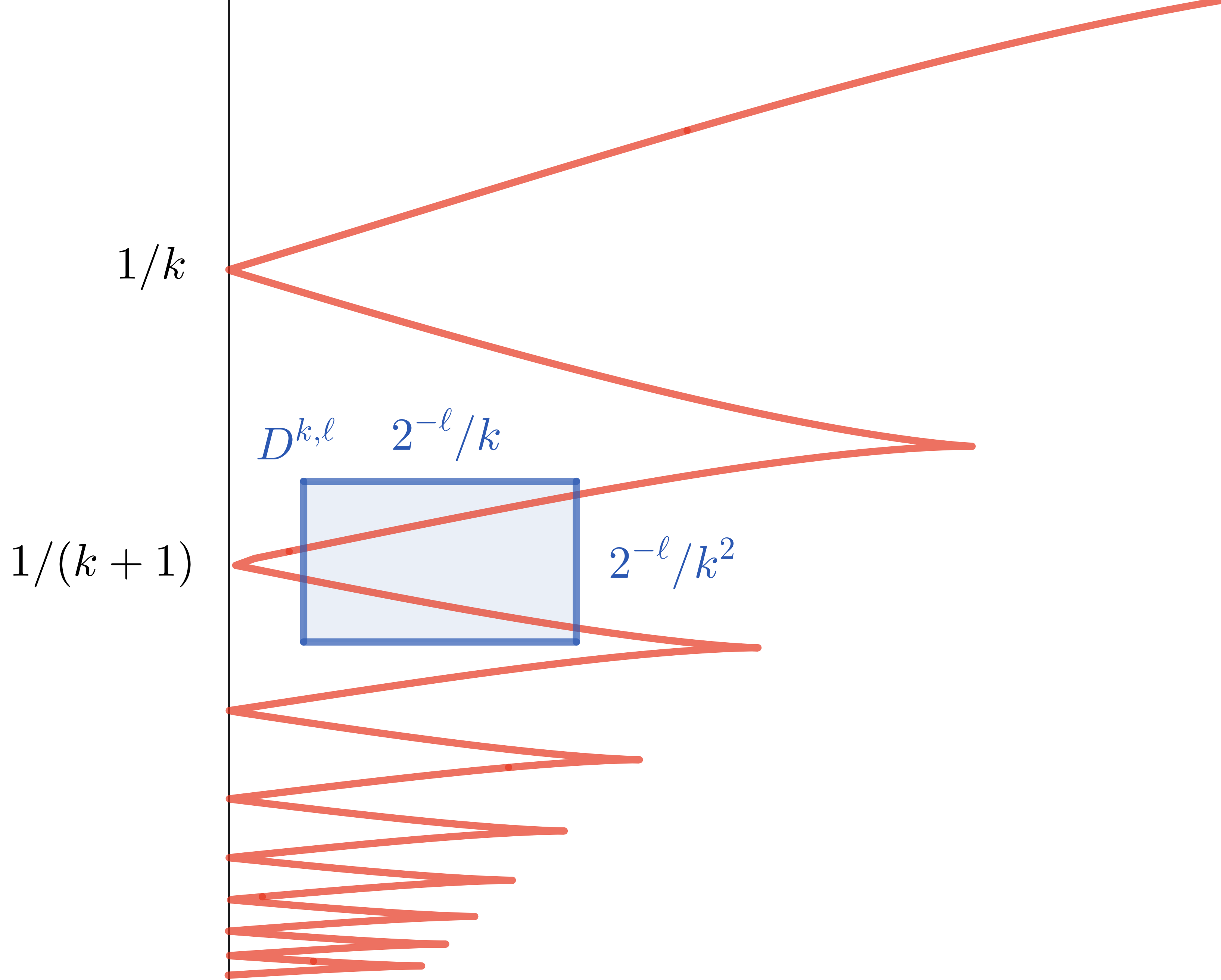}
    \caption*{Case I}
  \end{subfigure}
  \begin{subfigure}{0.49\textwidth}
    \centering
    \includegraphics[width=\linewidth]{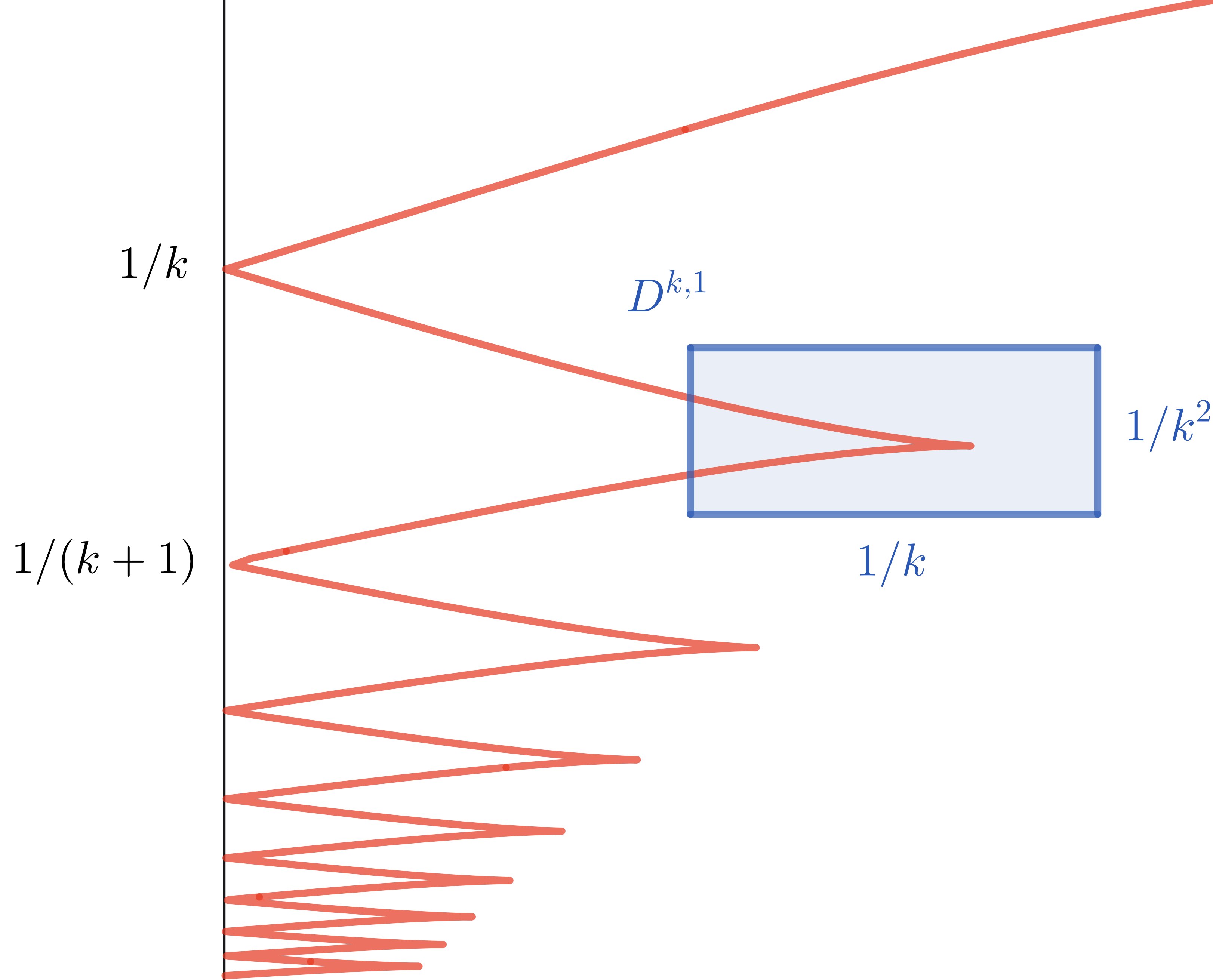}
    \caption*{Case II}
  \end{subfigure}
\caption{The localization regions $D^{k,\ell}$ associated with the $k$th zigzag of the $(|x|,|u|)$-profile of the singular support $\Sigma_1$. Case~I corresponds to $\ell\ge 2$, and Case~II to $\ell=1$.}
   \label{fig2}
\end{figure}

The choice and the further analysis  of such frozen operators  will be  different in the cases $\ell\ge 2$ and $\ell=1$  (corresponding to Case I and Case II in Figure \ref{fig2}, respectively). This phenomenon is  linked to the different ways in which  the above mentioned cone condition in \cite{Mockenhaupt-See-So93} can be interpreted as a curvature condition. In the  paper by Sogge \cite{Sogge91} on  averages over variable plane curves  this condition  was  given  in terms of a change of curvature  in time  (hence the use of the  descriptive terminology \textit{cinematic} curvature). However this interpretation breaks down at several points in Figure  \ref{fig3} where the graph has a horizontal tangent line. These  points correspond to the special  choices $\tau_j=j\pi+\pi/2$ in the  frequency variable $\tau$ parametrizing the oscillatory integrals in  \eqref{eq:Klat-formula}, \eqref{eq:Klatkl-formula} below  and are located at the top of the figure and  close to the cusps.  Thus there are  $(d_1-1)$-dimensional spheres  in the singular support where the tangent hyperplanes to the singular support are all horizontal. Since the  affine horizontal planes will remain horizontal under application of  the automorphic dilations   there is no longer  a change of curvature in time. Indeed,   the presence of these  horizontal tangent planes  suggests the absence of even  a lower rank cinematic phenomenon; nevertheless the rank $(d_1-1)$ cone condition still holds, due to  a  certain interplay between an  actual  curvature condition for the singular supports  and the rotational curvature \cite{PhongStein86} implicit   in the Heisenberg group law.  This insight has significant consequences for our parametrizations of the associated phase functions for these operators  and also for  our choice of frozen variables in the analysis of various  $TT^*$ arguments. We note that a  related phenomenon also occurs in the  paper \cite{RSS22} by Roos, Srivastava and the third author,   which studies certain families of $\ka$-dimensional submanifolds of Heisenberg type groups with $\kappa<d-1$,   where a cone condition of rank $d-1$  still holds.  
 
\begin{figure}[ht]
\centering
\includegraphics[width=0.9\linewidth]{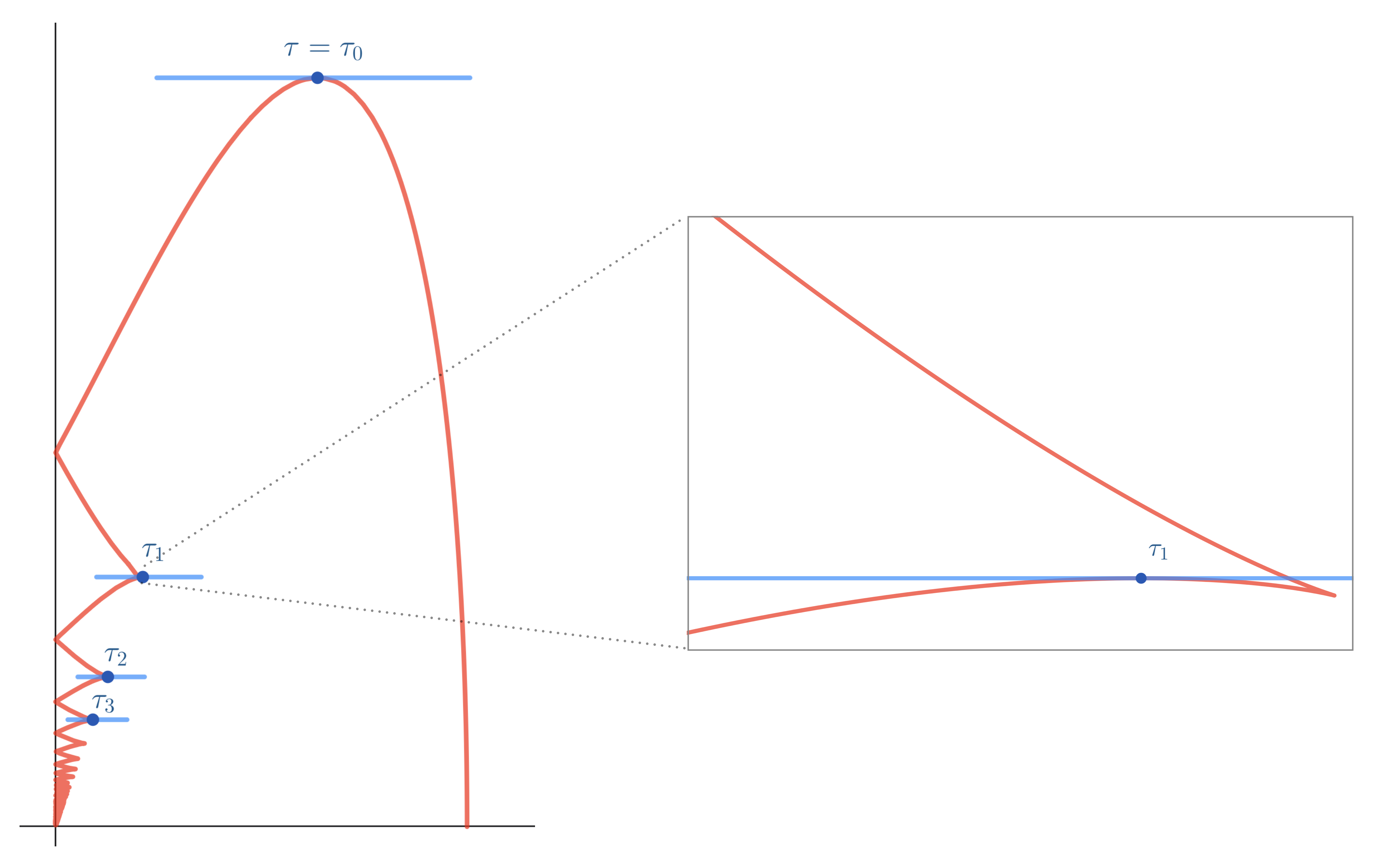}
\caption{A plot indicating horizontal points of the singular support $\Sigma_1$  of the convolution kernel of $\smash{e^{i\sqrt L}}$.  Note that these horizontal points are distinct from the cusp points visible in the plot.}
\label{fig3}
\end{figure}

\subsubsection*{A guide through the paper} 
In Section \ref{conseq}, we  show how the square function estimates from Theorem~\ref{thm:main-square} imply the   spectral multiplier results in Theorem~\ref{thm:Lp-mult} and Corollary~\ref{BR-cor}.
In Section \ref{sec:localization} we study the reduction of the proof of Theorem~\ref{thm:main-square} to  localizations based on an abstract localization result for left-invariant operators, on finite propagation speed for $k=0,$ and on further localizations which are made possible by means of  error estimates for the $T^{k,\ell}_{\la,t}$ for $k\ne 0$. The error estimates can be found in Appendix \ref{decayproof} (see also \cite{MuellerSeeger2015} for a different treatment).
For $k\neq 0$ these localizations force us to decompose the time intervals into smaller intervals of length comparable to the diameter of the $D^{k,\ell}$ in Figure \ref{fig2}.
In Section \ref{prelimwave} we present some preliminaries about the solution operator for the Heisenberg wave equation, including the decomposition \eqref{eq:wavedec} and further information. The proof of the square function bound \eqref{eq:T0la-sq} involving the operators $T^0_{\la,t}$ will be given in Sections \ref{sec:k=0-preliminaries}, \ref{sec:k=0-nonhor} and~\ref{sec:k=0-hor}. 
The proof of \eqref{eq:Tkla-sq} in the cases $k\ge 1$, $\ell\ge 1$ will be given in the remaining Sections~\ref{sec:kneq0-preliminaries}--\ref{CaseII}. Appendix~\ref{cinecurv} contains a calculation regarding the rank $(d_1-1)$ cone condition.

\subsection*{Further directions and open problems} 

\subsubsection*{Extensions to Heisenberg type groups}
We expect that the methods of the current paper also give improvements to known spectral multiplier results on a  Heisenberg type group $G$  with $d_2$-dimensional  center, where $d_2\ge 2$.  These exist for every $d_2$, but we note  that  if $d_1=d-d_2$ is the dimension of the first, horizontal layer of the group $G$, then  the Radon--Hurwitz formulas yield  $d_2\lc \log d_1$ (for more information see \cite{AdamsLaxPhillips, Kaplan1980}). The second author \cite{Ni24} proved an analogue of Theorem~\ref{thm:Lp-mult} in the range $p\le \frac{2(d_2+1)}{d_2+3}$ for $\alpha>d\,|\frac 1p-\frac 12|$ relying on Stein--Tomas  Fourier restriction type estimates (from Casarino--Ciatti \cite{CasarinoCiatti}, Liu--Wang \cite{LiuHeping-WangYingzhan} and Chen--Ouhabaz \cite{ChenOuhabaz}, see also \cite{Ni23-restr}). Note that this method does not yield multiplier results on the standard Heisenberg groups where $d_2=1$.  It is expected that the methods in our  paper can be further developed to substantially extend the  $p$-range for general Heisenberg type groups.

\subsubsection*{Further extensions to Métivier groups}
In \cite{Ni23-restr}, \cite{Ni25Studia} the second author also extended the  restriction and multiplier results to Métivier groups with higher dimensional center (again obtaining an analog of Theorem~\ref{thm:Lp-mult} in the range $1\le p\le  \frac{2(d_2+1)}{d_2+3}$, $d_2\ge 2$, at least in the case $(d_1,d_2)\notin\{(8,6), (8,7)\}$). For Métivier groups that are not of Heisenberg type the parametrix construction from \cite{MuellerSeeger2015} used in the current paper is  no longer available which makes further improvements beyond this range  very difficult. A new approach to describe solutions of the appropriate wave equation, with a partial parametrix,   was developed by Martini and Müller in \cite{MartiniMueller-Metivier}, relying on Fourier integral operators with complex phase.   Their approach yields fixed-time estimates of the kind \eqref{eq:fixedtime} (with a possible loss of $\la^\eps$).  It would be very interesting to prove new  improved  $p$-sensitive multiplier theorems in the  Métivier case. 

\subsubsection*{Endpoint bounds}
Sharp  estimates for wave multipliers on the Heisenberg group  can be found in \cite{MuellerSeeger2015}, and for other oscillating multipliers  in \cite{Bramati-etal}. It would be very interesting to find more general endpoint versions of Theorem~\ref{mainTklthm} (covering among other things the oscillatory multiplier results) which mirror corresponding   results 
for the Laplacian on  compact manifolds \cite{See91-Indiana, KimJongchon18}.

\subsubsection*{Improving the $p$-ranges}
The modern powerful methods in Euclidean Fourier analysis used in Fourier restriction and radial multiplier theory beg to be  used to extend the $p$-range in Theorem~\ref{thm:main-square},  and consequently the $p$-ranges in Theorem~\ref{thm:Lp-mult}.  This is an  interesting direction for further  research.

\subsubsection*{Local smoothing phenomena}
We conjecture that for suitable ranges of $p\gg 2$ the inequality  \eqref{eq:main-square}  can be strengthened  by replacing the  $L^2([1,2])$ norm with  an $L^p([1,2])$ norm. This is  analogous to Sogge's local smoothing conjecture \cite{Sogge91} for the wave equation associated with an elliptic Laplacian. Such an inequality  would also imply new $L^p$ spectral multiplier theorems beyond the spaces defined in \eqref{eq:L2Sob-loc}.

\section{Consequences of the square function estimate}\label{conseq}
We  first show that  Theorem~\ref{thm:main-square} implies Theorem~\ref{thm:Lp-mult} for multipliers that are compactly  supported in $(0,\infty)$, and then provide some references for the more general situation. The results and proofs in this section and  Section \ref{sec:localization}  are valid for any left-invariant homogeneous sub-Laplacian $L$ on any Carnot group $G$ of topological dimension $d$ in place of $\mathbb H_n.$

 If $T : \scriptS(G) \to  \scriptS'(G)$ is a bounded left-invariant linear operator, then the Schwartz kernel theorem ensures the existence of  a tempered distribution $k_T\in  \scriptS'(G)$ such that 
 \begin{equation}\label{convkernel}
T \phi = \phi * k_T, \qquad \phi\in \ \scriptS(G),
\end{equation}
where $*$ denotes the group convolution on $G.$ We can then write 
$$k_T=T\de_0$$ (and shall often do so).
In particular, by Hulanicki's result   \cite[Thm.\ 2.4]{Hu84}  we know that $m(L)\delta_0\in  \scriptS(G)$ for $m\in  \scriptS(\bbR)$.

\begin{proposition}\label{prop:compact} Let $p\ge 2$, $\alpha_0>0$, $0<\eps<1$, and
$\beta\in \Coi((0,\infty))$, with $\beta\neq 0$.
Assume that the $L^p$ inequality 
\begin{equation} \label{eq:sqfct-ass}
\Big\|\Big(\int_1^{1+\varepsilon}|e^{it\sqrt{L}}  \beta(\tfrac{t\sqrt{L}}{\la} ) f|^2\,dt\Big)^{\frac 1 2} \Big\|_p \le C_0(1+\la)^{\alpha_0-\frac 12} \|f\|_p
\end{equation} holds for all $\lambda\ge 1$ and all
$f\in \cS(G)$. 
Under these hypotheses, if $m:\R\to\C$ is supported in a compact subinterval $I$ of $(0,\infty)$  and $m \in L^2_\alpha(\R)$ for some $\alpha> \alpha_0$, then $\smash{m(\sqrt L)}$ extends to a bounded operator on $L^p(G)$ with
\begin{equation}\label{eq:compact}
\|m(\sqrt L)\|_{p\to p} 
\lesssim_{I,\alpha,\beta} C_0 \|m\|_{L^2_\alpha(\R)}.
\end{equation} 
Here $m(\sqrt L)$ on the  left-hand side can also be replaced by $m(L)$.
\end{proposition}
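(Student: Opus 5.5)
The plan is the Kaneko--Sunouchi / Carbery--Gasper--Trebels argument: write $m(\sqrt L)$ as a superposition of half-wave operators, decompose $\hat m$ dyadically in the time variable, and estimate each dyadic piece by duality and Cauchy--Schwarz in $t$ using the square function hypothesis \eqref{eq:sqfct-ass}. By self-adjointness and $L^p$-invariance under complex conjugation it suffices to treat $t>0$. The case of $m(L)$ reduces to $m(\sqrt L)$ via $\tilde m(s)=m(s^2)$, since $\|\tilde m\|_{L^2_\alpha}\lc_I\|m\|_{L^2_\alpha}$ for functions supported in a compact subset of $(0,\infty)$.

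\textbf{Step 1: dyadic decomposition in time.} Fix $\chi\in\Coi((0,\infty))$ with $\chi=1$ on $I$, so that $m(\sqrt L)=m(\sqrt L)\chi(\sqrt L)$. Choose a Littlewood--Paley partition $1=\sum_{j\ge0}\phi_j$ on the $t$-line, with $\phi_0$ supported in $|t|\le 2$ and $\phi_j$ supported in $|t|\sim 2^j$. Set $\hat m_j=\phi_j\hat m$, so that
\[ m_j(\sqrt L)\chi(\sqrt L)=\int \hat m_j(t)\,e^{it\sqrt L}\chi(\sqrt L)\,dt .\]
For the term $j=0$, $m_0$ is Schwartz with seminorms controlled by $\|m\|_{L^2}$. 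Hence by Hulanicki's theorem \cite[Thm.\ 2.4]{Hu84} the kernel of $m_0(\sqrt L)\chi(\sqrt L)$ is Schwartz, and this operator is bounded on every $L^p$ with norm $\lc\|m\|_{L^2}$. For $j\ge1$ we have $\|\hat m_j\|_{L^2}\lc 2^{-j\alpha}\|m\|_{L^2_\alpha}$. It therefore suffices to prove
\[ \|m_j(\sqrt L)\chi(\sqrt L)\|_{p\to p}\lc C_0 2^{j\alpha_0}\|\hat m_j\|_{L^2},\]
because then the sum over $j$ converges since $\alpha>\alpha_0$.

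\textbf{Step 2: inserting $\beta$.} Split the $t$-support $[2^{j-1},2^{j+1}]$ into $O(1)$ intervals $J=[c\lambda,c\lambda(1+\eps)]$ with $\lambda\sim 2^j$. Since $\beta\ne0$, there is an interval on which $|\beta|\ge c_0>0$. Using a finite smooth partition of unity $\chi=\sum_\nu\chi_\nu$ and a rescaling of the constant $c$, we arrange that $\beta(ts/\lambda)\neq0$ for $s\in\supp\chi_\nu$ and $t\in J$. The number of pieces depends only on $I$, $\beta$ and $\eps$.

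Write $\chi_\nu(s)=\beta(ts/\lambda)\psi_t(s)$, where $\psi_t=\chi_\nu/\beta(t\cdot/\lambda)$ is smooth, compactly supported, and depends smoothly on $t/\lambda$. To remove the $t$-dependence, fix $t_0=c\lambda$ and write
\[\psi_t=\psi_{t_0}+\int_{t_0}^t\partial_\tau\psi_\tau\,d\tau .\]
Each operator $\psi_{t_0}(\sqrt L)$ and $\lambda\,\partial_\tau\psi_\tau(\sqrt L)$ has a Schwartz kernel by Hulanicki's theorem, uniformly in $\tau$, so it is uniformly bounded on $L^p$. These operators commute with everything in sight. By Minkowski's inequality in $\tau$, the problem reduces to bounding, uniformly in $\tau\in J$, the operators
\[ \int_{J}\bbone_{t>\tau}\,\hat m_j(t)\,e^{it\sqrt L}\beta(\tfrac{t\sqrt L}{\lambda})\,dt .\]
The indicator $\bbone_{t>\tau}$ is harmless for the next step, because a square function over a subinterval is pointwise dominated by the full one.

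\textbf{Step 3: Cauchy--Schwarz and scaling.} Pair with $g\in L^{p'}$ and apply Cauchy--Schwarz in $t$. This bounds the pairing by
\[\|\hat m_j\|_{L^2}\,\Big\|\Big(\int_J|e^{it\sqrt L}\beta(\tfrac{t\sqrt L}{\lambda})f|^2dt\Big)^{1/2}\Big\|_p\|g\|_{p'}.\]
Now use the dilations $D_r$, which preserve $L^p$ norms up to the fixed Jacobian and conjugate $e^{it\sqrt L}\beta(t\sqrt L/\lambda)$ into $e^{i(t/r)\sqrt L}\beta((t/r)\sqrt L/\lambda)$. Together with the substitution $t=c\lambda s$, this turns the square function over $J$ into $(c\lambda)^{1/2}$ times the square function over $[1,1+\eps]$ with parameter $\lambda'=\lambda/(c\lambda)$ replaced appropriately. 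Here $\beta$ is also absorbed into the rescaling of $c$ from Step 2, so that the $\beta$ in the hypothesis is exactly the given one.

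Applying \eqref{eq:sqfct-ass} then yields $\lc C_0\lambda^{1/2}\lambda^{\alpha_0-1/2}=C_0\lambda^{\alpha_0}$. This is the estimate required at the end of Step 1.

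\textbf{Main obstacle.} The delicate point is the bookkeeping in Steps 2--3. The scaling must match the hypothesis exactly: time window of relative length $\eps$, frequency $\sqrt L\sim\lambda/t$, and the fixed function $\beta$. At the same time, the $t$-dependent division by $\beta$ must not destroy the square function structure. The $\tau$-integration trick above is what I would try first to handle this. If it proves awkward, the fallback is to choose the partition of $\chi$ finer still, so that $\beta(t\cdot/\lambda)$ is essentially constant on each piece, and then control the remainder by a Taylor expansion in $t/\lambda-c$.
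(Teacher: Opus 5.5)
Your argument is correct and follows the same route as the paper's proof. Both proofs use Fourier inversion in the half-wave group, then a dyadic decomposition of $\hat m$ in time, then Cauchy--Schwarz in $t$ against $\|\hat m\|_{L^2(|t|\sim 2^j)}\lesssim 2^{-j\alpha}\|m\|_{L^2_\alpha}$. Then $\beta$ is inserted by dividing by it on the relevant spectral window, and automorphic rescaling brings each time window to $[1,1+\eps]$. After the substitution $t=c\lambda s$ and dilation by $c\lambda$, the parameter in the hypothesis is exactly $\lambda\sim 2^j$, so the $\lambda'$ in your Step 3 should simply be $\lambda$.

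The one genuinely different step is how the $t$-dependence of the quotient multiplier is removed. The paper writes $\chi(\sqrt L)=\chi_1(t\sqrt L)\chi(\sqrt L)$ and $\chi_1(t\,\cdot)=\chi_2(t\,\cdot)\,\beta(r_\circ t\,\cdot)$. The leftover factor $\chi_2(t\sqrt L)$ is then a family of parabolic dilates of a single Schwartz-kernel operator. Its kernels are dominated by one integrable function uniformly in $t\sim 1$, which bounds $F_t\mapsto \chi_2(t\sqrt L)F_t$ on $L^p(L^2_t)$ and lets it be peeled off inside the square function after Cauchy--Schwarz.

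Your freezing via $\psi_t=\psi_{t_0}+\int_{t_0}^t\partial_\tau\psi_\tau\,d\tau$ uses only the scalar bound $\|\partial_\tau\psi_\tau(\sqrt L)\|_{p\to p}\lesssim \lambda^{-1}$ (Hulanicki with parameters) together with $|J|\sim\lambda$ and Minkowski in $\tau$. It pays for this with the truncated integrals $\bbone_{t>\tau}$, which are indeed harmless after Cauchy--Schwarz. Both devices work; the paper's is a bit more economical.

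One small fix is needed in Step 2. If $\beta$ is nonvanishing only on a short interval, windows of relative length exactly $\eps$ may be too long for $\beta(ts/\lambda)$ to stay nonzero. As the paper does at the outset, first pass to a smaller $\eps'$, which is allowed because the hypothesis on $[1,1+\eps]$ trivially implies it on $[1,1+\eps']$. Then cut the dyadic time ranges into windows of that relative length.
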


\begin{proof} 
Making $\eps>0$ in the assumption smaller if necessary we may assume that 
there is $r_\circ>0$ 
such that $\beta(r)\neq 0$ on the  interval $(r_\circ(1-\eps), r_\circ(1+\eps) )$. We first show \eqref{eq:compact} under the assumption
that $m$ is supported in $(1-\frac{1}{8N}, 1+\frac{1}{8N})$ where $N>\eps^{-1}$. Let $\chi\in \Coi$ with $\chi(r)=1$ for $|r-1|\le \frac{1}{8N}$ and $\chi$ supported in $(1-\frac{1}{6N}, 1+\frac{1}{6N})$. 

Let $\ell_0=\max\{\ell \in \Z: 2^\ell < r_\circ\}$.
Using the Fourier inversion formula (with $\widehat m(\rho) =\int_\R m(r)e^{-ir\rho} dr$) we write 
\[ m(\sqrt{L}) 
 = \frac 1{2\pi} \int_\R \, \hat{m}(\tau ) \, e^{i\tau  \sqrt{L}} \, \chi(\sqrt{L}) \, {d\tau  } = u_0(\sqrt L)+\sum_\pm \sum_{\ell > \ell_0} u_{\ell,\pm}  (\sqrt L), \]
  where, with $J_0 = [-2^{\ell_0+1}, 2^{\ell_0+1}]$, and,  $J_\ell^- = [-2^{\ell + 1}, -2^\ell] $, and 
  $J_\ell^+=  [2^\ell, 2^{\ell + 1}]$ for $\ell>\ell_0$, 
 \[ u_{\ell,\pm}  (\sqrt L)= \frac{1}{2\pi}
\int_{J_{\ell}^{ \pm } }\hat{m}(\tau ) \, \chi(\sqrt{L}) e^{i\tau \sqrt{L}} \, {d\tau } . 
\] The multiplier $u_0$ is analogously defined, replacing $J_\ell^\pm$ by $[-2^{\ell_0+1},2^{\ell_0+1}]$. Since $u_0$ is a smooth compactly supported function, the inequality
\[
\|u_0(\sqrt L) f\|_p\lc \|m\|_{L^2} \|f\|_p
\]
follows from Hulanicki's theorem. 

We shall now focus on the multipliers $u_{\ell,+}$ for $\ell>\ell_0$. 
We decompose
\[
J_\ell^+=\bigcup_{\nu=1}^{4N} J_{\ell,\nu},
\]
where $J_{\ell,\nu}=[R_{\ell,\nu}, R_{\ell,\nu+1}]$ with 
$R_{\ell,\nu} = 2^\ell (1+(\nu-1) \frac{1}{4N})$. Note that $R_{\ell,\nu}^{-1} J_{\ell,\nu} \subset [1, 1+\frac{1}{4N}]$.  
Use Cauchy--Schwarz and a change of variable in $t$ to estimate
\begin{align*}
& \Big| \int_{J_{\ell}^{+} }\hat{m}(\tau ) \, e^{i\tau \sqrt{L}} \, \chi(\sqrt{L}) \, f \, d\tau  \Big| \\
& \le \Big( \int_{J_{\ell}^{+}} |\hat{m}(\tau )|^2 \, d\tau  \Big)^{\frac 12} \Big( \int_{J_{\ell}^+ }\big| e^{i\tau  \sqrt{L}} \, \chi(\sqrt{L}) \, f \big|^2 \, d\tau  \Big)^{\frac 12} \\
& \lesssim 2^{- \ell \alpha} \, \| m \|_{L^2_ \alpha} 
\Big( \sum_{\nu=1}^{4N}\int_{J_{\ell,\nu} }\big| e^{i\tau  \sqrt{L}} \, \chi(\sqrt{L}) \, f \big|^2 \, d\tau  \Big)^{\frac 12} 
\\ &\lc 2^{-\ell \alpha}  \| m \|_{L^2_ \alpha} 
\Big( \sum_{\nu=1}^{4N}\int_{1}^{1+\frac{1}{4N}} \big| e^{iR_{\ell,\nu} t \sqrt{L}} \, \chi(\sqrt{L}) \, f \big|^2 \, R_{\ell,\nu}dt \Big)^{\frac 12} .
\end{align*} 
Let $\chi_1\in \Coi$ be supported in $(1-\frac{3}{4N}, 1+\frac{3}{4N})$ so that  $\chi_1(r)=1$ for $|r-1|\le \frac{1}{2N}$. If $|r-1|\le \frac{1}{6N}$ and $|t-1|\le \frac{1}{4N}$ then $\chi_1(tr)=1$;  hence $\chi_1(t\cdot )\chi=\chi$  for all $t\in [1, 1+\frac{1}{4N}]$. 
Since $\beta(r_\circ r)\neq 0$ for $|r-1|\le N^{-1}$  we see that $\chi_2(r)= \chi_1(r)/ \beta(r_\circ r)$ defines a function in $C^\infty_0$ and we get with $g=\chi(\sqrt L) f$
\[ \int_{1}^{1+\frac{1}{4N}} \big| e^{iR_{\ell,\nu} t \sqrt{L}} \, \chi(\sqrt{L}) \, f \big|^2 \, dt =  \int_{1}^{1+\frac{1}{4N} } \big| e^{iR_{\ell,\nu} t \sqrt{L}} \, \chi_2( t\sqrt L)\, \beta(r_\circ t\sqrt{ L}) \, g \big|^2 \, dt.
\]
Since $\chi_2(\sqrt L)\delta_0$ is a Schwartz function, we have by parabolic scaling with $D_t$ and Hulanicki's theorem (see also \cite{MaMe90})  
\[ \Big\| \Big(\int_{1}^{1+\frac{1}{4N}} \big| \chi_2( t\sqrt L)  F_t \big|^2 dt\Big)^{\frac 1 2}\Big\|_p \lc 
\Big\| \Big(\int_{1}^{1+\frac{1}{4N}} \big| F_t \big|^2 dt\Big)^{\frac 1 2}\Big\|_p,\quad 
1\le p\le \infty.\]
Hence, using that $R_{\ell,\nu} \sim 2^\ell$,
\begin{align*} 
&\|u_{\ell,+}(\sqrt L) f\|_p \lc 2^{\ell (\frac 12-\alpha)} \| m \|_{L^2_ \alpha}  \sum_{\nu=1}^{4N} \Big\| \Big(\int_1^{1+\frac{1}{4N}} |e^{iR_{\ell,\nu} t \sqrt { L}}  \beta (r_\circ t\sqrt {L} )\,g|^2 dt\Big)^{\frac 12} \Big\|_p 
\\
&\lc 2^{\ell (\frac 12-\alpha)} \| m \|_{L^2_ \alpha}  \sum_{\nu=1}^{4N}  
\Big\| \Big(\int_1^{1+\frac{1}{4N} }\big |D_{R_{\ell,\nu}^{-1} }\big[ e^{i t \sqrt {L}}  \beta (\tfrac{r_\circ}{R_{\ell,\nu}} t \sqrt  { L} ) \, D_{R_{\ell,\nu}} g\big]\big |^2  dt \Big)^{\frac 12} \Big\|_p 
\\
&\lc \sum_{\nu=1}^{4N}  C_02^{\ell (\frac 12-\alpha)} \| m \|_{L^2_ \alpha}  \Big( \frac{R_{\ell,\nu}}{r_\circ}\Big) ^{\alpha_0-\frac 12}   \|g\|_p \lc C_0 2^{\ell(\alpha_0-\alpha)} \| m \|_{L^2_ \alpha}  \|f\|_p. 
\end{align*}
Here we have used scaling by the dilations $D_{R_{\ell,\nu}}$, the assumption \eqref{eq:sqfct-ass} for $\la=R_{\ell,\nu} /r_\circ\sim 2^\ell$, and finally $\|g\|_p=\|\chi(\sqrt L) f\|_p\lc \|f\|_p$.

Since for real valued functions $f$ we can replace $e^{i t\sqrt L}f$ with $e^{-it\sqrt L}f$ the  above also yields $\|u_{\ell,-}(\sqrt L) \|_{p\to p} \lc C_0 2^{\ell (\alpha_0-\alpha)} \| m \|_{L^2_ \alpha}$ and the proposition  for $m$ supported in $(1-\frac 1{8N},  1+\frac{1}{8N}) $ follows after summing  in $\ell$. This support assumption can be removed by decomposing $m$ into a finite sum $\sum_{j} m_j$ where each multiplier is supported in an  interval $I_j$ of $\bbR\setminus\{0\}$ which is of length at most $\frac{1}{8N}$. Rescaling and observing that $\|m(R\cdot)\|_{L^2_\alpha}\lc (1+R)^\alpha\|m\|_{L^2_\alpha}$ for $\alpha\ge 0$, $R\ge 0$,  yields the general case of the proposition.  Finally, by using the proved result for $m_0(\sqrt L)$ with $m_0(r)= m(r^2)$, we get the final assertion for $m(L)$. 
\end{proof}

Corollary~\ref{BR-cor} and Theorem~\ref{thm:Lp-mult} can now  be derived  from Proposition~\ref{prop:compact} by known arguments, as outlined in the remainder of this section. Thus in this paper we will entirely focus on the proof of Theorem~\ref{thm:main-square}.

\subsubsection*{Riesz means}\label{onBR} 

To handle the Bochner-Riesz multiplier $(1-R^{-2}L)_+^\nu$, one decomposes $(1-L)_+^\nu= m_0(L)+ m_1(L),$ where $m_0$ is $C^\infty$ with compact support,  and $m_1$ is supported in $[\frac 12, 1]$ 
and 
belongs  to $L^2_\alpha$  when $\alpha< \nu+\frac 12$. Then $m_0(L)$ is $L^p$-bounded for $1\le p\le \infty$ by \cite{Hu84} and 
$m_1(L)$ is bounded on $L^p$ in the range of $p$ and $\nu$ prescribed in Corollary~\ref{BR-cor}, and the result for $R>0$ follows by scaling. 

The Riesz multiplier $(1-R^{-1} \sqrt L)_+^\nu$ can be similarly decomposed.  
Indeed  $\rho \mapsto m_1(\sqrt \rho)$    
belongs  to $L^2_\alpha$ when $\alpha< \nu+\frac 12$.
However, $m_0(\sqrt{L})=h(L),$ where the mapping $\rho\mapsto h(\rho):=m_0 (\sqrt \rho)$ is no longer smooth at the origin. 
To address this complication,  choose a smooth cut-off function $\chi_0$ which is identically 1 near the origin and such that $m_0=\chi_0 m_0.$ By a Taylor expansion around $0$, we may then decompose 
$$
m_0(\tau)=m_0(0)\chi_0(\tau)+\tau g(\tau),
$$
where  $g\in \Coi .$   Then $h(\rho)=m_0 (\sqrt \rho)=m_0(0)\widetilde{\chi_0}(\rho)+\psi(\rho),$ where $\widetilde{\chi_0}\in \Coi $ and and   $\psi(\rho)=\sqrt \rho g(\sqrt \rho)$. We note that if 
$\eta$ is a bump function in $(1/2,2)$, then $ \psi(2^{-\ell}\cdot)\eta$ and its derivatives  are $O(2^{-\ell/2})$. We again apply  Hulanicki's theorem to see that $m_0(0)\widetilde{\chi_0}(L)$ is $L^p$-bounded for $1\le p\le \infty$.
Moreover, $\eta(L)\psi(2^{-\ell} L)$ has $L^p\to L^p$ operator norm $O(2^{-\ell/2})$, and by dilation invariance the same holds true for  $\eta(2^{\ell}L )\psi(L)$.   Choosing $\eta $ appropriately we may sum in $\ell$ to conclude that  $\psi(L)$ is bounded on $L^p(G)$ for $1\le p\le \infty$. 

\subsubsection*{Conditions of Hörmander--Mikhlin type}
For multipliers which are not necessarily compactly supported (satisfying the assumptions of  Theorem~\ref{thm:Lp-mult})  one can use ideas  in \cite{carbery-revista} or \cite{Seeger-Crelle1986, See88} to combine a Calderón--Zygmund decomposition of $|f|^p$ with a crucial  application of Littlewood--Paley theory, together with singular integral estimates in work by Hulanicki and Stein (\cite{FoSt82, Hu84}) to deduce Theorem~\ref{thm:Lp-mult} from Proposition~\ref{prop:compact}. We shall not give the details. Alternatively one can rely on  the implementation of this argument in 
\cite[Thm. I.19] {ChenOuhabazSikoraYan} (putting $q=\infty$ in that result).

\section{First localizations} \label{sec:localization} 

We first observe that estimate \eqref{eq:main-square} in Theorem~\ref{thm:main-square} can easily be reduced to the case where $\beta$ is supported in $[\frac{1}{2},2]$. To this end, assume that $\beta$ is supported in $[-R, R].$ Choose next $\chi_0, \chi_1\in \Coi (\R)$ such that $\chi_0$ is supported in $[-1,1]$ and $\chi_1$ in $[\frac{1}{2}, 2],$ and so that 
$$\chi_0(\tau)+\sum_{j=0}^\infty \chi_1(2^{-j} \tau)=1$$
 for every $\tau\ge 0,$ and decompose
$$
\beta(\tau/\la)=\beta_0(\tau)+\sum_{1\le 2^j\le 2R\la} \beta_j(\tau/2^j),
$$
where $\beta_0(\tau)=\beta(\tau/\la)\chi_0(\tau)$ and $\beta_j(\tau)=\beta(2^{j}\tau/\la) \chi_1(\tau)$ for $1\le 2^j\le R\la.$ 

Then 
$$\beta(t\sqrt{L}/\la)=\psi (t^2L)+\sum_{1\le 2^j\le 2R\la} \beta_j(t\sqrt{L}/2^j),
$$
where $\psi(\rho)=\beta_0(\sqrt{\rho}).$

Arguing in a similar way as in the arguments for the Riesz means 
by means of (1.3) in \cite{MuellerStein-mult}, one can  show that for $1\le t\le 2,$
\[
\|e^{it \sqrt{L}} \beta_0(t\sqrt{L})  \|_{L^1\to L^1} \le C_\beta,
\]
with a bound  $C_\beta$ controlled by the $C^M$-norm of $\beta$ for some sufficiently large $M.$ 
Moreover, assuming that estimate \eqref{eq:main-square} holds for any  $\beta$ supported in $[\frac{1}{2},2],$ with
$C_{p,\alpha, \beta}$ depending only on $p,\alpha, R$ and the $C^M$-norm of $\beta$ for some sufficiently large $M,$ 
we find that 
$$\Big\| \Big( \int_1^2 \big| e^{it \sqrt{L}} \beta_j(\tfrac{t\sqrt{L}}{2^j}) f \big|^2 \, dt \Big)^{\frac 12} \Big\|_{L^p(\bbH_n)} \lesssim C_{p,\alpha,\beta} \, 2^{j\alpha} \, \|f\|_{L^p(\bbH_n)}.
$$
Summing over all $j$ such that $1\le 2^j\le 2R\la,$ we obtain  \eqref{eq:main-square}. 

\medskip

We next show that by exploiting the finite propagation speed of solutions to the wave equation associated to the sub-Laplacian $L,$  it suffices to prove inequality \eqref{eq:main-square}  for $L^p$ functions $f$ supported in a small neighborhood of $0$. 

\medskip

The following discussions will apply to arbitrary Carnot groups equipped with a left-invariant metric satisfying the local volume growth condition specified below. Suppose that $G$ is a Carnot group, and let $\mathfrak g$ be  its Lie algebra. As usual, we identify $G$ with its Lie algebra $\mathfrak{g}$ via exponential coordinates, so that the group product is given by the Baker--Campbell--Hausdorff formula. Moreover, let
\[
d:G\times G\to \R_{\ge 0}
\]
be a left-invariant metric on $G,$ i.e., $d(wz,wz')=d(z,z')$  for all $z,z',w\in G,$ and denote by $B(z,r)$ the corresponding  
ball of radius $r$ centered at $z,$ i.e.,
\[
B(z,r):=\{w\in G: d(w,z)<r\}.
\]
By $\overline B(z,r)$ we denote the corresponding closed ball, and by $$\widetilde{B(z,r)}:=B(z,2r)$$ the doubling of $B(z,r)$. In addition to the Carnot group structure, we assume that there exists a ``local homogeneous dimension'' $D>0$ such that
\begin{equation}\label{locdim}
|B(z,r)|\sim r^D \qquad \text{for all } \  r\in (0,r_0],
\end{equation}
where $r_0\gg 1$ is a fixed constant (note also that $|B(z,r)|=|B(0,r)|$).

\begin{examples}

(a) Assume that  $X_1,\dots, X_d$ is a basis of the Lie algebra $\g$ so that its first, horizontal layer $\g_1$ is spanned by $X_1,\dots,X_{d_1},$ and let $L=-\sum_{k=1}^{d_1}X_k^2$ be the associated sub-Laplacian (here, we identify as usually any vector $X\in \g$ with  the corresponding Lie derivative). For further background, we refer, e.g., to \cite{MaMueNi23}.
We then denote  by $d_{\mathrm{CC}}$  the (left-invariant) Carnot--Carathéodory or sub-Riemannian distance on $G$ associated 
to $L.$ We recall that for this metric $d=d_{\mathrm{CC}},$ the volume of $B(z,r)$ is given by  
\begin{equation}\label{ballvol}
|B(z,r)|=|B(0,r)|= |B(0,1)|\, r^Q,
\end{equation}
where  $Q\ge d$ is the homogeneous dimension of $G.$ 
\smallskip

(b) Let $d_G$ denote the left-invariant Riemannian metric defined by assuming that the left-invariant vector fields $X_1,\dots,X_d$ in Example a) form an orthonormal basis of the tangent space at any point of $G.$ Then $d_G$ is just  the sub-Riemannian metric associated to the full Laplacian 
$-\sum_{k=1}^{d}X_k^2,$ so that $d_G\le d_{\mathrm{CC}}.$ It is easily seen that the local homogeneous dimension is here given by $d.$

\end{examples}

We finally assume that $X$ is a complex Banach space, endowed with the norm $\|\cdot\|_X.$ 

\medskip

We begin with a general localization result.

\begin{proposition}\label{localize-a}
(i) Fix $r_0\gg1,$ and let $K\in L^\infty(G,X)$ be supported in a ball $\overline B(u_0,r)$, where  $u_0$ lies in the center of $G$ and $0<r\le r_0,$ and denote by $T_K$ the convolution operator $T_Kf=f*K,\ f\in \scriptS(G).$ Moreover, let $0<\ve <1$  and $1\le p\le\infty,$ and assume that 
\begin{equation}\label{locest}
\|T_Kf\|_{L^p(G,X)}\le A\, \|f\|_{L^p(G)} 
\end{equation}
for all $f\in L^p(G)$ which are supported in $\overline B(0,\ve r).$ 

Then there is a constant $C_{\ve}$ which, for a given group $G$ and metric $d$, depends only on  $\ve,$ such that 
\begin{equation}\label{globalest}
\|T_Kf\|_{L^p(G,X)}\le C_{\ve}^{1/p'} A\, \|f\|_{L^p(G)} 
\end{equation}
for all $f\in L^p(G).$ 
\smallskip

(ii) More generally, if $\alpha\in {\rm Aut}(G)$ is any automorphism of the Lie group $G,$ then the following analogous statement holds true:
\smallskip

If $\supp K\subset \alpha(\overline B(u_0,r)),$ and if \eqref{locest} holds for all $f\in L^p(G)$ which are supported in 
$\alpha(\overline B(0,\ve r)), $  then \eqref{globalest} holds  for all  $f\in L^p(G).$ 

\end{proposition}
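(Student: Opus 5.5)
The plan is a standard covering and finite-overlap argument, made left-invariant. Since $T_Kf=f*K$ and $f*K(z)=\int f(w)K(w^{-1}z)\,dw$, the value $T_Kf(z)$ depends only on $f$ near $z\,\overline B(u_0,r)^{-1}$. Because $u_0$ is central and $d$ is left-invariant, $z\,\overline B(u_0,r)^{-1}$ is the set of $w$ with $w^{-1}z\in u_0\overline B(0,r)$. So $w\in z u_0^{-1}\overline B(0,r)^{-1}$, and $\overline B(0,r)^{-1}=\overline B(0,r)$, since $d(0,w^{-1})=d(w,0)$ by left-invariance. Hence $f*K(z)$ only sees $f$ on the ball $\overline B(zu_0^{-1},r)$. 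Centrality of $u_0$ is exactly what makes this set a genuine left translate of a ball.

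Step 1 (covering). Choose a maximal $\ve r/2$-separated set $\{z_j\}$ in $G$. Then the balls $B_j=\overline B(z_j,\ve r)$ cover $G$. The enlarged balls $B_j^*=\overline B(z_j,(1+\ve)r+\ve r)$ have bounded overlap: by the volume condition \eqref{locdim}, each point lies in at most $C_\ve\sim ((2+2\ve)r/(\ve r/4))^D\sim \ve^{-D}$ of them. This uses the disjoint balls $B(z_j,\ve r/4)$, all of comparable volume since $r\le r_0$ (at scales up to $4r_0$; adjust $r_0$ if needed). Decompose $f=\sum_j f_j$ with $f_j=f\bbone_{E_j}$, where the $E_j\subset B_j$ are disjoint (a Borel partition subordinate to the cover).

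Step 2 (localized bound). By left-invariance of convolution, $(f_j*K)(z)=(\tau f_j)*K(z_j^{-1}z)$ with $\tau f_j(w)=f_j(z_jw)$ supported in $\overline B(0,\ve r)$. So \eqref{locest} gives $\|f_j*K\|_{L^p(X)}\le A\|f_j\|_p$, using the invariance of Haar measure. Moreover, $f_j*K$ is supported in $E_j\,\overline B(u_0,r)\subset u_0 \overline B(z_j,\ve r+r)$, using centrality and the triangle inequality. Call this set $\tilde B_j$; the translated family $\{\tilde B_j\}$ has the same overlap bound $C_\ve$.

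Step 3 (summing). At each $z$, at most $C_\ve$ terms $f_j*K(z)$ are nonzero, so by Hölder
\[
\Big\|\sum_j f_j*K(z)\Big\|_X^p\le C_\ve^{p-1}\sum_j\|f_j*K(z)\|_X^p .
\]
Integrating gives $\|T_Kf\|_p^p\le C_\ve^{p-1}A^p\sum_j\|f_j\|_p^p=C_\ve^{p-1}A^p\|f\|_p^p$, i.e.\ the constant $C_\ve^{1/p'}$; the case $p=\infty$ is the same with a supremum. The main (minor) obstacle is the bookkeeping of left versus right translations, which is where centrality of $u_0$ and left-invariance of $d$ enter, together with the bounded-overlap count from \eqref{locdim}.

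For (ii), apply (i) with the metric $d_\alpha(z,w)=d(\alpha^{-1}z,\alpha^{-1}w)$. This metric is left-invariant because $\alpha$ is an automorphism, and its balls are $\alpha(B)$. Its volume growth is that of $d$ times the constant Jacobian $|\det d\alpha|$, so \eqref{locdim} still holds. The point $\alpha(u_0)$ is again central. Alternatively, conjugate: set $K_\alpha=|\det\alpha|\,K\circ\alpha$ and $f\mapsto f\circ\alpha$, which maps $L^p$ isometrically up to the Jacobian factor and intertwines the convolutions, and then apply (i) directly.
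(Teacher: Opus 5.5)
Your proposal is correct and is essentially the paper's own proof. It has the same ingredients: a maximal separated family of $\ve r$-scale balls, a disjoint decomposition $f=\sum_j f\bbone_{E_j}$, left translation by $z_j^{-1}$ to invoke \eqref{locest}, centrality of $u_0$ to see that only $O(\ve^{-D})$ pieces contribute at each $z$, and Hölder over those terms. The paper counts the contributing pieces via $\widetilde{B_i}\subset B(u_0^{-1}z,3r)$, which is the same as your overlap bound for the sets $u_0\overline B(z_j,(1+\ve)r)$.

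For (ii), your conjugation with $K_\alpha=|\det\alpha_*|\,K\circ\alpha$ is exactly the paper's argument, and it is the better of your two options because it gives the same constant $C_\ve$ uniformly in $\alpha$. Via the metric $d_\alpha$ you would also have to note that the Jacobian cancels in the volume ratio $|B_{d_\alpha}(0,3r)|/|B_{d_\alpha}(0,\ve r/2)|$; this uniformity matters later, when $\alpha=D_{k^{-1}}$ varies with $k$.
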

\begin{proof} 
By Zorn's lemma, we can choose a maximal family  of pairwise disjoint  $\eps r/2$-balls $B_i=B(z_i,\eps r/2), i\in I, $ in $G.$ Then $I$ is countable. Indeed, given any  $R>1,$ if $I_R$ denotes the set of indices $i\in I$ such that $z_i\in B(0,R),$ then
\[
B_i\subset \widetilde{B(0,R)},\quad\text{hence}\quad  \sum_{i\in I_R} |B_i| \le |\widetilde{B(0,R)}|.
\]
Thus, by \eqref{locdim}, $(\sharp I_R) \, |B(0,\eps r/2)|\le |B(0,R)|,$ which shows that $I_R$ is finite.

We may  thus assume that $I=\bbN.$ Moreover, the maximality of the  family $(B_i)_{i\in \bbN}$ implies that the family of doublings $(\widetilde{B_i})_{i\in \bbN}$ covers $G.$ We can therefore decompose $G$ into the pairwise disjoint sets $D_i\subset B_i$ defined recursively by $D_0:=\widetilde{B_0}$ and $D_{i+1}:=\widetilde{B_{i+1}}\setminus 
( \widetilde{B_{0}} \cup\cdots \cup\widetilde{B_{i}}).$

Given $f\in L^p(G),$ we may accordingly decompose $f=\sum_{i\in\bbN} f_i,$ where $f_i:=f\chr_{D_i}$ satisfies $f|_{\widetilde {B_i}}=0$.
 
Next, note that for any fixed $z\in G,$ $q_z(f):=\|T_K f(z)\|_X$ defines a continuous semi-norm $q_z$  on $L^p(G),$ and thus 
$$
q_z(f)\le \sum\limits_{i\in\bbN} q_z(f_i).
$$
Moreover, 
$$T_K f_i(z)=\int_{\overline B(z_i,\eps r)} f_i(w)K(w^{-1}z) \,dw\ne 0$$ 
only if there is some $w\in \overline B(z_i,\eps r)$ such that $w^{-1}z\in \overline{B}(u_0,r).$ Then $z=wu_0y$ for some $y\in \overline{B}(0,r).$  Noting also that $u_0^{-1}z=wy,$ since $u_0$ is central, this implies  that
\begin{align*}
d(z_i,u_0^{-1}z) &\le d(z_i,w)+d(w,u_0^{-1}z)= d(z_i,w)+d(w,wy)\\
 &= d(z_i,w)+d(0,y)\le  \eps r+r< 2r.
\end{align*}

This implies that $\widetilde {B_i}\subset B(u_0^{-1}z,3r).$ Thus, if $I_z$ denotes the set of all $i\in \bbN$ such that $q_z(f_i)\ne 0,$ then 
 $\sum_{i\in I_z} |B_i| \le |B(u_0^{-1}z,3r)|=|B(0,3r)|,$ and arguing in a similar way as before and making use of \eqref{locdim}, we see that 
 $$
 \sharp I_z\le |B(0,3r)|/|B(0,\eps r/2)|\le C r^D/(\ve r)^D=C \ve^{-D}=:C_\eps.
 $$
 If $p<\infty,$ then by Hölder's inequality, we thus get
 $$
q_z(f)\le \sum\limits_{i\in I_z} q_z(f_i)\le  C_\eps^{1/p'} \big(\sum\limits_{i\in \bbN} q_z(f_i)^p\big)^{1/p},
$$
hence 
$$
\|T_Kf\|_{L^p(G,X)}\le C_\eps^{1/p'} \big(\sum\limits_{i\in \bbN} \|T_K f_i\|_{L^p(G,X)}^p\big)^{1/p}.
$$
But, $f_i$ is supported in $\overline B(z_i, \ve r),$ and since $T_K$ is left-invariant,  by means of the left-translation by $z_i^{-1},$ our assumption \eqref{locest} implies that $\|T_Kf_i\|_{L^p(G,X)}\le A\, \|f_i\|_{L^p(G)} ,$ and thus 
$$
\|T_Kf\|_{L^p(G,X)}\le C_\eps^{1/p'} A\,\big(\sum\limits_{i\in \bbN} \|f_i\|_{L^p(G)}^p\big)^{1/p}=C_\eps^{1/p'} A\,\|f\|_{L^p(G)},
$$
since the $f_i$'s have disjoint supports. This proves (i) for $p<\infty,$ and the proof for $p=\infty$  requires only the usual  modifications.

\medskip

To prove (ii), let $\alpha\in {\rm Aut}(G),$ and denote by $\alpha_*=D\alpha|_0\in {\rm Aut}(\mathfrak g)$ the associated automorphism of the Lie algebra of $G$ (see, for example, \cite{HiNe12}).

Then we recall that the  Haar measure $dw,$ which agrees with the Lebesgue measure on $\mathfrak g,$ transforms under $\alpha$ as 
$d(\alpha(w))=|\det \alpha_*| \,dw.$ Thus, if we put $K_\alpha(z)=|\det \alpha_*| \, K(\alpha(z)),$
then straight-forward computations show that 
\begin{equation}\label{auto}
T_{K_\alpha} (f\circ \alpha)=(T_K f)\circ \alpha.
\end{equation}

Moreover, if $\supp K\subset \alpha(\overline B(u_0,r)),$ then $\supp K_\alpha \subset \overline B(u_0,r).$ Thus, if 
 \eqref{locest} holds for all $f\in L^p(G)$ which are supported in $\alpha(\overline B(0,\ve r)), $  then \eqref{auto} implies that 
\eqref{locest} holds for $T_{K\alpha},$  for all $f\in L^p(G)$ which are supported in $\overline B(0,\ve r), $ and hence by part (i) 
the global estimate \eqref{globalest} will hold for $T_{K\alpha}.$  Again, by \eqref{auto}, then the estimate \eqref{globalest} will also hold for $T_{K}.$
\end{proof} 

Then we can exploit \cite[Proposition~3.1]{MartiniMueller-Metivier} to prove the following lemma, in which we choose as metric for simplicity the  Carnot--Carathéodory distance  $d_{\mathrm{CC}}$ (the metric $d_G$ would work as well).

\begin{lemma} \label{localize1}
Let $0<\eps<1$, $p\ge 2$  and $\alpha> 0,$ and suppose that   $\beta\in \Coi $ is supported in $[\frac{1}{2},2].$  Assume   that  for all $\la\ge 1$ the following variant 
\begin{equation}\label{eq:main-squareloc}
\Big\| \Big( \int_1 ^2 \big|  \chr_{\overline{B}(0,5)} e^{it \sqrt{L}} \beta(\tfrac{t\sqrt{L}}{\la}) f \big|^2 \, dt \Big)^{\frac 12} \Big\|_p \le C_{p,\alpha,\beta}  \la^\alpha \|f\|_p
\end{equation}
of  \eqref{eq:main-square}  holds true  for all  functions $f\in L^p(G)$ supported in the closed  ball $\overline B(0,\eps),$  where the constant $C_{p,\alpha,\beta}$ depends in $\beta$ only on the $C^M$-norm of $\beta$ for some sufficiently large $M\in\bbN.$
Then, for all $f\in L^p(G)$ and $\la\ge 1,$ 
\begin{equation}\label{eq:dualsqeps}
\Big\| \Big(\int_1^2 | e^{it\sqrt L} \beta (\tfrac{t\sqrt L} \la)f|^2   \,dt \Big)^{\frac{1}{2}} \Big\|_p \le C_{\eps, p,\alpha,\beta} \la^\alpha \|f\|_p,
\end{equation}
 where also the  constant $C_{\eps, p,\alpha,\beta}$ depends in $\beta$ only on the $C^M$-norm of $\beta$ for some sufficiently large $M\in\bbN.$
\end{lemma}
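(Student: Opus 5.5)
The plan is to combine finite propagation speed with the localization result Proposition~\ref{localize-a}(i), applied with the Banach space $X=L^2([1,2])$. The obstruction is that the kernel of $e^{it\sqrt L}\beta(t\sqrt L/\la)$ is not compactly supported, because the cutoff $\beta$ destroys finite propagation speed. I would therefore first split the multiplier into a compactly supported main part and a rapidly decaying error.

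\textbf{Step 1 (splitting).} Choose an even $\phi\in\Coi(\R)$ with $\phi=1$ near $0$ and $\supp\phi\subset(-1,1)$. Write
\[
e^{it\sqrt L}\beta(\tfrac{t\sqrt L}{\la})=\int \widehat{\rho_{t,\la}}(s)\,\cos(s\sqrt L)\,ds,
\]
where $\rho_{t,\la}$ is the even extension of $r\mapsto e^{itr}\beta(tr/\la)$. Decompose $\widehat{\rho_{t,\la}}=\phi(s/c)\widehat{\rho_{t,\la}}+(1-\phi(s/c))\widehat{\rho_{t,\la}}$ with $c$ slightly larger than $2$, say $c=3$.

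The main piece, $A_t$, involves only $|s|\le 3$. By finite propagation speed for $\cos(s\sqrt L)$ with respect to $d_{\mathrm{CC}}$, which is what \cite[Proposition~3.1]{MartiniMueller-Metivier} provides, the kernel of $A_t$ is supported in $\overline B(0,3)$ for all $t\in[1,2]$.

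\textbf{Step 2 (the error piece).} Since $\rho_{t,\la}$ oscillates at frequency $t\in[1,2]$ and $\widehat{\rho_{t,\la}}(s)$ concentrates near $s=\pm t$, the function $\widehat{\rho_{t,\la}}$ decays like $O_N(\la^{1-N}|s|^{-N})$ for $|s|\ge 3$. This follows by integrating by parts, with bounds depending on finitely many derivatives of $\beta$. Consequently the error multiplier $E_t(r)$ satisfies symbol estimates $|\partial_r^jE_t(r)|\lesssim_N \la^{-N}(1+r)^{-N}$, uniformly in $t\in[1,2]$. Hulanicki's theorem, in the quantitative form giving $L^1$ kernel bounds from finitely many symbol seminorms, then yields $\|E_t(\sqrt L)\|_{L^p\to L^p}\lesssim \la^{-N}$. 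Minkowski's inequality in $t$ handles the square function for $p\ge2$.

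\textbf{Step 3 (localization of the main piece).} We have $A_t=e^{it\sqrt L}\beta(t\sqrt L/\la)-E_t(\sqrt L)$. For $f$ supported in $\overline B(0,\eps)$, the function $A_tf$ is supported in $\overline B(0,3+\eps)\subset\overline B(0,5)$, so
\[
A_tf=\chr_{\overline B(0,5)}A_tf=\chr_{\overline B(0,5)}e^{it\sqrt L}\beta(\tfrac{t\sqrt L}{\la})f-\chr_{\overline B(0,5)}E_tf.
\]
Hypothesis \eqref{eq:main-squareloc} together with Step 2 then gives
\[
\Big\|\Big(\int_1^2|A_tf|^2\,dt\Big)^{1/2}\Big\|_p\lesssim\la^\alpha\|f\|_p
\]
for such $f$.

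\textbf{Step 4 (globalization).} Regard $T_K$ as convolution with the $X$-valued kernel $K=(A_t\delta_0)_{t\in[1,2]}$, which is supported in $\overline B(0,3)$. Apply Proposition~\ref{localize-a}(i) with $u_0=0$, $r=3$, and $\eps/3$ in place of $\eps$. Strictly, $K$ need not lie in $L^\infty$; mollifying, or truncating the spectrum by Schwartz functions and taking limits, handles this. The result is the global bound for $A_t$. Adding back the $E_t$ term from Step 2 gives \eqref{eq:dualsqeps}.

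The main obstacle is Step 2. There one needs uniform-in-$t$ rapid decay of the error kernel in $L^1(G,L^2([1,2]))$, with constants controlled by $C^M$ norms of $\beta$. This requires carefully tracking the derivative bounds on $E_t$, which involve powers of $\la$ from differentiating $\beta(tr/\la)$, against the decay in $s$. These must then be fed into a quantitative Hulanicki or Mauceri--Meda estimate.
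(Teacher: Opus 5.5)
Your argument is correct and is essentially the paper's proof. Both arguments write the kernel $k_t$ as a part supported in a fixed ball, by finite propagation speed, plus an error that is $O(\la^{-N})$ in $L^1$. Both absorb the error by Minkowski's inequality, and both globalize the main part with Proposition~\ref{localize-a} for $X=L^2([1,2])$, transferring the hypothesis through the identity (main part) $=T_t-$(error).

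The only difference is that you prove the kernel splitting yourself, via a cutoff in the wave-time variable $s$. The paper instead quotes it from \cite[Proposition~3.1]{MartiniMueller-Metivier} in the form $k_t=\chr_{\overline B(0,4)}k_t+R_{t,\la}$. Two small points on your version:
\begin{itemize}
\item ``$\phi=1$ near $0$'' is too weak. You need, say, $\phi\equiv1$ on $[-\frac56,\frac56]$, so that $1-\phi(s/3)$ vanishes for $|s|\le\frac52$. Then $|s\mp t|\ge |s|/5$ on its support for all $t\in[1,2]$, and your decay bound for $\widehat{\rho_{t,\la}}$ holds on the whole support, not just for $|s|\ge3$.
\item Since $A_t\delta_0=k_t-E_t(\sqrt L)\delta_0$ is a difference of Schwartz functions, uniformly in $t\in[1,2]$, you get $K\in L^\infty(G,X)$ directly. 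No mollification is needed.
\end{itemize}
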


\begin{proof} 
Denote by $T_t$ the operator $e^{it\sqrt L}\beta (\tfrac{t\sqrt L} \la) .$ By Hulanicki's theorem, its convolution kernel 
$k_t=T_t \delta_0$ is a Schwartz function, so that  $T_tf$ is smooth for any $f\in L^p.$ Moreover, from Proposition~3.1 in  \cite{MartiniMueller-Metivier}, whose proof is based on the finite propagation speed 
for solutions of the wave equation (see \cite{Melrose86}), it follows that 
\[
k_t = \chr_{\overline{B}(0,4)} k_t+ R_{t,\lambda} \qquad \text{for}\ 1\le t\le 2,
\]
where
\begin{equation}\label{errorR}
\|R_{t,\lambda}\|_1 \lesssim_N \lambda^{-N}\qquad\text{for every } N\in\bbN.
\end{equation}
We decompose $T_t=T^1_t+T^2_t,$ where $T^1_tf=f\ast (\chr_{\overline{B}(0,4)} k_t)$ and $T^2_tf=f\ast R_{t,\lambda}.$
By means of Minkowski's inequality and \eqref{errorR}, we may bound 
\begin{equation}\label{erroresti}
\Big\| \Big(\int_1^2 |T^2_t f|^2   \,dt \Big)^{\frac{1}{2}} \Big\|_p \lesssim_N \la^{-N} \|f\|_p
\end{equation}
for every $N\in \bbN.$ 

As for $T^1_t,$ let us put $X=L^2([1,2])$  and $K(z)(t)=(\chr_{\overline{B}(0,4)} k_t)(z)$ for $z\in G$ and $t\in[1,2],$ so that $K$ is a convolution kernel taking values in $X.$ Then, with the notation of Proposition~\ref{localize-a},
$T^1_t f(z)=(f*K)(z)(t),$ so that
\[
\Big(\int_1^2 |T^1_t f(z)|^2   \,dt \Big)^{\frac{1}{2}}=\|T_K f(z)\|_X.
\]
Moreover, since for $f$ supported in $B(0,\ve),$ the function $f*K$ is supported in $\overline B(0,5)$, and since $T^1_t =T_t-T^2_t$, we see that the operator $T_K$ satisfies the assumptions  in Proposition~\ref{localize-a}, with $r=4$ and $u_0=0.$ By the proposition, this implies that,
for all $f\in L^p(G),$
\[
\Big\| \Big(\int_1^2 |T^1_t f|^2   \,dt \Big)^{\frac{1}{2}} \Big\|_p \le  C_{\ve, p,\alpha,\beta}  \la^\alpha\|f\|_p.
\]
In combination with \eqref{erroresti} this implies \eqref{eq:dualsqeps}.
\end{proof}

\section{Preliminaries on the wave operator on $\mathbb H_n$  and a spectral decomposition} \label{prelimwave}

Recall that  the Lie algebra $\g$ of  $\bbH_n$ admits the stratification  $\g=\g_1\oplus\g_2,$ where the first, horizontal layer $\g_1$ is spanned by the vector fields $X_1,\dots, X_{2n},$  and the second, central layer $\g_2$ by $U.$
Then $[\g_1,\g_1]=\g_2,$  where  $\g_1$ has dimension  $d_1=2n$ and $\g_2$ has dimension  $d_2=1.$  If one defines dilations $D_r, r>0,$ on $\g$ by setting $D_r|_{\g_j}=r^j\, {\mathrm{id}}_{\g_j}, j=1,2,$ and correspondingly on  $\mathbb H_n$ by setting $D_r(x,u):= (rx,r^2u), r>0,$ then these  dilations are automorphisms of the Heisenberg group.

The sub-Laplacian $L$ on  $\mathbb H_n$ is then homogeneous of degree $2$ under these dilations,    i.e., $L(f\circ D_r)=r^2 (L f)\circ D_r$ for any $f\in \mathcal S(\bbH_n).$ This implies that 
\begin{equation}\label{Lhomog}
m( L)(f\circ D_r)=(m(r^2L) f)\circ D_r.
\end{equation}
If $k_{m(L)}=m(L)\delta_0$ denotes the convolution kernel of $m(L)$ as in  \eqref{convkernel},  so that $m(L)f=f*k_{m(L)},$ then \eqref{Lhomog} easily implies that 
\begin{equation}\label{mrrL}
k_{m(r^2L)}=r^{-Q}k_{m(L)}\circ D_{r^{-1}}, \qquad r>0,
\end{equation}
where $Q=2n+2$ denotes the homogeneous dimension of $\bbH_n.$

\subsection{The singular support and horizontal points}

Nachman \cite{Na82} computed the singular support $\Sigma_1$  of the convolution  kernel $\sK_1$ of ${e^{i\sqrt L}}$ and established (non-uniform)  asymptotics at generic points of $\Sigma_1$.  In particular he found
\begin{equation}
\Sigma_1= \{ (x, u):  |x|= \rho(\tau) \text{ and }  |u|=\tfrac 14 \nu(\tau) \text{ for some } \tau\ge 0 \}, 
\end{equation}
where  the  functions $\rho$ and $\nu$ are given by 
\[
\rho(\tau) = \Big| \frac{\sin \tau}{\tau}\Big|,\quad\quad
\nu(\tau) = \frac 1{\tau} - \frac{\sin(2\tau)}{2\tau^2},
\]
where the values at $\tau=0$ are defined by continuous extension, so that $\rho(0)=1$ and $\nu(0)=0$. 

The singular support $\Sigma_1$ is symmetric under rotations in $\bbR^{2n}\times \{0\}$, reflections in $\{0\}\times \bbR,$  and in the $(|x|,|u|)$-plane it corresponds to the  zigzag curve approaching the origin as  illustrated in Figure \ref{fig1} in the introduction. 
As pointed out in the introduction the parameters where $\Sigma_1$ has horizontal tangent planes play a special role in our analysis.
For $k\in \mathbb{N}$ and  $k\pi<\tau<(k+1)\pi$,  we get 
$$ \rho'(\tau)=(-1)^k\frac {\tau\cos\tau-\sin \tau}{\tau^2},\qquad \nu'(\tau)=-2\frac {\cos \tau}{\tau^3}\big(\tau\cos\tau-\sin \tau\big).
$$ 
Hence the slope is given by $\nu'(\tau)/\rho'(\tau) = (-1)^{k+1}2\frac {\cos \tau}{\tau}$, and it vanishes if and only if 
\begin{equation}\label{hpoint}
\tau=\tau_k:=k\pi+\tfrac \pi 2.
\end{equation}
This shows that the points with horizontal tangent plane  are given by $|x|=\rho(\tau_k)$ and $|u|=\frac14\nu(\tau_k),$ for some $k\in \N$ (compare Figure \ref{fig3}).

\medskip

The paper \cite{MuellerSeeger2015} by the first and third author provides a parametrix for the wave operator by representing $e^{i\sqrt L}$  as a superposition of oscillatory integral operators which  will be described in the next subsection. An important tool is a subordination formula which expresses wave propagators in terms of the Schrödinger   propagators $e^{it L}, t>0,$ for which one has explicit expressions, up to the Fourier transform in the central variable $u$ (cf. \cite{Gaveau77, MuellerRicci-Inv90}). 
As a consequence this allowed  the authors to write down a decomposition of $\sK_1$   and give an effective 
analysis for the pieces associated with the zig-zag curves in Figure \ref{fig1}. 

In particular, for $\alpha(p)=(d-1)|\frac1p-\frac12|$ this led to 
sharp fixed time $L^p\to L^p$ bounds for  the operators  $e^{it\sqrt L} (I+t^2L)^{-\alpha(p)/2} $, and for the variants $\beta (\la^{-1}t\sqrt L) e^{it \sqrt L} $ considered here (first, for $t=1,$  and then for arbitrary $t$ because of \eqref{Lhomog}). 
\smallskip

In the next subsection,  we also recall preliminaries from the joint spectral calculus  of the operators $L$ and  $-iU$ which are needed to introduce this decomposition.

\subsection{Spectral calculus and a decomposition of the wave operator}

For $f\in \mathcal S(\mathbb H_n)$ and $\mu\in\R$, let $f^\mu$ denote the $\mu$-section of the partial Fourier transform along the central layer $\g_2$ given by
\[
f^\mu(x) = \int_{\R} f(x,u) e^{- 2\pi i \mu u} \, du,\quad x\in \R^{d_1}=\R^{2n}.
\]
We extend this notation to tempered distributions $\mathcal S'(\R^d)$. By this partial Fourier transform, the vector fields $X_j$ are transformed into some first order differential operators $X_j^\mu$ such that $(X_jf)^\mu = X_j^\mu f^\mu$, and the sub-Laplacian $L$ is transformed into the $\mu$-twisted Laplacian $L^\mu$ such that
\[
(Lf)^\mu = L^\mu f^\mu = -\big((X_1^\mu)^2 + \dots + (X_{d_1}^\mu)^2\big) f^\mu.
\]
The partial Fourier transform and the functional calculi of $L$ and $L^\mu$ are compatible, that is, for any bounded Borel measurable $m:\R\to\C$ and any function $f\in L^2(\mathbb H_n)$,
\[
(m(L) f)^\mu = m(L^\mu) f^\mu
\]
for almost all $\mu\in \R$. Moreover, the group convolution of two suitable functions $f,g$ on $\mathbb H_n$ is transformed into
\[
(f*g)^\mu = f^\mu *_\mu g^\mu,
\]
where $*_\mu$ is the $\mu$-twisted convolution on $\R^{d_1}$ given by
\[
\phi *_\mu \psi (x) = \int_{\R^{d_1}} \phi(x') \, \psi(x-x') \, e^{i \pi \mu\langle J x,x' \rangle} \, dx',
\quad x\in \R^{d_1}=\R^{2n}.
\]
Thus, at least formally, we have
\[
e^{itL^\mu}f^\mu=(e^{itL} f )^\mu=(f*(e^{itL}\delta_0))^\mu= f^\mu *_\mu (e^{itL}\delta_0)^\mu.
\]
The partial Fourier transform $(e^{itL}\delta_0)^\mu= e^{itL^\mu}\delta_0$ of the Schrödinger kernel $e^{itL }\delta_0$ can be computed explicitly, see \cite{MuellerRicci-Inv90}: 
For $f\in \mathcal S(\R^{2n})$, there is a distribution $\gamma_t^\mu \in \mathcal S'(\R^{2n})$ such that
\begin{equation}\label{eq:gamma}
e^{itL^\mu } f = f *_\mu \gamma_t^\mu.
\end{equation}
Moreover, for all $t\in \R$ and $\mu\in \R$ such that $\sin (2\pi t \mu)\neq 0$, $\gamma_t^\mu$ is given by
\begin{equation}
\label{eq:gammatmu}
\gamma_t^\mu (x) = \Big(\frac {\mu}{2\sin (2\pi t \mu )}\Big)^{ n} \, 
e^{-i\frac {\pi}2 \mu \cot(2\pi t \mu)|x|^2},\quad x\in \R^{2n}.
\end{equation}
This can also  be found in \cite[Proposition~3.2]{MuellerSeeger2015}, together with an alternative description near $\{\mu:\sin(2\pi t\mu)=0\} $.

\smallskip

The following subordination result is an immediate consequence of \cite[Proposition~3.2]{MuellerSeeger2015}. Note that in  \cite{MuellerSeeger2015} only the case $t=1$ is considered, but the  identity \eqref{eq:subordination} is a consequence of  a subordination formula for  multipliers and thus holds when we plug in $t^2L$ in place of $L$ as well. Moreover, the first identity in \eqref{eq:rhola-est} follows from \eqref{Lhomog}.

\begin{lemma}\label{lem:approx}(\cite[Proposition~4.1]{MuellerSeeger2015})
Let $\beta,\tilde \beta\in \Coi ((0,\infty))$ with $\supp \beta\subseteq (\frac 12,2)$ and ${\tilde \beta|_{[\frac 1 4 , 4 ]} = 1}$. Then there are functions $a_{\la,\circ},\rho_\lambda\in \Coi (\R)$ supported in $[\frac{1}{16}, 4 ]$ and $[\frac 1 4 , 4 ]$, respectively, such that
\begin{equation} \label{eq:subordination}
\beta\big(\tfrac{t\sqrt L}{\lambda}\big)e^{it\sqrt L} = \tilde\beta\big(\tfrac{t^2 L}{\lambda^2}\big) \, T_{\la,t} 
 + \, \rho_\lambda\big(\tfrac{t^2 L}{\lambda^2}\big)
\end{equation}
for all $\lambda\ge 1$ and $t\in (0,\infty)$, where for all $\lambda\ge 1$,
\begin{equation}\label{eq:Tlat} T_{\la,t}= \lambda^{\frac 12}\!\int_0^\infty e^{i\frac{\la }{ 4 \varsigma}} \, a_{\lambda,\circ}(\varsigma) \, 
e^{i \varsigma t^2 L/\la} \, 
d\varsigma ,
\end{equation}
\begin{equation}\label{eq:rhola-est} 
\| \rho_\lambda\big(\tfrac{t^2 L}{\lambda^2}\big) \|_{p\to p} = \| \rho_\lambda\big(\tfrac{L}{\lambda^2}\big) \|_{p\to p}\lesssim_N \lambda^{-N},
\end{equation}
and,\footnote{There is a typo in \cite[Prop 4.1]{MuellerSeeger2015}  where in (33) a decay of  order  $\la^{-N_2}$ is erroneously suggested  (but never used).}for all $N_1, N_2\in \N$, and all $\lambda\ge 1$, 
\begin{equation}\label{eq:ala}
 \sup_{\varsigma\in [\frac{1}{16}, 4 ]}\big|\partial_{\varsigma}^{N_1} \partial_\la^{N_2} a_{\lambda,\circ} (\varsigma) \big|\le C(N_1,N_2) \, \|\beta\|_{C^{2(N_1+N_2+1)}}. 
\end{equation}
\end{lemma}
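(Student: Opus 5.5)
The plan is to derive the identity from the one-dimensional subordination formula expressing $e^{i\sqrt{s}}$ through the Schrödinger propagators $e^{i\varsigma s}$, applied at the spectral level. Since both sides of \eqref{eq:subordination} are functions of $t^2L$, by the functional calculus it suffices to prove the scalar identity
\[
\beta\big(\tfrac{\sqrt s}{\la}\big) e^{i\sqrt s}=\tilde\beta\big(\tfrac{s}{\la^2}\big)\,\la^{\frac12}\int_0^\infty e^{i\frac{\la}{4\varsigma}}a_{\la,\circ}(\varsigma)e^{i\varsigma s/\la}\,d\varsigma+\rho_\la\big(\tfrac{s}{\la^2}\big),\qquad s\ge0.
\]
Then substitute $s=t^2L$. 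The operator identity holds for all $t>0$ because the same scalar identity is evaluated at $t^2 s$. The equality of operator norms in \eqref{eq:rhola-est} then follows from the dilation invariance \eqref{Lhomog}.

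To produce the scalar identity, I would start from the Fourier representation in the variable $s$. Writing $s=\la^2 r$, one has
\[
\beta(\sqrt r)e^{i\la\sqrt r}=\int \widehat{g_\la}(\sigma)e^{i\sigma\la r}\,d\sigma ,\qquad g_\la(r)=\beta(\sqrt r)e^{i\la\sqrt r}.
\]
The Fourier transform of $g_\la$ at frequency $\la\sigma$ is computed by stationary phase. The phase $\la(\sqrt r-\sigma r)$ is stationary at $r=(2\sigma)^{-2}$, with critical value $\la/(4\sigma)$. This gives $\widehat{g_\la}$ the form $\la^{1/2}e^{i\la/(4\sigma)}a(\sigma)$, where $a$ is a symbol in $\sigma$ supported where $(2\sigma)^{-2}\in\supp\beta(\sqrt\cdot)\subset(\frac14,4)$, that is, $\sigma\in(\frac14,1)$. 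Away from that region the transform is $O(\la^{-N})$ by non-stationary phase.

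Next I would choose a cutoff $\chi$ supported in $[\frac1{16},4]$, equal to $1$ near $[\frac14,1]$. Set $a_{\la,\circ}(\varsigma)=\la^{-1/2}e^{-i\la/(4\varsigma)}\,\chi(\varsigma)\,\la\,\widehat{g_\la}(\la\varsigma)$, normalized so that the main integral reproduces the $\chi$-localized part of the Fourier inversion. The remainder $\rho_\la$ then consists of two pieces: the contribution from $1-\chi$, and the correction needed because $\tilde\beta$ was inserted. Since $\tilde\beta=1$ on the support of $\beta(\sqrt\cdot)$, the remainder is $(1-\tilde\beta)$ times the main integral plus a non-stationary piece. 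This remainder is a smooth function of $r$ of size $O(\la^{-N})$ in every $C^M$ norm, and it can be cut off to $[\frac14,4]$ up to pieces handled the same way.

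The $L^p$ bound in \eqref{eq:rhola-est} then follows from Hulanicki's theorem, or from Hörmander-type multiplier bounds with finitely many derivatives. These bounds are uniform for compactly supported multipliers with bounded $C^M$ norm, and here that norm is $O(\la^{-N})$.

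The main obstacle is the symbol estimate \eqref{eq:ala}, namely uniform control of $\partial_\varsigma^{N_1}\partial_\la^{N_2}a_{\la,\circ}$ in terms of $\|\beta\|_{C^{2(N_1+N_2+1)}}$. I would handle it by making the stationary phase expansion explicit after the substitution $r=(2\varsigma)^{-2}(1+y)^2$, which turns the phase into $\frac{\la}{4\varsigma}(1-y^2)$, an exact quadratic. Then $a_{\la,\circ}$ is $\chi(\varsigma)$ times an integral $\la^{1/2}\int e^{-i\la y^2/(4\varsigma)}b(\varsigma,y)\,dy$ with $b$ smooth, and a Plancherel/Gaussian argument gives the bounds with exactly $2(N_1+N_2+1)$ derivatives of $\beta$ lost. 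The $\partial_\la$ derivatives, which bring down factors of $y^2/\varsigma$, are absorbed by integrating by parts twice per factor. This accounts for the loss $2N_2$.
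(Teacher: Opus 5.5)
Your proposal is correct and takes essentially the paper's route. The paper quotes the case $t=1$ from \cite{MuellerSeeger2015}, gets general $t$ by inserting $t^2L$ into the scalar identity, and gets the norm equality in \eqref{eq:rhola-est} from \eqref{Lhomog}; your Fourier inversion in the spectral variable with the completed square $\la(w-\varsigma w^2)=\frac{\la}{4\varsigma}-\la\varsigma(w-\frac1{2\varsigma})^2$ is exactly the subordination argument behind that citation.

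One correction concerns your claim that the remainder ``can be cut off to $[\frac14,4]$ up to pieces handled the same way''. Since $\tilde\beta g_\la=g_\la$, the remainder is exactly $\rho_\la=\tilde\beta E_\la$, where $E_\la$ is the $(1-\chi)$-part of the Fourier inversion of $g_\la$. So $\rho_\la$ is automatically supported in $\supp\tilde\beta$ with all $C^M$-norms $O(\la^{-N})$, and no further cutting is needed. Cutting it down to $[\frac14,4]$ is in fact impossible: the main term $\tilde\beta(g_\la-E_\la)$ has $g_\la-E_\la$ entire and $\not\equiv0$ (for $\beta\not\equiv0$), so it does not vanish on $\supp\tilde\beta\setminus[\frac14,4]$. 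The stated support of $\rho_\la$ should therefore be read as $\supp\tilde\beta$, which is all that is used.
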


Next, we recall that the self-adjoint operators  $L$ and $-iU$  commute and thus admit a joint functional calculus. For any Borel measurable function $m:\R\times \R\to\C$, we can define the (possibly unbounded) operator $m(L,-iU)$ on $L^2(\mathbb H_n)$. The partial Fourier transform and the joint functional calculus of $L$ and $-iU$ are compatible, that is, if $m$ is a bounded Borel function, then, for all $f\in L^2(\mathbb H_n)$ and almost all $\mu\in\g_2^*$,
\[
\left(m(L,-iU)f\right)^\mu = m(L^\mu,2\pi\mu) f^\mu.
\]
Note that since also $-iU$ is homogeneous of degree 2 with respect to the automorphic dilations $D_r,$ the identity \eqref{Lhomog}
generalizes naturally to the following one for the joint spectral calculus:
\begin{equation}\label{LUhomog}
m( L,-iU)(f\circ D_t)=(m(t^2L, t^2(-iU)) f)\circ D_t, \qquad t>0.
\end{equation}

We use the joint functional calculus to introduce a decomposition of the operator
$T_{\la,t}$ in \eqref{eq:Tlat} in the central Fourier variable $\mu,$ which is motivated by the periodicity properties of the trigonometric functions appearing in \eqref{eq:gammatmu}. 
 To this end, choose an even cut-off  $\eta_0\in C_c^{\infty}(\R)$ supported in ${(-\frac{5 \pi}{8}, \frac{5 \pi}{8})}$ with $\eta_0(\tau) = 1$ for ${\tau \in(-\frac{3 \pi}{8}, \frac{3 \pi}{8})}$ such that
\[
\sum_{k=-\infty}^\infty \eta_0(\tau-k\pi) = 1 \quad \text{for all }\tau\in \R ,
\]
and define \begin{equation} \label{Tklat-definition}
T_{\lambda,t}^{k}  = \lambda^{\frac 12} \int_0^\infty e^{i \frac{\lambda}{ 4 \varsigma}} \, a_{\lambda,\circ}(\varsigma) \, \eta_0\Big(\frac{\varsigma t^2}{\lambda}(-iU)-k \pi\Big) \, e^{i \varsigma t^2 L / \lambda} \, d\varsigma
\end{equation}
(note here that for $t=1$ this corresponds to the according definition in (55) in \cite{MuellerSeeger2015}, and for arbitrary $t>0$ this is motivated by \eqref{LUhomog}).

The operator $ T_{\la,t}$ then decomposes (compare (54) and (57) in  \cite{MuellerSeeger2015}) at least formally  as 
$$
 T_{\la,t} =\sum_{k\in\Z}  T^k_{\la,t}.
$$
Each operator $ T^{k} _{\la,t}$ is a convolution operator $T^{k}_{\la,t}f=f*\sK_{\la,t}^k,$ and 
by  \eqref{eq:gammatmu} (compare also \cite[(57)]{MuellerSeeger2015}\footnote{Note that in  contrast to \cite[(57)]{MuellerSeeger2015}, where each factor except for the last depends only on $|\mu|$, here we must allow for  negative $k\in\Z,$ and correspondingly  replace each instance of $|\mu|$ by $\mu$.} for the case $t=1$), its convolution kernel is given by 
\begin{multline}
\sK_{\la,t}^k(x,u)=\lambda^{\frac 12} \int_0^\infty \int_\R \,  e^{i \frac{\lambda}{ 4 \varsigma}}\,  a_{\lambda,\circ}(\varsigma) \, \eta_0\big(2\pi\tfrac{\varsigma t^2 }{\lambda}\mu-k \pi\big)\, \Big(\frac {\mu}{2\sin (2\pi \frac{\varsigma t^2}{\lambda}\mu)}\Big)^{n}\\
\times e^{-i\frac {\pi}2 \mu |x|^2\cot(2\pi \frac{\varsigma t^2}{\lambda} \mu) } 
  \, e^{2\pi i\mu u} \,d\mu\, d\varsigma. \label{Kklat}
\end{multline}
The concrete description of the joint spectrum of $L$ and $-iU$ also shows that 
for fixed $\la$ we  have
\begin{equation} \widetilde \beta\big(\tfrac{t^2L}{\la^2}) \, T^k_{\la,t} = 0 \text{ for }|k|\ge 8\la 
\end{equation} (for $t=1$ see \cite[(56)]{MuellerSeeger2015} and the discussion leading to it, and for general $t>0$ this follows then by \eqref{LUhomog}). Thus we only need to consider  $T^k_{\la,t}$ for $|k|<8\la.$
\smallskip

The analysis in \cite{MuellerSeeger2015} (using the previous formula for $\sK^k_{\la,t}$) links the singular support of $\sK^0_{\la,t}$ with the outer part in Figure \ref{fig1}, and for $k>0$  the singular support of  $\sK^{\pm k}_{\la,t}$ 
with the $k$th zigzag curve in Figure \ref{fig1}. The kernels $\sK^k_{\la,t}$  become more singular and less oscillatory near the vertical axis which necessitated another decomposition in \cite{MuellerSeeger2015}:
With $\eta_0$ as above and for $\ell\ge 1$,  let $\eta_\ell\in C_c^{\infty}(\R)$ be given  by 
\begin{equation}\label{eq:eta-ell-def}
 \eta_\ell(\tau) = \eta_{0}(2^{\ell-1} \tau)-\eta_{0}(2^{\ell} \tau),\qquad \tau\in \bbR.
\end{equation}
Then $\eta_{0}(\tau) = \sum_{\ell = 1}^{\infty} \eta_\ell(\tau)$ for all $\tau \in\R\setminus\{ 0\},$  and for $k\neq 0$  we  define for any $\ell\ge 1$
\begin{equation}\label{eq:Tlakl-def} 
T_{\lambda,t}^{k,\ell} = \lambda^{\frac 12} \int_0^\infty e^{i \frac{\lambda}{ 4 \varsigma}} \, a_{\lambda,\circ}(\varsigma) \, \eta_\ell\Big(\frac{\varsigma t^2}{\lambda}(-iU)-k \pi\Big) \, e^{i \varsigma t^2 L / \lambda} \, d\varsigma.
\end{equation}
This defines 
a convolution operator $f\mapsto T_{\lambda,t}^{k,\ell}f=f*\sK_{\la,t}^{k,\ell},$ whose convolution kernel is given  by 
\begin{multline}
\sK_{\la,t}^{k,\ell} (x,u)=\lambda^{\frac 12}  \int_0^\infty \int_\R \, e^{i \frac{\lambda}{ 4 \varsigma}}\,  a_{\lambda,\circ}(\varsigma) \, \eta_\ell\big(2\pi\tfrac{\varsigma t^2 }{\lambda}\mu-k \pi\big)\, \Big(\frac {\mu}{2\sin (2\pi \frac{\varsigma t^2}{\lambda}\mu)}\Big)^{n}\\
\times e^{-i\frac {\pi}2 \mu |x|^2\cot(2\pi \frac{\varsigma t^2}{\lambda} \mu) } 
  \, e^{2\pi i\mu u} \,d\mu\, d\varsigma,    \label{Kkllat}
\end{multline}
and we get the  decomposition 
$
T^k_{\la,t}f=\sum_{\ell=1}^\infty T_{\la,t}^{k,\ell}f,
$
with convergence at least in the sense of distributions (indeed by \cite{MuellerSeeger2015} also in $L^1$). 
\smallskip

To summarize,  we have decomposed 
\begin{equation}\label{opdecomp}
\beta\big(\tfrac{t\sqrt L}{\la}\big) e^{it\sqrt L }f=\widetilde \beta\big(\tfrac{t^2L}{\la^2}) \,  T^0_{\la,t} f+
\sum_{0<|k|<8\la} \sum_{\ell\ge 1} \widetilde \beta\big(\tfrac{t^2L}{\la^2}) \,  T^{k,\ell}_{\la,t}f+ \rho_\la\big(\tfrac{t^2L}{\la^2}\big)f,
\end{equation}
with $T^{0}_{\la,t}f=f*\sK^{0}_{\la,t}$ and $T^{k,\ell}_{\la,t}f=f*\sK^{k,\ell}_{\la,t}.$

Here, by \eqref{eq:rhola-est}, the error term $\rho_\lambda\big(\tfrac{t^2L}{\lambda^2}\big)$ can be ignored in the sequel. 
\smallskip

Finally, the  following proposition provides  concrete representations of the convolution kernels $\sK^0_{\la,t}$ and 
$\sK^{k,\ell}_{\la,t}.$  Let
\begin{equation} \label{eq:phasedef}
 \Phi(x,u,t,\sigma,\tau) =\sigma( t^2-|x|^2 g(\tau) +4u\tau), \text{ with } g(\tau)=\tau\cot \tau, 
  \end{equation} and 
 \begin{equation}\label{eq:ampldef}  a_\la(\sigma)=  (2\pi^{n+1})^{-1} \, a_{\la,\circ}(\tfrac 1{4\sigma}) \, \sigma^{n-1}. \end{equation}
 Note that $a_\la$ is a $\Coi $ function  supported in $(\frac 1{16},4)$. 

\begin{proposition}\label{lem:decomp} 
  Let $\Phi$ and  $a_\la$ be as in \eqref{eq:phasedef} and  \eqref{eq:ampldef}. Then for  $k=0$ 
\begin{equation}\label{eq:Klat-formula}
\sK_{\lambda,t}^{0}(x, u) = \lambda^{ n+\frac{3}{2}} t^{-2n} \int_\R \, \int_0^\infty   e^{i\lambda\Phi(x,u,t,\sigma,\tau)} \, a_\lambda(t^2\sigma) \, \eta_0( \tau) \, \Big(\frac{\tau}{\sin\tau}\Big)^{n}  d\sigma \, d\tau,
\end{equation}
and for $k\neq 0$, $\ell\ge 1$
\begin{equation}\label{eq:Klatkl-formula} 
\sK_{\lambda,t}^{k,\ell}(x, u) = \lambda^{ n+\frac{3}{2}} t^{-2n}  \int_\R \, \int_0^\infty  e^{i\lambda\Phi(x,u,t,\sigma,\tau)} \, a_\lambda(t^2\sigma) \, \eta_\ell( \tau -k \pi) \, \Big(\frac{\tau}{\sin\tau}\Big)^{n}  d\sigma \, d\tau.
\end{equation}
\end{proposition}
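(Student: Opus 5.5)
The plan is to derive \eqref{eq:Klat-formula} and \eqref{eq:Klatkl-formula} directly from the $(\varsigma,\mu)$ representations \eqref{Kklat} and \eqref{Kkllat} by one change of variables. No further analysis should be needed; the only care required is to check the sign conventions and the absolute convergence of the resulting integrals.

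\emph{Step 1: substitution.} In \eqref{Kkllat} (and in \eqref{Kklat} for $k=0$), keep $\varsigma>0$ fixed and set $\tau=2\pi\varsigma t^2\mu/\lambda$. Since $\varsigma,t>0$, the map $\mu\mapsto\tau$ is an increasing bijection of $\bbR$, with $d\mu=\frac{\lambda}{2\pi\varsigma t^2}\,d\tau$. Then replace $\varsigma$ by $\sigma=\frac{1}{4\varsigma t^2}$, a decreasing bijection of $(0,\infty)$ with $d\varsigma=\frac{d\sigma}{4t^2\sigma^2}$; reversing the orientation absorbs the sign. In the new variables $\mu=\frac{\lambda\tau}{2\pi\varsigma t^2}=\frac{2\lambda\sigma\tau}{\pi}$, and the argument of the cutoff becomes $2\pi\frac{\varsigma t^2}{\lambda}\mu-k\pi=\tau-k\pi$, which produces the factors $\eta_\ell(\tau-k\pi)$ and $\eta_0(\tau)$.

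\emph{Step 2: phase and amplitude.} The total phase is $\frac{\lambda}{4\varsigma}-\frac{\pi}{2}\mu|x|^2\cot\tau+2\pi\mu u$. Using $\frac{1}{4\varsigma}=\sigma t^2$, $\frac{\pi}{2}\mu=\lambda\sigma\tau$ and $2\pi\mu=4\lambda\sigma\tau$, it becomes
\[
\lambda\sigma t^2-\lambda\sigma|x|^2\,\tau\cot\tau+4\lambda\sigma u\tau=\lambda\Phi(x,u,t,\sigma,\tau).
\]
For the amplitude, $\big(\frac{\mu}{2\sin\tau}\big)^n=\big(\frac{\lambda\sigma}{\pi}\big)^n\big(\frac{\tau}{\sin\tau}\big)^n$. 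Collecting this with the prefactor $\lambda^{1/2}$ and the two Jacobians gives
\[
\lambda^{\frac12}\Big(\frac{\lambda\sigma}{\pi}\Big)^n\cdot\frac{2\lambda\sigma}{\pi}\cdot\frac{1}{4t^2\sigma^2}
=\frac{\lambda^{n+\frac32}\sigma^{n-1}t^{-2}}{2\pi^{n+1}}
=\lambda^{n+\frac32}t^{-2n}\,\frac{(t^2\sigma)^{n-1}}{2\pi^{n+1}}.
\]
Moreover $a_{\lambda,\circ}(\varsigma)=a_{\lambda,\circ}\big(\tfrac{1}{4t^2\sigma}\big)$. By \eqref{eq:ampldef} these combine to $\lambda^{n+\frac32}t^{-2n}a_\lambda(t^2\sigma)$, which is exactly the claimed formula.

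\emph{Step 3: justification.} This step is the only genuine, though mild, obstacle: the representation \eqref{eq:gammatmu} is valid only where $\sin\tau\neq0$, and the integrals must be legitimate Lebesgue integrals. On $\supp\eta_0\subset(-\frac{5\pi}{8},\frac{5\pi}{8})$ the only zero of $\sin$ is $\tau=0$, where $\tau/\sin\tau$ extends smoothly. For $k\neq0$ and $\ell\ge1$, $\eta_\ell(\cdot-k\pi)$ is supported where $2^{-\ell-1}\cdot\frac{3\pi}{8}\lesssim|\tau-k\pi|<\frac{5\pi}{8}$, so $\sin\tau\neq0$ there. Since $a_\lambda$ has compact support in $(\frac1{16},4)$, in the new variables the integrand is smooth with compact support in $(\sigma,\tau)$, so the integral converges absolutely. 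Hence the substitution is justified pointwise in $(x,u)$. The original $(\varsigma,\mu)$ integrals, interpreted as the convolution kernels of the spectrally defined operators \eqref{Tklat-definition} and \eqref{eq:Tlakl-def} via the joint functional calculus and \eqref{eq:gamma}, then coincide with these absolutely convergent expressions (following \cite{MuellerSeeger2015} for $t=1$, and using \eqref{LUhomog} for general $t$).
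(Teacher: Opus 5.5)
Your proposal is correct and follows the paper's proof: the paper applies the same substitution $\sigma=1/(4\varsigma t^2)$, $\tau=2\pi\varsigma t^2\mu/\lambda$ to \eqref{Kklat} and \eqref{Kkllat}, with total Jacobian $-\lambda/(2\pi\sigma t^2)$, and then verifies directly that the phase and amplitude match, exactly as you do. Your Step 3, which checks supports, the smoothness of $(\tau/\sin\tau)^n$ and absolute convergence, makes explicit routine points that the paper leaves implicit.
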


\begin{proof} 
To prove \eqref{eq:Klat-formula}, we  change variables in \eqref{Kklat} from $\varsigma$ to $\sigma$ via $1/(4\varsigma)=t^2\sigma$, so that $\varsigma = 1/(4t^2\sigma)$, and then from $\mu$ to $\tau$ by setting $2\pi\varsigma t^2\mu/\lambda=\tau,$ so that $\mu=2 \la  \sigma\tau/\pi$. The Jacobian of the total change of variables is $-\la/(2\pi\sigma t^2),$ and identity \eqref{eq:Klat-formula} follows by straight-forward computations from \eqref{Kklat}.  \eqref{eq:Klatkl-formula} is derived  in the same way from \eqref{Kkllat}.
\end{proof}

\begin{remark}\label{rem:k-negative}
In the following sections, we will typically suppress the case $k<0$ and state estimates only for $k>0$. The corresponding bounds for negative $k$ can be recovered directly from the positive case. Indeed, since $\chi_0$ was chosen to be even, each $\eta_\ell$ is even as well. Thus, replacing $\tau$ by $-\tau$ in \eqref{eq:Klatkl-formula} yields
\[
\sK_{\lambda,t}^{-k,\ell}(x, u) = \sK_{\lambda,t}^{k,\ell}(x, -u) \quad \text{for } k>0. 
\]
Moreover, the involution $\theta(x,u):=(x_1,-x_2,\dots,x_{2n-1},-x_{2n},-u)$ is an automorphism of $\mathbb H_n,$ and   \eqref{eq:phasedef} is radially symmetric in the $x$ variable.  In combination, this implies that 
\begin{align*}
T^{-k,\ell}_{\la,t} f  = \big( T^{k,\ell}_{\la,t}  (f\circ \theta)\big) \circ \theta.
\end{align*}
\end{remark}

\section{The case $k=0$: Preliminaries}\label{sec:k=0-preliminaries}

We begin by stating a localization result for $\bbH_n$ which is an almost immediate consequence of Lemma~\ref{localize1}, since the Euclidean topology and the sub-Riemannian topology  on $\bbH_n$ are the same, and since the Euclidean distance is even bounded by the sub-Riemannian distance. The Euclidean ball of radius $r$ centered at $z$ will be denoted by 
$B_r(z),$ i.e.,
\[
B_r(z)=\{w\in \bbH_n:|w-z|<r\}
\]
where $|\cdot|$ denotes the Euclidean norm.

We introduce a smooth bump function $\chi$ supported in the Euclidean ball $B_1(0)\subset \bbH_n$ such that $\chi\equiv 1$ on $B_{\frac{1}{2}}(0),$ and put $\chi_\eps(w):=\chi(w/\eps)$ for  $\eps>0.$ 

\begin{proposition} \label{prop:localization}
Let $0 < \eps < 1$, $p \geq 2$, $\alpha > 0$, and let $\beta$ be a smooth bump function supported in $[\frac12,2]$.  Suppose that for every  $(x^0,u^0) \in \bbH_n$ with $x^0 \in \bbR^{2n}$ of the form $x^0=(|x^0|,0,\ldots,0)$, with $|x^0| < 10$ and every $f \in \mathcal{S}(\bbH_n)$, we have the estimate
\begin{equation}\label{eq:main-square-L2}
\Big\| \Big( \int_1^2 \Big| \tilde \chi_{\varepsilon} \, e^{it\sqrt{L}} \, \beta\Big( \frac{t\sqrt{L}}{\lambda} \Big) \, ( \chi_{\varepsilon}f) \Big|^2 \, dt \Big)^{\frac{1}{2}} \Big\|_p
\le C(\eps, \alpha,p) \lambda^\alpha \, \|f\|_2,
\end{equation}
with $\tilde \chi_\eps(x,u):=\chi_\eps(x-x^0,u-u^0)$.  Then the square function estimate \eqref{eq:main-squareloc} holds.  
\end{proposition}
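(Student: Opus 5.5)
The plan is to reduce \eqref{eq:main-squareloc} to \eqref{eq:main-square-L2} in three moves: shrink the support of the input, cover the output region by finitely many small balls, and move each ball into the normalized position $x^0=(|x^0|,0,\dots,0)$ by a rotation. The $\ve$ in \eqref{eq:main-squareloc} (the radius of the support of $f$ in Lemma~\ref{localize1}) may be chosen freely, since Lemma~\ref{localize1} is stated for every $0<\ve<1$. So let $\ve$ be the value fixed in the hypothesis, and apply Lemma~\ref{localize1} with a smaller radius $\ve'$, chosen so that the Carnot--Carathéodory ball $\overline B(0,\ve')$ lies in the Euclidean ball $B_{\ve/2}(0)$. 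This is possible because the two topologies coincide. Then every $f$ supported in $\overline B(0,\ve')$ satisfies $\chi_\ve f=f$. Moreover, $f$ has support of measure $\lesssim_\ve 1$ and $p\ge 2$, so Hölder's inequality gives $\|f\|_2\lesssim_\ve \|f\|_p$. This converts the $L^2$ right-hand side of \eqref{eq:main-square-L2} into the $L^p$ right-hand side required in \eqref{eq:main-squareloc}.

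\emph{Output localization.} Since the Euclidean distance is bounded by the sub-Riemannian one, $\overline B(0,5)\subset\{|x|\le 5,\ |u|\le 5\}$. This compact set can be covered by $N=N(\ve)$ Euclidean balls $B_{\ve/2}(z_j)$ with centers $z_j=(x^j,u^j)$ satisfying $|x^j|<10$. On each such ball $\chi_\ve(\cdot-z_j)\equiv1$, so pointwise
\[
\chr_{\overline B(0,5)}\le \sum_{j=1}^N \chi_\ve(\cdot-z_j)\qquad\text{(taking } \chi\ge 0\text{)}.
\]
Using this bound inside the square function and applying Minkowski's inequality in $L^2(dt)$ and then in $L^p$, the left-hand side of \eqref{eq:main-squareloc} is at most $\sum_j$ of the corresponding expressions with cutoff $\chi_\ve(\cdot-z_j)$.

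\emph{Normalizing the centers.} For each $j$, choose $R_j\in U(n)\subset SO(2n)$ (acting on $\bbR^{2n}\cong\bbC^n$) with $R_jx^j=(|x^j|,0,\dots,0)$. This is possible because $U(n)$ acts transitively on spheres. Such $R_j$ commutes with $J$, so $(x,u)\mapsto(R_jx,u)$ is a group automorphism. It preserves Lebesgue measure and commutes with $L$, hence with $e^{it\sqrt L}\beta(t\sqrt L/\la)$. We take $\chi$ radial in $x$ (which we may assume when fixing $\chi$); then $\chi_\ve$ is invariant under $R_j$, and $\chi_\ve(\cdot-z_j)$ is carried to $\tilde\chi_\ve$ with center $(R_jx^j,u^j)$. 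Applying \eqref{eq:main-square-L2} to $f\circ R_j^{-1}$ bounds each term by $C(\ve,\alpha,p)\la^\alpha\|f\|_2$, and summing over $j$ gives \eqref{eq:main-squareloc} with constant $N(\ve)\,C(\ve,\alpha,p)\,C_\ve$.

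\emph{Remaining issues.} The only genuinely delicate points are the bookkeeping of the rotation invariance (making sure the rotation lies in $U(n)$ rather than in the full $SO(2n)$) and passing from $f\in\cS$ to $f\in L^p$ supported in $\overline B(0,\ve')$. The latter follows by density, since for fixed $\la$ the operators have Schwartz kernels.
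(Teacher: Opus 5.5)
Your proposal is correct and follows essentially the same route as the paper. You shrink the input support to a small Carnot--Carath\'eodory ball so that $\chi_\eps f=f$ and H\"older gives $\|f\|_2\lesssim_\eps\|f\|_p$, and then use the $U(n)$-action by automorphisms commuting with $L$ to move the center to $x^0=(|x^0|,0,\dots,0)$. The only differences are that you make explicit the finite covering of $\overline B(0,5)$ by translated cutoffs, and that you use $B_{\eps/2}(0)$, which is what actually guarantees $\chi_\eps f=f$; the paper leaves these bookkeeping steps implicit.
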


\begin{proof} Choose $\eps'>0$ so small that $B(0,\eps')\subset B_\eps(0).$ Then, for $f$ supported in $B(0,\eps'),$
$\|f\|_2\le C_{\eps' }\|f\|_p$ by Hölder's inequality, and thus \eqref{eq:main-square-L2} implies \eqref{eq:main-squareloc}, with $\eps'$ in place of $\eps.$ 

Finally, it suffices to consider only $x^0 = (|x^0|,0,\dots,0)$ because the unitary group $U(n)$, which acts transitively on $\bbC^n$, acts by automorphisms of $\bbH_n$ that leave the  center fixed. The sub-Laplacian $L$ is invariant under this action, and hence so is $\beta(\frac{t\sqrt{L}}{\lambda})$.  
\end{proof}

We will exploit this for $k=0$ from the decomposition of Proposition~\ref{lem:decomp}.

\smallskip

Now let $\eps \in (0,10^{-4})$ and $(x^0,u^0)$ with $|(x^0,u^0)| < 10$ and $x^0 = (|x^0|,0,\ldots,0)$ be fixed, and define $\chi_\eps,\tilde\chi_\eps$ as above.
The following proposition treats the case $k = 0$.

\begin{proposition}\label{L:T0lambda}
If $p\ge \frac{2(d_1+1)}{d_1-1}$  and $\alpha \ge d \, (\tfrac12-\tfrac1p)-\tfrac12$, then for $\ve>0$ sufficiently small, 
\begin{equation}\label{eq:T0lambda}
\Big\| \Big( \int_1^2 \big| \tilde \chi_{\varepsilon} \, T_{\lambda,t}^0 ( \chi_{\varepsilon} f) \big|^2 \, dt \Big)^{\frac{1}{2}} \Big\|_p
\le C(a_\lambda, \ve, \alpha, p)\lambda^\alpha \, \|f\|_2 \quad \text{for all } f\in \mathcal S(\R^d),
\end{equation}
where the constant  $C(a_\lambda, \ve,\alpha, p)$ depends in $a_\la$ only on the $C^M$-norm of $a_\lambda$ for some sufficiently large $M\in\N.$
\end{proposition}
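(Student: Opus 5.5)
We plan to prove \eqref{eq:T0lambda} by a $TT^*$ argument in the spirit of \cite{Kapitanski, Ginibre-Velo92, Mockenhaupt-See-So93}, applied to the oscillatory integral representation \eqref{eq:Klat-formula}. Write $S_\la f(z,t)=\tilde\chi_\eps T^0_{\la,t}(\chi_\eps f)(z)$. By duality, \eqref{eq:T0lambda} is equivalent to $\|S_\la S_\la^*\|_{L^{p'}(L^2_t)\to L^p(L^2_t)}\lc \la^{2\alpha}$. The kernel of $S_\la S_\la^*$ is $\tilde\chi_\eps(z)\,\tilde\chi_\eps(w)\,\mathcal K_{t,s}(z,w)$, where $\mathcal K_{t,s}$ is a superposition over the support of $\chi_\eps$ of products of two kernels of the form \eqref{eq:Klat-formula}.

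Since $\eta_0$ localizes to $|\tau|<5\pi/8$, the function $g(\tau)=\tau\cot\tau$ is smooth there and $(\tau/\sin\tau)^n$ is a harmless amplitude. After the substitution $(\sigma,\tau)\mapsto \xi$, the phase $\la\Phi$ becomes a homogeneous Fourier integral phase in a $2$-dimensional frequency cone, in $(x,u)$ with $x$ radial. Using the $U(n)$ symmetry together with $x^0=(|x^0|,0,\dots,0)$, I would freeze the central variable $u$ (or one transversal direction, chosen according to whether the localized point is near a horizontal point $\tau=\pm\pi/2$; only $|\tau|<5\pi/8$ matters here, and $\tau_0=\pi/2$ lies in this range). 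For each frozen value I would then view the operator as an FIO in the remaining $d-1=2n$ space variables plus $t$. The rank $(d_1-1)$ cone condition from Appendix~\ref{cinecurv} gives the $L^2$ estimate with loss $\la^{-(d_1-1)/2}$ off the diagonal. Following \cite[Thm.~3.2]{Mockenhaupt-See-So93}, the dispersive estimate for the $TT^*$ kernel should have the form
\[
\sup_{t,s}|\mathcal K_{t,s}(z,w)|\lc \la^{d}\,(1+\la|z-w|)^{-\frac{d_1-1}{2}},
\]
with $L^2$-boundedness supplied by Plancherel in $t$ (an average in $t$ gaining $\la^{-1}$). Interpolating between these in the usual Stein--Tomas manner, with a dyadic decomposition in $\la|z-w|\sim 2^j$ as in the proof of \cite[Thm.~3.2]{Mockenhaupt-See-So93}, the sum over $j$ converges exactly when $p\ge\frac{2(d_1+1)}{d_1-1}$, with the endpoint handled by a Bourgain-type interpolation. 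This yields $\la^{2\alpha}$ with $2\alpha=2d(\frac12-\frac1p)-1$. The $L^2\to L^p$ framing matters here: at this point we only need $\|f\|_2$ on the right-hand side, so the classical $TT^*$ route is available.

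The key steps, in order, are as follows. First, stationary phase in $(\sigma,\tau)$ gives the kernel bounds away from the singular support; the Schwartz-tail parts are error terms. Second, we verify nondegeneracy of the frozen-variable canonical relation uniformly for $|\tau|\le5\pi/8$. Third, we prove the $L^2(dt)$-orthogonality estimate. Fourth, we run the interpolation.

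The main obstacle is the second step near the horizontal points $\tau=\pm\pi/2$. There the cinematic interpretation fails, and the natural frozen variable ($u$, or time) no longer yields a canonical graph. I would split $\eta_0$ into pieces localized near and away from $\pm\pi/2$. Away from these points I freeze $u$. Near them I freeze a horizontal variable $x_1$ instead, exploiting the rotational curvature in the group law (the $4u\tau$ term coupling with the twisted convolution) so that the mixed Hessian in the remaining variables is nondegenerate with rank $d_1-1$. Checking this nondegeneracy uniformly via the determinant computed in Appendix~\ref{cinecurv} is where I expect most of the work.
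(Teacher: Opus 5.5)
There is a genuine gap. The uniform $L^2$ boundedness of the frozen operators is the step that carries the whole argument, and it fails for the variables you choose to freeze. In fact your assignment is reversed.

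\textbf{The frozen variables.} A frozen operator $T_{z_j}T_{z_j'}^*$ has a nondegenerate Monge--Amp\`ere matrix only if the frozen coordinate actually moves along the bicharacteristics. At an output point $x\approx x^0=(|x^0|,0,\dots,0)$, these curves have horizontal velocity proportional (up to signs) to $g(\tau)e_1-\tau e_2$, and vertical velocity proportional to $\sin^2\tau/\tau$.
\begin{itemize}
\item Freezing $u$ fails at $\tau=0$, which lies in the non-horizontal region. With $u,u'$ frozen, take the mixed Hessian of $\Psi$ from \eqref{eq:Psi} with rows $(x,s,\sigma,\sigma',\tau',y)$ and columns $(x',s',\sigma,\sigma',\tau',y)$. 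Its entire $\tau'$-row vanishes at $y=0$, $x=x'$, $u=u'$, $\sigma=\sigma'$, $\tau'=0$. This is a critical point of the phase whenever $|x^0|^2\approx s$. If $u'$ is not frozen, the same row contains $\Psi_{\tau'u'}=-4\sigma'$.
\item Freezing $x_1$ fails at $\tau=\pm\frac\pi2$, which is where you propose to use it. This is exactly Remark~\ref{breakdown}: $\Psi_{\underline y\underline x'}=-2\sigma'(g(\tau')I+\tau'\underline J)$ degenerates, because $g(\frac\pi2)=0$ and $\underline J$ has a zero row.
\end{itemize}
The paper freezes $x_1$ on $\supp\eta_{0,\mathsf{nh}}$ (Lemma~\ref{lem:0-det}). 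On the horizontal part it freezes $x_2$, where $\Psi_{\underline y\tilde x'}=-2\sigma'(g(\tau')A+\tau'B)$ with $B$ invertible.

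Freezing a horizontal coordinate also matches the dispersive bound that is actually available. Proposition~\ref{lem:pointwiseQ0} gives decay only in $\la|\Delta x|\ge\la|\Delta x_j|$. Freezing $u$ would need decay in $\la|\Delta u|$, which you assert but do not prove.

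\textbf{Source of the $L^2$ bound.} It comes neither from the cone condition nor from Plancherel in $t$. The gain $\la^{-1}$ from the $s'$-integration belongs to the $L^1\to L^\infty$ bound, via Schur's test. The $L^2$ bound is H\"ormander's $L^2$ theorem applied to the frozen kernels. Integrating out the central variable of the intermediate point produces $\widehat{\chi_\ve}(4\la(\sigma\tau-\sigma'\tau'))$. This leaves $d+2$ phase variables $(\sigma,\sigma',\tau',y)$ against $d$ output variables, so the gain $\la^{-(d+1)}$ cancels the prefactor $\la^{d+1}$.

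\textbf{The endpoint.} A dyadic Stein--Tomas decomposition in $\la|z-w|$ would need $L^2$ bounds for the dyadic pieces, which you do not have. Bourgain's trick would give only a restricted weak-type endpoint, whereas the statement includes $p=\frac{2(d_1+1)}{d_1-1}$ in strong form. In the paper the endpoint is immediate:
\begin{itemize}
\item Interpolating the two frozen bounds gives $\la^{(d-1)(1-\frac2p)}\langle\la|x_1-x_1'|\rangle^{-\gamma_p}$, with $\gamma_p=\frac{d-2}{2}(1-\frac2p)$.
\item At $p=\frac{2d}{d-2}$ one has $\gamma_p=\frac2p$, so one-dimensional Hardy--Littlewood--Sobolev in the frozen variable applies exactly.
\item The rest of the range then follows by interpolation with the trivial $p=\infty$ bound.
\end{itemize}
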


To  prepare the proof of Proposition~\ref{L:T0lambda}, recall from Lemma~\ref{lem:decomp} that  the convolution kernel $\sK_{\lambda,s}^{0}$ of $T_{\lambda,t}^{0}$ is given by
\[
\sK_{\lambda,t}^{0}(x, u) = \lambda^{ n+\frac{3}{2}} t^{-2n} \int_\R \, \int_0^\infty e^{i\lambda\Phi(x,u,t,\sigma,\tau)} \, a_\lambda(t^2\sigma) \, \eta_0( \tau) \, \Big(\frac{\tau}{\sin\tau}\Big)^{n} d\sigma \, d\tau.
\]
Since the factor $t^{-2n}$ is irrelevant for the estimate \eqref{eq:T0lambda}, we drop it in the sequel. Moreover, it will be convenient to replace $t$ by $$s=t^2\in[1,4].$$
Moreover, since $\supp\eta_0\subset [-\tfrac 58 \pi, \tfrac 58 \pi],$ the factor $(\frac{\tau}{\sin\tau})^{n}$ is smooth and can be included into the amplitude by putting $\tilde \eta_0(\tau):=\eta_0(\tau)\, (\frac{\tau}{\sin\tau})^{n}.$ 

\smallskip

We shall therefore rather work with the convolution kernel 
\begin{equation}\label{eq:Klats-formula}
K_{\lambda,s}^{0}(x, u) = \lambda^{n+\frac32}  \int_\R \, \int_0^\infty e^{i\lambda\tilde \Phi(x,u,s,\sigma,\tau)} \, a_\lambda(s\sigma) \, \tilde \eta_0( \tau)  \, d\sigma \, d\tau,
\end{equation}
where 
\begin{equation} \label{eq:phasesdef}
\tilde \Phi(x,u,s,\sigma,\tau) =\sigma(s-|x|^2 g(\tau) +4u\tau), \quad\text{ with } g(\tau)=\tau\cot \tau,
\end{equation}
and denote the corresponding convolution operator by $\tilde T^0_{\la,s} f=f*K_{\lambda,s}^{0}.$

We then define $T^{0,\varepsilon}_\lambda :\mathcal S(\R^d) \to L^0(\R^{d + 1})$ as  the operator given by
\begin{equation}\label{eq:xut}
(T^{0,\varepsilon}_\lambda f)(x,u,s) : = \tilde\chi_\varepsilon(x,u) \big(\tilde T_{\lambda,s}^0(\chi_\varepsilon f)\big)(x,u),\quad f\in \mathcal S(\R^d).
\end{equation} 
We then need to prove estimates of the form 
\[
\Big\| \Big( \int_1^4 \big| T^{0,\varepsilon}_\lambda f \big|^2 \, ds \Big)^{\frac{1}{2}} \Big\|_p
\lesssim \lambda^\alpha \, \|f\|_2.
\]
By decomposing the $s$-interval $[1,4]$ into smaller intervals of length $\ve\in(0,1)$ and applying suitable automorphic 
scalings for each of them, we may actually  reduce to smaller $s$-intervals $[1,1+\ve].$  Accordingly, we always assume that  $s\in[1,1+\ve]$ in the definition of  $T^{0,\varepsilon}_\lambda f.$

Similarly, by means of a partition of unity  consisting of smooth cut-off functions supported in intervals of length $\ve$ we may further assume that $a_\la$ is supported in such a shorter interval. Thus, we may reduce to proving the following:

\medskip
{\it 
Suppose that  $p\ge \frac{2(d_1+1)}{d_1-1}$ and $\alpha \ge  d \, (\tfrac12-\tfrac1p)-\tfrac12$.
If the constant $\ve>0$ is  sufficiently small, and if $a_\la$ is supported in an interval of length $\ve$ contained in 
$(\frac 1{16},4),$ then we can bound 
\begin{equation}\label{eq:T0lambdas}
\Big\| \Big( \int_1^{1+\ve} \big| T^{0,\varepsilon}_\lambda f \big|^2 \, ds \Big)^{\frac{1}{2}} \Big\|_p
\le C(a_\lambda,\eps, \alpha,p)\lambda^\alpha \, \|f\|_2 \quad \text{for all } f\in \mathcal S(\R^d),
\end{equation}
where the constant  $C(a_\lambda,\eps, \alpha,p)$ depends in $a_\la$ only on the $C^M$-norm of $a_\lambda$ for some sufficiently large $M\in\N.$}
\smallskip

Recall that the point $(x^0,u^0)$ satisfies $|(x^0,u^0)|<10$. For $|x^0|\ll 1$, we obtain  rapid decay in $\lambda$ by exploiting that the phase of the associated integral kernel is non-stationary in at least one of the parameters $\sigma$ or $\tau$.

\begin{lemma}\label{lem:0-x0-small}
If $|x^0|\le \frac 1 {100}$, then 
\[
\Big\| \Big( \int_1^{1+\ve} \big| T^{0,\varepsilon}_\lambda f \big|^2 \, ds \Big)^{\frac{1}{2}} \Big\|_p
\lesssim_N \lambda^{-N} \, \|f\|_2 \quad \text{for all } f\in \mathcal S(\R^d).
\]
\end{lemma}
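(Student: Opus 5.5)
The plan is to show that the kernel $K^0_{\la,s}$, restricted to the relevant region of $(x,u)$, is $O(\la^{-N})$ together with its derivatives, uniformly for $s\in[1,1+\ve]$. The operator $T^{0,\eps}_\la f(x,u,s)$ only involves values $K^0_{\la,s}(w^{-1}z)$ with $z=(x,u)\in \supp\tilde\chi_\eps$ and $w\in\supp\chi_\eps$. Hence $w^{-1}z=(x',u')$ satisfies $|x'-x^0|\lc \eps$ and $|u'-u^0|\lc\eps$; the latter because the group law adds to $u$ only a term $O(\eps)$ when $|x|,|x'|\lc 1$. In particular $|x'|\le \frac1{100}+C\eps\le \frac1{50}$.

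On this set, consider the derivatives of the phase $\tilde\Phi=\sigma(s-|x'|^2g(\tau)+4u'\tau)$:
\[
\partial_\sigma\tilde\Phi = s-|x'|^2 g(\tau)+4u'\tau,\qquad \partial_\tau\tilde\Phi=\sigma\big(4u'-|x'|^2g'(\tau)\big).
\]
On $\supp\tilde\eta_0$ we have $|\tau|\le\frac58\pi$, so $|g(\tau)|\le1$ and $|g'(\tau)|\le C_0$ for a fixed constant $C_0$ (note $g$ is smooth there, since $\tau\cot\tau$ stays away from its poles). Split into two cases using the size of $u'$.

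\textbf{Case $|u'|\le\frac18$.} Then $|\partial_\sigma\tilde\Phi|\ge 1-10^{-4}-4\cdot\frac18\cdot\frac58\pi\ge c>0$, since $\frac{5\pi}{16}<1$ is what we need. Here I must check the margin: $4\cdot\frac18\cdot\frac{5\pi}8=\frac{5\pi}{16}\approx0.98$. This margin is too thin, so instead I split at $|u'|\le\frac1{10}$, which gives $4\cdot\frac1{10}\cdot\frac{5\pi}8\approx0.785$, and $|\partial_\sigma\tilde\Phi|\ge 0.2$.

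\textbf{Case $|u'|\ge\frac1{10}$.} Then $|\partial_\tau\tilde\Phi|\ge\sigma\big(\frac4{10}-C_0\cdot 4\cdot10^{-4}\big)\ge c\sigma$, with $\sigma\ge \frac1{64}$ on $\supp a_\la(s\cdot)$.

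In the first case, integrate by parts $N$ times in $\sigma$. The amplitude $a_\la(s\sigma)$ is compactly supported in $\sigma$ with $C^M$ bounds controlled by \eqref{eq:ala}, so there are no boundary terms. In the second case, integrate by parts in $\tau$; $\tilde\eta_0$ is compactly supported and smooth. To make the split smooth, I would use a smooth cutoff in $u'$ rather than a sharp one, which is harmless. Either way $|K^0_{\la,s}(x',u')|\lc_N \la^{n+\frac32-N}$ uniformly on the relevant set, and the same holds for $s$-derivatives.

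It then remains to pass from the kernel bound to the operator bound. For $z$ in the compact set $\supp\tilde\chi_\eps$, Cauchy--Schwarz gives $|T^{0,\eps}_\la f(z,s)|\le \|\chi_\eps f\|_2\,\big(\int |K^0_{\la,s}(w^{-1}z)|^2\chi_\eps(w)^2\,dw\big)^{1/2}\lc_N\la^{-N}\|f\|_2$. Since $z$ ranges over a bounded set and $s$ over an interval of length $\ve$, integrating the square in $s$ and taking the $L^p$ norm in $z$ yields the claimed bound.

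The only real obstacle is bookkeeping of constants: the case split must be made so that one of the two derivatives is bounded below uniformly. This works because $|x'|$ is small and $g$, $g'$ are bounded on $\supp\eta_0$.
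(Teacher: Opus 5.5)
Your proposal is correct and follows essentially the paper's argument: reduce to a pointwise bound $|T^{0,\varepsilon}_\lambda f(x,u,s)|\lesssim_N\lambda^{-N}\|f\|_2$ using compact support and Cauchy--Schwarz, with the phase non-stationary in $\sigma$ when the central coordinate is small and non-stationary in $\tau$ when it is large. The only difference is cosmetic: you split according to the central coordinate $u'$ of $w^{-1}z$, whereas the paper splits according to $|(x,u)|$. Also, no smooth cutoff is needed, since the case split is in the spatial point and not in the integration variables $(\sigma,\tau)$.
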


\begin{proof}
Recall that $\varepsilon\le \frac 1 {10^{4}}$.
Since $T^{0,\varepsilon}_\lambda f$ is supported on $\supp \tilde\chi_\varepsilon$, we have
\[
\Big\| \Big( \int_1^{1+\ve} \big| T^{0,\varepsilon}_\lambda f \big|^2 \, ds \Big)^{\frac{1}{2}} \Big\|_p
\lesssim 
\Big\| \Big( \int_1^{1+\ve} \big| T^{0,\varepsilon}_\lambda f \big|^2 \, ds \Big)^{\frac{1}{2}} \Big\|_\infty,
\]
so it suffices to show uniformly in $(x,u,s)$ that 
\begin{equation}\label{eq:0-x0-small-i}
|T^{0,\varepsilon}_\lambda f(x,u,s)|\lesssim_N \lambda^{-N}  \|f\|_2.
\end{equation}
By \eqref{eq:xut}, we have
\begin{align*}
(T^{0,\varepsilon}_\lambda f)(z,s)
 & = \tilde\chi_{\varepsilon}(z)\big((\chi_{\varepsilon} f)*K_{\lambda,s}^0\big) (z) \\
 & = \tilde\chi_{\varepsilon}(z)\int_{\bbH_n}\chi_{\varepsilon} (w) \,  K^0_{\lambda,s}\big(w^{-1}z\big) \, f(w)\, dw,
\end{align*}
where, by \eqref{eq:Klats-formula} and \eqref{eq:phasesdef}, when writing $z=(x,u)$ and $w=(y,v)$, the phase of the associated integral kernel is given by
\begin{align*}
\tilde \Phi(w^{-1}z,s,\sigma,\tau)
& =
\tilde \Phi(x-y,u-v+\tfrac 1 2 \langle Jx,y\rangle,s,\sigma,\tau)\\
&=\sigma\,\big(s-\left|x-y\right|^2 g(\tau) +4\, (u-v+\tfrac 1 2 \langle Jx,y\rangle)\,\tau\big),
\end{align*}
where $|(x-x^0,u-u^0)|<\eps$ on the support of $\tilde\chi_\eps$ and  $|(y,v)|\le \varepsilon$ for 
$(y,v)\in \supp \chi_\varepsilon.$

Assume that $|x^0|\le \frac 1 {100}$. We distinguish the cases $|(x,u)|<\frac 1 {10}$ and $|(x,u)|\ge \frac 1 {10}$.

Suppose that $|(x,u)|<\frac 1 {10}$.  Then since $|g(\tau)|=
|\tau\cot\tau| \le 1$ in the above integrals, we have
\[
\left||x-y|^2 \, g(\tau)\right| \le (\tfrac 1 {10} + \varepsilon)^2 < \tfrac 2 {100}
\]
and also
\[
|4\, (u-v+\tfrac 1 2 \langle Jx,y\rangle)|\, |\tau|
 \le (4 \,(\tfrac{1}{10} +  \varepsilon) + 2 \,\tfrac{1}{10} \, \varepsilon) \,\tfrac{5}{8}\pi < \tfrac{3}{10}\pi < \tfrac{95}{100}.
\]
Thus, since $s \in [1,1+\varepsilon]$,  we see that $|\partial_\sigma (\tilde \Phi(w^{-1}z,s,\sigma,\tau))| \ge \frac 3{100}$ and we can integrate by parts. Combining this with an application of Cauchy--Schwarz in the integral above shows \eqref{eq:0-x0-small-i} for $|(x,u)|<\frac 1 {10}$.

Next, suppose that $|(x,u)|\ge \frac 1 {10}$. We have $|x| <\varepsilon + \tfrac{1}{100}$ for $(x,u)\in \supp \tilde \chi_\varepsilon$, which then implies $|u| \ge |(x,u)| - |x| \ge \tfrac 1 {10} - |x| \ge \tfrac 1 {20}$. Hence,
\[
|4\, (u-v+\tfrac 1 2 \langle Jx,y\rangle)|
 \ge  \tfrac{1}{5} - 4\varepsilon - 2 \, (\varepsilon + \tfrac 1 {100} ) \, \varepsilon \ge \tfrac 1 {10}.
\]
Moreover, since $g'(\tau)= \cot \tau - \tau/(\sin \tau)^2=( \sin 2\tau -2\tau) /(1-\cos 2\tau) $ and $g''(\tau) = 2 \,(\tau  \cot \tau - 1)/  (\sin \tau)^2 \le 0$, we get for $|\tau|\le \frac 5 8 \pi$ that
\[
|g'(\tau)|\le g'(-\tfrac 5 8 \pi) = \frac{\sin \tfrac \pi 4 + \tfrac 5 4 \pi }{1+\cos \tfrac \pi 4} = \frac{\sqrt 2 + \frac 5 2 \pi}{2 +\sqrt 2} \le 3.
\]
Thus, we have $|x-y|^2\,|g'(\tau)|\le 3\, (2\varepsilon+\frac 1 {100})^2\le \frac{1}{20}$ for $(y,v)\in\supp \chi_\varepsilon$. Hence, we obtain $|\partial_\tau (\tilde \Phi(w^{-1}z,s,\sigma,\tau))| \ge \frac \sigma{20}$, and we can argue again via integration by parts and Cauchy--Schwarz.
\end{proof}

To prove \eqref{eq:T0lambdas}, we shall assume from now on that $|x^0|\ge \frac{1}{100}$, where we may assume that  $x^0=(|x^0|,0, \dots,0)$ since rotations in the $x$ variable are automorphisms of $\mathbb H_n$. In particular, if $|x-x^0|<\ve< 10^{-4}$ and $|x-y|<\ve,$ we see that $|x_1-y_1|\sim 1$ and $|x_j-y_j|\ll 1$ for $j=2,\dots, d_1$.

\subsection{Decomposition into the non-horizontal and horizontal part}\label{nh+h}

Recall from the discussion preceding \eqref{eq:Klats-formula} that
\[
\supp\tilde\eta_0\subset [-\tfrac{5}{8}\pi,\tfrac{5}{8}\pi],
\]
and that $\pm\frac{\pi}{2}\in\supp\tilde\eta_0$. The parameters $\tau=\pm\frac{\pi}{2}$ correspond to horizontal points on $\Sigma_s$, that is, points at which the tangent plane to $\Sigma_s$ is horizontal. The heuristic discussion in the Introduction therefore suggests decomposing
\[
\tilde\eta_0 = \eta_{0,\mathsf{nh}} +\eta_{0,\mathsf{h}} +\eta_{0,\mathsf{\check h}}
\]
into smooth bump functions $\eta_{0,\mathsf{nh}},\eta_{0,\mathsf{h}}$ and $\eta_{0,\mathsf{\check h}}$ such that 
$$
\supp\eta_{0,\mathsf{nh}}\subset [-\tfrac 3{8} \pi, \tfrac 3{8} \pi]\quad \text{and} \  \supp\eta_{0,\mathsf{h}}\subset [\tfrac 3{16} \pi, \tfrac 58 \pi],
$$
where $\eta_{0,\mathsf{\check h}}(\tau):=\eta_{0,\mathsf{h}}(-\tau).$ Accordingly, we decompose 
\begin{equation}\label{horidecomp}
T^{0,\varepsilon}_\lambda=T^{0,\mathsf{nh},\varepsilon}_\lambda+T^{0,\mathsf{h},\varepsilon}_\lambda+T^{0,\mathsf{\check h},\varepsilon}_\lambda,
\end{equation}
where  the non-horizontal part $T^{0,\mathsf{nh},\varepsilon}_\lambda$ is defined like $T^{0,\varepsilon}_\lambda,$ only with $\tilde \eta_0$ in \eqref{eq:Klats-formula} replaced by $\eta_{0,\mathsf{nh}}$, and the other two summands are defined analogously.

By symmetry, estimates for $T^{0,\mathsf{\check h},\varepsilon}_\lambda$ can be reduced to estimates for $T^{0,\mathsf{h},\varepsilon}_\lambda,$ so that it suffices to work with $T^{0,\mathsf{nh},\varepsilon}_\lambda$ and $T^{0,\mathsf{h},\varepsilon}_\lambda$.

\subsection{The $TT^*$ argument for the operators $T^{0,\varepsilon}_\lambda$}\label{TT*0}

To prove that
\[
T^{0,\varepsilon}_\lambda:
L^2_{(x,u)}\to L_{(x,u)}^{p}(L_s^2)
\]
is bounded, we first decompose it as in \eqref{horidecomp}. We then apply the Stein--Tomas $TT^*$ argument separately to the non-horizontal and horizontal parts, proving
\[
L_{(x,u)}^{p'}(L_s^2)\to L_{(x,u)}^{p}(L_s^2)
\]
estimates for the operators $T^{0,\mathsf{nh},\varepsilon}_\lambda
(T^{0,\mathsf{nh},\varepsilon}_\lambda)^*$ and $T^{0,\mathsf{h},\varepsilon}_\lambda
(T^{0,\mathsf{h},\varepsilon}_\lambda)^*$.
We first record the corresponding kernel formula for the former operator.

\begin{lemma}\label{lem:0-TTstar}
Assume that $\ve>0$ is sufficiently small. Then, for $s,s'\in[1,1+\ve]$, the integral kernel $Q^{0,\mathsf{nh},\varepsilon}_\lambda$ of $T^{0,\mathsf{nh},\varepsilon}_\lambda (T^{0,\mathsf{nh},\varepsilon}_\lambda)^*$ is given by
\begin{multline}\label{eq:Q0}
Q^{0,\mathsf{nh},\varepsilon}_\lambda(x,u,s,x',u',s')
 = \lambda^{d + 1} \, \chi_{\varepsilon}(x-x^0,u-u^0) \, \chi_{\varepsilon}(x'-x^0,u'-u^0) \\
\times \int_\R \, \int_0^\infty \! \int_0^\infty \! \int_{\R^{d_1}}e^{i\lambda\Psi}  \, a_\lambda(s\sigma)  \,\overline{a_\lambda(s'\sigma')} \,\eta_{0,\mathsf{nh}}(\tau')  \, 
\chi_{\varepsilon}(y) \, h_\lambda \, dy \, d\sigma \, d\sigma' \, d\tau',
\end{multline}
where $h_\lambda=h_\lambda(\sigma,\sigma',\tau',x,y,u)$ is  smooth and  such that each partial derivative of $h_\lambda$ is uniformly bounded, if $\sigma,\sigma' \sim 1$, $ |\tau'|,|y| \lesssim 1$ and $\lambda\gg 1,$  and where the phase function $\Psi$  is given by 
\begin{align}
\Psi = \sigma s - \sigma's' & - \left|x-y\right|^2\sigma \, g(\tfrac{\sigma'}{\sigma}\tau') \notag \\ 
& + \left|x'-y\right|^2\sigma' g(\tau') + 4 \sigma'\tau'(u-u' + \tfrac12\langle J(x-x'),y \rangle), \label{eq:Psi}
\end{align}
where we recall that $g(\tau) = \tau\cot\tau$.
\end{lemma}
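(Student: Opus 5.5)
The plan is to write out $T^{0,\mathsf{nh},\varepsilon}_\lambda(T^{0,\mathsf{nh},\varepsilon}_\lambda)^*$ explicitly and carry out the integration in the central variable $v$ of the intermediate point together with one of the two frequency variables $\tau$. This leaves a single frequency $\tau'$, together with $\sigma,\sigma'$ and the horizontal intermediate variable $y$, and it should produce the phase $\Psi$.

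\emph{Step 1: the kernel of $TT^*$.} Write $z=(x,u)$, $z'=(x',u')$ and $w=(y,v)$. From \eqref{eq:xut} the kernel of $T^{0,\mathsf{nh},\varepsilon}_\lambda$ is $\tilde\chi_\eps(z)\chi_\eps(w)K^{0,\mathsf{nh}}_{\lambda,s}(w^{-1}z)$. Hence the kernel of $TT^*$ is
\[
\tilde\chi_\eps(z)\,\tilde\chi_\eps(z')\int \chi_\eps(w)^2\, K^{0,\mathsf{nh}}_{\lambda,s}(w^{-1}z)\,\overline{K^{0,\mathsf{nh}}_{\lambda,s'}(w^{-1}z')}\,dw .
\]
Inserting \eqref{eq:Klats-formula} and the expression for $\tilde\Phi(w^{-1}z,\dots)$ computed in the proof of Lemma~\ref{lem:0-x0-small}, we get the prefactor $\lambda^{2n+3}=\lambda^{d+2}$ and integrals over $\sigma,\tau,\sigma',\tau',y,v$. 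The total phase is $\lambda$ times
\[
\sigma s-\sigma's'-\sigma|x-y|^2g(\tau)+\sigma'|x'-y|^2g(\tau')+4\sigma\tau\big(u+\tfrac12\langle Jx,y\rangle\big)-4\sigma'\tau'\big(u'+\tfrac12\langle Jx',y\rangle\big)-4v(\sigma\tau-\sigma'\tau').
\]
The last term is the only place where $v$ enters the phase.

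\emph{Step 2: the $(v,\tau)$ integration.} Let $\psi_y(v)=\chi_\eps(y,v)^2$. The $v$-integral equals $\widehat{\psi_y}\big(4\lambda(\sigma\tau-\sigma'\tau')\big)$, which is rapidly decreasing. Substitute $\tau=\tau_c+r/\lambda$ with $\tau_c=\sigma'\tau'/\sigma$; this costs a Jacobian $\lambda^{-1}$ and accounts for the drop from $\lambda^{d+2}$ to $\lambda^{d+1}$. The $\tau$-dependent part of the phase is $\lambda\varphi(\tau)$, where
\[
\varphi(\tau)=\sigma\big(-|x-y|^2g(\tau)+4(u+\tfrac12\langle Jx,y\rangle)\tau\big),
\]
and we Taylor expand $\lambda\varphi(\tau_c+r/\lambda)=\lambda\varphi(\tau_c)+r\varphi'(\tau_c)+\tfrac{r^2}{\lambda}\int_0^1(1-\theta)\varphi''(\tau_c+\theta r/\lambda)\,d\theta$. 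Evaluating the full phase at $\tau=\tau_c$ makes the $u$-terms combine to $4\sigma'\tau'(u-u'+\tfrac12\langle J(x-x'),y\rangle)$ and the $g$-term become $\sigma g(\tfrac{\sigma'}{\sigma}\tau')$. This is exactly $\Psi$ in \eqref{eq:Psi}. Everything else is collected into
\[
h_\lambda=\int \widehat{\psi_y}(4\sigma r)\,e^{ir\varphi'(\tau_c)+i\frac{r^2}{\lambda}\int_0^1(1-\theta)\varphi''(\tau_c+\theta r/\lambda)d\theta}\,\eta_{0,\mathsf{nh}}(\tau_c+r/\lambda)\,dr .
\]
Harmless smooth factors, and the splitting $\chi_\eps(y,v)^2=\chi_\eps(y)\cdot(\dots)$ via a slightly larger cutoff, are also absorbed into $h_\lambda$. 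It depends only on $(\sigma,\sigma',\tau',x,y,u)$, as claimed.

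\emph{Step 3: uniform smoothness of $h_\lambda$ (the main point).} We must check that all derivatives of $h_\lambda$ are bounded uniformly in $\lambda\gg1$. Differentiating in any parameter brings down polynomial factors in $r$ (at most $|r|^{j}$, using $\lambda\ge1$) times derivatives of $\varphi,g,\eta_{0,\mathsf{nh}}$. These are absorbed by the Schwartz decay of $\widehat{\psi_y}(4\sigma r)$, since $\sigma\sim1$. For this we need $g$ smooth with bounded derivatives on the relevant $\tau$-range. The cutoff $\eta_{0,\mathsf{nh}}(\tau)$ restricts $\tau=\tau_c+r/\lambda$ to $[-\tfrac38\pi,\tfrac38\pi]$, far from the poles $\pm\pi$ of $g$, so only $g$ on that compact set is used. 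We also need $\tau_c$ away from $\pm\pi$ whenever the integrand is nonzero. This is where the smallness of $\eps$ enters: $a_\lambda$ is supported in an interval of length $\eps$ and $s,s'\in[1,1+\eps]$, so $\sigma'/\sigma=1+O(\eps)$ and $|\tau_c|<\pi/2$.

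I expect the only delicate point to be exactly this bookkeeping: choosing the substitution so that the remainder term $r^2/\lambda$ stays harmless, and confirming that no hidden $\lambda$-growth enters $h_\lambda$. Once the exact Fourier-transform identity in $v$ is used in place of a formal stationary phase argument, this is straightforward.
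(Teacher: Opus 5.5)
Your proposal is correct and follows the paper's proof essentially step by step. You integrate out $v$ to get $\widehat{\chi_\ve}(4\lambda(\sigma\tau-\sigma'\tau'))$, substitute $\tau=\tfrac{\sigma'}{\sigma}\tau'+\vartheta$ (your $\vartheta=r/\lambda$ merges this with the paper's subsequent $\lambda^{-1}$-rescaling), read off $\Psi$ at $\vartheta=0$, and absorb the rest into $h_\lambda$. Its uniform smoothness comes from the Schwartz decay of the Fourier transform, as in the paper's factorization $g(\tfrac{\sigma'}{\sigma}\tau')-g(\tfrac{\sigma'}{\sigma}\tau'+\vartheta)=\vartheta G$; your $\widehat{\psi_y}$ with $\psi_y(v)=\chi_\ve(y,v)^2$ is just a more careful version of the paper's assumption that $\chi_\ve^2$ splits as a product in $y$ and $v$.
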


\begin{remark}\label{rem:h-variant}
Correspondingly, the integral kernel $Q^{0,\mathsf{h},\varepsilon}_\lambda$ of $T^{0,\mathsf{h}, \varepsilon}_\lambda (T^{0,\mathsf{h},\varepsilon}_\lambda)^*$ is given by the analogous formula, with each occurrence of $\eta_{0,\mathsf{nh}}$ replaced by $\eta_{0,\mathsf{h}}$.
\end{remark}

\begin{proof}
Recall that $a_\lambda$ in \eqref{eq:Klats-formula} is supported in an interval of length $\ve$ contained in $(\frac1{16},4)$ and that $\eta_{0,\mathsf{nh}}$ is supported in $[-\frac38\pi,\frac38\pi]$.
We put $z^0=(x^0,u^0),$ and let $z=(x,u), z'=(x',u')\in\R^d.$ Then, for $\varphi\in\cS(\bbH_n),$
\begin{align*}
(T^{0,\mathsf{nh},\varepsilon}_\lambda \varphi)(z,s)
 & = \tilde\chi_{\varepsilon}(z)\big((\chi_{\varepsilon} \varphi)*K_{\lambda,s}^{0,\mathsf{nh}}\big) (z) \\
 & = \tilde\chi_{\varepsilon}(z)\int_{\bbH_n}\chi_{\varepsilon} (w) \,  K^{0,\mathsf{nh}}_{\lambda,s}\big((w)^{-1}z\big) \, \varphi(w)\, dw,\\
\big((T^{0,\mathsf{nh},\varepsilon}_\la)^*f\big) (w)
&= \chi_{\varepsilon} (w) \int_1^{1+\ve} \int_{\bbH_n} \tilde \chi_\ve(z')\, \overline{ K_{\lambda,s'}^{0,\mathsf{nh}}}\big(w^{-1}z'\big)f(z',s')\, dz' \,ds',
\end{align*}
where $K_{\lambda,s}^{0,\mathsf{nh}}$ is defined as $K_{\lambda,s}^{0}$ in \eqref{eq:Klats-formula}, but with $\tilde \eta_{0}$ replaced by $\eta_{0,\mathsf{nh}}$.
We obtain
\[
\big(T^{0,\mathsf{nh},\varepsilon}_\la (T^{0,\mathsf{nh},\varepsilon}_\la)^*f\big)(z,s)
 = \int_1^{1+\ve} \int_{\bbH_n} \, Q^{0,\mathsf{nh},\varepsilon}_\lambda(z,s,z',s') \,  f(z',s')\, dz'\, ds',
\]
where
\[
 Q^{0,\mathsf{nh},\varepsilon}_\lambda(z,s,z',s') =\tilde \chi_{\varepsilon}(z) \, \tilde\chi_{\varepsilon}(z') \,
  \int_{\bbH_n} K_{\lambda,s}^{0,\mathsf{nh}}\big(w^{-1}z\big) \, \overline{ K_{\lambda,s'}^{0,\mathsf{nh}}}\big(w^{-1}z'\big) \, \chi_{\varepsilon}^2(w) \, dw. 
\]
Moreover, by \eqref{eq:Klats-formula} and \eqref{eq:phasesdef},
\begin{align*}
& K_{\lambda,s}^{0,\mathsf{nh}}\big((y,v)^{-1}(x,u)\big) \, \overline{ K_{\lambda,s'}^{0,\mathsf{nh}}}\big((y,v)^{-1}(x',u')\big)\\
&= K_{\lambda,s}^{0,\mathsf{nh}}\big(x-y,u-v + \tfrac12\langle Jx,y\rangle\big) \, \overline{ K_{\lambda,s'}^{0,\mathsf{nh}}}\big(x'-y,u'-v + \tfrac12\langle Jx',y\rangle)\big) \\ 
&= \lambda^{d+2} \int_\R \, \int_0^\infty \int_\R \, \int_0^\infty e^{i\lambda\Psi_0} \, a_\lambda(s\sigma) \, \eta_{0,\mathsf{nh}}(\tau)  \,\overline{a_\lambda(s'\sigma')} \,\eta_{0,\mathsf{nh}}(\tau')  d\sigma \, d\tau \, d\sigma ' \, d\tau',
\end{align*}
and
\begin{align*}
\Psi_0 & = \sigma\big(s-\left|x-y\right|^2g(\tau) + 4 \tau(u-v + \tfrac12 \langle Jx,y\rangle)\big)\\
 &\quad -\sigma'\big(s'-\left|x'-y\right|^2g(\tau') + 4 \tau' (u'-v + \tfrac12 \langle Jx',y\rangle)\big).
\end{align*}

Recall that $w=(y,v)$.  We may assume that $\chi(w)$ equals a product of a smooth bump in $y$ and a smooth bump in $v$.  To help keep expressions more compact, we incorporate this assumption by a slight abuse of notation, writing $\chi^2_\ve(w)=\chi_\ve(y)\chi_\ve(v)$. With this assumption, the only term depending on $v$ in the previous identity for $Q^{0,\mathsf{nh}\varepsilon}_\lambda(z,s,z',s')$ is $e^{-i\la 4 (\sigma\tau-\sigma'\tau') v} \chi_\ve(v).$ Since  
\[
\int_{\R} \, e^{-i\la 4 (\sigma\tau-\sigma'\tau') v} \chi_\ve(v)\, dv=\widehat{\chi_\ve}\big(4\la(\sigma\tau-\sigma'\tau')\big),
\] 
we may simplify the previous expression for $ Q^{0,\mathsf{nh},\varepsilon}_\lambda$ to 
\begin{multline*}
 Q^{0,\mathsf{nh}\varepsilon}_\lambda(z,s,z',s') = \tilde \chi_{\varepsilon}(z) \, \tilde\chi_{\varepsilon}(z') \, \lambda^{d + 2} \\
 \times \int_{\R^{d_1}} \int_\R \, \int_0^\infty \int_\R \, \int_0^\infty e^{i\lambda\Psi_1} \chi_{\varepsilon}(y) \,\widehat{\chi_\ve}\big( 4\lambda(\sigma\tau-\sigma'\tau')\big) \\
  \times a_\lambda(s\sigma) \, \eta_{0,\mathsf{nh}}(\tau)  \,\overline{a_\lambda(s'\sigma')} \,\eta_{0,\mathsf{nh}}(\tau')\,  d\sigma \, d\tau \, d\sigma ' \, d\tau'\, d y, \label{eq:Q0-v}
\end{multline*}
where
\begin{align*}
\Psi_1 & = \sigma\big(s-\left|x-y\right|^2g(\tau) + 4 \tau (u + \tfrac12 \langle Jx,y\rangle)\big)\\
&\quad-\sigma'\big(s'-\left|x'-y\right|^2g(\tau') + 4 \tau'(u' + \tfrac12 \langle Jx',y\rangle)\big).
\end{align*}
We change variables from $\tau$ to $\vartheta$, defined by 
$$
\tau = \tfrac{\sigma'}\sigma \tau' + \vartheta, \quad\text{i.e., } \  \sigma\tau-\sigma'\tau' = \sigma\vartheta.
$$
Next, we decompose $\Psi_1$ as $\Psi_1 = \Psi_2 + \Psi_3$,
where
\[
\Psi_2 = \sigma \left|x-y\right|^2\big(g(\tfrac{\sigma'}\sigma \tau')-g(\tfrac{\sigma'}\sigma \tau' + \vartheta)\big) + 
4 \sigma\vartheta (u + \tfrac12\langle J x,y\rangle),
\]
and
\begin{align*}
\Psi_3 & = \sigma\big(s-\left|x-y\right|^2g(\tfrac{\sigma'}\sigma \tau') + 4 \tfrac{\sigma'}{\sigma}\tau'(u + \tfrac12\langle J x,y\rangle)\big)\\
&\quad -\sigma'\big(s'-\left|x'-y\right|^2g(\tau') + 4 \tau'(u' + \tfrac12\langle J x',y\rangle\big)).
\end{align*}
Note that if we choose $\ve>0$ sufficiently small in \eqref{eq:T0lambdas}, then, due to our support assumption on $a_\la$, we may assume that $\sigma'/\sigma$ is very close to $1,$ so that the argument $\tfrac{\sigma'}\sigma \tau'$ will still stay away from singularities of $g$. 

Let $h_\lambda$ be given by
\[
h_\lambda(\sigma,\sigma',\tau',x,y,u) = \lambda \int_{\R} \, e^{i\lambda\Psi_2} \, \eta_{0,\mathsf{nh}}(\tfrac{\sigma'}\sigma \tau' + \vartheta) \, \widehat{\chi_\ve}(4\lambda\sigma \vartheta) \, d\vartheta.
\]
Then $h_\lambda$ is smooth, and  each partial derivative of $h_\lambda$ is uniformly bounded, if $\sigma,\sigma' \sim 1$, $ |\tau'|,|y| \lesssim 1$ and $\lambda\gg 1$.  Indeed, for $|\vartheta|\gg 1,$ this smoothness is clear since $\widehat{\chi_\ve}\in \cS$, and for $|\vartheta|\lesssim 1$, this can be seen by writing $g(\tfrac{\sigma'}\sigma \tau')-g(\tfrac{\sigma'}\sigma \tau' + \vartheta)=\vartheta \,G(\sigma,\sigma',\tau',\vartheta)$ and scaling in $\vartheta$ by $\la^{-1}.$ 

Thus, the previous expression for  $Q^{0,\mathsf{nh}\varepsilon}_\lambda(z,s,z',s')$ becomes
\begin{multline*}
 Q^{0,\mathsf{nh},\varepsilon}_\lambda(z,s,z',s') =
\tilde \chi_{\varepsilon}(z) \, \tilde\chi_{\varepsilon}(z') \,\lambda^{d+1} \\
\times \int_\R \, \int_0^\infty \! \int_0^\infty \! \int_{\R^{d_1}}e^{i\lambda\Psi_3} \, a_\lambda(s\sigma)  \,\overline{a_\lambda(s'\sigma')} \,\eta_{0,\mathsf{nh}}(\tau')  \, 
\chi_{\varepsilon}(y) \, h_\lambda \, dy \, d\sigma \, d\sigma' \, d\tau'.
\end{multline*}
This finishes the proof of Lemma~\ref{lem:0-TTstar}, since $\Psi=\Psi_3.$
\end{proof}

\section{The non-horizontal part} \label{sec:k=0-nonhor}

In this section, we prove Proposition~\ref{L:T0lambda} with $T^{0,\mathsf{nh}}_{\lambda,t}$ in place of $T^0_{\lambda,t}$.

\subsection{The key estimates for the non-horizontal part}

Given $x,x'\in \R^{d_1}$, we split coordinates as $x = (x_1,\underline x)$ and $x' = (x_1',\underline x')$. For fixed $x_1\in \R$, let $T_{\lambda,x_1}^{0,\mathsf{nh},\varepsilon}:\scriptS(\R^{d})\to \scriptS(\R^{d})$ be the operator given by
\[
(T_{\lambda,x_1}^{0,\mathsf{nh}, \varepsilon} \varphi)(\underline x,u,s) = (T_{\lambda}^{0,\mathsf{nh},\varepsilon} \varphi)(x_1,\underline x,u,s).
\]
If we write $f(x',u',s') = f_{x_1'}(\underline x',u',s')$, then
\begin{equation}\label{eq:x_1-x_1'}
\big(T^{0,\mathsf{nh},\varepsilon}_\la (T^{0,\mathsf{nh},\varepsilon}_\la)^*f\big)(x,u,s) = \int_\R \, \big( T_{\lambda,x_1}^{0,\mathsf{nh},\varepsilon} (T_{\lambda,x_1'}^{0,\mathsf{nh},\varepsilon} )^* \, f_{x_1'}\big) (\underline x,u,s) \, dx_1'.
\end{equation}
Let $Q_{\lambda,x_1,x_1'}^{0,\mathsf{nh},\varepsilon}$ denote the integral kernel of the operator $T_{\lambda,x_1}^{0,\mathsf{nh},\varepsilon} (T_{\lambda,x_1'}^{0,\mathsf{nh},\varepsilon} )^*$. Note that
\begin{equation}\label{eq:frozen-Q}
Q_{\lambda,x_1,x_1'}^{0,\mathsf{nh},\varepsilon}(\underline x,u,s,\underline x',u',s')
= Q_\lambda^{0,\mathsf{nh},\varepsilon}(x,u,s,x',u',s').    
\end{equation}

In the subsequent discussions, we shall always assume  the constant $\ve>0$ to be  sufficiently small, usually without further mentioning.

\begin{proposition}\label{lem:0-L1} 
We have
\[
\|T_{\lambda,x_1}^{0,\mathsf{nh},\varepsilon} (T_{\lambda,x_1'}^{0,\mathsf{nh},\varepsilon} )^* \|_{L_{(\underline x,u)}^{1}(L^2_s)\to L_{(\underline x,u)}^\infty(L^2_s)}
\lesssim \lambda^{d-1} \, \langle\lambda\left|x_1'-x_1\right|\rangle^{-\frac{d-2}{2}} ,
\]
with control of the implicit constant in this estimate as in  Proposition~\ref{L:T0lambda}.
\end{proposition}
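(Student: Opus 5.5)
The estimate amounts to a pointwise kernel bound. By \eqref{eq:frozen-Q}, the operator $T_{\lambda,x_1}^{0,\mathsf{nh},\varepsilon}(T_{\lambda,x_1'}^{0,\mathsf{nh},\varepsilon})^*$ has kernel $Q_\lambda^{0,\mathsf{nh},\varepsilon}(x,u,s,x',u',s')$. For an operator with kernel $Q(z,s,z',s')$ acting $L^1_z(L^2_s)\to L^\infty_z(L^2_s)$, the norm is bounded by $\sup_{z,z'}\|Q(z,\cdot,z',\cdot)\|_{L^2_s\to L^2_s}$. Since $s,s'$ range over $[1,1+\ve]$, it suffices to show
\[
|Q_\lambda^{0,\mathsf{nh},\varepsilon}(x,u,s,x',u',s')|\lesssim \lambda^{d-1}\langle\lambda|x_1-x_1'|\rangle^{-\frac{d-2}{2}}
\]
uniformly in all remaining variables. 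Hilbert--Schmidt then gives the $L^2_s$ operator bound. (A slightly better route would use an oscillatory $s$-dependence, but a pointwise bound is enough here.)

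To bound $Q$, I would start from the representation \eqref{eq:Q0} in Lemma~\ref{lem:0-TTstar}. It has prefactor $\lambda^{d+1}$ and an integral over $(\sigma,\sigma',\tau',y)$ of dimension $d_1+3=d+2$, with phase $\lambda\Psi$ and a harmless amplitude $h_\lambda$. The $y$-dependence of $\Psi$ is
\[
-\sigma|x-y|^2 g(\tfrac{\sigma'}{\sigma}\tau')+\sigma'|x'-y|^2 g(\tau')+2\sigma'\tau'\langle J(x-x'),y\rangle .
\]
The quadratic part in $y$ has coefficient $\sigma' g(\tau')-\sigma g(\tfrac{\sigma'}{\sigma}\tau')$. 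Since $\sigma'/\sigma\approx1$ and $|\tau'|\le\frac38\pi$ (the non-horizontal region, where $g\sim1$ and $g(\tau)-\tau g'(\tau)=1+\tau^2/\sin^2\tau\cdot(\ldots)$ stays away from $0$), this coefficient is comparable to $(\sigma'-\sigma)$ times a nonvanishing factor. I would therefore not do stationary phase in all $y$ directions at once. Instead I would treat $y_1$ separately from $\underline y$.

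The gradient in $\underline y$ is $2\sigma g(\cdot)(\underline x-\underline y)-2\sigma' g(\tau')(\underline x'-\underline y)+2\sigma'\tau'(J(x-x'))_{\underline y}$. The term $J(x-x')$ contains $x_1-x_1'$ in the $y_2$-slot. I expect the decay $\langle\lambda|x_1-x_1'|\rangle^{-(d-2)/2}$, which is $d_1-1=d-2$ half-powers, to come from stationary phase in the $d_1-1$ variables $\underline y$, combined with the $(\sigma,\sigma')$ integration. Concretely:

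\emph{Step 1.} Integrate out $\sigma$ and $s$-related directions. The derivative $\partial_\sigma\Psi$ contains $s$ plus bounded terms. Combined with $\partial_{\sigma'}$, this localizes $\sigma-\sigma'$, costing at most a factor $\lambda^{-1}$ from one of the $\sigma$-integrals or nothing. I would aim to show that the $(\sigma,\sigma',\tau',y_1)$ integration contributes $O(\lambda^{-2})$. This accounts for $\lambda^{d+1}\cdot\lambda^{-2}=\lambda^{d-1}$, using stationary phase in the two variables $(\tau',y_1)$ with nondegenerate Hessian. The mixed derivative $\partial_{\tau'}\partial_{y_1}\Psi\approx 2\sigma'(x_1-y_1)g'(\tau')$ is nonzero because $|x_1-y_1|\sim1$ and $g'\neq0$ for $\tau'\neq0$. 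Near $\tau'=0$, $g'$ vanishes, and I would instead use $\partial^2_{\tau'}$ and the $u$-term.

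\emph{Step 2.} Treat the remaining $\underline y$-integral. If $\lambda|x_1-x_1'|\lesssim1$ the trivial bound suffices. Otherwise, I would show that the Hessian in $\underline y$ of the full phase, after eliminating $(\tau',y_1)$ at the critical point, has all $d_1-1$ eigenvalues $\gtrsim|x_1-x_1'|$. Stationary phase then yields $(\lambda|x_1-x_1'|)^{-(d-2)/2}$.

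The main obstacle is Step 2: verifying that the effective $\underline y$-Hessian is $\gtrsim|x_1-x_1'|$ in every direction, uniformly. This is a rank-$(d_1-1)$ curvature statement corresponding to the cone condition in Appendix~\ref{cinecurv}. I would first try computing the critical point $\tau'=\tau'(x,x',y,\dots)$ from $\partial_{y_1}\Psi=0$ and Taylor expanding in $x_1-x_1'$. The $J$-term supplies rotational curvature in the $y_2$ direction, and the coefficient $\sigma'g(\tau')-\sigma g(\frac{\sigma'}{\sigma}\tau')$ together with the $\tau'$-dependence supplies it in $y_3,\dots,y_{d_1}$. Excluding a neighborhood of $\tau'=0$ through a separate non-stationary argument may be necessary.
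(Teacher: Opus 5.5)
Your proof reduces the proposition to a pointwise bound on the kernel, and that bound is false. The claim $|Q^{0,\mathsf{nh},\varepsilon}_\lambda|\lesssim\lambda^{d-1}\langle\lambda|x_1-x_1'|\rangle^{-\frac{d-2}{2}}$ fails by a full factor $\lambda$ already on the diagonal. Indeed
\[
Q^{0,\mathsf{nh},\varepsilon}_\lambda(z,s,z,s)=\tilde\chi_\varepsilon(z)^2\int_{\mathbb{H}_n}\bigl|K^{0,\mathsf{nh}}_{\lambda,s}(w^{-1}z)\bigr|^2\chi_\varepsilon^2(w)\,dw .
\]
Here $K^{0,\mathsf{nh}}_{\lambda,s}$ has size about $\lambda^{\frac{d+1}{2}}$ on a $\lambda^{-1}$-neighbourhood of its singular support. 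So this quantity is of order $\lambda^{d}$ whenever $z^0$ lies on that support, which is allowed, and the bound must be uniform in $z^0$.

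The phase \eqref{eq:Psi} shows where your count goes wrong. For $x=x'$, $u=u'$ and $\sigma'=\sigma(1+b)$ it equals $\sigma s-\sigma's'+|x-y|^2\bigl(\sigma b\,(\tau')^2/\sin^2\tau'+O(b^2)\bigr)$. So all dependence on $y$ and $\tau'$ disappears at $b=0$. The $(\sigma,\sigma',\tau',y)$-integral therefore gains only one factor $\lambda^{-1}$, not $\lambda^{-2}$: the $b$-integration confines $|x-y|$ to a shell of width $\lambda^{-1}$.

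In particular,
\[
\partial_{\tau'}\partial_{y_1}\Psi=2\sigma'(x_1-y_1)g'(\tfrac{\sigma'}{\sigma}\tau')-2\sigma'(x_1'-y_1)g'(\tau')+2\sigma'(J\Delta x)_1 ,
\]
which vanishes at $\Delta x=0$, $\sigma=\sigma'$. You kept only the first term and dropped the cancelling $x'$-term. For the same reason, the Cartesian $y$-Hessian is $2\bigl(\sigma'g(\tau')-\sigma g(\tfrac{\sigma'}{\sigma}\tau')\bigr)I$, of size $|\sigma'-\sigma|$. This has no relation to $|\Delta x_1|$ unless you first localize to $\sigma'/\sigma-1\sim|\Delta x|$, so Step 2 has no footing either.

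The missing factor $\lambda^{-1}$ must therefore come from integrating in $s$ or $s'$. Hilbert--Schmidt is not enough: on the diagonal $Q$ behaves like $\lambda^d\langle\lambda(s-s')\rangle^{-N}$, whose Hilbert--Schmidt norm on $[1,1+\varepsilon]^2$ is $\sim\lambda^{d-\frac12}$. The paper proves $Q=Q_1+E$, with $E$ rapidly decaying in $\lambda|\Delta x|$ and
\[
|Q_1|\lesssim\lambda^{d}\langle\lambda|\Delta x|\rangle^{-\frac{d_1-1}{2}}\int\bigl\langle\lambda\bigl(\Delta s-q(s,\Delta x,\Delta w,\tau')\bigr)\bigr\rangle^{-N}\eta_{0,\mathsf{nh}}(\tau')\,d\tau',\qquad \partial_s^jq=O(\varepsilon).
\]
Since $\partial_s(\Delta s-q)\sim1$, both $\sup_s\int|Q|\,ds'$ and $\sup_{s'}\int|Q|\,ds$ are $\lesssim\lambda^{d-1}\langle\lambda|\Delta x|\rangle^{-\frac{d_1-1}{2}}$. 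Schur's test in $(s,s')$, together with $|\Delta x|\ge|\Delta x_1|$, then gives the claim.

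The decay does not come from a $\underline y$-Hessian. It comes from polar coordinates $y-x=\rho\omega$, in which the $\omega$-dependence of $\Psi$ is linear:
\[
2\rho\langle\omega,\sigma'\tau'(\cot\tau'\,\Delta x+J\Delta x)\rangle .
\]
The vector here has length $\sigma'|\Delta x|\,\tau'/\sin\tau'$, so stationary phase on $S^{d_1-1}$ gives $\langle\lambda|\Delta x|\rangle^{-\frac{d_1-1}{2}}$ directly. The paper then substitutes $\sigma'/\sigma=1+|\Delta x|\beta$, with Jacobian $|\Delta x|$. Stationary phase in $\rho$ and in $\beta\sim1$ gives $(\lambda|\Delta x|)^{-1}$, which cancels the factor $\lambda|\Delta x|$; the other $\beta$-ranges are handled by integrating by parts in $\rho$. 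The remaining $\sigma'$-integral produces the localization $\langle\lambda(\Delta s-q)\rangle^{-N}$ that Schur's test exploits.
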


\begin{proposition}\label{lem:0-L2}
We have
\[
\|T_{\lambda,x_1}^{0,\mathsf{nh},\varepsilon} (T_{\lambda,x_1'}^{0,\mathsf{nh},\varepsilon} )^*\|_{L^2(\R^{d})\to L^2(\R^{d})} \lesssim 1,
\]
with control of the implicit constant in this estimate as in Proposition~\ref{L:T0lambda}.
\end{proposition}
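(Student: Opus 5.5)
The plan is to reduce Proposition~\ref{lem:0-L2} to a uniform $L^2$ bound for the frozen operator itself. Since $\|T_{\lambda,x_1}^{0,\mathsf{nh},\varepsilon}(T_{\lambda,x_1'}^{0,\mathsf{nh},\varepsilon})^*\|_{2\to2}\le \|T_{\lambda,x_1}^{0,\mathsf{nh},\varepsilon}\|_{2\to 2}\,\|T_{\lambda,x_1'}^{0,\mathsf{nh},\varepsilon}\|_{2\to2}$, it suffices to prove
\[
\sup_{x_1}\ \|T_{\lambda,x_1}^{0,\mathsf{nh},\varepsilon}\|_{L^2(\R^d_{(y,v)})\to L^2(\R^{d}_{(\underline x,u,s)})}\lesssim 1 .
\]
This avoids the kernel $Q^{0,\mathsf{nh},\varepsilon}_\lambda$ of Lemma~\ref{lem:0-TTstar} altogether.

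By \eqref{eq:Klats-formula}, $T_{\lambda,x_1}^{0,\mathsf{nh},\varepsilon}$ is an oscillatory integral operator from $d$ variables $w=(y,v)$ to $d$ variables $z=(\underline x,u,s)$. It has $N=2$ frequency variables $\theta=(\sigma,\tau)$ and kernel
\[
\lambda^{\frac{d+2}{2}}\,\tilde\chi_\varepsilon\,\chi_\varepsilon \int\!\!\int e^{i\lambda\varphi(z,w,\theta)}\, a_\lambda(s\sigma)\,\eta_{0,\mathsf{nh}}(\tau)\,d\sigma\,d\tau ,
\]
where
\[
\varphi=\sigma\big(s-|x-y|^2g(\tau)+4\tau(u-v+\tfrac12\langle Jx,y\rangle)\big),\qquad x=(x_1,\underline x)\ \text{frozen in } x_1 .
\]
The prefactor $\lambda^{n+\frac32}=\lambda^{(d+N)/2}$ is exactly the normalization for which Hörmander's $L^2$ theorem gives an $O(1)$ bound, provided the canonical relation $\{(z,\varphi_z;w,-\varphi_w):\varphi_\theta=0\}$ is a local canonical graph. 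Equivalently, the bordered Hessian
\[
\mathcal M=\begin{pmatrix}\varphi_{zw}&\varphi_{z\theta}\\ \varphi_{\theta w}&\varphi_{\theta\theta}\end{pmatrix}
\]
must be nondegenerate on the critical set $\{\varphi_\sigma=\varphi_\tau=0\}$, uniformly on the supports. Uniformity in $x_1$ and in the $C^M$ norms of $a_\lambda$ is then automatic by compactness. All other factors are smooth, compactly supported bounded amplitudes (note $\sigma\sim1$).

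The main step, and the expected obstacle, is the computation of $\det\mathcal M$. On the supports we have $|x_1-y_1|\sim1$, $|\underline x-\underline y|\ll1$, $\sigma\sim1$ and $|\tau|\le\frac38\pi$. I would first use the critical equations: $\varphi_\sigma=0$ fixes $s$, and $\varphi_\tau=0$ gives $4(u-v+\frac12\langle Jx,y\rangle)=|x-y|^2g'(\tau)$. The columns of $\mathcal M$ coming from $s$ and $u$ are simple: $\partial_s\varphi=\sigma$ and $\partial_u\varphi=4\sigma\tau$, with $\partial_{uv}\varphi=0$. These columns eliminate the $\sigma$-row and $v$-direction, leaving a reduced matrix. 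Its entries involve $\partial_{\underline x\underline y}$ of $-\sigma|x-y|^2g(\tau)+2\sigma\tau\langle Jx,y\rangle$, together with the $\tau$-derivatives
\[
\partial_\tau\partial_{\underline y}\varphi=2\sigma g'(\tau)(\underline x-\underline y)+2\sigma(J^{\intercal}x)^{\underline{\ }},\qquad
\partial_\tau\partial_{y_1}\varphi=2\sigma g'(\tau)(x_1-y_1)+\dots,
\]
and $\varphi_{\tau\tau}=-\sigma|x-y|^2g''(\tau)$.

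Using $x^0=(|x^0|,0,\dots,0)$ and smallness of $\varepsilon$, I would evaluate the determinant at the model configuration $\underline x=\underline y=0$, $x=(x_1,0)$, $y=(y_1,0)$ and then argue by perturbation. There the matrix becomes nearly block diagonal: the $(\underline x,\underline y)$ block is $2\sigma g(\tau)I$ plus the twist $2\sigma\tau J$ restricted. Its determinant is a power of $4\sigma^2(g(\tau)^2+\tau^2)\ne0$ on each $2\times2$ block, except for the block pairing with $x_2$, which mixes with $y_1$ and $\tau$. For that remaining small block I expect the determinant to be a nonzero multiple of
\[
(x_1-y_1)^2\big(g(\tau)g''(\tau)\ \text{-type combination}\big)\pm\tau\,g'(\tau)\text{-type terms}.
\]
Its nonvanishing for $|\tau|\le\frac38\pi$ is precisely the non-horizontal (cone/cinematic curvature) condition discussed in the introduction and Appendix~\ref{cinecurv}, since it degenerates only near $\tau=\pm\frac\pi2$. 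If this direct evaluation becomes messy, I would instead solve the critical equations for $(s,u)$ as functions of $(\underline x,y,v,\tau)$. This exhibits the canonical relation as parametrized by $(w,\eta)$, and I would then check that the map $(w,\eta)\mapsto(z,\zeta)$ has invertible differential. With either route, Hörmander's theorem yields the uniform bound, and hence the proposition.
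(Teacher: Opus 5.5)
Your route is valid and differs from the paper's. You bound each frozen operator $T^{0,\mathsf{nh},\ve}_{\la,x_1}$ on $L^2$ by H\"ormander's theorem, with the two frequency variables $(\sigma,\tau)$ and normalization $\la^{n+\frac32}=\la^{(d+2)/2}$, and then use $\|T_{x_1}T_{x_1'}^*\|\le\|T_{x_1}\|\,\|T_{x_1'}\|$. The one step you left open is the bordered Hessian. Your guess for its form is off, although your conclusion is right. With rows $(\underline x,u,s,\sigma,\tau)$ and columns $(y,v,\sigma,\tau)$,
\[
\mathcal M=\begin{pmatrix}
\varphi_{\underline x y}&0&\varphi_{\underline x\sigma}&\varphi_{\underline x\tau}\\
0&0&4\tau&4\sigma\\
0&0&1&0\\
\varphi_{\sigma y}&-4\tau&0&\varphi_{\sigma\tau}\\
\varphi_{\tau y}&-4\sigma&\varphi_{\tau\sigma}&\varphi_{\tau\tau}
\end{pmatrix}.
\]
Expand along the $s$-row and then the $u$-row. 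This removes the $\sigma$- and $\tau$-columns, so $\varphi_{\tau\tau}$ (hence $g''$) and $\varphi_{\sigma\tau}$ never enter. Next subtract $\frac\tau\sigma$ times the $\tau$-row from the $\sigma$-row and expand along the $v$-column. Since $\varphi_{\sigma y}-\frac\tau\sigma\varphi_{\tau y}=2(g-\tau g')(x-y)=2h(\tau)(x-y)$, with $h$ as in \eqref{htau}, this gives
\[
|\det\mathcal M|=16\sigma^2\,\Big|\det\begin{pmatrix}2\sigma\big(g(\tau)I+\tau J^{\intercal}\big)_{\underline x,y}\\ 2h(\tau)\,(x-y)^{\intercal}\end{pmatrix}\Big| .
\]
Here $\varphi_{\tau y}=\sigma(2g'(\tau)(x-y)+2Jx)$, with $Jx$ rather than $J^\intercal x$; the $Jx$ terms cancel. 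Now expand along the last row, using $|x_1-y_1|\sim1$ and $|\underline x-\underline y|\lesssim\ve$. The $\underline x\times\underline y$ block of $gI+\tau J^\intercal$ has determinant $g(\tau)(g(\tau)^2+\tau^2)^{n-1}$. Hence
\[
|\det\mathcal M|=c_n\,\sigma^{d_1+1}\,h(\tau)\,|x_1-y_1|\,g(\tau)\big(g(\tau)^2+\tau^2\big)^{n-1}+\mathcal O(\ve)\sim 1
\]
for $|\tau|\le\frac38\pi$, where $g(\tau)=\tau\cot\tau\gtrsim1$. The only degeneration is $g(\tau)=0$, i.e.\ the horizontal parameters $\tau=\pm\frac\pi2$, exactly as in Remark~\ref{breakdown}. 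The determinant is nonvanishing on the whole support, not just on the critical set, so you need neither the critical equations nor your alternative parametrization.

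The paper instead works with the composed operator $T^{0,\mathsf{nh},\ve}_{\la}(T^{0,\mathsf{nh},\ve}_{\la})^*$. It uses the kernel of Lemma~\ref{lem:0-TTstar}, in which $v$ and the difference variable $\vartheta$ have been integrated out into the amplitude $h_\la$. That leaves a phase $\Psi$ with $d+2$ frequency variables $(\sigma,\sigma',\tau',y)$, and Lemma~\ref{lem:0-det} shows its Monge--Amp\`ere determinant is $\sim1$. By the same kind of cofactor expansions, that determinant reduces to a factor comparable to $|x_1'-y_1|\,h(\tau')\,|x_1-y_1|\,h(\tfrac{\sigma'}{\sigma}\tau')$ times $|\det A|\,|\det B|$. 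Here $A,B$ are perturbations of multiples of $gI\pm\tau\underline J$, so this is in effect the product of your determinant for $T_{\la,x_1}$ and the one for $T^*_{\la,x_1'}$.

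Your factorization is shorter and avoids the auxiliary amplitude $h_\la$. The paper's version reuses the kernel already needed for the pointwise bound in Proposition~\ref{lem:0-L1}. It also sets up the template for $k\ge1$. There the bordered Hessian of a single frozen operator is only of size $\delta$, and the $TT^*$ structure recovers the bound $\delta^{-1}$: the localization $|\eta-\eta'|\lesssim\la^{-1}$ from the $v$-integration, the partial Fourier transform in $u$, and the rescaling in $s$.
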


We postpone the proofs of Propositions \ref{lem:0-L1} and \ref{lem:0-L2} for the moment and show how they imply the estimates in Proposition~\ref{L:T0lambda} for  
$T_\lambda^{0,\mathsf{nh},\varepsilon}$.

\begin{proof}[Proof of Proposition~\ref{L:T0lambda} for $T_\lambda^{0,\mathsf{nh},\varepsilon}$]
Since $p> 2,$ using \eqref{eq:x_1-x_1'} and  Minkowski's inequality, we obtain
\[
\|T_\lambda^{0,\mathsf{nh},\varepsilon} (T_\lambda^{0,\mathsf{nh},\varepsilon})^* f\|_{L_{(x,u)}^p(L_s^2)}
 \le \Big\| \int_\R \, \big\| T_{\lambda,x_1}^{0,\mathsf{nh},\varepsilon} (T_{\lambda,x_1'}^{0,\mathsf{nh},\varepsilon} )^* \, f_{x_1'} \big\|_{L_{(\underline x,u)}^p(L_s^2)} \, dx_1' \Big \|_{L_{x_1}^p}.
\]
Interpolating the estimates of Propositions \ref{lem:0-L1} and \ref{lem:0-L2}, we get
\[
\|T_{\lambda,x_1}^{0,\mathsf{nh},\varepsilon} (T_{\lambda,x_1'}^{0,\mathsf{nh},\varepsilon} )^* f_{x_1'}\|_{L_{(\underline x,u)}^p(L^2_s)}
\lesssim \lambda^{(d-1)(1-\frac 2 p)} \, \langle\lambda\left|x_1'-x_1\right|\rangle^{-\gamma_p} \, \|f_{x_1'}\|_{L_{(\underline x,u)}^{p'}(L^2_s)},
\]
where $\gamma_p = \frac{d-2}2 (1-\frac 2 p)$. 

Assume first that $p = \frac{2d}{d-2}.$ Then $(d-1)(1-\frac 2 p)-\gamma_p =\frac d2(1-\frac 2 p)=1,$  and if we choose $q$ so that  $1+\frac 1p=\frac 1q+\frac 1{p'},$ then $q=\frac p 2>1,$ and so $q\gamma_p =\frac{d-2}2 (\frac p2-1)=1.$ Moreover,
$2\big(d(\frac 12 -\frac 1p)-\frac 12\big)=1.$

Thus, applying Hardy--Littlewood--Sobolev's inequality, we get
\begin{multline*}
\lambda^{(d-1)(1-\frac 2 p)} \, \Big\| \int_\R \, \langle\lambda\left|x_1'-x_1\right|\rangle^{-\gamma_p} \, \|f_{x_1'}\|_{L_{(\underline x,u)}^{p'}(L^2_s)} \, dx_1' \Big \|_{L_{x_1}^p} \\
 \lesssim \lambda \|f\|_{L_{(x,u)}^{p'}(L^2_s)}=\lambda^{2\big(d(\frac 12 -\frac 1p)-\frac 12\big)} \|f\|_{L_{(x,u)}^{p'}(L^2_s)},
\end{multline*}
hence
\[
\| T_\lambda^{0,\mathsf{nh},\varepsilon} \|_{L_{(x,u)}^2 \to L_{(x,u)}^p(L^2_s) } \lesssim \lambda^{d(\frac 12 -\frac 1p)-\frac 12},
\]
which proves \eqref{eq:T0lambdas} for this critical Lebesgue exponent $p$, with $\alpha=d(\frac 12 -\frac 1p)-\frac 12.$ 

For $p=\infty,$ since we may assume that $|x|,|x'|\lesssim 1,$ and since $\langle\lambda\left|x_1'-x_1\right|\rangle^{-\gamma_p}\le 1$, we can trivially estimate 
$$\lambda^{(d-1)(1-\frac 2 p)} \, \Big\| \int_\R \, \langle\lambda\left|x_1'-x_1\right|\rangle^{-\gamma_p} \, \|f_{x_1'}\|_{L_{(\underline x,u)}^{p'}(L^2_s)} \, dx_1' \Big \|_{L_{x_1}^p}\lesssim \lambda^{d-1} \|f\|_{L_{(x,u)}^{1}(L^2_s)},
$$
so that \eqref{eq:T0lambdas}  holds as well. 

Then \eqref{eq:T0lambdas} follows by interpolation, which completes the proof of Proposition~\ref{L:T0lambda} with $T_\lambda^{0,\mathsf{nh},\varepsilon}$ in place of $T_\lambda^{0,\varepsilon}$.  
\end{proof}

\subsection{Pointwise estimates} \label{pointwise0}

In this section, we prove Proposition~\ref{lem:0-L1}. Recall from Lemma~\ref{lem:0-TTstar} that the integral kernel $Q^{0,\mathsf{nh},\varepsilon}_\lambda$ of $T^{0,\mathsf{nh},\varepsilon}_\lambda (T^{0,\mathsf{nh},\varepsilon}_\lambda)^*$ is given by
\begin{multline*}
Q^{0,\mathsf{nh},\varepsilon}_\lambda(x,u,s,x',u',s')
 = \lambda^{d + 1} \, \chi_{\varepsilon}(x-x^0,u-u^0) \, \chi_{\varepsilon}(x'-x^0,u'-u^0) \\
\times \int_\R \, \int_0^\infty \! \int_0^\infty \! \int_{\R^{d_1}}e^{i\lambda\Psi} 
\,a_\lambda(s\sigma)  \,\overline{a_\lambda(s'\sigma')}  \, 
\chi_{\varepsilon}(y) \, h_\lambda \, dy \, d\sigma \, d\sigma' \, \eta_{0,\mathsf{nh}}( \tau')\,  d\tau',
\end{multline*}
where $\eta_{0,\mathsf{nh}}$ is supported in $[-\frac 3{8} \pi, \frac 3{8} \pi]$\, with phase
\begin{align*}
\Psi = \sigma s - \sigma's' & - \left|x-y\right|^2\sigma'\tau' \cot(\tfrac{\sigma'}\sigma \tau') \\ 
& + \left|x'-y\right|^2\sigma' \tau' \cot \tau' + 4\sigma'\tau'(u-u' + \tfrac12\langle J(x-x'),y \rangle).
\end{align*}

The localizations given by the cut-offs $\chi_\ve$ allow us to assume in the subsequent discussions that  $|(x,u)-(x^0,u^0)|\le \ve$, $|(x',u')-(x^0,u^0)|\le \ve$, and $|y|\le \ve.$ We recall that $|(x^0,u^0)|\le 10$,   $|x^0|\ge\frac{1}{100},$ and  $x^0=(|x^0|,0, \dots,0)$.

\medskip

Proposition~\ref{lem:0-L1} will follow immediately from the next proposition   by means of Schur's test, since $|\Delta x| \geq |\Delta x_1|$.
\begin{proposition}\label{lem:pointwiseQ0} 
If the constant $\ve>0$ is sufficiently small, then for all $s\in[1,1+\ve],$
\begin{equation}\label{sprimeintest}
\int_1^{1+\ve}| Q^{0,\mathsf{nh},\varepsilon}_\lambda(x,u,s,x',u',s')|\, ds'  
\le C(\ve,a_\la)\, \lambda^{d-1}
\, \langle\lambda\left|\Delta x\right|\rangle^{-\frac{d_1-1}2} ,
\end{equation}
and for all $s'\in[1,1+\ve],$
\begin{equation}\label{sintest}
\int_1^{1+\ve}| Q^{0,\mathsf{nh},\varepsilon}_\lambda(x,u,s,x',u',s')|\, ds 
\le C(\ve,a_\la)\, \lambda^{d-1}
\, \langle\lambda\left|\Delta x\right|\rangle^{-\frac{d_1-1}2} ,
\end{equation}
uniformly in $(x,u)$ and $(x',u'),$ 
where the  constant $C(\ve,a_\la)$ can be chosen to be independent of  $(x^0,u^0)$ and to depend only on the $C^M$-norm of $a_\la$ for some sufficiently large $M$.
\end{proposition}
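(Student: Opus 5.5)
The plan is to estimate $Q^{0,\mathsf{nh},\varepsilon}_\lambda$ as an oscillatory integral in the variables $(y,\sigma,\sigma',\tau')$, with $\lambda^{d+1}$ in front. The bound should come from stationary phase in $y$ (giving decay in $\lambda|\Delta x|$), together with a further gain from integrating in $s'$ (resp.\ $s$) against the remaining $\sigma$ or $\sigma'$ oscillation.

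\emph{Step 1: the $y$-integral.} Fix $(\sigma,\sigma',\tau')$ and compute the $y$-gradient of $\Psi$:
\[
\nabla_y\Psi = 2(x-y)\sigma' \tau'\cot(\tfrac{\sigma'}{\sigma}\tau') - 2(x'-y)\sigma'\tau'\cot\tau' + 2\sigma'\tau' J^\intercal(x-x').
\]
Since $\sigma'/\sigma=1+O(\ve)$, the $y$-Hessian is $2\sigma'\tau'(\cot\tau'-\cot(\tfrac{\sigma'}\sigma\tau'))I$, which is of size $|\sigma-\sigma'|$ and therefore degenerate. I would not use it directly. Instead I split the $y$-gradient into a part independent of $y$, namely $2\sigma' g(\tau')(x-x') + 2\sigma'\tau' J^\intercal (x-x') + O(|\sigma-\sigma'|)$, and a part proportional to $y$ with small coefficient. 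The matrix $g(\tau')I+\tau' J^\intercal$ is invertible, with eigenvalue moduli $\sqrt{g^2+\tau'^2}=|\tau'/\sin\tau'|\ge1$. Hence $|\nabla_y\Psi|\gtrsim |\Delta x| - C|\sigma-\sigma'|$.

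\emph{Step 2: case split.} If $|\sigma-\sigma'|\le c|\Delta x|$, I integrate by parts in $y$ to obtain decay $\langle\lambda|\Delta x|\rangle^{-N}$, which is even better than required. The main contribution therefore comes from $|\sigma-\sigma'|\gtrsim|\Delta x|$. There I would change to the variables $\sigma-\sigma'$ and $\sigma'$ and use stationary phase in $y$ with Hessian $\sim\tau'(\sigma-\sigma')$. This gives a factor $\min\bigl(1,(\lambda|\tau'||\sigma-\sigma'|)^{-d_1/2}\bigr)$. The powers must then be arranged so that the $s'$-integration converts one unit of this: $\partial_{\sigma'}\Psi = -s' + O(1)$, and integrating $\int ds'$ against $e^{-i\lambda\sigma' s'}$ together with the $\sigma'$-integral localizes $\sigma'$-frequencies to a window of width $\lambda^{-1}$. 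Concretely, I would integrate by parts in $\sigma'$ after first taking the $s'$-integral inside via Plancherel-type bounds. Equivalently, the $\sigma$- and $\sigma'$-integrals together with $s,s'$ act like a one-dimensional Fourier pairing, costing $\lambda^{-1}$ relative to $\lambda^{d+1}$ and accounting for $\lambda^{d-1}$ after the $\tau'$-integration is also handled.

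\emph{Step 3: the $\tau'$-integral.} The phase derivative $\partial_{\tau'}\Psi$ contains $4\sigma'(\Delta u + \tfrac12\langle J\Delta x,y\rangle)$ plus terms $O(|\Delta x|)$. I would use the resulting one-dimensional oscillation (van der Corput or integration by parts) to gain $\lambda^{-1}$ in the regime where $\Delta u$ dominates. Otherwise the $\tau'$-integration is over a bounded set, and the bookkeeping $\lambda^{d+1}\cdot\lambda^{-1}\cdot\lambda^{-1}$ combined with the stationary phase factor yields $\lambda^{d-1}\langle\lambda|\Delta x|\rangle^{-(d_1-1)/2}$. The loss of one half-power relative to $d_1/2$ is absorbed by integrating the factor $(\lambda|\sigma-\sigma'|)^{-d_1/2}$ over $|\sigma-\sigma'|\gtrsim|\Delta x|$ in one variable.

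\emph{Main obstacle.} I expect the main obstacle to be the uniformity near $\tau'=0$ and near $\sigma=\sigma'$, where the $y$-Hessian degenerates. The plan there is to perform dyadic decomposition in $|\tau'||\sigma-\sigma'|$ and to sum the resulting pieces. The estimate \eqref{sintest} follows symmetrically, interchanging the roles of $(s,\sigma)$ and $(s',\sigma')$.
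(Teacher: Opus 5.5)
Your power counting does not close. It falls short by a factor $(\lambda|\Delta x|)^{1/2}$.

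With your Step~2 bound, integrating in absolute value over $r=|\sigma-\sigma'|$ gives
\[
\lambda^{d+1}\int_{|\Delta x|\lesssim r\lesssim 1}(\lambda r)^{-d_1/2}\,dr\approx\lambda^{d}\,(\lambda|\Delta x|)^{-\frac{d_1-2}{2}}
\]
(with a logarithm when $d_1=2$). After the $s'$-integration this yields $\lambda^{d-1}\langle\lambda|\Delta x|\rangle^{-(d_1-2)/2}$, not $\langle\lambda|\Delta x|\rangle^{-(d_1-1)/2}$. Integrating $(\lambda r)^{-d_1/2}$ over $r\gtrsim|\Delta x|$ costs a full power of $\lambda|\Delta x|$, not a half. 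The $\lambda^{-1}$ it produces cannot also be counted as a separate gain.

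The $\tau'$-integration cannot make up the difference. In $\int|Q|\,ds'$ the $\tau'$-dependence of the phase only shifts the location of the $s'$-window of width $\lambda^{-1}$; the paper simply integrates in $\tau'$ with absolute values.

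The missing idea is that the oscillation in the ratio $\sigma'/\sigma$ must be exploited by stationary phase. Since $|x-y|\sim 1$ on the support, you also have $|\nabla_y\Psi|\gtrsim|\sigma-\sigma'|-C|\Delta x|$. Hence for $|\sigma-\sigma'|\gg|\Delta x|$ there is no critical point and integration by parts gives rapid decay. A $y$-critical point occurs only when $\sigma'/\sigma=1+|\Delta x|\beta$ with $|\beta|\sim 1$.

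The paper makes this substitution (Jacobian $|\Delta x|$), writes $y-x=\rho\omega$, and applies stationary phase in $\omega$, then $\rho$, then $\beta$. The reduced phase is $\sigma'\bigl[\Delta s-|\Delta x|F(s,\beta,|\Delta x|,|\Delta x|\tau')+4\tau'\Delta w\bigr]$ with $F(1,\beta,0,0)=\beta+\beta^{-1}$. This has a nondegenerate critical point with $\partial_\beta^2\sim|\Delta x|$. For $\lambda|\Delta x|\gtrsim1$ the result is $\lambda^{d+1}|\Delta x|\,(\lambda|\Delta x|)^{-(d_1+1)/2}=\lambda^{d}(\lambda|\Delta x|)^{-(d_1-1)/2}$. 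The $\beta$-step supplies exactly the half power you are missing.

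Two smaller corrections.

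First, the $y$-Hessian satisfies
\[
2\sigma'\tau'\bigl(\cot\tau'-\cot(\tfrac{\sigma'}{\sigma}\tau')\bigr)\approx 2\sigma'(\tfrac{\sigma'}{\sigma}-1)(\tau'/\sin\tau')^2 ,
\]
which is of size $|\sigma-\sigma'|$ uniformly in $\tau'$. So there is no degeneracy at $\tau'=0$ and no dyadic decomposition in $|\tau'||\sigma-\sigma'|$ is needed. The only degeneracy, at $\sigma=\sigma'$, is absorbed by the $|\Delta x|$-rescaling.

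Second, the $\lambda^{-1}$ from the $s'$- or $s$-integration cannot come from a Plancherel-type argument, because it is $|Q|$ that is integrated. You need a pointwise bound by $\langle\lambda(\Delta s-q)\rangle^{-N}$, with $q$ independent of $s'$ and $\partial_s^j q=O(\varepsilon)$, so that both $\int ds'$ and $\int ds$ give $\lambda^{-1}$. The paper gets this from the homogeneity of degree one of $\Psi$ in $(\sigma,\sigma')$. In the variables $(\sigma',\beta)$ the phase is $\sigma'$ times a bracket independent of $\sigma'$, so after the stationary-phase steps one integrates by parts in $\sigma'$. In your variables $(\sigma-\sigma',\sigma')$, with $\sigma-\sigma'$ integrated in absolute value, this structure is not available.
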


We will prove Proposition~\ref{lem:pointwiseQ0} by showing the following.

\begin{lemma}\label{lem:pointwiseQ0-precise} 
We can decompose $Q^{0,\mathsf{nh},\varepsilon}_\lambda=Q^{\varepsilon}_{1,\lambda}+E^\ve_\la,$ where the main term 
$Q^{\varepsilon}_{1,\lambda}$ can be estimated pointwise by 
\begin{multline}\label{eq:pointwiseQ0}
|Q^{\varepsilon}_{1,\lambda}(x,u,s,x',u',s')| \le C_N(\ve,a_\la)\, \lambda^d
\, \langle\lambda\left|\Delta x\right|\rangle^{-\frac{d_1-1}2} \\
\times \int_\R \big\langle \lambda\, (\Delta s - q(s,\Delta x,\Delta w,\tau') ) \big\rangle^{-N} \, \eta_{0,\mathsf{nh}}(\tau')\, d\tau'
\end{multline}
for any $N\in\N,$ where 
$$
\Delta x = x-x',\, \Delta w = u-u' +\frac 12\langle J\Delta x,x\rangle \text{ and }\Delta s=s-s',
$$ 
and where $q$ is a smooth function satisfying estimates of the form $\partial^j_s q=\mathcal O(\ve)$ for every $j\in\N.$
Here, the  constant $C_N(\ve,a_\la)$ can be chosen to be independent of  $\tau'$ and to depend, for a given $N\in\N,$  in $a_\la$ only on the $C^M$-norm of $a_\la$ for some sufficiently large $M$ (which may depend on $N.$) 

For every $N\in\N$, for all $s\in[1,1+\ve]$,
\[
\int_1^{1+\ve}| E^\ve_\la(x,u,s,x',u',s')|\, ds'  
\le C_N(\ve,a_\la)\, \lambda^d
\, \langle\lambda\left|\Delta x\right|\rangle^{-N} ,
\]
and for all $s'\in[1,1+\ve]$,
\[
\int_1^{1+\ve}| E^\ve_\la(x,u,s,x',u',s')|\, ds 
\le C_N(\ve,a_\la)\, \lambda^d
\, \langle\lambda\left|\Delta x\right|\rangle^{-N} .
\]
\end{lemma}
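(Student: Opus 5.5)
The plan is to carry out the $y$-integration in \eqref{eq:Q0} by stationary phase, and then to carry out the $(\sigma,\sigma')$-integration in a way that leaves only the $\tau'$-integral together with a factor localizing $\Delta s$ near a smooth function $q$.

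\textbf{Step 1: the $y$-integral.} Write $\Psi=\Psi(y)$ for fixed $(x,u,s,x',u',s',\sigma,\sigma',\tau')$. The $y$-dependent part is
\[
-|x-y|^2\sigma' A+|x'-y|^2\sigma' B+2\sigma'\tau'\langle J\Delta x,y\rangle ,
\qquad A=\tau'\cot(\tfrac{\sigma'}{\sigma}\tau'),\quad B=\tau'\cot\tau'.
\]
This is a quadratic function of $y$ with Hessian $2\sigma'(B-A)I$. Its linear part is $2\sigma'\big(A x-B x'+\tau' J\Delta x\big)$.

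The natural decomposition is to insert a cutoff $\chi\big(\lambda^{1-\delta}(\sigma'/\sigma-1)\big)$ in the ratio variable, with a small $\delta>0$, or better to split according to the size of $|\sigma'-\sigma|$ relative to $|\Delta x|$.

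\emph{Main region.} Here $|\sigma'/\sigma-1|\gtrsim |\Delta x|$. On $\operatorname{supp}\eta_{0,\mathsf{nh}}$ we have $|\tau'|\le \tfrac38\pi$, so $g'$ stays away from $0$ except near $\tau'=0$. Hence $B-A\approx \tau'^2(\sigma'/\sigma-1)\,\cdot(\text{nonvanishing})$, and the Hessian in $y$ is comparable to $|\sigma'-\sigma|$ (with a $\tau'$ weight). Stationary phase in the $d_1$ variables $y$ gives a factor $(\lambda|\sigma-\sigma'|)^{-d_1/2}$, and the critical value produces a phase $\lambda\big(\sigma s-\sigma' s'+\Theta\big)$.

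\emph{Remaining region.} Here the Hessian is small compared to $|\Delta x|$. The gradient in $y$ is then of size $\gtrsim|\Delta x|$, because $\nabla_y\Psi\approx 2\sigma'(B(x-x')+\tau'J\Delta x)$ and $|B\Delta x+\tau'J\Delta x|\ge |B||\Delta x|$ with $B\ge 1-c>0$ on the support. Repeated integration by parts in $y$ then gives $\langle\lambda|\Delta x|\rangle^{-N}$, and this piece goes into $E^\varepsilon_\lambda$. The trivial $\lambda^{d+1}\cdot\lambda^{-1}$ bound for the $s'$-integral supplies the claimed $\lambda^d$: the $\sigma'$ integration against $e^{-i\lambda\sigma' s'}$ is absorbed after integrating in $s'$.

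\textbf{Step 2: the $\sigma$-integrals.} After the stationary phase, the phase in $(\sigma,\sigma')$ has the form
\[
\lambda\big(\sigma(s-\cdots)-\sigma'(s'-\cdots)\big)+\lambda\,\Theta,
\]
where $\Theta$ involves $|\Delta x|^2/(\sigma'-\sigma)$-type terms. The critical point in $\sigma-\sigma'$ sits at $|\sigma-\sigma'|\sim|\Delta x|$. I would change variables to $\sigma'$ and $r=\sigma-\sigma'$.

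Stationary phase in $r$ gives the factor $(\lambda|\Delta x|)^{1/2}$ against $(\lambda|\Delta x|)^{-d_1/2}$, for a net $\langle\lambda|\Delta x|\rangle^{-(d_1-1)/2}$. The remaining $\sigma'$-integral is a Fourier transform of a smooth bump evaluated at $\lambda(\Delta s-q)$. Here $q(s,\Delta x,\Delta w,\tau')$ is obtained from the critical value. It collects $4\tau'\Delta w$ together with the $|x-y_c|^2$ terms, and its $s$-dependence enters only through $a_\lambda(s\sigma)$ and the $O(\varepsilon)$-localized ratio, which yields $\partial_s^j q=O(\varepsilon)$. Integrating by parts in $\sigma'$ gives $\langle\lambda(\Delta s-q)\rangle^{-N}$. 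The amplitude $h_\lambda$ is harmless thanks to its uniform derivative bounds from Lemma~\ref{lem:0-TTstar}. The powers add up to $\lambda^{d+1}\lambda^{-1}=\lambda^d$ from the two oscillatory integrations normalized against each other.

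\textbf{Main obstacle.} I expect the main obstacle to be uniformity near $\tau'=0$. There the Hessian $B-A$ degenerates like $\tau'^2(\sigma'/\sigma-1)$, and $q$ loses its dependence on $\Delta w$. I would handle this by a dyadic decomposition in $|\tau'|$, checking that the loss in the Hessian is compensated by the $\tau'$ measure. If that fails, I would use the variables $(y,\sigma)$ jointly, so that the combined Hessian in $(y,r)$ is nondegenerate; the $\sigma\sigma'$ cross terms coming from $\sigma s-\sigma's'$ supply curvature independent of $\tau'$.
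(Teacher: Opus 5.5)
Your plan follows the paper's route. You freeze $\tau'$, eliminate $y$ by stationary phase, do stationary phase in the rescaled discrepancy between $\sigma$ and $\sigma'$ (critical point at $|\sigma'/\sigma-1|\sim|\Delta x|$), and finally integrate by parts in $\sigma'$, using that the phase is $\sigma'$ times a function of $\sigma'/\sigma$. The only organizational difference is how $y$ is handled. The paper introduces $\beta$ via $\sigma'/\sigma=1+|\Delta x|\beta$ first, writes $y-x=\rho\omega$, and does the $\omega$- and $\rho$-stationary phase separately. You treat $y\in\R^{d_1}$ at once, which is legitimate because the $y$-phase is exactly quadratic with scalar Hessian.

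However, the ``main obstacle'' you identify does not exist; it comes from a miscomputation of that Hessian. With $r=\sigma'/\sigma$ one has $A=\tau'\cot(r\tau')=g(r\tau')/r$, not $g(r\tau')$. So $B-A$ is not $g(\tau')-g(r\tau')\approx-(r-1)\tau'g'(\tau')=O(\tau'^2)$, but
\[
B-A=\tau'\,\frac{\sin((r-1)\tau')}{\sin\tau'\,\sin(r\tau')}
=(r-1)\,\frac{\sin((r-1)\tau')}{(r-1)\tau'}\cdot\frac{\tau'^2}{\sin\tau'\,\sin(r\tau')}.
\]
This is comparable to $r-1$ uniformly for $|\tau'|\le\frac38\pi$ and $r$ near $1$; at $\tau'=0$ it equals $1-1/r$. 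To first order it is $(r-1)\bigl(g-\tau'g'\bigr)(\tau')=(r-1)\tau'^2/\sin^2\tau'$, which is exactly the paper's $\psi_2\sim(\tau'/\sin\tau')^2|\Delta x|\beta$. The linear coefficient is also uniform: $|B\Delta x+\tau'J\Delta x|=\frac{\tau'}{\sin\tau'}|\Delta x|$. Hence the $y$-Hessian, the location of the critical ratio and all stationary phase constants are uniform in $\tau'$. No dyadic decomposition in $\tau'$ or fallback joint Hessian is needed, and the loss of $\Delta w$-dependence of $q$ at $\tau'=0$ is harmless.

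Two smaller corrections. First, the $s$-dependence of $q$ does not come from $a_\lambda(s\sigma)$; amplitudes never enter $q$. Write $\sigma s-\sigma's'=\sigma'\Delta s+(\sigma-\sigma')s$. The term $(\sigma-\sigma')s$ moves the critical point in your difference variable and contributes $O(|\Delta x|)$ to the critical value, which is why $\partial_s^jq=O(|\Delta x|)=O(\varepsilon)$ (compare the paper's $F(s,\beta,a,b)$).

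Second, for the error terms: the use in Proposition~\ref{lem:pointwiseQ0} requires them with $\lambda^{d-1}$ rather than $\lambda^{d}$ when $\lambda|\Delta x|\lesssim1$, and this is what the paper's argument actually gives. Your mechanism yields this if you combine three ingredients:
\begin{itemize}
\item the $\lambda^{-1}$ from the $s'$-integration, after integrating by parts in $\sigma'$ at fixed ratio;
\item the $O(|\Delta x|)$ size of the region $|\sigma'/\sigma-1|\ll|\Delta x|$;
\item for $|\sigma'/\sigma-1|\gg|\Delta x|$, the fact that the $y$-critical point satisfies $|x-y_c|\sim|\Delta x|/|\sigma'/\sigma-1|\ll1$ while $|x-y|\gtrsim1$ on the support, so integration by parts in $y$ gives $(\lambda|\sigma'/\sigma-1|)^{-N}$.
\end{itemize}
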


Before the prove this lemma, we show how Proposition~\ref{lem:pointwiseQ0} is derived from it.

\begin{proof}[Proof of Proposition~\ref{lem:pointwiseQ0}]
We need to show that the  main term $Q^{\varepsilon}_{1,\lambda}$ will satisfy estimates of the form \eqref{sprimeintest} and \eqref{sintest}. For $N\ge 2,$ we clearly have 
$$
\int_1^{1+\ve}  \big\langle\lambda\, (\Delta s - q(s,\Delta x,\Delta w,\tau') ) \big\rangle^{-N} \,ds'\lesssim \la^{-1}.
$$
Noting  that the properties of the function $q$ imply that
$$
\partial_{s}\big(\Delta s - q(s,\Delta x,\Delta w,\tau') \big)=1+\mathcal O(\ve)\sim 1,
$$
the  inverse function theorem allows to see that the  estimate 
$$
\int_1^{1+\ve}  \big\langle\lambda\, (\Delta s - q(s,\Delta x,\Delta w,\tau') ) \big\rangle^{-N} \,ds\lesssim \la^{-1}
$$
holds as well. The estimates \eqref{sprimeintest} and \eqref{sintest} are immediate consequences.
\end{proof}

The rest of this subsection will be  devoted to the proof of Lemma~\ref{lem:pointwiseQ0-precise}.

\subsubsection{Freezing $\tau'$}
We freeze $\tau'\in \supp \eta_{0,\mathsf{nh}},$  and consider the oscillatory integral
\begin{multline}\label{eq:Q0-v-2}
Q^{0,\varepsilon}_{\lambda,\tau'}(x,u,s,x',u',s') := \lambda^{d + 1} \int_{\sigma' \sim 1} \int_{\sigma \sim 1} \int_{\R^{d_1}} e^{i\lambda\Psi} \,a_\lambda(s\sigma)  \,\overline{a_\lambda(s'\sigma')}  \\ 
\times \chi_{\varepsilon}(y) \, h_\lambda \, dy \, d\sigma \, d\sigma'
\end{multline}
(by writing $\sigma\sim 1,$ etc., we wish to remind that $a_\la$ is supported in an interval of length $\ve$ contained in 
$(\frac 1{16},4)$). Note also that 
\begin{multline*}
Q^{0,\mathsf{nh},\varepsilon}_\lambda(x,u,s,x',u',s')
 =  \chi_{\varepsilon}(x-x^0,u-u^0) \, \chi_{\varepsilon}(x'-x^0,u'-u^0)\\
 \times\int_\R Q^{0,\varepsilon}_{\lambda,\tau'}(x,u,s,x',u',s') \, \eta_{0,\mathsf{nh}}( \tau')\,  d\tau'.
\end{multline*}

\subsubsection{Changes of  variables and  stationary phase for a spherical integration}

First, by  a change of variables from $y$ to $y'=y- x$, \eqref{eq:Q0-v-2} becomes
\begin{equation}\label{eq:Q0-y}
\lambda^{d + 1} \int_{\sigma' \sim 1} \int_{\sigma \sim 1} \int_{\R^{d_1}} e^{i\lambda\Psi_1} \,a_\lambda(s\sigma)  \,\overline{a_\lambda(s'\sigma')}  \, 
\chi_{\varepsilon}(y'+x) \,h_{1,\lambda} \, dy' \, d\sigma \, d\sigma',
\end{equation}
where  $ h_{1,\lambda}$ is a  slightly modified version of $h_\la$ sharing the essential  properties with $h_\la,$ with  phase function
\begin{align*}
\Psi_1 = \sigma s - \sigma's'& - \left|y'\right|^2\sigma'\tau' \cot(\tfrac{\sigma'}\sigma \tau') \\
& + \left|y' + \Delta x\right|^2\sigma' \tau' \cot \tau' + 4\sigma'\tau'(\Delta w + \tfrac12\langle J\Delta x,y' \rangle).
\end{align*}
Next, we use polar coordinates in $y'$, that is, $y' = \rho \, \omega$, where $\rho \in (0,\infty)$ satisfies $\rho \sim 1$, since 
$|y'+x^0|<2\ve,$ and where $\omega$ lies in the unit sphere $S^{d_1-1}$ of $\R^{d_1}.$  Denoting by $d\omega$ the surface measure of $S^{d_1-1}$,  by this change of coordinates \eqref{eq:Q0-y} equals
\[
\lambda^{d + 1} \int_{\sigma' \sim 1} \int_{\sigma \sim 1}  \int_{\rho \sim 1}  \int_{S^{d_1-1}} e^{i\lambda \Psi_2} \, \,a_\lambda(s\sigma)  \,\overline{a_\lambda(s'\sigma')}  \, \chi_{\varepsilon}(\rho\omega + x) \, h_{2,\lambda}\, d\omega \, d\rho  \, d\sigma \, d\sigma' ,
\]
with a function $ h_{2,\lambda}$  having analogous properties in these new coordinates to those of $h_{1,\la}, $ and 
where
\begin{multline*}
\Psi_2 = \sigma s - \sigma's'  + \rho^2\big(\sigma' \tau' \cot \tau' -\sigma'\tau' \cot(\tfrac{\sigma'}\sigma \tau')\big) 
 + 2\rho \, \big\langle \omega, \Delta x \, \sigma' \tau' \cot \tau' +\sigma'\tau' J\Delta x\big\rangle \\
 + |\Delta x|^2\sigma' \tau' \cot \tau' + 4 \sigma'\tau' \Delta w.
\end{multline*}
Note that the factor $\chi_{\varepsilon}(\rho\omega + x)$ localizes the integration in $\omega$ to an $\mathcal{O} (\ve)$- neighborhood of the point $-x^0/|x^0|$ in $S^{d_1-1}.$

Finally, we change variables from $\sigma$ to $\beta$ such that
\[
\frac{\sigma'}\sigma = 1 + |\Delta x| \, \beta.
\]
Note that since $a_\la$ is supported in an interval of length $\ve$ within $(\frac{1}{16}, 4)$ and $s,s'\in[1,1+\ve],$ we may assume that $|\Delta x| \, |\beta|\lesssim \ve,$ which will allow us to add a cut-off  factor $\chi_\ve(\left|\Delta x\right|\beta)$ to the amplitude.

We can then write \eqref{eq:Q0-y} as 
\begin{multline}\label{eq:Q0-polar}
\lambda^d (\la |\Delta x|) \int_{\sigma' \sim 1} \int  \int_{\rho \sim 1}  \int_{S^{d_1-1}} e^{i\lambda \Psi_3}  
 \, \overline{a_\lambda(s'\sigma')}  \\ \times \chi_{\varepsilon}(\rho\omega + x) \, \chi_\ve(\left|\Delta x\right|\beta)\,\tilde a_\la\, h_{3,\lambda}\,d\omega \, d\rho  \, d\beta \, d\sigma' ,
\end{multline}
where $\tilde a_\la$ stands for $a_\lambda(s\frac{\sigma'}{1 + |\Delta x|  \beta}),$
with $h_{3,\lambda}$ sharing the essential  properties with $h_\la,$
and with phase 
\begin{align*}
\Psi_ 3 &=  \sigma'\Big[\Delta s - \frac{s\left|\Delta x\right| \beta }{1 + \left|\Delta x\right|\beta} +\rho^2 \psi_2+2\rho \, \big\langle \omega, \Delta x \,\tau' \cot \tau' +\tau' J\Delta x\big\rangle\\
 & \hskip6cm + \left|\Delta x\right|^2 \tau' \cot \tau' + 4 \tau' \Delta w\Big]\\
 &= \sigma'\Big[\Delta s-q(s,\Delta x,\Delta w,\tau',\omega,\rho,|\Delta x|\beta)\Big].
\end{align*}
Here, we have set
\[
\psi_2  = - \tau' \big(\cot\big((1 + \left|\Delta x\right|\beta) \tau' \big)-\cot \tau' \big).
\]
Note that
\[
\psi_2 \sim \Big(\frac{ \tau' }{\sin \tau' }\Big)^2 \left|\Delta x\right|\beta \sim \left|\Delta x\right|\beta\quad\text{and}\quad
|\big\langle \omega, \Delta x \, \tau' \cot \tau' +\tau' J\Delta x\big\rangle|\lesssim |\Delta x|.
\]
Let us denote by $E^{\varepsilon,2}_{\lambda,\tau'}$ the contribution of the region where $|\beta|\gg 1$ to \eqref{eq:Q0-polar}. For $\beta$ in this region, we see that we can integrate by parts in  $\rho$ in order to gain negative powers  $\jp{\lambda|\Delta x||\beta|}^{-N}.$ Applying subsequently integrations by parts in $\sigma',$ we finally see that we may write 
\begin{align*}
E^{\varepsilon,2}_{\lambda,\tau'}
= \la^d \,\int_{\sigma' \sim 1} \int_{|\beta|\gg 1}  \int_{\rho \sim 1}  \int_{S^{d_1-1}} 
& \big\langle \lambda\, (\Delta s - q(s,\Delta x,\Delta w,\tau',\omega,\rho,|\Delta x|\beta) ) \big\rangle^{-N} \\
& \times  (\la |\Delta x|)\, \jp{\lambda|\Delta x||\beta|}^{-N}\,
e^{i\lambda \Psi_3}  \,\overline{a_\lambda(s'\sigma')}  \\
& \times \chi_{\varepsilon}(\rho\omega + x) \,  \chi_\ve(\left|\Delta x\right|\beta)\, \tilde a_\la \,h_{N,\lambda}\,d\omega \, d\rho  \, d\beta \, d\sigma' 
\end{align*}

Observing  that  also here $\partial^j_s q=\mathcal O(\ve)$ for every $j\in\N,$ we can then essentially argue  as before for the main term  $Q^\ve_{1,\la}$ to prove that the contribution by the error term $E^{\varepsilon,2}_{\lambda,\tau'}$ even  satisfies  the stronger error term estimates,  with the right-hand sides in \eqref{sprimeintest} and \eqref{sintest} replaced by 
$C_N(\ve,a_\la)\, \lambda^d\, \langle\lambda\left|\Delta x\right|\rangle^{-N}.$ Note to this end that  for $N\ge2$
$$
\int(\la |\Delta x|)\jp{\lambda|\Delta x||\beta|}^{-N}\, d\beta\lesssim 1,
$$
 so that we obtain these estimates when  $\la |\Delta x|\le 1,$ and when $\la |\Delta x|>1,$ then  
$$
(\la |\Delta x|)\jp{\lambda|\Delta x||\beta|}^{-N}\lesssim (\la |\Delta x|)^{1-N} \,|\beta|^{-N},
$$ 
so that the integral in $\beta\gg 1$ is finite, and we  also gain any factor  $\langle\la |\Delta x|\rangle^{1-N}.$
\smallskip

We are thus reduced to considering the contribution by the  region where $|\beta|\lesssim 1,$ i.e., 
\begin{multline*}
Q^{\varepsilon,2}_{\lambda,\tau'}=\lambda^d (\la |\Delta x|) \int_{\sigma' \sim 1} \int_{|\beta|\lesssim 1}  \int_{\rho \sim 1}  \int_{S^{d_1-1}} e^{i\lambda \Psi_3}  \,\overline{a_\lambda(s'\sigma')} \\ 
 \times \chi_{\varepsilon}(\rho\omega + x) 
\, \chi_\ve(\left|\Delta x\right|\beta)\,\tilde a_\la \, h_{3,\lambda}\,d\omega \, d\rho  \, d\beta \, d\sigma' .
\end{multline*}

We  examine the term in the phase $\Psi_3$ involving the inner product with $\omega$. Using that $J$ is skew-symmetric with $ J^2 = - \mathrm{id}_{\R^{d_1}}$, we get
\begin{align*}
\bigl| \Delta x \, \sigma' \tau' \cot \tau' + \sigma' \tau' J\Delta x\bigr|& = \sqrt{\left( \left|\Delta x\right| \sigma' \tau' \cot \tau' \right)^2 + \left| \sigma'\tau'\right|^2\left|\Delta x\right|^2} \\
& = \sigma'\left|\Delta x\right| \frac{\tau'}{ \sin \tau' }\sim |\Delta x|.
\end{align*}
Thus, performing stationary phase in $\omega$ when $|\underline{\Delta x}|\lesssim \ve |\Delta x_1|,$   and $N$ integrations by parts in $\omega$ along $S^{d_1-1}$ when $|\underline{\Delta x}|\gg  \ve |\Delta x_1|,$ we find that  \eqref{eq:Q0-polar}
is the sum of two corresponding terms
\begin{equation}\label{eq:Q0-stat}
Q^{\varepsilon,2}_{\lambda,\tau'}=Q^{\varepsilon,3}_{\lambda,\tau'}+E^{\varepsilon,3}_{\lambda,\tau'}.
\end{equation}
Here, the first term is given by 
\begin{equation*}
Q^{\varepsilon,3}_{\lambda,\tau'}=\lambda^d (\la |\Delta x|) \int_{\sigma' \sim 1} \int_{|\beta|\lesssim 1} \int_{\rho \sim 1} \jp{\lambda\Delta x}^{-\frac{d_1-1}2} \, e^{i\lambda \Psi_4^\pm} \, \tilde a_\la  \,\overline{a_\lambda(s'\sigma')}\, h_{4,\lambda}\, d\rho \, d\beta \, d\sigma',
\end{equation*}
where the function $h_{4,\lambda}$ has similar properties as its predecessors and in addition localizes to the region where 
$|\underline{\Delta x}|\lesssim \ve |\Delta x_1|,$ and the  phase is given by
\[
\Psi_ 4 ^\pm = \sigma'\Big[\Delta s - \frac{s\left|\Delta x\right| \beta }{1 + \left|\Delta x\right|\beta} +\psi^\pm
 + \left|\Delta x\right|^2 \tau' \cot \tau' + 4 \tau' \Delta w\Big],
\]
where  $\psi^\pm=\rho^2\, \psi_2\pm 2\rho\, \psi_1$
collects all terms in the phase depending on $\rho$, with
\[
\psi_2  = - \tau' \big(\cot\big((1 + \left|\Delta x\right|\beta) \tau' \big)-\cot \tau' \big)
 \quad\text{and}\quad
\psi_1=  \frac{ \tau' }{\sin \tau' } \left|\Delta x\right|.
\]
The sign in this phase  depends on whether $\Delta x_1$ is positive, or negative. Moreover, with a slight abuse of notation, the factor ${\jp{\lambda\Delta x}^{-\frac{d_1-1}2}}$ actually denotes a symbol of order ${-\frac{d_1-1}{2}}$ in $\rho(\Delta x \, \sigma' \tau' \cot \tau' + \sigma' \tau' J \Delta x)$.

Note that
\[
\psi_2 \sim \Big(\frac{ \tau' }{\sin \tau' }\Big)^2 \left|\Delta x\right|\beta \sim \left|\Delta x\right|\beta.
\]

The second term in \eqref{eq:Q0-stat} is again an error term, which for any $N\in\N$ can be written as 
\begin{multline*}
E^{\varepsilon,3}_{\lambda,\tau'}
=\lambda^d (\la |\Delta x|) \int_{\sigma' \sim 1} \int_{|\beta|\lesssim 1} \int_{\rho \sim 1}  \int_{S^{d_1-1}} \jp{\lambda\Delta x}^{-N}\,e^{i\lambda \Psi_3}  \,\overline{a_\lambda(s'\sigma')}  \, \chi_{\varepsilon}(\rho\omega + x) \\
 \times \,\tilde a_\la\, h_{N,\lambda}\,d\omega \, d\rho  \, d\beta \, d\sigma' ,
\end{multline*}
where  $h_{N,\lambda}$ localizes to where $|\underline{\Delta x}|\gg \ve |\Delta x_1|.$ After integrations by parts in $\sigma'$, its contribution can be handled very much in the same way as we handled the one by  $E^{\varepsilon,2}_{\lambda,\tau'}$ before.

We shall thus continue with the main term $Q^{\varepsilon,3}_{\lambda,\tau'}.$

Let us first examine again  the term $\psi^\pm$  in the phase function $\Psi_ 4 ^\pm$ involving $\rho.$  If $|\beta|\ll 1,$ then
\[
|\partial_\rho\psi^{\pm}| = |2  \rho\psi_2 \pm  2\psi_1|\sim |\Delta x|. 
\]
  Thus, again by integration by parts in $\rho,$ we may again gain  any factors $ \jp{\lambda\Delta x}^{-N},$ and so also the contributions by the region where  $|\beta|\ll 1$ can be handled as an error term in a similar way as before.
 
 Thus, we are left with 
 \begin{equation*}
Q^{\varepsilon,4}_{\lambda,\tau'}=\lambda^d (\la |\Delta x|) \int_{\sigma' \sim 1} \int_{|\beta|\sim 1} \int_{\rho \sim 1} \jp{\lambda\Delta x}^{-\frac{d_1-1}2} \, e^{i\lambda \Psi_4^\pm} \, \tilde a_\la \,\overline{a_\lambda(s'\sigma')}  \, h_{4,\lambda}\, d\rho \, d\beta \, d\sigma'.
\end{equation*}

\subsubsection{Stationary phase for the integration in $\rho$}
Since the critical point of the  phase $\psi^\pm=\rho^2\, \psi_2\pm 2\rho\, \psi_1$ in $\rho$ is given by  $\rho_c = \mp \psi_1/\psi_2$,  observing that $\rho_c \sim \mp \beta^{-1}$, where $|\beta|\sim 1,$ we see that we may assume that $\psi^\pm$ has a critical point if and only if $\mp\beta \sim 1$.  

For the following discussion, we shall consider the phase  $\Psi_4^-$ (the cases of the phase  $\Psi_4^+$ can be treated analogously). 

If  $-\beta\sim 1,$ we can then again integrate by parts in $\rho$ and see that the corresponding contribution can be treated as an error term as before. Thus, we shall assume in the sequel that $\beta\sim 1,$ and that $\psi^-$ does have a critical point for any such $\beta.$ 

We can then  insert a suitable additional smooth cut-off functions $\chi_1(\beta)$  which localizes to  the region where 
$\beta\sim 1$ that we have devised, i.e., we may assume that 
 \begin{multline*}
Q^{\varepsilon,4}_{\lambda,\tau'}=\lambda^d (\la |\Delta x|) \int_{\sigma' \sim 1} \int_{\beta\sim 1} \int_{\rho \sim 1} \jp{\lambda\Delta x}^{-\frac{d_1-1}2} \, e^{i\lambda \Psi_4^-} \, \tilde a_\la \,\overline{a_\lambda(s'\sigma')}  \\ \times h_{4,\lambda}\, \chi_1(\beta) d\rho \, d\beta \, d\sigma'.
\end{multline*}
Using various trigonometric identities, we get
\begin{align*}
 \psi^-(\rho) |_{\rho = \rho_c}
& = - \frac{\psi_1^2}{\psi_2} = \frac{\left|\Delta x\right|^2 \tau' }{(\sin^2 \tau') \left(\cot\big((1 + \left|\Delta x\right|\beta) \tau' \big)-\cot \tau' \right)}\\
 & = - \left|\Delta x\right|^2  \tau' (\cot \tau' + \cot(\left|\Delta x\right|\beta \tau' )).
\end{align*}
 Since $\partial_\rho^2 \psi^-=2\psi_2\sim |\Delta x|,$ applying stationary phase in $\rho$ we get
\begin{equation}
Q^{\varepsilon,4}_{\lambda,\tau'}=\lambda^d (\la |\Delta x|) \int_{\sigma' \sim 1} \int_{|\beta|\sim 1} \jp{\lambda\Delta x}^{-\frac{d_1}2} \, e^{i\lambda \Psi_5} \, \tilde a_\la \,\overline{a_\lambda(s'\sigma')}  \, h_{5,\lambda}\,\chi_1(\beta)\, d\beta \, d\sigma',\label{Q5}
\end{equation}
where
\[
\Psi_5 = \sigma'\Big[\Delta s -\frac{s\left|\Delta x\right| \beta }{1 + \left|\Delta x\right|\beta} 
 - \left|\Delta x\right|^2 \tau' \cot(\left|\Delta x\right| \beta \tau' ) + 4\tau' \Delta w\Big].
\]

\subsubsection{Stationary phase for the integration in $\beta$}

Consider the function
\[
F(s,\beta,a,b) = \frac{s \beta }{1 + a\beta} + \frac {g(b \, \beta)}\beta,
\]
for $|a|,|b|\lesssim \ve\ll 1$ and $\beta \sim 1$, where  again $g(\tau) = \tau \cot \tau$. Then
\begin{equation}\label{Psi_5}
\Psi_5 = \sigma'\Big[\Delta s - \left|\Delta x\right|F\big(s,\beta,|\Delta x|,|\Delta x| \tau'\big) + 4 \tau' \Delta w\Big] .
\end{equation}
Note that for $a = |\Delta x|$ and $b = |\Delta x| \tau'$ the conditions $|a|,|b|\lesssim \ve\ll 1$  are indeed satisfied, since we are assuming that $|\Delta x|\lesssim \ve.$ 
If $a = b = 0$ and $s=1,$ then $F(1,\beta,0,0)=\beta+\frac 1\beta,$ so $F_\beta(1,\beta,0,0)=1-\frac 1{\beta^2}$ and $F_{\beta \beta}(1,\beta,0,0)=2\frac 1{\beta^3}.$ Then  $\beta_c(1,0,0):=1$ is the unique  critical point. Thus, by the implicit function theorem,  for $a,b$ and $s-1$  sufficiently small, $F$ has a unique critical point $\beta_c(s,a,b)$ which depends smoothly on $s,a,b.$ 

Let us  put $h(s,a,b)=F(s,\beta_c(s,a,b),a,b).$ Then $h$ is smooth, and we see  by \eqref{Psi_5} that if we let $a = |\Delta x|$ and $b = |\Delta x| \tau'$ and assume $\ve$ to be sufficiently small,  then
\begin{equation}
\Psi_6=\Psi_5\big|_{\beta = \beta_c(|\Delta x|,|\Delta x| \tau')}
 =\sigma'\Big[\Delta s-q(s,\Delta x,\Delta w,\tau')\Big], \label{eq:Psi5}
  \end{equation}
  where 
$$
  q(s,\Delta x,\Delta w,\tau')=|\Delta x|\, h(s,|\Delta x|,\,|\Delta x| \tau')- 4 \tau' \Delta w.
 $$ 
  Note that $\partial_{s}^j q = \mathcal O(\ve)$ for every $j\in\N,$ since $|\Delta x|,|\Delta w|\lesssim \ve.$
Hence, returning to the oscillatory integral in \eqref{Q5}, after applying the method of stationary phase in $\beta$, we obtain

\begin{equation*}
Q^{\varepsilon,4}_{\lambda,\tau'}=\lambda^d (\la |\Delta x|) \int_{\sigma' \sim 1} \jp{\lambda\Delta x}^{-\frac{d_1+1}2} \, e^{i\lambda \Psi_6} \, \tilde a_\la \,\overline{a_\lambda(s'\sigma')}  \, h_{6,\lambda}\, d\sigma'.\label{Q6}
\end{equation*}
Integrating $N$-times by parts in $\sigma',$ we see that the contribution by  $Q^{\varepsilon,4}_{\lambda,\tau'}$ gives our main term $Q^{\ve}_{1,\la}$ in \eqref{eq:pointwiseQ0}.  Note  also that the  asserted control of the constants $C(\ve,a_\la)$ in Proposition~\ref{lem:pointwiseQ0} is evident from our proof.

This completes the proof of Lemma~\ref{lem:pointwiseQ0-precise}, hence also that of Proposition~\ref{lem:pointwiseQ0} and Proposition~\ref{lem:0-L1}.
\qed

\subsection{$L^2$ estimates}
We proceed by proving Proposition~\ref{lem:0-L2}, that is, by showing  that the operator
\[
{T_{\lambda,x_1}^{0,\mathsf{nh},\varepsilon} (T_{\lambda,x_1'}^{0,\mathsf{nh},\varepsilon} )^*}:L^2\to L^2
\]
is  uniformly  bounded. Recalling \eqref{eq:frozen-Q}, Proposition~\ref{lem:0-L2} will then be a direct consequence of the following (see \cite{Hoermander-Acta1971},  \cite{GreenleafSeeger-Escorial}).

\begin{lemma}\label{lem:0-det}
The phase function $\Psi$ in \eqref{eq:Psi} satisfies
\[
\left|
\, \det\begin{pmatrix}
\Psi_{(\underline x, u,s),(\underline x', u',s')} & \Psi_{(\underline x, u,s),(\sigma,\sigma',\tau',y)}\\
\Psi_{(\sigma,\sigma',\tau',y),(\underline x', u',s')} & \Psi_{(\sigma,\sigma',\tau',y),(\sigma,\sigma',\tau',y)}\\
\end{pmatrix} 
\right|
\sim 1
\]
on the support of the oscillatory integral in \eqref{eq:Q0}.
\end{lemma}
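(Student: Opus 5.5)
The plan is to compute the $(2d+2)\times(2d+2)$ mixed Hessian directly, exploit its nearly block-triangular structure, and reduce the determinant to a small Schur complement. That complement can be evaluated at a base configuration. Since every entry is smooth in all variables, and the support of the oscillatory integral in \eqref{eq:Q0} lies in an $\mathcal O(\ve)$-neighborhood of the configuration
\[
x=x'=x^0,\quad u=u'=u^0,\quad y=0,\quad s=s'=1,\quad \sigma=\sigma',
\]
it suffices to show the determinant is $\sim1$ at such points, uniformly for $\tau'\in\supp\eta_{0,\mathsf{nh}}$, $\sigma\sim1$, $|x^0|\in[\frac1{100},10]$. The bound $\sim1$ then persists for $\ve$ small by continuity, because all derivatives of $\Psi$ are uniformly bounded there.

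\emph{Step 1: first-order structure.} From \eqref{eq:Psi}, $\Psi$ has no genuine $(x,u,s)$–$(x',u',s')$ coupling: the term $\langle J(x-x'),y\rangle$ is bilinear only with $y$. Hence the upper-left block $\Psi_{(\underline x,u,s),(\underline x',u',s')}$ vanishes. The $s,s',u,u'$ rows and columns are very simple:
\[
\Psi_{s\sigma}=1,\qquad \Psi_{s'\sigma'}=-1,\qquad \Psi_{u\tau'}=4\sigma',\qquad \Psi_{u'\tau'}=-4\sigma',
\]
and all other second derivatives with $s$, $s'$, $u$ or $u'$ vanish, except $\Psi_{u\sigma'}=4\tau'$ and $\Psi_{u'\sigma'}=-4\tau'$. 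Row and column operations along these entries remove the $s,s',u,u'$ lines together with the $\sigma,\sigma'$ lines and one copy of the $\tau'$ line. After this elimination we must check that the remaining square matrix, with rows $(\underline x,\tau',y)$ and columns $(\underline x',\tau',y)$ in the appropriate combination, is nondegenerate. The bookkeeping of exactly which $\tau'$ row and column survive is part of the routine computation.

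\emph{Step 2: the $x$–$y$ blocks.} Differentiating, one finds
\[
\Psi_{x y}=2\sigma g(\tfrac{\sigma'}{\sigma}\tau')\,I+2\sigma'\tau' J^{\intercal}+\dots,\qquad
\Psi_{x' y}=-2\sigma' g(\tau')\,I-2\sigma'\tau' J^{\intercal}+\dots.
\]
Both are of the form $aI+bJ$ with $a^2+b^2\approx 4\sigma'^2(\tau'/\sin\tau')^2\sim1$, so they are invertible; these are the rotational-curvature entries. Restricted to the rows $\underline x$ (respectively the columns $\underline x'$), they have rank $d_1-1$ and pair $\underline x,\underline x'$ with a $(d_1-1)$-dimensional subspace of $y$. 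A Schur complement eliminates these, leaving a small matrix. Its entries involve the one remaining $y$-direction (essentially $y_1$, up to the twist by $J$), together with $\tau'$ and the $\theta$-entries $\Psi_{y\tau'}$, $\Psi_{\tau'\tau'}$, $\Psi_{yy}=2(\sigma'g(\tau')-\sigma g(\sigma'\tau'/\sigma))I$. The last entry vanishes at $\sigma=\sigma'$.

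\emph{Step 3 (main obstacle): the residual determinant.} At the base configuration we must compute this residual $2\times2$ or $3\times3$ determinant explicitly. Its nonvanishing should come from two ingredients. The first is $\Psi_{y\tau'}\ni -2(x-y)\sigma' g'(\tau')+2\sigma'J(x-x')$-type terms, which at $x-y\approx x^0$ have size $\sim|x^0|\ge\frac1{100}$ in the $e_1$ direction. The second is the non-horizontal restriction $|\tau'|\le\frac38\pi$, which keeps $g'(\tau')$ and $\cos\tau'$ away from the degenerations at $\tau'=\pm\frac\pi2$. This is exactly where the horizontal part must later be treated separately. I would first try to arrange the residual determinant as a product such as
\[
c\,|x^0|^2\,\big(g'(\tau')\big)^2\,\Big(\frac{\tau'}{\sin\tau'}\Big)^{2(d_1-1)}
\]
(or similar), and verify the trigonometric identity by hand. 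If a clean factorization does not appear, I would instead reduce to $n=1$ by the $U(n)$-invariance noted in the proof of Proposition~\ref{prop:localization} together with the $aI+bJ$ structure. Then I would compute the explicit $8\times8$ determinant and check its sign on $|\tau'|\le\frac38\pi$.
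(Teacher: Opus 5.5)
\textbf{There is a genuine gap: the lemma is exactly the residual determinant you defer to Step~3, and your description of that residual is wrong in ways that would derail the computation.} Several parts of your set-up are correct: the vanishing upper-left block, the entries in the $s,s',u,u'$ lines, the formulas for $\Psi_{xy}$, $\Psi_{x'y}$, $\Psi_{yy}$, and the continuity argument from the base configuration.

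First, the elimination is forced, and it does not leave a $\tau'$ row and column.
\begin{itemize}
\item The $s$ row removes the $\sigma$ column, and the $s'$ column removes the $\sigma'$ row.
\item The $u'$ column, which then contains only $-4\sigma'$, removes the $\tau'$ row.
\item The $u$ row has entries $4\tau'$, $4\sigma'$ in the $\sigma'$, $\tau'$ columns. It removes the $\tau'$ column once you replace the $\sigma'$ column by $\sigma'\cdot(\sigma'\text{-column})-\tau'\cdot(\tau'\text{-column})$.
\end{itemize}
What survives is the $\sigma$ row and this combined column.

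The idea you are missing is a homogeneity property: every term of $\Psi$ except $|x'-y|^2\sigma' g(\tau')$ depends on $(\sigma',\tau')$ only through $\sigma'\tau'$. Hence the combined column vanishes in the $\underline x$ and $\sigma$ rows. In the $y$ rows it equals $-2\sigma'h(\tau')(x'-y)$, where $h(\tau)=(\tau/\sin\tau)^2$. Meanwhile $\Psi_{\sigma y}=2h(\tfrac{\sigma'}{\sigma}\tau')(x-y)$. The reduced matrix then has a zero upper-left block, so $\Psi_{yy}$ and $\Psi_{\tau'\tau'}$ play no role.

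The entry you planned to rely on is
\[
\Psi_{y\tau'}=2\sigma'\bigl(g'(\tfrac{\sigma'}{\sigma}\tau')(x-y)-g'(\tau')(x'-y)+J(x-x')\bigr).
\]
This is only $\mathcal O(\ve)$ on the support, because the two $g'$ terms cancel at $x=x'$, $\sigma=\sigma'$. So it cannot supply the nondegeneracy.

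Second, the mechanism. After the reduction, the determinant is, up to factors $\sim1$, the product of $\det\binom{\Psi_{\sigma y}}{\Psi_{\underline x y}}$ and $\det\bigl(-2\sigma'h(\tau')(x'-y)\;\big|\;\Psi_{y\underline x'}\bigr)$. Write $\Psi_{xy}=aI+bJ$ with $a=2\sigma g(\tfrac{\sigma'}{\sigma}\tau')$ and $b=-2\sigma'\tau'$. The first factor is $\Psi_{xy}$ with its $x_1$ row replaced by $2h(\tfrac{\sigma'}{\sigma}\tau')(x-y)^\intercal$, so it equals
\[
2h(\tfrac{\sigma'}{\sigma}\tau')\,(a^2+b^2)^{n-1}\bigl(a(x_1-y_1)-b(x_2-y_2)\bigr).
\]
Since $|x_1-y_1|\sim1$ and $|x_2-y_2|\lesssim\ve$, this is $\sim1$ precisely when $g(\tfrac{\sigma'}{\sigma}\tau')\neq0$. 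The second factor behaves the same way with $g(\tau')$. The paper phrases this as invertibility of $gI\pm\tau'\underline J$, where $\underline J$ is singular because $x_2$ couples only to the frozen $x_1$.

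Thus the quantity that must stay away from zero is $g(\tau')=\tau'\cot\tau'$, which vanishes at $\pm\frac\pi2$. Your guessed factor $(g'(\tau'))^2$ is wrong on both ends:
\begin{itemize}
\item $g'(0)=0$, although $0\in\supp\eta_{0,\mathsf{nh}}$, so it would predict a degeneracy that does not occur.
\item $g'(\pm\frac\pi2)=\mp\frac\pi2\neq0$, so it misses the degeneracy that does occur.
\end{itemize}
The invertibility of the full matrices $aI+bJ$ that you note holds for every $\tau'$, horizontal points included. So it is not where the restriction $|\tau'|\le\frac38\pi$ enters.

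Finally, your fallback of reducing to $n=1$ does not follow from $U(n)$-invariance alone. It would require the decoupling of $x_3,\dots,x_{d_1}$ at the base point, and the resulting $8\times8$ determinant is again left uncomputed.
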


\begin{proof}

Recall that the phase function $\Psi$ of Lemma~\ref{lem:0-TTstar} is given by
\begin{align*}
\Psi = \sigma s - \sigma's' & - \left|x-y\right|^2\sigma'\tau' \cot(\tfrac{\sigma'}\sigma \tau') \\ 
& + \left|x'-y\right|^2\sigma' \tau' \cot \tau' + 4 \sigma'\tau'(u-u' + \tfrac12\langle J(x-x'),y \rangle).
\end{align*}
Thus, the Monge-Ampère matrix
\[
\begin{pmatrix}
\Psi_{(\underline x, u,s),(\underline x', u',s')} & \Psi_{(\underline x, u,s),(\sigma,\sigma',\tau',y)}\\
\Psi_{(\sigma,\sigma',\tau',y),(\underline x', u',s')} & \Psi_{(\sigma,\sigma',\tau',y),(\sigma,\sigma',\tau',y)}\\
\end{pmatrix}
\]
is given by
\[
\begin{pmatrix}
0 & 0 & 0 & \Psi_{\underline x\sigma} & \Psi_{\underline x\sigma'} & \Psi_{\underline x\tau'} & \Psi_{\underline xy} \\
0 & 0 & 0 & 0 & 4 \tau' & 4 \sigma' & 0 \\
0 & 0 & 0 & 1 & 0 & 0 & 0 \\
0 & 0 & 0 & \Psi_{\sigma\sigma} & \Psi_{\sigma\sigma'} & \Psi_{\sigma\tau'} & \Psi_{\sigma y} \\
\Psi_{\sigma'\underline x'} & - 4 \tau' & -1 & \Psi_{\sigma'\sigma} & \Psi_{\sigma'\sigma'} & \Psi_{\sigma'\tau'} & \Psi_{\sigma' y} \\
\Psi_{\tau'\underline x'} & - 4 \sigma' & 0 & \Psi_{\tau'\sigma} & \Psi_{\tau'\sigma'} & \Psi_{\tau'\tau'} & \Psi_{\tau' y} \\
\Psi_{y\underline x'} & 0 & 0 & \Psi_{y\sigma} & \Psi_{y\sigma'} & \Psi_{y\tau'} & \Psi_{yy} 
\end{pmatrix}.
\]
Using first the $s$-row and the $s'$-column and then the $u'$-column for cofactor expansion, we see that the modulus of the determinant of the above matrix equals $4 \sigma'$ times the modulus of
\[
\begin{vmatrix}
0 & \Psi_{\underline x\sigma'} & \Psi_{\underline x\tau'} & \Psi_{\underline xy} \\
0 & 4 \tau' & 4 \sigma' & 0 \\
0 & \Psi_{\sigma\sigma'} & \Psi_{\sigma \tau'} & \Psi_{\sigma y} \\
\Psi_{y\underline x'} & \Psi_{y\sigma'} & \Psi_{y \tau'} & \Psi_{y y}
\end{vmatrix}
\]
Note that $\sigma' \, \Psi_{\underline x\sigma'} = 
\Psi_{\underline x\tau'} \, \tau'$ and $\sigma' \, \Psi_{\sigma\sigma'} = 
\Psi_{\sigma\tau'} \, \tau'$.
Thus, multiplying first the $\sigma'$-column by $\sigma'$, and subtracting then $\tau'$ times the $\tau'$-column from the $\sigma'$-column, we get that the determinant above is comparable to
\[
\begin{vmatrix}
0 & 0 & \Psi_{\underline x\tau'} & \Psi_{\underline xy} \\
0 & 0 & 4 \sigma' & 0 \\
0 & 0 & \Psi_{\sigma \tau'} & \Psi_{\sigma y} \\
\Psi_{y\underline x'} & \sigma' \, \Psi_{y\sigma'} -\Psi_{y \tau'} \, \tau'& \Psi_{y \tau'} & \Psi_{y y}
\end{vmatrix}.
\]
Using the $u$-row for cofactor expansion, the modulus of this determinant equals $ 4 \sigma'$ times the modulus of
\[
\begin{vmatrix}
0 & 0 & \Psi_{\underline xy} \\
0 & 0 & \Psi_{\sigma y} \\
\Psi_{y\underline x'} & \sigma' \, \Psi_{y\sigma'} -\Psi_{y \tau'} \, \tau' & \Psi_{y y}
\end{vmatrix}.
\]
 Let 
 \begin{equation}\label{htau}
h(\tau') = g(\tau')-\tau' g'(\tau')=(\tau')^2/(\sin \tau')^2,
\end{equation}  
and note that $h(\tau')\sim 1$ for $|\tau'|\le 5\pi/8.$   Then
\[
\sigma' \, \Psi_{y\sigma'} -\Psi_{y \tau'} \, \tau'
 = -2\sigma'\left(x'-y\right)h(\tau').
\]
Recall that $|x'_1-y_1|\sim 1$ and $|\underline x'-\underline y|\ll 1$. Thus, subtracting $\tfrac{x'_j-y_j}{x'_1-y_1}$ times the $y_1$-row from the $y_j$-row for all $j\in \{2,\dots,d_1\}$, the above determinant equals
\[
\begin{vmatrix}
0 & 0 & \Psi_{\underline xy} \\
0 & 0 & \Psi_{\sigma y} \\
\Psi_{y_1\underline x'} & -2\sigma'\left(x'_1-y_1\right)h(\tau') & \Psi_{y_1 y} \\
\big(\Psi_{y_j\underline x'} -\tfrac{x'_j-y_j}{x'_1-y_1}\Psi_{y_1\underline x'}\big)_{j\ge 2} & 0 & \big(\Psi_{y_j y} -\tfrac{x'_j-y_j}{x'_1-y_1}\Psi_{y_1 y}\big)_{j\ge 2}
\end{vmatrix}.
\]
Using the  middle column in this block matrix for cofactor expansion, the modulus of this determinant equals $2\sigma'\left|x'_1-y_1\right|h(\tau')$ times the modulus of 
\[
\begin{vmatrix}
0 & \Psi_{\underline xy} \\
0 & \Psi_{\sigma y} \\
\big(\Psi_{y_j\underline x'} -\tfrac{x'_j-y_j}{x'_1-y_1}\Psi_{y_1\underline x'}\big)_{j\ge 2} & \big(\Psi_{y_j y} -\tfrac{x'_j-y_j}{x'_1-y_1}\Psi_{y_1 y}\big)_{j\ge 2}
\end{vmatrix}.
\]
Note that $\Psi_{y_j y_k} = 0$ for $j\neq k$. Thus, the above determinant equals
\[
\begin{vmatrix}
0 & \Psi_{\underline xy_1} & \big(\Psi_{\underline xy_k}\big)_{k\ge 2}^\intercal \\
0 & \Psi_{\sigma y_1}& \big(\Psi_{\sigma y_k}\big)_{k\ge 2}^\intercal \\
\big(\Psi_{y_j\underline x'} -\tfrac{x'_j-y_j}{x'_1-y_1}\Psi_{y_1\underline x'}\big)_{j\ge 2} & \big(-\tfrac{x'_j-y_j}{x'_1-y_1}\Psi_{y_1 y_1}\big)_{j\ge 2} & \big(\Psi_{y_j y_k}\big)_{j,k\ge 2} 
\end{vmatrix}.
\]
since $\Psi_{y_1 y_k}=0$ for $k\ge2.$
Moreover, 
\[
\Psi_{\sigma y} = 2 \left(x-y\right)h(\tfrac{\sigma'}{\sigma}\tau').
\]
 
Thus, using that $|x_1-y_1|\sim 1$ and $|\underline x-\underline y|\ll 1$, subtracting $\tfrac{x_k-y_k}{x_1-y_1}$ times the $y_1$-column from the $y_k$-column for all $k\in \{2,\dots,d_1\}$, and using then cofactor expansion in the $\sigma$-row, the above determinant equals $2 \left|x_1-y_1\right|h(\tfrac{\sigma'}{\sigma}\tau')$ times
 \[
\begin{vmatrix}
0 & \big(\Psi_{\underline x y_k} - \tfrac{x_k-y_k}{x_1-y_1} \Psi_{\underline xy_1}\big)_{k\ge 2}^\intercal \\
\big(\Psi_{y_j\underline x'} -\tfrac{x'_j-y_j}{x'_1-y_1}\Psi_{y_1\underline x'}\big)_{j\ge 2} & \big(\Psi_{y_j y_k} + \tfrac{x'_j-y_j}{x'_1-y_1}\tfrac{x_k-y_k}{x_1-y_1}\Psi_{y_1 y_1}\big)_{j,k\ge 2}
\end{vmatrix}.
\]
Note that all these matrices are $(d_1-1)\times (d_1-1)$ matrices. Let $A$ be the matrix in the lower left corner and $B$ upper right corner.

Then the modulus of the determinant of the matrix above equals $\left|\det A\right|\left|\det B\right|$. 
Note that
\[
\Psi_{\underline y \underline x'} = -2\sigma' \big(g(\tau')I_{d_1-1} + \tau'\underline{J}\big)
\quad\text{and}\quad
\Psi_{\underline x \underline y}
=2\sigma g\big(\tfrac{\sigma'}{\sigma}\tau'\big)I_{d_1-1}
-2\sigma'\tau'\underline{J},
\]
where the matrix $\underline{J}$ is skew-symmetric. Since $|\tau'|\le \frac 3 8 \pi$ on the support of $\eta_{0,\mathsf{nh}}$ and $\sigma'/\sigma$ is sufficiently close to $1$, we have $g(\tau')\sim 1$ and $g(\tfrac{\sigma'}{\sigma}\tau')\sim 1$. Thus,
\[
| \Psi_{\underline y \underline x'} v| \sim |v| \quad\text{and}\quad 
| \Psi_{\underline x \underline y} v| \sim |v| \quad\text{for all } v\in\R^{d_1-1}. 
\]
Since $|\frac{x'_j-y_j}{x'_1-y_1}|\ll 1$ and $|\frac{x_j-y_j}{x_1-y_1}|\ll1$ for $j\ge2$, we get
\[
|Av|\sim |v| \quad\text{and}\quad |Bv|\sim |v| \quad \text{for all } v\in\R^{d_1-1}. 
\]
In particular, both $A$ and $B$ are invertible with determinant comparable to 1.
\end{proof}

\begin{remark}\label{breakdown}
If $\tau'=\frac{\pi}{2},$ then $g(\tau')=0.$ Moreover, the matrix 
$
\underline J=(\begin{smallmatrix}
0 & 0 \\
0 &  \underline{\underline J} \\
\end{smallmatrix})
$
(where $\smash{\underline{\underline J}}=J_{n-1}$) is degenerate.  Thus the last step of the proof would break down if we were to apply the same arguments to the horizontal part of $T^{0,\ve}_\la$. 

In analogy to what we shall do later  for the horizontal part of the operator $\breve T_{\lambda}^{k,1,\varepsilon} $ in Proposition~\ref{lem:k1-L2}, we should  not freeze the variable $x_1$ here,  but $x_2.$
\end{remark}

\section{The horizontal part}\label{sec:k=0-hor}

Our goal in this section is to prove Proposition~\ref{L:T0lambda} with $T^{0,\mathsf{h}}_{\lambda,t}$ in place of $T^0_{\lambda,t}$.  In conjunction with Section \ref{sec:k=0-nonhor}, this completes the proof of Proposition~\ref{L:T0lambda}.

For the horizontal part $T_{\lambda}^{0,\mathsf{h},\varepsilon},$ we have to replace the kernel $K_{\lambda,s}^{0}$ in \eqref{eq:Klats-formula}
 by 
 \[
K_{\lambda,s}^{0,\mathsf{h}}(x, u) := \lambda^{n+\frac32}  \int_\R \, \int_0^\infty e^{i\lambda\tilde\Phi(x,u,s,\sigma,\tau)} \, a_\lambda(s\sigma) \, \eta_{0,\mathsf{h}}( \tau)  \, d\sigma \, d\tau,
\]
where $\supp\eta_{0,\mathsf{h}}\subset [\tfrac 3{16} \pi, \tfrac 58 \pi],$   with phase as in \eqref{eq:phasesdef}, that is,
\[
\tilde \Phi(x,u,s,\sigma,\tau) =\sigma \big(s-\tau (|x|^2 \cot(\tau) - 4u)\big).
\]

For the pointwise estimates, one easily checks that the arguments in the proof of Proposition~\ref{lem:pointwiseQ0} remain valid for $|\tau|,|\tau'|\le\tfrac{5}{8}\pi$, so the proposition also holds for the kernel $Q^{0,\mathsf{h},\varepsilon}_\lambda$ in place of $Q^{0,\mathsf{nh},\varepsilon}_\lambda$.

\smallskip

On the other hand, for the $L^2$ estimates, shifting  coordinates $\tau=\tilde \tau +\tfrac \pi 2,$ so that $\tilde \tau\in [-\tfrac 5{16} \pi, \tfrac 18 \pi]\subset[-\tfrac 38 \pi,\tfrac 38\pi],$ we arrive at the phase
\begin{equation}\label{eq:brevePhi-0}
\breve\Phi
 = \sigma\big(s+(\tfrac \pi 2+\tilde\tau )(|x|^2 \tan(\tilde\tau) + 4u)\big).
\end{equation}
Up to a translation and a constant rescaling in the $u$-variable, this phase agrees with the phase
\[
\sigma \, \tilde \Phi^{-} = \sigma\Big[\delta^{-1}-1+s +\left(k\pi -\tfrac12\pi+\tilde\tau\right) \Big(\delta |x|^2 \tan(\tilde\tau)+\frac {u}{k} +4\delta^{-1} u_c(\tau_{k,1})\Big)\Big]
\]
in the oscillatory integral \eqref{eq:kernel-kpm}, if we set $k=1$ and hence $\delta=1$.
Thus, freezing the variable $x_2$, the $L^2$ estimates from Proposition~\ref{lem:k1-L2} include the estimate for $T_{\lambda,x_2}^{0,\mathsf{h},\varepsilon} (T_{\lambda,x_2'}^{0,\mathsf{h},\varepsilon} )^*$ as a special case when $\delta=1$.

Alternatively, for the $L^2$ estimate, we may also follow the arguments of the proof of Lemma~\ref{lem:0-det}. In what follows we set $\tilde x=(x_1,x_3,\dots, x_{d_1})$ and $\tilde x'=(x_1',x_3',\dots, x_{d_1}')$, while keeping the previous notation for $\underline y=(y_2,\dots, y_{d_1})$. For the $L^2$ estimate for $T_{\lambda,x_2}^{0,\mathsf{h},\varepsilon} (T_{\lambda,x_2'}^{0,\mathsf{h},\varepsilon})^*$ we need to consider the phase function $\Psi$ in \eqref{eq:Psi}, but with $x_2$, $x_2'$ frozen, that is,
\begin{multline*}
\Psi(\tilde x,u,s, \tilde x',u',s', \sigma,\sigma',\tau',y)  = \sigma s - \sigma's'  - \left|x-y\right|^2\sigma'\tau' \cot(\tfrac{\sigma'}\sigma \tau') \\ 
 + \left|x'-y\right|^2\sigma' \tau' \cot \tau' + 4\sigma'\tau'(u-u' + \tfrac12\langle J(x-x'),y \rangle).
\end{multline*}
We need to verify 
\[
\left| \,
\det
\begin{pmatrix}
\Psi_{(\tilde x, u,s),(\tilde x', u',s')} & \Psi_{(\tilde x, u,s),(\sigma,\sigma',\tau',y)}\\
\Psi_{(\sigma,\sigma',\tau',y),(\tilde x', u',s')} & \Psi_{(\sigma,\sigma',\tau',y),(\sigma,\sigma',\tau',y)}\\
\end{pmatrix}
\right|
\sim 1
\]
The same matrix and determinant manipulations as in the previous section, with $\underline x, \underline x'$ replaced by $\tilde x, \tilde x'$, reduce matters to verifying that, for $\tau'\in\supp\eta_{0,\mathsf{h}}$, the modulus of the determinant
\[
\begin{vmatrix}
0 & \big(\Psi_{\tilde x y_k} - \tfrac{x_k-y_k}{x_1-y_1} \Psi_{\tilde x y_1}\big)_{k\ge 2}^\intercal \\
\big(\Psi_{y_j\tilde x'} -\tfrac{x'_j-y_j}{x'_1-y_1}\Psi_{y_1\tilde x'}\big)_{j\ge 2} & \big(\Psi_{y_j y_k} + \tfrac{x'_j-y_j}{x'_1-y_1}\tfrac{x_k-y_k}{x_1-y_1}\Psi_{y_1 y_1}\big)_{j,k\ge 2}
\end{vmatrix}
\]
is bounded below. 

Since, as before, $|x_k-y_k|\ll|x_1-y_1|$ and $|x_k'-y_k|\ll|x_1'-y_1|$ for $k\geq 2$, it suffices to verify that the $(d_1-1)\times (d_1-1)$ matrices $\Psi_{\underline y\tilde x'}$ and $\Psi_{\tilde x \underline y}$ are uniformly invertible.
We compute 
\[
\Psi_{\underline y\tilde x'} = -2\sigma'\big( g(\tau') A + \tau' B\big),
\]
where
\[
A = \begin{pmatrix} 0&0\\0&I_{d_1-2}\end{pmatrix}
\quad\text{and}\quad
B= \begin{pmatrix} 1&0\\0& \tilde J \end{pmatrix},
\]
and where $\tilde J=J_{n-1}$ is a $(d_1-2)\times (d_1-2)$ invertible skew-symmetric matrix. Clearly $B$ is invertible and since in our present case $|\sigma'g(\tau')|\ll 1$ we get that $\Psi_{\underline y\tilde x'} $ is also invertible. The same argument shows that $\Psi_{ \tilde x \underline y }$ is also invertible. 

Altogether, in either way, we see that the ``frozen'' operators $T_{\lambda,x_2}^{0,\mathsf{h},\varepsilon}$ satisfy the analogous estimates of Propositions \ref{lem:0-L1} and \ref{lem:0-L2}, only with $x_1, x'_1$ replaced by  $x_2, x'_2$. Following from thereon the previous arguments for the non-horizontal part, we thus complete the proof of Proposition~\ref{L:T0lambda}.

\section{The case $k\ge 1$: Preliminary reductions}\label{sec:kneq0-preliminaries}

In this section, we first treat the range $2^\ell k\gg\lambda$ using estimates from \cite{MuellerSeeger2015}. In remaining the range $2^\ell k\lesssim \lambda$, we reduce the proof of Theorem~\ref{thm:main-square} to the square function estimate \eqref{kl-est} in Proposition~\ref{prop:conditional}.

Recall from \eqref{opdecomp} that we have the decomposition 
\[
\beta\big(\tfrac{t\sqrt L}{\la}\big) e^{it\sqrt L }f
= \widetilde \beta\big(\tfrac{t^2L}{\la^2}) \, T^0_{\la,t} f
+ \sum_{0<|k|<8\la } \sum_{\ell\ge 1} \widetilde \beta\big(\tfrac{t^2L}{\la^2}) \, T^{k,\ell}_{\la,t}f+ \rho_\la\big(\tfrac{t^2L}{\la^2}\big)f.
\]
We shall now turn to  estimating the contributions  of the operators  $T^{k,\ell}_{\la,t}f=f*\sK^{k,\ell}_{\la,t}$ to \eqref{eq:main-square} in Theorem~\ref{thm:main-square}. In view of Remark \ref{rem:k-negative}, we may and shall assume that $k>0$ in what follows. 

\subsection{The contributions by the  $T^{k,\ell}_{\la,t}$ with $2^\ell k \gg\lambda$}

We consider estimates for the case  $2^\ell k\gg\lambda$, with $k\le 8\la,$ for which we are not expected to use curvature and oscillation in a significant way.

In this subsection, we establish estimates dual to the square function estimates in Theorem~\ref{thm:main-square}. Note that the conjugate exponent of $\frac{2d_1}{d_1+1}$ is $\frac{2d_1}{d_1-1}$.

\begin{proposition}\label{prop:near}
For $1\le p < \frac{2d_1}{d_1 + 1}$ and all $\ell\ge 1$,
\[
\Big\| \sum_{k\in\mathcal I_{\lambda,\ell}} 
\int_1^2 T^{k,\ell}_{\la,t}f_t \, dt \Big\|_p\\
\lc  2^{-\frac{\ell}4} \la^{d(\frac 1p-\frac 12)-\frac 12}
\Big\| \Big(\int_1^2 | f_t|^2 \, dt\Big)^{\frac{1}{2}} \Big\|_p,
\]
where $ \mathcal I_{\lambda,\ell} : = \{k > 0 : 10^5 \lambda \le 2^\ell k \text{ and }k\le 8\lambda\}$.
\end{proposition}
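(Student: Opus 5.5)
Interpolate between an $L^1$ estimate and an $L^2$ estimate for the operator
\[
S_\ell\colon (f_t)_{t\in[1,2]}\mapsto \sum_{k\in\mathcal I_{\la,\ell}}\int_1^2 T^{k,\ell}_{\la,t}f_t\,dt,
\]
mapping $L^p(L^2_t)$ to $L^p$. The target exponent is $d(\frac1p-\frac12)-\frac12$. At $p=1$ this is $\frac{d-1}{2}$, and at $p=2$ it is $-\frac12$. The $2^{-\ell/4}$ factor should come from having room at both endpoints. The exponent restriction $p<\frac{2d_1}{d_1+1}$ suggests that the endpoint bounds are not sharp for large $k$, and that one sums over $k$ with a loss that is only acceptable in this range. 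So I would prove bounds that depend on $k$ and interpolate before summing in $k$.

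\emph{$L^1$ endpoint.} By duality, \eqref{eq:Linftybounds} gives $\|T^{k,\ell}_{\la,t}\|_{L^1\to L^1}\lc (2^\ell k)^{-1}\la^{1/2}$. Then Cauchy--Schwarz in $t$ and Minkowski give
\[
\Big\|\int_1^2 T^{k,\ell}_{\la,t}f_t\,dt\Big\|_1\lc (2^\ell k)^{-1}\la^{\frac12}\,\Big\|\Big(\int_1^2|f_t|^2\,dt\Big)^{\frac12}\Big\|_1 .
\]

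\emph{$L^2$ endpoint.} I would use the spectral structure. Each $T^{k,\ell}_{\la,t}$ is a function of $L$ and $-iU$, and the cutoff $\eta_\ell(\frac{\varsigma t^2}{\la}(-iU)-k\pi)$ localizes $\mu$ to a set of relative width $2^{-\ell}/k$ around $k\pi\la/(\varsigma t^2)$. For different $k$ these sets are almost disjoint, so almost orthogonality reduces matters to a single $k$. For a single $k$, write $\int T^{k,\ell}_{\la,t}f_t\,dt$ and compute $TT^*$ in $t$. On the joint spectrum, with $L^\mu$ having eigenvalues $(2j+n)|\mu|$, the multiplier of $T^{k,\ell}_{\la,t}$ is an oscillatory integral in $\varsigma$ with phase $\frac{\la}{4\varsigma}+\varsigma t^2(2j+n)\mu/\la$. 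Integrating in $\varsigma$ (the "additional cancellations" mentioned in the introduction) gives an $L^2$ operator norm bound for $\int\cdot\,dt$ of the form $(2^\ell k)^{-1/2}$ times a factor that is at most $\la^{-1/2}$, once the $t$-integration is used. In the regime $2^\ell k\gg\la$ the $\mu$-support forces $|\mu|\gg\la^2$-type sizes with $j$ small, so the $\varsigma$-phase has a nondegenerate critical point. The target bound is
\[
\|S^k_\ell\|_{L^2(L^2)\to L^2}\lc \la^{-\frac12}\,\big(\la/(2^\ell k)\big)^{\frac12},
\]
where $S^k_\ell$ denotes the single-$k$ piece. After orthogonality, the sum over $k$ costs nothing.

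\emph{Interpolation and summation.} With $\theta=2(1-\frac1p)$, complex interpolation for mixed norms gives
\[
\|S^k_\ell\|_{L^p(L^2)\to L^p}\lc \la^{\frac12(1-\theta)-\frac\theta2}\,(\la/2^\ell k)^{\,1-\frac\theta2}.
\]
At $p=1$ the sum over $k$ is bounded by the triangle inequality, giving $\sum_{k\ge 10^5\la 2^{-\ell}}(2^\ell k)^{-1}\la^{1/2}\lc 2^{-\ell}\la^{1/2}\log\la$. This beats $\la^{(d-1)/2}$ when $d\ge3$. Combined with the orthogonal $L^2$ bound (where the sum is $\le 2^{-\ell/2}$ up to the orthogonality constant), interpolation yields $2^{-c\ell}$ times a power of $\la$ at most $d(\frac1p-\frac12)-\frac12$. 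The condition $p<\frac{2d_1}{d_1+1}$ guarantees the slack needed to convert the $\log\la$ loss and to get at least $2^{-\ell/4}$.

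\emph{Main obstacle.} The hardest step is the $L^2$ bound with the $\la^{-1/2}$ gain and the almost orthogonality in $k$. Showing that the $\varsigma$-integral gives decay uniformly while the joint spectrum is localized near $\mu\sim k\la$ requires care near $\sin(2\pi\varsigma t^2\mu/\la)\approx0$, where the $\eta_\ell$ localization is singular. I would first try to compute the multiplier $m(j,\mu)$ explicitly via stationary phase in $\varsigma$, then apply Plancherel in $t$.
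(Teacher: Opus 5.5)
\textbf{Verdict: there is a genuine gap, in the $L^2$ endpoint.} Your $L^1$ endpoint matches the paper: it uses the $L^1$ bound $(2^\ell k)^{-1}\la^{1/2}$ and sums over $k$ by the triangle inequality to get $\lesssim 2^{-\ell}\la^{1/2}\log\la$. Your final numerology also matches: interpolation against an $L^2$ bound $2^{-\ell/2}$, and the observation that $\frac1p-\frac12<d(\frac1p-\frac12)-\frac12$ exactly when $p<\frac{2d_1}{d_1+1}$. The problem is that the $L^2$ bound for the sum does not come out of what you propose.

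\textbf{Why the proposed $L^2$ argument fails.}
\begin{itemize}
\item \emph{No orthogonality in $\mu$.} The cutoff $\eta_\ell(\frac{\varsigma t^2}{\la}(-iU)-k\pi)$ localizes $\mu$ to relative width $2^{-\ell}/k$ only for \emph{fixed} $\varsigma t^2$. After integrating over $\varsigma\in[\frac1{16},4]$ and $t\in[1,2]$, the $k$-th piece lives on $|\mu|\sim k\la$ with an $O(1)$ relative spread. So every $\mu$ is seen by $\sim|\mu|/\la$ values of $k$, all paired with the same data, and their contributions can add coherently. Reducing to a single $k$ would bound the sum by $\sup_{k\in\mathcal I_{\la,\ell}}(2^\ell k)^{-1/2}$, which for $2^\ell\le\la$ is $\lesssim\la^{-1/2}$, much smaller than $2^{-\ell/2}$. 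The bound $2^{-\ell/2}$ you use in the summation step is correct, but it is a property of the whole sum and cannot be reached from single-$k$ bounds by orthogonality.
\item \emph{No stationary phase in $\varsigma$.} For fixed $t$ and $\mu$, the $\eta_\ell$-window in $\varsigma$ has length $\sim 2^{-\ell}/k\le 10^{-5}\la^{-1}$. That is far below the stationary-phase scale $\la^{-1/2}$, so the $\varsigma$-integral alone gives no decay. Also $|\mu|\sim k\la\le 8\la^2$, not $\gg\la^2$.
\item \emph{Per-$k$ interpolation loses a power of $\la$.} Interpolating for each $k$ gives $\la^{1/p-1/2}(2^\ell k)^{-1/p}$. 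Summing with the triangle inequality costs $\sum_{k\le 8\la}k^{-1/p}\sim\la^{1-1/p}$, which only yields the smaller range $p<\frac{2d}{d+2}$.
\end{itemize}

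\textbf{The missing idea.} Substitute $s=\varsigma t^2$ before anything else. This gives
\[
\sum_{k\in\mathcal I_{\la,\ell}}\int_1^2 T^{k,\ell}_{\la,t}f_t\,dt=\int\sum_{k\in\mathcal I_{\la,\ell}}\eta_\ell\big(\la^{-1}s(-iU)-k\pi\big)\,e^{isL/\la}G_s\,ds,\qquad G_s=\la^{\frac12}\int e^{i\frac{\la t^2}{4s}}\,a_{\la,\circ}(st^{-2})\,t^{-2}f_t\,dt,
\]
where $G$ does not depend on $k$. Then proceed as follows.
\begin{enumerate}
\item Since $|\partial_s\partial_t(t^2/4s)|\sim1$, the one-dimensional $L^2$ lemma for oscillatory integrals gives $\|G\|_{L^2_s}\lesssim\|f\|_{L^2_t}$ pointwise in $(x,u)$. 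This is where the $\la^{-1/2}$ that cancels the prefactor $\la^{1/2}$ comes from: the $t$-integration, not the $\varsigma$-integration.
\item Apply Plancherel in $u$. For fixed $\mu$, the $k$-th summand is supported in $s$ on an interval of length $\sim\la 2^{-\ell}/|\mu|$ around $s\approx k\la/(2\mu)$. These intervals are pairwise disjoint, and only $k\sim|\mu|/\la$ occur, so their union has measure $\lesssim 2^{-\ell}$.
\item Apply Cauchy--Schwarz in $s$ on this set, use $\sum_k|\eta_\ell|^2\lesssim1$ and unitarity of $e^{isL/\la}$, and apply Plancherel again.
\end{enumerate}
This gives $\lesssim 2^{-\ell/2}\|(\int_1^2|f_t|^2\,dt)^{1/2}\|_2$ for the whole sum at once. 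The disjointness to exploit is in $s$ at fixed $\mu$, not in $\mu$, and it must be applied to the sum rather than to each $k$ separately.
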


\begin{proof}
We interpolate between $L^1$ and $L^2$. For the $L^1$ bounds, we use Lemmas 8.5 and 8.6 of \cite{MuellerSeeger2015} (see also Eq.\ (100) there), which yield
 \[\|\la^{-\frac{d-1}{2}} T_{\lambda,t}^{k,\ell} \|_{L^1\to L^1} \lc \la^{-\frac{d_1-1}{2} } (2^\ell k)^{-1} \quad\text{for }k\in
 \mathcal I_{\lambda,\ell}.
 \]
Thus, for $k\in\mathcal I_{\lambda,\ell}$, 
 \[ \Big\|\int_1^2 T^{k,\ell}_{\la,t} f_t \, dt\Big\|_1\lc \la ^{\frac{1}{2}} (2^\ell k)^{-1} \Big\|\int_1^2 |f_t| \, dt\Big\|_1,\]
 and therefore, for $K\le \la$,
 \[
 \Big\| \sum_{k\sim K} \Big|\int_1^2 T^{k,\ell}_{\la,t} f_t \, dt\Big| \, \Big\|_1 \lc \la^{\frac{1}{2}} 2^{-\ell} \, 
 \Big\|\int_1^2 |f_t| \, dt\Big\|_1.\]
 Note that $10^5 2^{-\ell} \lambda \le k \le 8\lambda$ for $k\in\mathcal I_{\lambda,\ell}$. 
 Hence
 \begin{equation}\label{L1}
 \Big\| \sum_{k\in\mathcal I_{\lambda,\ell}} \Big|\int_1^2 T^{k,\ell}_{\la,t} f_t \, dt\Big| \, \Big\|_1 \lc 
 C(\lambda,\ell) \, 
 \Big\|\int_1^2 |f_t| \, dt\Big\|_1 ,
\end{equation}
with constant
\[
C(\lambda,\ell) = \begin{cases}\la^{\frac{1}{2}} 2^{-\ell} (\ell + 1)
 &\text{ if } 2^\ell\le \la,
 \\
 \la^{\frac{1}{2}} \log(\la + 1) 2^{-\ell} 
 &\text{ if } 2^\ell\ge \la.
\end{cases}
\]
We proceed with the $L^2$ estimates. Note that 
\begin{align*}T^{k,\ell}_{\la,t}& = \la^{\frac{1}{2}}\int_0^\infty  e^{i\frac{\la}{ 4 \varsigma} } \, a_{\lambda,\circ}(\varsigma) \, \eta_\ell\Big(\frac{\varsigma t^2}{\la} ( -iU )-k\pi\Big) \, e^{i\varsigma t^2 L/\la} \, d\varsigma
\\& = \la^{\frac{1}{2}} \int_0^\infty e^{i\frac{\la t^2}{ 4 s}} \, a_{ \la, \circ}(st^{-2}) \, \eta_\ell(\la^{-1} s(-iU)-k\pi) \, e^{isL/\la} \, t^{-2} \, ds.
\label{s-integral}
\end{align*} 
Thus,
\[ 
\int_1^2 T^{k,\ell}_{\la,t}f_t \, dt = \int\eta_\ell(\la^{-1} s(-iU)-k\pi) \, e^{isL/\la} \, G_s \, ds,
\]
where
\[G_s = 
\la^{\frac{1}{2}} \int e^{i\frac{\la t^2}{ 4 s}} \, a_{\lambda,\circ}(st^{-2}) \, t^{-2} \, f_t \, dt.\]
Note that $|\frac{\partial^2}{\partial s\partial t} (t^2s^{-1}/ 4 ) |\sim 1$ for $s,t\sim 1$. Thus, by standard oscillatory $L^2$ integral estimates, we have
\begin{equation}\label{hoerm} \Big(\int_J | G_s(x,u)|^2 \, ds\Big)^{\frac{1}{2}} \lc \Big(\int_1 ^2 |f_t(x,u)|^2 \, dt\Big)^{\frac{1}{2}}. \end{equation}
Note that the $s$ integral is extended over a compact interval $J\subset [\frac 1 8, 32]$.
Using Plancherel  in the central variable $u,$ we get
\begin{align*}
&\Big\| \sum_{k\in\mathcal I_{\lambda,\ell}} 
\int_1^2 T^{k,\ell}_{\la,t}f_t \, dt \Big\|_2 
 =\Big( \int_{\R^{d_1}}\int_{\R} \Big| \sum_{k\in\mathcal I_{\lambda,\ell}} \int_1^2 (T^{k,\ell}_{\la,t}f_t)^\mu(x) \, dt \Big|^2 d\mu \, dx\Big)^{\frac{1}{2}} .
\end{align*} 
Note that for any $x\in \R^{d_1},$ 
\[
\int_1^2 (T^{k,\ell}_{\la,t}f_t)^\mu(x) \, dt = \int_J \eta_\ell( 2\pi s\tfrac{\mu}{\lambda}-k\pi) (e^{isL/\la} \, G_s)^\mu(x) \, ds.
\]
Since $2^\ell\ge 10^5 \la /k\ge 10^5/8\gg1,$ we have $2^{-\ell}\ll 1.$ For fixed $\mu,$ the condition $\eta_\ell(2\pi s\tfrac{\mu}{\lambda}-k\pi) \neq 0$ implies that $|2\pi s\mu-\la k\pi|\le \la 2^{1-\ell},$ and since $1/8\le s\le 32,$ this implies that 
$$
1\le k\sim \frac{|\mu|}\la.
$$
Moreover, since  $|s-\frac {\la} {\mu} \frac k 2 |\le \frac\la{|\mu|} 2^{-\ell},$ where $2^{-\ell}\ll \frac{1}{2},$ we see that for fixed $\mu$ and $x,$ the functions 
$$
g^{\mu,x}_k(s) =\eta_\ell(2\pi s\tfrac{\mu}{\lambda}-k\pi) (e^{isL/\la} \, G_s)^\mu(x)
$$
are  supported in pairwise disjoint intervals of length $\frac\la{|\mu|} 2^{-\ell}.$ And, since $k\sim |\mu|/\la,$ we see that
$\sum_{k\in\mathcal I_{\lambda,\ell}} g^{\mu,x}_k$ is supported in a set of total volume bounded by a constant times 
$(\frac{|\mu|}{\la})\, \frac\la{|\mu|} 2^{-\ell}\sim 2^{-\ell}.$ Thus, by Cauchy--Schwarz,
\begin{multline*}
 \big|\sum_{k\in\mathcal I_{\lambda,\ell}}\int_1^2(T^{k,\ell}_{\la,t}f_t)^\mu(x) \, dt\Big|
 =\Big| \int_J\sum_{k\in\mathcal I_{\lambda,\ell}}g^{\mu,x}_k(s) \, ds\Big|\\
 \lesssim 2^{-\frac \ell 2}\Big(\int_J \big|\sum_{k\in\mathcal I_{\lambda,\ell}}g^{\mu,x}_k(s)\big|^2 \, ds\Big)^{\frac 12}
 \lesssim 2^{-\frac \ell 2}\Big(\int_J \sum_{k\in\mathcal I_{\lambda,\ell}}\big| g^{\mu,x}_k(s)\big|^2 \, ds\Big)^{\frac 12}.
\end{multline*}
We thus obtain
\begin{align*}
&\Big\| \sum_{k\in\mathcal I_{\lambda,\ell}} 
\int_1^2 T^{k,\ell}_{\la,t}f_t \, dt \Big\|_2 \\
&\lesssim 2^{-\frac \ell 2}
\Big( \int_{\R^{d_1}}\int_{\R} \sum_{k\in\mathcal I_{\lambda,\ell}}  \int_J \Big|\eta_\ell(2\pi s\tfrac{\mu}{\lambda}-k\pi) (e^{isL/\la} \, G_s)^\mu(x) \Big|^2 \, ds \,d\mu \, dx\Big)^{\frac{1}{2}}\\
&\lesssim 2^{-\frac \ell 2} \Big( \int_{\R^{d_1}}\int_{\R}   \int_J \Big|(e^{isL/\la} \, G_s)^\mu(x) \Big|^2 \, ds \,d\mu \, dx\Big)^{\frac{1}{2}},
\end{align*} 
since $\sum_{k\in\mathcal I_{\lambda,\ell}} |\eta_\ell(2\pi s\tfrac{\mu}{\lambda}-k\pi)|^2 \lesssim 1.$
Hence, again by Plancherel, since $e^{isL/\la}$ is a unitary operator on $L^2,$ 
\begin{equation}\label{L2} 
\Big\| \sum_{k\in\mathcal I_{\lambda,\ell}} 
\int_1^2 T^{k,\ell}_{\la,t}f_t \, dt \Big\|_2 \lesssim 2^{-\frac \ell 2}\Big\|\Big(\int_J |G_s|^2 \,ds\Big)^{\frac 12}\Big\|_2
\lesssim 2^{-\frac \ell 2}\Big\|\Big(\int_1^2 |f_t|^2 \,dt\Big)^{\frac 12}\Big\|_2,
\end{equation} 
where we have applied \eqref{hoerm} in the last inequality.

Finally, for $1\le p\le 2$, by interpolation between \eqref{L1} and \eqref{L2}, we get
\begin{align*}\Big\| \sum_{k\in\mathcal I_{\lambda,\ell}} 
\int_1^2 T^{k,\ell}_{\la,t}f_t \, dt \Big\|_p
& \lc \la^{\frac 1p-\frac 12} \, 2^{-\frac \ell p} 
 \\
& \times \max \big\{(\ell + 1)^{\frac{1}{p}} , (\log \la)^{\frac{1}{p}}\big\} \, \Big\| \Big(\int_1^2 | f_t|^p \, dt\Big)^{\frac{1}{p}} \Big\|_p .
\end{align*}
Now 
\[
\Big(\int_1^2 | f_t|^p \, dt\Big)^{\frac{1}{p}}\le
\Big(\int_1^2 | f_t|^2 \, dt\Big)^{\frac{1}{2}}
\]
and since 
$ \tfrac 1p-\tfrac 12 < d\,(\frac 1p-\frac 12)-\tfrac 12$ {if and only if} $p< \tfrac{2d_1}{d_1+1}$
we have established  Proposition~\ref{prop:near}. 
\end{proof}

Note that the estimates in Proposition~\ref{prop:near} can be summed over all $\ell>0$, so that the contributions of these $k$ and $\ell$ to the left-hand side of \eqref{eq:main-square} in Theorem~\ref{thm:main-square} are fine.

\subsection{Support reduction of the convolution kernel of $T_{\lambda,t}^{k,\ell}$}\label{decay}

We are left with the contributions of the operators $T_{\lambda,t}^{k,\ell}$ with $0<k\le 8\la$ and $2^\ell k\lesssim \la.$ By Proposition~\ref{lem:decomp}, for $k\neq 0$ and $\ell \ge 1$, the convolution kernel of $T_{\lambda,t}^{k,\ell}$ is given by
\[
\sK_{\lambda,t}^{k,\ell}(x, u) = \lambda^{ n + \frac{3}{2}} \, t^{-2n} \int_0^\infty \int_0^\infty  e^{i\lambda\Phi(x,u,t,\sigma,\tau)} \, a_\lambda(t^2\sigma) \, \eta_\ell( \tau -k \pi)\left(\frac{\tau}{\sin\tau}\right)^{n} \, d\sigma \, d\tau,
\]
where $a_\lambda$ is supported in $(\frac{1}{16}, 4 )$, and
\[
\Phi(x,u,t,\sigma,\tau) = \sigma( t^2-|x|^2 g(\tau) + 4u\tau), \text{ with } g(\tau) = \tau\cot \tau.
\]
As in the case $k=0$, in order to simplify the notation slightly, we will drop the factor $t^{-2n}$, replace $t$ by $$s := t^2\in[1,4],$$ and  work from here on with the convolution kernel
\begin{equation}\label{eq:K-kl-alt}
K_{\lambda,s}^{k,\ell}(x, u) = \lambda^{n + \frac 32} \int_0^\infty \int_0^\infty e^{i\lambda\tilde \Phi(x,u,s,\sigma,\tau)} \, a_\lambda(s\sigma) \, \eta_\ell(\tau -k \pi)\Big(\frac{\tau}{\sin\tau}\Big)^n d\sigma \, d\tau
\end{equation}
in place of $\sK_{\lambda,t}^{k,\ell}$,
where, as in \eqref{eq:phasesdef},
\begin{equation}\label{eq:phase-kl-alt}
\tilde \Phi(x,u,s,\sigma,\tau) = \sigma(s-|x|^2 g(\tau) + 4u\tau).
\end{equation}
Correspondingly,  we shall estimate the operator $\tilde T^{k,\ell}_{\la,s}$ defined by 
$$\tilde T^{k,\ell}_{\la,s} f = f*K_{\lambda,s}^{k,\ell}$$
 in place of $T_{\lambda,t}^{k,\ell}.$
Note that 
\begin{equation}\label{TtildeT}
\Big\| \Big( \int_1^2 \big| T_{\lambda,t}^{k,\ell} f \big|^2 \, dt \Big)^{\frac{1}{2}} \Big\|_p
\sim \Big\| \Big( \int_1^4 \big| \tilde{T}_{\lambda,s}^{k,\ell} f \big|^2 \, ds \Big)^{\frac{1}{2}} \Big\|_p.
\end{equation}

Assume now that $k\ge 1$ and $\ell\ge 1.$ In order to prepare our decay estimates for fixed $k$ and $\ell$, we compute the critical points of the phase function $\tilde\Phi$. We have
\[
\nabla_{\sigma,\tau}^{\mathsf T} \tilde \Phi (x,u,s,\sigma,\tau) = \binom{s-|x|^2 g(\tau) + 4 \tau u}{\sigma (-|x|^2 g'(\tau) + 4 u)}
 = A_{\sigma,\tau}
\begin{pmatrix}
 |x|^2 \\ u
\end{pmatrix}
 + \begin{pmatrix}
 s \\ 0
\end{pmatrix},
\]
where
\[
A_{\sigma,\tau} : = \begin{pmatrix}
 -g(\tau) & 4 \tau \\
 -\sigma g'(\tau) & 4 \sigma
\end{pmatrix}.
\]
Note that
\begin{equation}\label{Ainvers}
A_{\sigma,\tau}^{-1}=
\frac 1 {\sigma\tau^2}
\begin{pmatrix}
 -\sigma \sin^2\tau & \tau \sin^2\tau \\
 \frac{1}{4}\sigma(\tau-\sin\tau\cos\tau) &  \frac{1}{4}\tau\sin\tau\cos\tau
\end{pmatrix}.
\end{equation}
We have $g'(\tau) = \cot\tau - \tau/\sin^2\tau$. Thus, $\nabla_{\sigma,\tau} \tilde \Phi(x,u,s,\sigma,\tau) = 0$ if and only if
\begin{align*}
\begin{pmatrix}
 |x|^2 \\ u
\end{pmatrix}
 = 
A_{\sigma,\tau}^{-1}
\begin{pmatrix}
 -s \\ 0
\end{pmatrix}
 = 
s\begin{pmatrix}
 \sin^2\tau/\tau^2 \\ 
 - \frac{1}{4}(\tau-\sin\tau\cos\tau)/\tau^2
\end{pmatrix}.
\end{align*}
Recall that the factor $\eta_\ell(\tau-k \pi)$ localizes to the union  $J_{k,\ell}$ of two intervals on which $|\tau-k \pi| \sim 2^{-\ell}$. 
Given $\tau\in J_{k,\ell}$, we write
\begin{equation}\label{eq:formulas-sing}
|x|_c(s,\tau) := \sqrt{s} \, \Big|\frac{\sin\tau}{\tau}\Big| \quad\text{and}\quad
  u_c(s,\tau) := - s \, \frac{\tau-\sin\tau\cos\tau}{ 4 \tau^2}.
\end{equation}
Moreover, we write
\[
\Delta x^2 (s,\tau): = |x|^2-|x|_{c}^2 (s,\tau) \quad\text{and}\quad \Delta u(s,\tau): = u-u_c(s,\tau).
\]
Then
\begin{align}\notag
 \nabla_{\sigma,\tau}^{\mathsf T} \tilde \Phi (x,u,s,\sigma,\tau)
& =  A_{\sigma, \tau}\binom{|x|^2-|x|_{c}^2 (s,\tau)}{u-u_c(s,\tau)} \\
& =  A_{\sigma, \tau}\binom{\Delta x^2 (s,\tau)}{\Delta u(s,\tau)}. \label{nablaPhi}
\end{align}
We fix any point $\tau_{k,\ell} \in J_{k,\ell}$. For $\tau \in J_{k,\ell}$, we have 
\[
|x|_{c}(s,\tau) \sim 2^{-\ell}/k \quad\text{and}\quad |u_c(s,\tau)| \sim 1/k.
\]
Moreover,
\[
 \left|u_c(s,\tau)-u_c (s,\tau_{k,\ell}) \right| \lesssim \frac{2^{-\ell}}{k^2} \quad\text{and}\quad 
\left| |x|_c^2 (s,\tau)-|x|_c^2 (s,\tau_{k,\ell}) \right| \lesssim \Big(\frac{2^{-\ell}}{k}\Big)^2.
\]
This suggest the following to hold:  $K_{\lambda,s}^{k,\ell}$  is essentially supported  in a ``hollow cylinder''
of the form 
$$
||x|-|x|_c(s,\tau_{k,\ell})|  \lesssim \frac{2^{-\ell}}{k} \quad \text{and}\quad |u-u_c (s,\tau_{k,\ell})| \lesssim \frac{2^{-\ell}}{k^2}.
$$
 We will make this more precise in the next proposition, whose proof will be given in  Appendix \ref{decayproof}.

\begin{proposition}\label{prop:decay-away}
There is some constant $R_0 > 0$ such that for any $s \in[1,4]$ the following holds:
Let $k\ge 1$ and $\ell\ge 1$, and let $\tau_{k,\ell}$ be in the support of $\eta_\ell(\tau-k \pi).$ Denote by $E_s^{k,\ell}$ the set of all $(x,u)\in \R^{d_1}\times \R$ such that 
\[
|x| \leq 10^{-1}\frac{2^{-\ell}}{k}, \quad\text{or}\quad 
|x| \ge \sqrt{10 }\, \frac{2^{-\ell}}{k}, \quad\text{or}\quad 
|u-u_c (s,\tau_{k,\ell})|  \ge R_0  \frac{2^{-\ell}}{k^2}.
\]
Then, for any $N\in\N,$ 
\[
\int_{E_s^{k,\ell}} |K_{\lambda,s}^{k,\ell}(x,u)| \, d(x,u) \le C_N
\Big(\frac{\lambda}{2^\ell k}\Big)^{-N}\Big(\frac{2^{\ell}}{k}\Big)^{\frac{1}{2}}.
\]
\end{proposition}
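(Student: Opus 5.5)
The plan is to estimate the kernel \eqref{eq:K-kl-alt} pointwise by integration by parts in $(\sigma,\tau)$, using the gradient formula \eqref{nablaPhi}, and then to integrate the resulting bounds over $E_s^{k,\ell}$. First we rescale to natural coordinates. On the support $\tau\in J_{k,\ell}$ we write $\tau=k\pi\pm 2^{-\ell}\theta$ with $\theta\sim 1$, so $d\tau=2^{-\ell}d\theta$. Since $|\sin\tau|\sim 2^{-\ell}$ and $\tau\sim k$, the amplitude factor $(\tau/\sin\tau)^n$ is of size $(2^\ell k)^n$, and its $\theta$-derivatives are of the same size. The phase expands as $\lambda\tilde\Phi=\lambda\sigma(s-|x|^2 g(\tau)+4u\tau)$. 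Here $g(\tau)=\tau\cot\tau\approx \pm k\pi 2^{\ell}/\theta$. Further, $g'(\tau)=\cot\tau-\tau/\sin^2\tau$ is dominated by $-k\pi 2^{2\ell}/\theta^2$, so $\partial_\theta$ brings in $2^{-\ell}g'\sim k2^{\ell}$.

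We rescale $x=\frac{2^{-\ell}}{k}X$ and $u-u_c(s,\tau_{k,\ell})=\frac{2^{-\ell}}{k^2}V$. The volume element is then $(2^{-\ell}/k)^{2n}\,2^{-\ell}k^{-2}\,dX\,dV$. In these coordinates the $\sigma$-derivative of the phase is
\[
\lambda\big(s-|X|^2 \tfrac{2^{-2\ell}}{k^2}g(\tau)+4\tau u\big),
\]
and the $\theta$-derivative is
\[
\lambda\sigma 2^{-\ell}\big(-|x|^2g'(\tau)+4u\big).
\]
The two relevant effective frequencies are $\lambda$ and $\lambda/(2^\ell k)$. One checks, using $|x|^2g'\sim \frac{2^{-2\ell}}{k^2}|X|^2 k 2^{2\ell}=|X|^2/k$ and $4u\approx -1/k+\ldots$, that the $\theta$-derivative has size $\frac{\lambda}{2^\ell k}\,\big||X|^2\theta^{-2}\pi-1+O(2^{-\ell}V/k)\big|$, up to constants. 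The $\sigma$-derivative has size $\lambda\,|\text{(affine in }|X|^2,V)|$.

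Next we verify nonstationarity on $E_s^{k,\ell}$. By \eqref{nablaPhi}, a critical point requires $|x|=|x|_c(s,\tau)\in[\frac{1}{\sqrt{10}}\cdot\sqrt{\ldots},\ldots]$, i.e. $|X|$ comparable to $|\sin\tau|k2^\ell/\tau\in(\sim\frac{1}{3},\sim 1.5)$ given the support of $\eta_\ell$. It also requires $|u-u_c(s,\tau_{k,\ell})|\lesssim 2^{-\ell}k^{-2}$. Hence on $E_s^{k,\ell}$ the vector $(\Delta x^2,\Delta u)$ in \eqref{nablaPhi} is bounded below. More quantitatively, we show that $|\nabla_{\sigma,\theta}(\lambda\tilde\Phi)|\gtrsim \frac{\lambda}{2^\ell k}(1+|X|^2+|V|)$ there. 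This is obtained by inverting $A_{\sigma,\tau}$ via \eqref{Ainvers} after rescaling: the rescaled matrix has entries of unit size and determinant of size one. We then integrate by parts $N$ times with the vector field adapted to this gradient. The key point is that higher derivatives of the phase and amplitude in $(\sigma,\theta)$ are controlled by the same scale, since $\partial_\theta^j$ of $g(\tau)$ in the $\theta$ variable is $O(k2^\ell)$. This gives
\[
|K^{k,\ell}_{\lambda,s}|\lesssim \lambda^{n+\frac32}2^{-\ell}(2^\ell k)^n\Big(\frac{\lambda}{2^\ell k}(1+|X|^2+|V|)\Big)^{-N}.
\]

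The final step is integration over $E_s^{k,\ell}$. Using the volume factor, we get
\[
\lambda^{n+\frac32}2^{-\ell}(2^\ell k)^n(2^\ell k)^{-2n}2^{-\ell}k^{-2}\Big(\frac{\lambda}{2^\ell k}\Big)^{-N}
=\Big(\frac{\lambda}{2^\ell k}\Big)^{n+\frac32-N}(2^{\ell}/k)^{1/2}\cdot(\ldots),
\]
and collecting powers of $2^\ell$ and $k$ should produce exactly $(2^\ell/k)^{1/2}$ times a power of $\lambda/(2^\ell k)$. Since $N$ is arbitrary, this yields the claim. The case $\lambda\lesssim 2^\ell k$ is trivial by the same bound with $N=0$ and the localization in $(X,V)$ from the first derivative only in $\sigma$. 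The main obstacle is the uniform lower bound for the rescaled gradient, particularly near the boundary $|X|\approx 1/\sqrt{10}$ or $\sqrt{10}$. There one must use that the $\theta$-component alone, $|X|^2\pi/\theta^2-1$ (with the precise $\eta_\ell$ support $|\theta|\in[\frac38\pi\cdot\ldots]$), stays away from zero. We also need care in the $u$-direction, where $u_c(s,\tau)$ varies by $O(2^{-\ell}/k^2)$ across $J_{k,\ell}$; this is why the constant $R_0$ must be chosen large.
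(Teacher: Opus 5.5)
Your argument is correct. Its engine is the same as in Appendix~\ref{decayproof}: non-stationary phase in $(\sigma,\tau)$ driven by the factorization \eqref{nablaPhi}, together with the fact that on $E_s^{k,\ell}$ one of $\Delta x^2(s,\tau)$, $\Delta u(s,\tau)$ is bounded below uniformly in $\tau\in J_{k,\ell}$. This is Lemma~\ref{lem:decay-away-1}, with $R_0$ absorbing the $O(2^{-\ell}k^{-2})$ variation of $u_c$.

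The packaging differs. The paper stays in the original variables and uses the two vector fields $V,W$ (the rows of $A_{\sigma,\tau}^{-1}$) to produce separate factors $(\lambda\Delta x^2)^{-1}$ and $(\lambda\Delta u)^{-1}$. It splits $E_s^{k,\ell}$ into three regions with different numbers $N,M$ of integrations by parts, and needs the induction of Lemma~\ref{lem:decay-away-2} to track the powers of $\tau$, $\sin\tau$ and the derivatives of $\eta_\ell$ that appear. You rescale first. Then $\lambda\nabla_{\sigma,\theta}\tilde\Phi=\frac{\lambda}{2^\ell k}\tilde B\,(\widetilde{\Delta X^2},\tilde V)^\intercal$, where $\widetilde{\Delta X^2}=|X|^2-|X|_c^2(s,\tau)$ and $\tilde V$ is the rescaled $\Delta u(s,\tau)$. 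The matrix $\tilde B$ has bounded entries and $|\det\tilde B|\sim1$, which follows from $\det A_{\sigma,\tau}=-4\sigma\tau^2/\sin^2\tau$. All higher derivatives of phase and amplitude obey uniform bounds. So one standard non-stationary phase lemma with the single weight $1+|X|^2+|V|$ replaces both the bookkeeping and the splitting. The paper's version, in exchange, is fully explicit and gives decay separately in $\lambda\Delta x^2$ and $\lambda\Delta u$.

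Your route also tracks $d\tau=2^{-\ell}d\theta$ automatically. The powers therefore collect to $(\frac{\lambda}{2^\ell k})^{n+\frac32-N}(2^\ell k)^{-1/2}$, which is stronger than the stated $(2^\ell/k)^{1/2}$; the paper's count does not use the length of the $\tau$-support.

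A few points need care when you write it out.
\begin{itemize}
\item For $\ell=1$ your asymptotics $g\approx\pm k\pi2^\ell/\theta$ and $g'\approx-k\pi2^{2\ell}/\theta^2$ fail near $\theta=\pi$, the horizontal point where $\cot\tau=0$. The lower bound must therefore come only from the determinant identity and the boundedness of the entries of $\tilde B$, as in your matrix argument.
\item Your range for $|X|_c$ omits the factor $\sqrt s$. For $\ell=k=1$ one has $|X|_c^2=4s\sin^2\tau/\tau^2$, whose supremum over $s\le4$ is $16\sin^2(3\pi/8)/(3\pi/8)^2\approx9.84$. The separation from $\{|X|^2\ge10\}$ holds, but only barely, and only if you use the exact endpoint $\frac58\pi$ of $\supp\eta_0$. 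The cruder bound $\sin^2\tau/\tau^2\le\frac{64}{9\pi^2}k^{-2}$ from the proof of Lemma~\ref{lem:decay-away-1} gives only $|X|_c^2<11.6$. For $s$ near $1$, which is all that Corollary~\ref{decaysmalls} uses, there is ample room.
\item For $\lambda\lesssim 2^\ell k$, the ``$N=0$'' bound is not integrable over the unbounded set $E_s^{k,\ell}$. Use $\min\{1,(\frac{\lambda}{2^\ell k}(1+|X|^2+|V|))^{-N}\}$ with $N>n+1$ instead, which integrates to $O((\frac{\lambda}{2^\ell k})^{1/2}(2^\ell k)^{-1/2})$.
\end{itemize}
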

Let us put  $u_c(\tau)= u_c(1,\tau),$ so that $u_c(s,\tau)=s\,u_c(\tau).$

\begin{corollary}\label{decaysmalls}
There is some constant $R_0 > 0$ such that for any $s \in[1,1+\frac{2^{-\ell}}k]$ the following holds: If 
$E^{k,\ell}$ denotes the set of all $(x,u)\in \R^{d_1}\times \R$ such that 
\[
|x| \leq 10^{-1}\frac{2^{-\ell}}{k}, \quad\text{or}\quad 
|x| \ge \sqrt{10 }\, \frac{2^{-\ell}}{k}, \quad\text{or}\quad 
|u-u_c (\tau_{k,\ell})|  \ge R_0  \frac{2^{-\ell}}{k^2},
\]
then, for any $N\in\N,$ 
\[
\int_{E^{k,\ell}} |K_{\lambda,s}^{k,\ell}(x,u)| \, d(x,u) \le C_N
\Big(\frac{\lambda}{2^\ell k}\Big)^{-N}\Big(\frac{2^{\ell}}{k}\Big)^{\frac{1}{2}}.
\]
\end{corollary}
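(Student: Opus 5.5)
The plan is to deduce Corollary~\ref{decaysmalls} directly from Proposition~\ref{prop:decay-away}. The only change is that $u_c(s,\tau_{k,\ell})$ is replaced by $u_c(\tau_{k,\ell})=u_c(1,\tau_{k,\ell})$. We may need to enlarge $R_0$, and we are allowed to, since the constant in the corollary is only asserted to exist. The key observation is that for $s\in[1,1+\frac{2^{-\ell}}{k}]$ the two centers are close on the scale $2^{-\ell}/k^2$. Indeed, $u_c(s,\tau)=s\,u_c(\tau)$, and by \eqref{eq:formulas-sing} we have $|u_c(\tau_{k,\ell})|\sim 1/k$ for $\tau_{k,\ell}\in J_{k,\ell}$, since $\tau\sim k\pi$ and $|\tau-\sin\tau\cos\tau|\sim k$. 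Hence
\[
|u_c(s,\tau_{k,\ell})-u_c(\tau_{k,\ell})| = (s-1)\,|u_c(\tau_{k,\ell})| \le C_1\,\frac{2^{-\ell}}{k}\cdot\frac1k = C_1\,\frac{2^{-\ell}}{k^2},
\]
with an absolute constant $C_1$.

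Let $R_0'$ denote the constant from Proposition~\ref{prop:decay-away}, and define the constant in the corollary by $R_0:=R_0'+C_1$. Suppose $(x,u)$ satisfies $|u-u_c(\tau_{k,\ell})|\ge R_0\,2^{-\ell}/k^2$. By the triangle inequality,
\[
|u-u_c(s,\tau_{k,\ell})|\ge (R_0-C_1)\,\frac{2^{-\ell}}{k^2}=R_0'\,\frac{2^{-\ell}}{k^2}.
\]
The conditions on $|x|$ are the same in both sets. It follows that $E^{k,\ell}\subset E^{k,\ell}_s$ for every $s\in[1,1+\frac{2^{-\ell}}{k}]\subset[1,4]$.

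Since the integrand is nonnegative, the $L^1$ bound of Proposition~\ref{prop:decay-away} over $E_s^{k,\ell}$ then gives the same bound over $E^{k,\ell}$, with the same constants $C_N$. There is no real obstacle in this argument. The only point to check is the uniform size bound $|u_c(\tau)|\lesssim 1/k$ on the support of $\eta_\ell(\cdot-k\pi)$, and this is immediate from the explicit formula.
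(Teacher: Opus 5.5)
Your proof is correct and is essentially the paper's argument. You use $u_c(s,\tau)=s\,u_c(\tau)$ and $|u_c(\tau_{k,\ell})|\sim 1/k$ to show that the two centers differ by $O(2^{-\ell}/k^2)$ for $s\in[1,1+\frac{2^{-\ell}}{k}]$; you then enlarge $R_0$ so that $E^{k,\ell}\subset E_s^{k,\ell}$ and apply Proposition~\ref{prop:decay-away}, and your explicit choice $R_0:=R_0'+C_1$ simply makes precise the paper's phrase ``increase the constant $R_0$ in a suitable way.''
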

\begin{proof} 
Since $|u_c(\tau)| \sim 1/k,$ we see that for  $s \in[1,1+\frac{2^{-\ell}}k]$ we have  $|u_c(s,\tau)- u_c(\tau)|\lesssim 2^{-\ell}k^{-1}  k^{-1}= 2^{-\ell}k^{-2},$ so that, for some constant $C>0, $
$$
|u-u_c (s,\tau_{k,\ell})|\ge |u-u_c (\tau_{k,\ell})|-C\frac { 2^{-\ell}}{k^2}.
$$
The asserted  estimate follows thus from the one in Proposition~\ref{prop:decay-away}, if we increase the constant $R_0$ in a suitable way.
\end{proof}

\subsection{Reduction  to shorter  time intervals}\label{shorts}

Assume again that  $k, \ell\ge 1$ are such that $ k\le 8\la$ and $2^\ell k\lesssim \la.$
For $k$ and  $\ell$ fixed,   we shall use  in the sequel the abbreviation
\[
\delta = \delta(k,\ell) := 2^{1-\ell}/k.
\]

In view of Corollary~\ref{decaysmalls}, we shall next reduce the  square function estimates for  $\tilde T^{k,\ell}_{\la,s}$ to estimates over shorter $s$-intervals of length at most $\delta.$ 

\begin{proposition}\label{prop:conditional}
Assume there is some $\ve\in (0,1)$ such that for all
 $p\ge \frac{2(d_1+1)}{d_1-1}$  and all $k,\ell$ as above the following estimate 
\begin{multline}
\Big\| \Big( \int_1^{1+\ve\delta(k,\ell)} \big| \tilde{T}_{\lambda,s}^{k,\ell} f \big|^2 \, ds \Big)^{\frac{1}{2}} \Big\|_p
\le C(\ve,a_\la,p)\, \delta(k,\ell)^{\frac 12}\\
\times\Big[\lambda^{d(\frac 12 - \frac 1p )-\frac 12} \, (2^\ell k)^{-d(\frac 12 - \frac 1p ) + \frac1p } + \Big(\frac{2^{\ell}}{k}\Big)^{\frac{1}{2}}
\Big]\,
\|f\|_p \label{kl-est}
\end{multline}
holds for all $f\in \mathcal S(\R^d),$ where the constant  $C(\ve,a_\lambda, p)$ depends in $a_\la$ only on the $C^M$-norm of $a_\lambda$ for some sufficiently large $M\in\N.$ 

Then for $p\ge \frac{2(d_1+2)}{d_1-1}$ and $\alpha > d \, \big(\frac{1}{2}-\frac{1}{p}\big)-\frac{1}{2},$ or $p> \frac{2(d_1+2)}{d_1-1}$ and $\alpha \ge d \, \big(\frac{1}{2}-\frac{1}{p}\big)-\frac{1}{2},$ the following estimate
\begin{equation}\label{kl-estg}
\Big\| \Big( \int_1^2 \big| \sum\limits_{1\le k \le 8\la, \,2^{\ell}k\lesssim \la} T_{\lambda,t}^{k,\ell} f \big|^2 \, dt \Big)^{\frac{1}{2}} \Big\|_p
\le  C_\ve\, C(\ve,a_\la,p)\, \la^\alpha\, \|f\|_p
\end{equation}
holds true, which completes the proof of Theorem~\ref{thm:main-square}.
\end{proposition}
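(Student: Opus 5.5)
The plan is to pass from the short-interval estimate \eqref{kl-est} to the full interval $s\in[1,4]$ by rescaling and localization, and then to sum over $k,\ell$.

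\emph{Step 1: from short intervals to $[1,4]$.} Cover $[1,4]$ by $O((\ve\delta)^{-1})$ intervals $[s_j,s_j(1+\ve\delta)]$. On each such interval I would use the automorphic dilation $D_{\sqrt{s_j}}$ together with \eqref{LUhomog}. This rescaling maps $\tilde T^{k,\ell}_{\la,s}$ to $\tilde T^{k,\ell}_{\la',s/s_j}$ with $\la'=\la\sqrt{s_j}\sim\la$, up to a harmless change in the amplitude $a_\la$ that stays controlled in $C^M$. Hence \eqref{kl-est} holds on every such interval with the same bound.

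Summing the squares of these $O(\delta^{-1})$ pieces naively would lose a factor $\delta^{-1/2}$, which exactly cancels the gain $\delta^{1/2}$ in \eqref{kl-est}. Since $p\ge2$, I would use Minkowski's inequality in $\ell^{p/2}$ to get
\[
\Big\|\Big(\sum_j\int_{I_j}|\cdot|^2\Big)^{1/2}\Big\|_p\le\Big(\sum_j\Big\|\Big(\int_{I_j}|\cdot|^2\Big)^{1/2}\Big\|_p^2\Big)^{1/2}.
\]
This alone gives only $\delta^{-1/2}\cdot\delta^{1/2}$ times the bracket in \eqref{kl-est}, with no loss and no gain. Doing better would require the localization of Corollary~\ref{decaysmalls}, but the bracket already appears to be enough.

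With \eqref{TtildeT}, this yields
\[
\Big\|\Big(\int_1^2|T^{k,\ell}_{\la,t}f|^2\,dt\Big)^{1/2}\Big\|_p\lesssim \la^{d(\frac12-\frac1p)-\frac12}(2^\ell k)^{-d(\frac12-\frac1p)+\frac1p}+(2^\ell/k)^{1/2}.
\]

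\emph{Step 2: summation over $k$ and $\ell$.} The first term is summable over $k,\ell\ge1$ exactly when $-d(\frac12-\frac1p)+\frac1p<-1$, i.e.\ $\frac{d+1}{p}<\frac d2-1$, i.e.\ $p>\frac{2(d+1)}{d-2}=\frac{2(d_1+2)}{d_1-1}$. At the endpoint the exponent is $-1$, and the sum over $2^\ell k\lesssim\la$ is $O((\log\la)^2)$, which is absorbed by taking $\alpha$ strictly larger than $d(\frac12-\frac1p)-\frac12$.

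The second term sums to $\sum_{k\le8\la}\sum_{2^\ell\lesssim\la/k}(2^\ell/k)^{1/2}\lesssim\sum_k\la^{1/2}/k\lesssim\la^{1/2}\log\la$. This is acceptable because $\frac12< d(\frac12-\frac1p)-\frac12$ for $p>\frac{2d}{d-2}$, which holds in our range. I take the triangle inequality over $(k,\ell)$ in the square function norm as free.

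\emph{Main obstacle.} The delicate step is Step 1: checking that the rescaled operators still fit the hypotheses of \eqref{kl-est}, namely the amplitude support, the shifted time $s/s_j$, and the uniformity of constants in $a_\la$. I would handle this through the kernel formula \eqref{eq:K-kl-alt}: the substitution $\sigma\mapsto\sigma/s_j$ only alters $a_\la(s\sigma)$ and the factor $\la$, and both are uniformly controlled.
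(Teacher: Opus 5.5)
Your proposal is correct and follows essentially the same route as the paper. You subdivide time into $O(\delta^{-1})$ intervals of relative length $\varepsilon\delta$, rescale each to $[1,1+\varepsilon\delta]$ via $D_{\sqrt{s_j}}$, and apply Minkowski in $\ell^{p/2}$, so that the $N^{1/2}\sim\delta^{-1/2}$ loss cancels the $\delta^{1/2}$ gain; then you sum over $k,\ell$. Your direct $\lambda^{1/2}\log\lambda$ count for the second term and your logarithmic loss at the endpoint are equivalent to the paper's bookkeeping.

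One small correction: the rescaling is exact. Combining $D_{\sqrt{s_j}}$ with the substitution $\tilde\sigma=s_j\sigma$ gives $K^{k,\ell}_{\lambda,s_j\tilde s}(D_{\sqrt{s_j}}(x,u))=s_j^{-1}K^{k,\ell}_{\lambda,\tilde s}(x,u)$. So neither $\lambda$ nor the amplitude $a_\lambda(s\sigma)$ changes; there is no $\lambda'=\lambda\sqrt{s_j}$. The step you flag as the main obstacle is therefore immediate.
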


\begin{proof} As a first step, we decompose the time interval $[1,2]$ in \eqref{kl-estg} into about $1/\ve$ intervals of length $\sim \ve.$ Exploiting homogeneity, by means of automorphic scalings, we may then reduce each of the corresponding integrals in $t$ to the interval $[1,\sqrt{1+\ve}],$ which corresponds to the $s$-interval $[1,1+\ve].$

Given $k$ and $\ell,$ we next decompose the $s$-interval $[1,1+\ve]$ into disjoint subintervals
\[
I_j=[s_j,s_{j+1}],\qquad s_1=1,\qquad s_{N+1}=1+\varepsilon,
\]
with $\delta=\delta(k,\ell),$ where $s_{j+1}/s_j=1+\varepsilon\delta$ for $j=1,\dots,N-1$ and $s_{N+1}/s_N\le 1+\varepsilon\delta$. Then $s_j\in[1,1+\varepsilon]$ for all $j\in\{1,\dots,N\}$. Moreover, $s_j^{-1}I_j\subset[1,1+\varepsilon\delta]$ and $|I_j|\lesssim \varepsilon\delta$. Note that $N\sim \delta^{-1}$ with a constant that depends on $\varepsilon$.

Using  Minkowski's inequality, we obtain
\begin{align*}
\Bigl\|\Bigl(\int_{1}^{1+\varepsilon}\bigl|\tilde T^{k,\ell}_{\lambda,s} f\bigr|^2\,ds\Bigr)^{\frac12}\Bigr\|_{p}
&=
\Bigl\|\Bigl(\sum_{j=1}^{N}\int_{I_j}\bigl|\tilde T^{k,\ell}_{\lambda,s} f\bigr|^2\,ds\Bigr)^{\frac12}\Bigr\|_{p}\\
&\le
\biggl(\sum_{j=1}^{N}\Bigl\|\Bigl(\int_{I_j}\bigl|\tilde T^{k,\ell}_{\lambda,s} f\bigr|^2\,ds\Bigr)^{\frac12}\Bigr\|_{p}^{2}\biggr)^{\frac12}.
\end{align*}
For a given $j,$ making the change of variables $s=s_j\tilde s$ and putting $r_j=s_{j+1}/s_j\le 1+\varepsilon\delta$, we have
\[
\Bigl(\int_{I_j}\bigl|\tilde T^{k,\ell}_{\lambda,s} f\bigr|^2\,ds\Bigr)^{\frac12}
=
s_j^{\frac12}\Bigl(\int_{1}^{r_j}\bigl|\tilde T^{k,\ell}_{\lambda,s_j\tilde s}f\bigr|^2\,d\tilde s\Bigr)^{\frac12}.
\]
Since $s_j\in[1,1+\varepsilon]$, the factor $s_j^{\frac{1}{2}}\sim 1$ is harmless. The phase function $\tilde\Phi$ is homogeneous of degree $2$ with respect to the parabolic rescaling $(s,x,u)\mapsto (\alpha^2 s,\alpha x,\alpha^2 u)$. Combining the automorphic rescaling $D_{\sqrt{s_j}}$ with the change of variables $\tilde\sigma=s_j\sigma$, we obtain
\[
K_{\lambda,s_j\tilde s}^{k,\ell}
\bigl(D_{\sqrt{s_j}}(x,u)\bigr)
=
s_j^{-1}K_{\lambda,\tilde s}^{k,\ell}(x,u).
\]
Since $s_j\in[1,1+\varepsilon]$, all resulting scaling factors are
harmless. Extending the $\tilde s$-integral from $[1,r_j]$ to
$[1,1+\varepsilon\delta]$, and using
$N^{\frac12}\sim\delta^{-\frac12}$, we obtain
\[
\|T^{k,\ell}_{\lambda,t}\|_{L^p\to L^p(L^2([1,2]))}
\le C_\ve\,
\delta^{-\frac12}
\|\tilde T^{k,\ell}_{\lambda,s}\|_
{L^p\to L^p(L^2([1,1+\varepsilon\delta]))}.
\]
Hence, by \eqref{kl-est},
\[
\|T^{k,\ell}_{\lambda,t}\|_{L^p\to L^p(L^2([1,2]))}
\ \le C_\ve \,C(\ve,a_\la,p)\, 
\Big[\lambda^{d(\frac 12 - \frac 1p )-\frac 12} \, (2^\ell k)^{-d(\frac 12 - \frac 1p ) + \frac1p } + \Big(\frac{2^{\ell}}{k}\Big)^{\frac{1}{2}}\Big].
\]

Let us denote by $p_1=\frac{2(d_1+1)}{d_1-1}=\frac{2d}{d-2}$ the Stein--Tomas exponent associated with $d_1.$ Then 
$d \, (\frac{1}{2}-\frac{1}{p_{1}})=1,$ and thus if  $p\ge p_1$ and $\alpha > d \, (\frac{1}{2}-\frac{1}{p})-\frac{1}{2},$ then $\alpha>\frac{1}{2}.$
Thus, for $2^\ell k\lesssim \la,$  we may estimate
\[
\Big(\frac{2^{\ell}}{k}\Big)^{\frac{1}{2}} \lesssim \Big(\frac{2^{\ell}}{k}\Big)^{\frac{1}{2}} (2^\ell k)^{-\alpha}\la^{\alpha}=2^{-\ell(\alpha-\frac 12)} k^{-(\alpha+\frac 12)}\la^{\alpha},
\]
so we can bound the sum  of the second terms  $(\frac{2^{\ell}}{k})^{\frac{1}{2}}$ over all $k$ and $\ell$ with $2^\ell k\lesssim \la$  by a constant times $\la^\alpha.$  Similarly, if $p>p_1$ and $\alpha \ge d \, (\frac{1}{2}-\frac{1}{p})-\frac{1}{2},$ then again $\alpha>\frac{1}{2},$ and the summation in $k$ and $\ell$ leads to the same bound of order $\la^\alpha$.  

Under the stronger assumption $p> \frac{2(d_1+2)}{d_1-1}=\frac{2(d+1)}{d-2}$ and $\alpha \ge d \, (\frac{1}{2}-\frac{1}{p})-\frac{1}{2},$ we even have  $-d\,(\frac 12 - \frac 1p ) + \frac1p < -1$, so we are able to sum the first term in $k$ and $\ell$ since
\[
\lambda^{d(\frac 12 - \frac 1p )-\frac 12} \, (2^\ell k)^{-d(\frac 12 - \frac 1p)+\frac 1p}\le \lambda^{\alpha}\, (2^\ell k)^{-d(\frac 12 - \frac 1p)+\frac 1p}.
\]
Similarly, if $p\ge \frac{2(d_1+2)}{d_1-1}$ and $\alpha > d \, (\frac{1}{2}-\frac{1}{p})-\frac{1}{2},$ then 
\begin{align*}
\lambda^{d(\frac 12 - \frac 1p )-\frac 12} \, (2^\ell k)^{-d(\frac 12 - \frac 1p)+\frac 1p}
& \le \lambda^{\alpha}\la^{-\ve_p}\, (2^\ell k)^{-d(\frac 12 - \frac 1p)+\frac 1p} \\
& \le \la^\alpha (2^\ell k)^{-d(\frac 12 - \frac 1p)+\frac 1p-\ve_p}
\end{align*}
for some $\ve_p>0,$ so we can again sum in $k$ and $\ell$.

Thus, \eqref{kl-estg} follows by applying Minkowski's inequality.
\end{proof} 

\section{Reductions to unit scales} \label{main-est-kl}

In this section, we localize the kernels $K_{\lambda,s}^{k,\ell}$ to the regions $D^{k,\ell}$ below and reduce the square function estimate \eqref{kl-est} of Proposition~\ref{prop:conditional} to the rescaled square function estimate \eqref{eq:Tkl-breve} in Proposition~\ref{prop:unit}.

Assume again that $2^\ell k\lesssim \la.$
For  $k,\ell$ fixed, let us denote by $D^{k,\ell}$ the complement of the set 
$E^{k,\ell}$ from Corollary~\ref{decaysmalls}, which is given by
\begin{equation}\label{eq:B-kell}
D^{k,\ell}=\Big\{(x,u):10^{-1}\frac{2^{-\ell}}{k} <|x|< \sqrt{10}\,\frac{2^{-\ell}}{k}\quad \text{and} \quad |u-u_c (\tau_{k,\ell})| <  R_0 \frac{2^{-\ell}}{k^2}\Big\},
\end{equation}
and let $\dot{K}_{\lambda,s}^{k,\ell}=\chr_{D^{k,\ell}} \,K_{\lambda,s}^{k,\ell}$  denote the according  localization of 
$K_{\lambda,s}^{k,\ell},$ with associated convolution operator $ \dot T_{\lambda,s}^{k,\ell} f=f*\dot{K}_{\lambda,s}^{k,\ell}.$
Then decompose 
\begin{equation}\label{TdotTR}
\tilde T_{\lambda,s}^{k,\ell}=\dot {T}_{\lambda,s}^{k,\ell}+R_{\lambda,s}^{k,\ell},
\end{equation}
where 
$R_{\lambda,s}^{k,\ell}f=f*\big(\chr_{E^{k,\ell}} \,K_{\lambda,s}^{k,\ell}\big).$ 
By  Corollary~\ref{decaysmalls}, we have that for $s\in[1,1+\frac{2^{-\ell}}{k}]$ and $2^{\ell}k\lesssim \la$
\[
\|\chr_{E^{k,\ell}} \,K_{\lambda,s}^{k,\ell}\|_{L^1} \,\le C \Big(\frac{2^{\ell}}{k}\Big)^{\frac{1}{2}}.
\]
Then  Minkowski's inequality  shows that the contribution by the operator  $R_{\lambda,s}^{k,\ell}$  to the square function estimate  \eqref{kl-est} in Proposition~\ref{prop:conditional} adds  the second term $\delta(k,\ell)^{\frac 12}(\frac{2^{\ell}}{k})^{\frac{1}{2}}.$

We are thus left with estimating the contributions by the operators $\dot{T}_{\lambda,s}^{k,\ell}.$ 
The goal of this  and the next two sections  is to prove the following

\begin{proposition}\label{prop:inter-Lp}
If $\ve>0$ is sufficiently small and $p\ge \frac{2(d_1+1)}{d_1-1}$, then, for all $k\ge 1$, $\ell \ge 1$,
\begin{multline}
\Big\| \Big( \int_1^{1+\ve\delta(k,\ell)} \big| \dot{T}_{\lambda,s}^{k,\ell} f \big|^2 \, ds \Big)^{\frac{1}{2}} \Big\|_p
\le C(\ve,a_\la,p)\, \delta(k,\ell)^{\frac 12}\\
\times\Big[\lambda^{d(\frac 12 - \frac 1p )-\frac 12} \, (2^\ell k)^{-d(\frac 12 - \frac 1p ) + \frac1p } + \Big(\frac{2^{\ell}}{k}\Big)^{\frac{1}{2}} \Big]
\, \|f\|_p \label{eq:inter-Lp}
\end{multline}
for all $f\in \mathcal S(\R^d),$ where the constant  $C(\ve,a_\lambda, p)$ depends in $a_\la$ only on the $C^M$-norm of $a_\lambda$ for some sufficiently large $M\in\N.$
\end{proposition}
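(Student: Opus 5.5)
The plan is to rescale each localized kernel $\dot K^{k,\ell}_{\la,s}$ to unit size and then run the same $TT^*$ scheme with frozen variables that was used for $k=0$ in Sections~\ref{sec:k=0-nonhor} and~\ref{sec:k=0-hor}.

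\textbf{Rescaling.} The kernel lives in $D^{k,\ell}$. This is a hollow cylinder with $|x|\sim\delta$, with $u$ localized to an interval of length $\sim \delta/k$ around $u_c(\tau_{k,\ell})$, and with time scale $\ve\delta$. Write $\tau=k\pi\pm\tilde\tau$ with $|\tilde\tau|\sim 2^{-\ell}$, and set
\[
x=\delta \tilde x,\qquad u=u_c(\tau_{k,\ell})+\tfrac{\delta}{k}\tilde u,\qquad s=1+\delta(\tilde s-1)
\]
(possibly after an additional rescaling of $\tilde\tau$ by $2^{\ell}$ when $\ell\ge 2$). In these variables the phase $\la\tilde\Phi$ becomes $\la\delta\,\breve\Phi$, with $\breve\Phi$ a unit-scale phase of the form indicated in \eqref{eq:kernel-kpm}.

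The Heisenberg group law is not invariant under this anisotropic rescaling. The twist term $\frac12\langle Jx,y\rangle$ picks up a factor $\delta^2 k/\delta=\delta k=2^{1-\ell}$. So the rescaled convolution is a twisted convolution with a small twisting parameter. This is harmless, since all $TT^*$ computations in Lemma~\ref{lem:0-TTstar} and Lemma~\ref{lem:0-det} only require that parameter to be bounded.

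The effective frequency parameter is $\la\delta\sim \la/(2^\ell k)\gtrsim 1$. The claimed bound \eqref{eq:inter-Lp} should then follow by bookkeeping: a unit-scale square function estimate $L^2\to L^p(L^2)$ with constant $(\la\delta)^{d(\frac12-\frac1p)-\frac12}$, combined with the Jacobian factors. These factors are $\delta^{1/2}$ from the time integral, $\delta^{\frac{d}{2}}k^{-\frac12}$-type factors from $L^2$ versus $L^p$ normalization in $(x,u)$, and the amplitude factor $(\tau/\sin\tau)^n\sim (k2^\ell)^n$. Passing from $L^2$ to $L^p$ for $f$ localized, as in Proposition~\ref{prop:localization}, converts the gains into the exponent $-d(\frac12-\frac1p)+\frac1p$ on $2^\ell k$. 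I would check this exponent count first, since it dictates which unit-scale estimate \eqref{eq:Tkl-breve} must be proved.

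\textbf{Unit-scale estimate.} At unit scale, localize as in Lemma~\ref{localize1} and Proposition~\ref{prop:localization} to small balls, using Proposition~\ref{localize-a}(ii) with the automorphism $D_\delta$ combined with the central translation by $u_c$. Then split $\tilde\tau$ into non-horizontal and horizontal pieces, the horizontal points $\tau_j=j\pi+\pi/2$ mattering only for $\ell=1$ (Case II). Apply $TT^*$ and interpolate between two frozen bounds:
\begin{itemize}
\item a pointwise bound $\jp{\la\delta|\Delta x_1|}^{-\frac{d_1-1}{2}}$, obtained by the spherical stationary phase argument of Proposition~\ref{lem:pointwiseQ0};
\item a uniform $L^2$ bound for the frozen operators, obtained via a Monge--Amp\`ere determinant computation as in Lemma~\ref{lem:0-det}.
\end{itemize}
In Case I freeze $x_1$; in Case II freeze $x_2$, following Remark~\ref{breakdown}. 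Hardy--Littlewood--Sobolev in $x_1$ then gives the Stein--Tomas range $p\ge \frac{2(d_1+1)}{d_1-1}$.

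\textbf{Main obstacle.} I expect the main obstacle to be the $L^2$ determinant bound, uniformly in $k$ and $\ell$. Near $\tau=k\pi$ the function $g(\tau)=\tau\cot\tau$ blows up like $k\,2^{\ell}$, and the blocks $g(\tau')I+\tau'\underline J$ become badly unbalanced, so invertibility does not come for free. I would first try to factor out the dominant $g$ term after rescaling $\tilde\tau$, and check that the reduced matrix becomes $I+O(2^{-\ell}/k)\underline J$, which is uniformly invertible. If this fails, I would accept a loss, which is presumably the source of the unfavorable $k,\ell$ dependence mentioned in the introduction.
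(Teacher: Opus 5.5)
Your overall architecture matches the paper's. You rescale to unit scale with twisted law parameter $r=2^{1-\ell}$ and frequency $\la\delta$, use $TT^*$ with $x_1$ frozen for $\ell\ge2$ and $x_2$ frozen for $\ell=1$, get the pointwise bound by spherical stationary phase, and finish with Hardy--Littlewood--Sobolev. The gap is the $L^2$ bound for the frozen operators, which you misdiagnose.

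After your own rescaling the spatial blocks are harmless: they become $2\tau I-\tfrac r2\underline{\underline J}$ with $|\tau|\sim1$, as in Lemma~\ref{eq:MA-kl}. The real degeneracy is in time. After $s=1+\delta(\tilde s-1)$ and $\eta=\sigma(\pi+\delta\varphi_\ell(\tau))$, the phase depends on $s$ only through $\frac{\delta^{-1}-1+s}{\pi+\delta\varphi_\ell(\tau)}$. Hence $\partial_s\partial_\tau$ of the phase is $O(\delta)$, and $s$ acts essentially like a translation in $u$. The full Monge--Amp\`ere determinant is then only $O(\delta^2)$, so the analogue of Lemma~\ref{lem:0-det} genuinely fails. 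A bounded twist parameter does not rescue it, and no factorization of the spatial block changes it.

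"Accepting a loss" does not suffice, because the statement needs the loss in $\|T_{x_1}T_{x_1'}^*\|_{2\to2}$ to be exactly $\delta^{-1}$. Interpolating with the pointwise bound, this gives $\delta^{-2/p}$ for $TT^*$ and $\delta^{-1/p}$ in \eqref{eq:Tkl-breve}. Combined with the $\delta^{1/2}$ from the kernel normalization and the $\delta^{1/2}$ from the time rescaling, this is exactly what produces the exponent $-d(\frac12-\frac1p)+\frac1p$ on $2^\ell k$.

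Your attribution of the $+\frac1p$ to "$L^2$ versus $L^p$ normalization" is incorrect. The rescaling preserves $L^p$ operator norms, and passing from $L^2$ to $L^p$ at unit scale only costs an $\ve$-dependent Hölder constant. A lossless unit-scale estimate would give the stronger exponent $-d(\frac12-\frac1p)$, but that is not available.

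The missing mechanism, as in the paper, has five steps:
\begin{enumerate}
\item Freeze $\xi$ in the amplitude $h_\la$ coming from the $v$-integration, and strip off the unimodular factor $e^{i\la\xi(\psi(s')+u')}$. Otherwise the amplitude is not uniformly smooth once $s'$ is rescaled.
\item Take the partial Fourier transform in $u$ to fix $\mu=\la\eta\sim\la$.
\item Rescale $s,s'$ by $\delta^{-1}$; by Lemma~\ref{scaleins} this costs exactly $\delta^{-1}$.
\item Conjugate away the large linear term $\mu(s-s')/(\delta\pi)$.
\item Check that the new phase $\tilde\Theta$ satisfies $|\tilde\Theta_{s\tau}|,|\tilde\Theta_{\tau's'}|\sim1$ and has Monge--Amp\`ere determinant $\sim1$ uniformly in $k,\ell$ (Lemmas~\ref{eq:MA-kl} and~\ref{eq:MA-k1}).
\end{enumerate}

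Two smaller points. First, the automorphism to use in Proposition~\ref{localize-a}(ii) is $D_{1/k}$ with balls of radius $\sim2^{-\ell}$. The box $D^{k,\ell}$ has $u$-height $\delta/k=2^{\ell-1}\delta^2$, not $\delta^2$, so $D_\delta$ does not fit when $\ell$ is large. Second, the term $(2^\ell/k)^{1/2}$ in \eqref{eq:inter-Lp} comes from replacing the sharp cutoff $\bbone_{D^{k,\ell}}$ by smooth localizations via Corollary~\ref{decaysmalls}, and your proposal does not account for it.
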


By Proposition~\ref{prop:conditional}, this will then complete the proof of  Theorem~\ref{thm:main-square}.

\subsection{Secondary  localizations for the kernels $K_{\lambda,s}^{k,\ell}$}

We fix again a smooth bump function $\chi$ supported in the Euclidean ball $B_1(0)\subset \R^{d_1}\times \R$ such that $\chi\equiv 1$ on $B_{\frac{1}{2}}(0).$ For $\ve>0,$ we  set
\[
\chi^{k,\ell}_\ve(x,u)=\chi\Big(\frac{k 2^\ell}{\ve}x, \frac{k^2 2^\ell}{\ve}u\Big).
\]

\begin{proposition}\label{prop:localization-ii}
Let $\ve>0$ be sufficiently small and $p\ge \frac{2(d_1+1)}{d_1-1}$. Suppose that, for every $k,\ell\ge 1$, every $(x^0,u^0)\in\bbH_n$ such that
$x^0=(|x^0|,0,\dots,0)$ and
\begin{equation}\label{essentsupp}
100^{-1}\frac{2^{-\ell}}{k} \le |x^0| \le 100\,\frac{2^{-\ell}}{k}
\quad\text{and}\quad
|u^0-u_c(\tau_{k,\ell})| \le 10R_0\frac{2^{-\ell}}{k^2},
\end{equation}
and every $f\in\mathcal S(\bbH_n)$, we have the estimate
\begin{multline}\label{loc-kl}
\Big\|\Big(
\int_1^{1+\ve\delta(k,\ell)}\big|\tilde\chi^{k,\ell}_\ve\,\tilde T_{\lambda,s}^{k,\ell}(\chi^{k,\ell}_\ve f)
\big|^2\,ds\Big)^{\frac12}\Big\|_p
\le C(a_\lambda,\ve,p)\, \delta(k,\ell)^{\frac12} \\
\times \lambda^{d(\frac12-\frac1p)-\frac12}
(2^\ell k)^{-d(\frac12-\frac1p)+\frac1p}
\|f\|_p,
\end{multline}
where $\tilde\chi^{k,\ell}_\ve(x,u):=\chi^{k,\ell}_\ve(x-x^0,u-u^0)$, and where the constant $C(a_\lambda,\ve,p)$ is uniform in
$k,\ell$ and $(x^0,u^0)$ and depends on $a_\lambda$ only through its $C^M$-norm for some sufficiently large $M\in\N$.
Then the square function estimate \eqref{eq:inter-Lp} holds.
\end{proposition}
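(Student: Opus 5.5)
The plan is to deduce \eqref{eq:inter-Lp} from the local bounds \eqref{loc-kl} in three steps: reduce to $f$ supported in a small box near the origin, cover the essential support $D^{k,\ell}$ of $\dot K^{k,\ell}_{\lambda,s}$ by a number of $\varepsilon$-boxes that is bounded uniformly in $k,\ell$, and absorb the discrepancies between $\dot T^{k,\ell}_{\lambda,s}$ and the localized operators $\tilde T^{k,\ell}_{\lambda,s}$ into the error term $(2^\ell/k)^{1/2}$.

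\emph{Step 1 (reduction to small support).} Write $r=2^{-\ell}/k$ and use the anisotropic boxes
\[
\mathcal B_c(z)=z\cdot\{(x,u):|x|<c\,r,\ |u|<c\,r/k\}.
\]
Since $r/k\ge r^2$, the Heisenberg correction $\tfrac12\langle Jx,y\rangle$ in the group law is $O(r^2)$, which is dominated by the $u$-width $r/k$. Hence products and inverses of such boxes are again boxes of comparable size, and $|\mathcal B_c|\sim c^{d}r^{d_1}r/k$.

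The covering argument in the proof of Proposition~\ref{localize-a} then goes through verbatim with the balls $B(z,r)$ replaced by these boxes. It uses only disjointness and doubling of left-translates, the bounded-overlap count $\sharp I_z\lesssim\varepsilon^{-d}$, and the fact that $u_c(\tau_{k,\ell})$ is central. The kernel $\dot K^{k,\ell}_{\lambda,s}$ is supported in $(0,u_c(\tau_{k,\ell}))\cdot\mathcal B_{C}(0)$ with $C$ depending only on $R_0$. Taking $X=L^2([1,1+\varepsilon\delta])$, it therefore suffices to prove \eqref{eq:inter-Lp} for $f$ supported in $\mathcal B_{\varepsilon/2}(0)$, at the cost of a constant $C_\varepsilon$ independent of $k,\ell$.

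\emph{Step 2 (covering the support).} Cover $\overline{D^{k,\ell}}\cdot\mathcal B_{\varepsilon}(0)$ by boxes $\{|x-x^j|<\varepsilon r/4,\ |u-u^j|<\varepsilon r/(4k)\}$ whose centers $(x^j,u^j)$ satisfy \eqref{essentsupp}. Since $D^{k,\ell}$ has $x$-extent $\sim r$ and $u$-extent $\sim R_0 r/k$, the number of boxes needed is $O(\varepsilon^{-d})$, uniformly in $k,\ell$.

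Choose a smooth partition of unity $\sum_j\psi_j=1$ on this set with $\tilde\chi^{k,\ell}_\varepsilon(\cdot-(x^j,u^j))\equiv1$ on $\operatorname{supp}\psi_j$. For $f$ supported in $\mathcal B_{\varepsilon/2}(0)$ we have $f=\chi^{k,\ell}_\varepsilon f$, and
\[
\dot T^{k,\ell}_{\lambda,s}f=\sum_j\psi_j\,\tilde\chi^{k,\ell,j}_\varepsilon\,\tilde T^{k,\ell}_{\lambda,s}(\chi^{k,\ell}_\varepsilon f)-\sum_j \psi_j\,\big(f*(\chr_{E^{k,\ell}}K^{k,\ell}_{\lambda,s})\big).
\]
The second sum is pointwise bounded by $|f|*|\chr_{E^{k,\ell}}K^{k,\ell}_{\lambda,s}|$. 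By Corollary~\ref{decaysmalls} and Minkowski's inequality, its square function over an $s$-interval of length $\varepsilon\delta$ is bounded by $\delta^{1/2}(2^\ell/k)^{1/2}\|f\|_p$, which is the second term in \eqref{eq:inter-Lp}.

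\emph{Step 3 (applying the hypothesis).} For each $j$, first rotate by an element of $U(n)$, as in Proposition~\ref{prop:localization}, so that $x^j=(|x^j|,0,\dots,0)$; this is an automorphism fixing the center, and $\tilde T^{k,\ell}_{\lambda,s}$ and the box geometry are rotation invariant. Then apply \eqref{loc-kl}, dropping the harmless bounded factor $\psi_j$. Summing the $O(\varepsilon^{-d})$ terms gives the first term of \eqref{eq:inter-Lp}.

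The main obstacle is Step 1. Proposition~\ref{localize-a} is formulated for metric balls with local volume growth and for automorphic images of such balls. The boxes here have $u$-width $r/k$, which is far larger than $r^2$, so they are neither metric balls nor automorphic dilates of a fixed ball. I would first try to restate the localization lemma for a general left-invariant family of ``quasi-balls'' satisfying an engulfing property and a doubling bound with constants independent of $k,\ell$; this is exactly what the proof actually uses. If that turns out to be awkward, the fallback is to run the partition argument directly, tiling $\bbH_n$ by left-translates of $\mathcal B_{\varepsilon/2}(0)$.
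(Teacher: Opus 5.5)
Your proposal is correct and has the same structure as the paper's proof. It reduces via Proposition~\ref{localize-a} to $f$ supported in the box $B^{k,\ell}_\ve=\{|x|\le \ve 2^{-\ell}/k,\ |u|\le \ve 2^{-\ell}/k^2\}$, covers $B^{k,\ell}_\ve D^{k,\ell}$ by $O(\ve^{-d})$ translates of that box with centers satisfying \eqref{essentsupp}, writes $\dot T^{k,\ell}_{\lambda,s}=\tilde T^{k,\ell}_{\lambda,s}-R^{k,\ell}_{\lambda,s}$ with the error controlled by Corollary~\ref{decaysmalls}, and rotates.

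The obstacle you flag in Step 1 does not arise, so no quasi-ball version of the localization lemma is needed. Your box $\{|x|<cr,\ |u|<cr/k\}$ is exactly $D_{k^{-1}}$ applied to the isotropic box $\{|x|<c2^{-\ell},\ |u|<c2^{-\ell}\}$, which is comparable to a ball of radius $\sim c2^{-\ell}$ for the Riemannian metric $d_G$ (local dimension $d$). The constant in Proposition~\ref{localize-a} is uniform in the radius $r\le r_0$, so part (ii) applies directly with $\alpha=D_{k^{-1}}$ and the central point $(0,k^2u_c(\tau_{k,\ell}))$; this is what the paper does.
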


\begin{proof}
This is again an easy consequence of Proposition~\ref{localize-a}. Assume that \eqref{loc-kl} holds true. As in \eqref{TdotTR}, we decompose again
\[
\tilde T_{\lambda,s}^{k,\ell}=\dot {T}_{\lambda,s}^{k,\ell}+R_{\lambda,s}^{k,\ell}.
\]
Since $\dot {T}_{\lambda,s}^{k,\ell}= \tilde T_{\lambda,s}^{k,\ell}-R_{\lambda,s}^{k,\ell}$,  Corollary~\ref{decaysmalls} and \eqref{loc-kl} imply that 
\begin{multline*}
\Big\| \Big( \int_1^{1+\ve\delta(k,\ell)} \big| \tilde \chi^{k,\ell}_\ve\, \dot{T}_{\lambda,s}^{k,\ell} (\chi^{k,\ell}_\ve \,f) \big|^2 \, ds \Big)^{\frac{1}{2}} \Big\|_p
\le
C(a_\la,\ve,p) \,\delta(k,\ell)^{\frac 12}\\
\times \Big[\lambda^{d(\frac 12 - \frac 1p )-\frac 12} \, (2^\ell k)^{-d(\frac 12 - \frac 1p ) + \frac1p } + \Big(\frac{2^{\ell}}{k}\Big)^{\frac{1}{2}}\Big]\, \|f\|_p.
\end{multline*}

Note first that if we fix a sufficiently large constant  $R\gg 1$,  then for $k=1$ the set $D^{1,\ell}$ is contained in the Euclidean ball $B_{R 2^{-\ell}}(\smash{z_{1,\ell}^0})$ centered at the point $\smash{z_{1,\ell}^0=(0,u_c (\tau_{1,\ell}))}$, which lies in the center of the Heisenberg group $G=\bbH_n$. Since the Euclidean distance and the left-invariant Riemannian distance $d_G$ are equivalent on bounded sets, we may assume that $D^{1,\ell}\subset B(\smash{z_{1,\ell}^0},R2^{-\ell})$ after increasing $R$ if necessary, where $B(z,r)$ denotes the ball of radius $r$ centered at $z$ with respect to the distance $d_G$.

For general $k\ge 1,$  let $\alpha=D_{k^{-1}}$ be the automorphic dilation given by $\alpha(x,u)=(k^{-1}x, k^{-2} u).$ Then we see analogously that $D^{k,\ell}\subset \alpha\big(B(\smash{z_{k,\ell}^0},R2^{-\ell})\big),$ where the point
$\smash{z_{k,\ell}^0}=(0,k^2u_c (\tau_{k,\ell}))$ is again central in $\bbH_n.$ Thus, by Proposition~\ref{localize-a} (ii), in order to prove \eqref{eq:inter-Lp}, it suffices to show this only for $f$ supported in $\alpha(B(0, \ve 2^{-\ell})).$ After adjusting $\ve$ by a fixed constant, it therefore suffices to assume that $f$ is supported 
in 
\[
B^{k,\ell}_\ve=\Big\{(x,u): |x|\le \ve \frac{2^{-\ell}}{k}, |u|\le \ve \frac{2^{-\ell}}{k^2}\Big\}.
\]
But then, if $\ve $ is chosen sufficiently small, it is easy to see that $\dot T_{\lambda,s}^{k,\ell} f=f*\dot{K}_{\lambda,s}^{k,\ell}$ is supported in the group product set $B^{k,\ell}_\ve D^{k,\ell},$ which is contained in
\[
\tilde D^{k,\ell}=\Big\{(x,u):\frac 1{50}\frac{2^{-\ell}}{k} <|x|\le 50\frac{2^{-\ell}}{k}\quad \text{and} \quad |u-u_c (\tau_{k,\ell})| <  5 R_0  \frac{2^{-\ell}}{k^2}\Big\}.
\] 
The latter is clear for $k=1,$ and follows for general $k\ge 1$ again by scaling with the dilation $D_{k^{-1}}$.

Since we can cover the set $\tilde D^{k,\ell}$ by $\mathcal O(\eps^{-d})$ Euclidean translates of the set $B^{k,\ell}_\ve,$ we see that the estimate in \eqref{loc-kl} will imply estimate \eqref{eq:inter-Lp}, where for the given $\ve $ the constant in \eqref{eq:inter-Lp} will be of the form $C(\ve,a_\la,p) =C_\ve\, C(a_\la,\ve,p)$. Arguing as in the proof of Proposition~\ref{prop:localization}, after rotation we may assume that $x_0=(|x_0|,0,\dots,0)$.
\end{proof} 

In analogy with the  operator $T^{0,\ve}_\la,$ we shall here define the operator $T^{k,\ell,\ve}_{\la}$ by
\begin{equation}\label{Tkle}
(T^{k,\ell,\ve}_{\la} f)(x,u,s) : = \tilde \chi^{k,\ell}_\ve \big(\tilde T^{k,\ell}_{\la,s}( \chi^{k,\ell}_\ve f)\big)(x,u),\quad f\in \mathcal S(\R^d),
\end{equation}
where we recall that $\tilde T^{k,\ell}_{\la,s} f = f*K_{\lambda,s}^{k,\ell}$.

\smallskip

Arguing in a similar way as for the case $k=0,$  by means of  further localizations we may then reduce the proof of Proposition~\ref{prop:localization-ii} to showing the following:

\medskip

{\it 
Suppose that   $p\ge \frac{2(d_1+1)}{d_1-1}$.
If the constant $\ve>0$ is  sufficiently small, and if $a_\la$ is supported in an interval of length $\ve$ contained in 
$(\frac 1{16},4),$ then we can bound 
\begin{multline}\label{eq:Tkllambdas}
\Big\| \Big( \int_1^{1+\ve\delta(k,\ell)} \big| T^{k,\ell,\ve}_{\la} f \big|^2 \, ds \Big)^{\frac{1}{2}} \Big\|_p
\le C(a_\lambda,\eps,p)\, \delta(k,\ell)^{\frac 12}\,\lambda^{d(\frac 12 - \frac 1p )-\frac 12} \\
\times (2^\ell k)^{-d(\frac 12 - \frac 1p ) + \frac1p }\,\|f\|_p 
\end{multline}
for all  $f\in \mathcal S(\R^d),$ where the constant  $C(a_\lambda,\eps,p)$ depends in $a_\la$ only on the $C^M$-norm of $a_\lambda$ for some sufficiently large $M\in\N.$}

\smallskip

To prove \eqref{eq:Tkllambdas}, we shall normalize the amplitude functions and pass to  unit scales where all variables are of unit size by means of a couple of coordinate changes. We first concentrate on the convolution kernel $K_{\lambda,s}^{k,\ell}$ before reinstalling the cut-offs in \eqref{Tkle}.
Recall from \eqref{eq:K-kl-alt} that
\[
K_{\lambda,s}^{k,\ell}(x, u) = \lambda^{n + \frac 32} \int_0^\infty \int_0^\infty e^{i\lambda\tilde \Phi(x,u,s,\sigma,\tau)} \, a_\lambda(s\sigma) \, \eta_\ell(\tau -k \pi)\Big(\frac{\tau}{\sin\tau}\Big)^n \, d\sigma \, d\tau,
\]
where
\[
\tilde \Phi(x,u,s,\sigma,\tau) = \sigma(s-|x|^2 g(\tau) + 4u\tau).
\]
Recalling \eqref{eq:eta-ell-def}, we see that if $\eta_\ell(\tau -k \pi)\neq 0,$ then
\begin{equation}\label{supptau}
\tau\in \big[(k-2^{-\ell}\, \tfrac 54)\pi,(k-2^{-\ell}\,\tfrac 38)\pi] \cup [(k+2^{-\ell}\, \tfrac 38)\pi,(k+2^{-\ell}\,\tfrac 54)\pi\big].
\end{equation}
First, changing coordinates via $\tau = k\pi + 2^{1-\ell} \, \tilde \tau$, where $\tilde \tau\in\R$ with $|\tilde \tau|\sim 1$, and observing that
\[
g(k\pi + 2^{1-\ell} \, \tilde \tau) = (k\pi + 2^{1-\ell} \, \tilde \tau)\cot( 2^{1-\ell} \, \tilde \tau),
\]
we see that the above expression equals
\[
\lambda^{n + \frac 32} \, 2^{1-\ell}\int_0^\infty \int_\R \, e^{i \lambda \sigma \tilde \Phi_1} \, a_\lambda(s\sigma) \, \eta_1(\tilde \tau) \, \Big(\frac{k\pi + 2^{1-\ell} \, \tilde \tau}{(-1)^k \sin (2^{1-\ell} \, \tilde \tau)}\Big)^{n} 
 \, d\tilde \tau \, d\sigma,
\]
where the phase function  $\tilde \Phi_1$ is given by
\[
\tilde \Phi_1 = s-\left(k\pi + 2^{1-\ell} \, \tilde \tau\right) \big( |x|^2 \cot( 2^{1-\ell} \, \tilde \tau) - 4 u\big).
\]
Expanding $(k\pi + 2^{1-\ell} \, \tilde \tau)^{n}$, we see that in place of $K_{\lambda,s}^{k,\ell}$, it is sufficient to bound convolution operators with kernels of the form
\begin{equation}\label{eq:kl-tilde}
\tilde K_{\lambda,s}^{k,\ell}(x,u)=\lambda^{n + \frac 32} \, k^{n} \, 2^{-\ell}\int_0^\infty \int_\R \, e^{i \lambda \sigma\tilde \Phi_1} \, a_\lambda(s\sigma) \, \tilde \eta_1(\tilde \tau)   \sin (2^{1-\ell} \, \tilde \tau)^{-n} \, d \tilde \tau \, d\sigma,
\end{equation}
where $\tilde\eta_1$ is an adapted smooth cutoff satisfying the same support conditions as $\eta_1$. In what follows, we drop the tilde and write $\eta_1$ in place of $\tilde\eta_1$ for simplicity.

\subsection{Translation in $u$ and rescaling in space}

Next, we change coordinates  $x$ and $u$ by passing to new coordinates $\breve x$ and  $\breve u$ via
\[
x = \frac {2^{1-\ell}}{k} \breve x,\quad 4 u = 4 u_c(\tau_{k,\ell}) + \frac {2^{1-\ell}}{k^2}   \breve u ,
\]
where $u_c(\tau_{k,\ell})$ is defined as in \eqref{eq:formulas-sing}, and, recalling \eqref{supptau}, we choose
\[
\tau_{k,\ell}: = (k+ 2^{-\ell-1})\pi\in \supp \eta_\ell.
\]
Recall that $u_c(\tau_{k,\ell})\sim 1/k$.  Then, if $(x,u)$ lies in the set $D^{k,\ell}$ defined in \eqref{eq:B-kell}, we have $|\breve x| \sim 1$ and $|\breve u|\lesssim 1$. We define
\begin{equation}\label{eq:kl-double-tilde}
\tilde{\tilde K}_{\lambda, s}^{k,\ell} (\breve x,\breve u) := 2^{(1-\ell) d} k^{-(d+1)} \tilde K_{\lambda,s}^{k,\ell} \Big(\frac {2^{1-\ell}}{k} \breve x, u_c(\tau_{k,\ell}) + \frac {2^{1-\ell}}{4 k^2}\breve u\Big),
\end{equation}
Up to the harmless fixed rescaling of the central variable by a factor of $4$, the above change of coordinates is the  composition of a translation by a central element in $\mathbb H_n,$  a parabolic scaling by $D_{1/k}$ and an isotropic scaling by $r=2^{1-\ell}$.

Now, if $K$  is any  convolution kernel on $\mathbb H_n,$  we first observe that the norms of the operators of right-convolution with $K,$ and with any translate of $K$ by a central element of $\mathbb H_n,$ are the same on $L^p.$ Similarly, since $D_{1/k}$  is an automorphism of  $\mathbb H_n,$ the same holds true for the right convolution with $K,$ and 
the right-convolution with the rescaled kernel $k^{-(d+1)} K\circ D_{1/k}$.

For the isotropic scaling $(x,u)\mapsto (rx,ru)$, the same is not true, however. We here need to pass to a group $\mathbb H_{n,r}$ which is only isomorphic to $\mathbb H_n$: The underlying manifold of $\mathbb H_{n,r}$ is again $\R^{2n}\times \R,$ but the group law in $\mathbb H_{n,r}$ is given by 
\[
(x,u)\cdot(x',u')= (x+x', u+u'+\tfrac 12 (rJx)^\intercal  x'),
\]
i.e., we replace the skew matrix $J$ from \eqref{eq:grouplaw} by $rJ$. The mapping $(x,u)\mapsto (rx,ru)$ defines then an isomorphism from $\mathbb H_{n,r}$  onto $\mathbb H_n.$ Correspondingly, if $f\ast_r g$ denotes the convolution of two functions $f$ and $g$ on $\mathbb H_{n,r}$, one easily computes that
\[
f(r\, \cdot)\ast_r  (r^d K(r\,\cdot))=(f\ast K)(r\,\cdot).
\]
This implies that the operator of right-convolution with $K$ on $\mathbb H_n$  and the operator of right-convolution with 
$r^d K(r\,\cdot)$ on $\mathbb H_{n,r}$ have the same norm on $L^p$.

Altogether, the convolution operators $f\mapsto f * \tilde K_{\lambda,s}^{k,\ell}$ on $\mathbb H_n$  and $f\mapsto f *_r \tilde{\tilde K}_{\lambda,s}^{k,\ell}$ on $\mathbb H_{n,r}$ have the same $L^p$ operator norms, up to the harmless fixed rescaling of the central variable by a factor of $4$.

\subsection{Rescaling in time} Fix $k,\ell\ge 1$ and put
\[
\delta=\delta(k,\ell)=2^{1-\ell}k^{-1}.
\]
Since in \eqref{eq:Tkllambdas} we assume that $s\in [1,1+\ve\delta],$ we finally  rescale the  variable $s$  by setting $s = 1+\delta(\breve s-1)$, so that $\breve s\in[1,1+\varepsilon]$.
For the square function estimate for the operator $T^{k,\ell,\ve}_{\la} $ this scaling leads to a gain by the factor $\delta^{\frac 12},$ i.e., 
\begin{equation}\label{snorm}
\Bigl(\int_{1}^{1+\varepsilon\delta}\bigl|(T^{k,\ell,\ve}_{\la} f)(\cdot,s)\bigr|^2\,ds\Bigr)^{\frac12}
=
\delta^{\frac12}\Bigl(\int_{1}^{1+\varepsilon}\bigl (T^{k,\ell,\ve}_{\la} f)(\cdot,1+\delta(\breve s-1))\big|^2\,d\breve s\Bigr)^{\frac12}.
\end{equation}
The same gain will appear for the kernels $\tilde K_{\lambda,s}^{k,\ell}$ and $\tilde{\tilde K}_{\lambda, s}^{k,\ell}$ when they are used in place of $K_{\lambda, s}^{k,\ell}$ herein.

Gathering all the factors from \eqref{eq:kl-tilde} and \eqref{eq:kl-double-tilde}, we observe that
\begin{equation}\label{eq:factors}
\lambda^{n + \frac 32} \, k^{n} \, 2^{-\ell} \, 2^{(1-\ell)d} \, k^{-(d+1)} = 2^{n-1} \,\delta^{\frac{1}{2}} \, 2^{-n\ell}\,\lambda_*^{n + \frac 32},
\end{equation}
where $\lambda_* := \lambda 2^{1-\ell}k^{-1}=\lambda \delta\gtrsim 1$.

Thus, to summarize, in the new coordinates we are led to consider convolution kernels of the form
\begin{equation}\label{eq:kernel-breve}
\breve K_{\lambda_*,\breve s}^{k,\ell}(\breve x,\breve u) := \lambda_*^{n + \frac 32}\, 2^{-n\ell} \int_0^\infty \int_\R e^{i \lambda_* \sigma \breve \Phi_0} \,\tilde a_{\la,\de}(\sigma,\breve s) \, \eta_1(\tilde \tau)   \sin (2^{1-\ell} \, \tilde \tau)^{-n} \, d \tilde \tau \, d\sigma,
\end{equation}
with phase
\[
\breve \Phi_0= \delta^{-1}-1+\breve s -\left(k\pi + 2^{1-\ell} \, \tilde \tau\right)\Big(\delta |\breve x|^2 \cot( 2^{1-\ell} \, \tilde \tau) -  
\frac {\breve u}{k} -4\delta^{-1} u_c(\tau_{k,\ell})\Big),
\]
where  $\breve s$ is assumed to be in $[1,1+\ve]$, and
\begin{equation}\label{atilde}
\tilde a_{\la,\de}(\sigma,\breve s)=a_{\lambda}\big((1+\delta(\breve s-1))\sigma\big).
\end{equation}
Note also that  by \eqref{eq:factors} the kernel $ \tilde{\tilde K}_{\lambda,1+\delta(\breve s-1)}^{k,\ell} $ corresponds to the kernel $ \delta^{\frac 12} \breve K_{\lambda_*,\breve s}^{k,\ell}$ in the rescaled coordinates.

\smallskip

Now, similar to \eqref{Tkle}, we define the operators $\breve T^{k,\ell,\ve}_{\la_*}$ by
\begin{equation}\label{Tkle-breve}
(\breve T^{k,\ell,\ve}_{\la_*} f)(\breve x,\breve u,\breve s) : = \tilde \chi_\ve \big(( \chi_\ve f) \ast_{2^{1-\ell}} \breve K_{\lambda_*,\breve s}^{k,\ell}\big)(\breve x,\breve u),\quad f\in \mathcal S(\R^d),
\end{equation}
where $\chi_\ve:=\chi^{1,0}_\ve$ is supported in the Euclidean ball of radius $\ve>0$ centered at the origin, and where $\tilde \chi_\ve(\breve x,\breve u):=\chi_\ve(\breve x-x^0,\breve u-u^0)$.  Here,  we may again assume that $x_0=(|x_0|,0,\dots,0)$ with $|x_0|\sim 1$ and $|u_0|\lesssim 1.$
\smallskip

 To prove \eqref{eq:Tkllambdas}, we will establish the following square function   estimates: 

\begin{proposition}\label{prop:unit}
Suppose that $p\ge \frac{2d}{d-2}$.
If the constant $\ve>0$ is  sufficiently small, then, for all $k\ge 1$, $\ell\ge 1$, we can bound
\begin{equation}\label{eq:Tkl-breve}
\Big\| \Big( \int_1^{1+\ve} \big| \breve T^{k,\ell,\ve}_{\la_*} f \big|^2 \, d\breve s \Big)^{\frac{1}{2}} \Big\|_p
\le C(a_\lambda,\eps,p)\, \delta^{-\frac 1p} \, \lambda_*^{d(\frac 12 - \frac 1p )-\frac 12}\,\|f\|_2
\end{equation}
for all  $f\in \mathcal S(\R^d)$, where the constant  $C(a_\lambda,\eps, p)$ depends in $a_\la$ only on the $C^M$-norm of $a_\lambda$ for some sufficiently large $M\in\N$.
\end{proposition}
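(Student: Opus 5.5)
The plan is to mirror the $k=0$ argument of Sections~\ref{sec:k=0-nonhor}--\ref{sec:k=0-hor} in the rescaled coordinates, using the $TT^*$ method with a frozen horizontal variable. First I split $\eta_1(\tilde\tau)$ into a non-horizontal part and a horizontal part. The horizontal part localizes near the parameter where $\tau=k\pi\pm\tfrac\pi2$ would correspond (after the shift $\tau\mapsto\tilde\tau$ as in \eqref{eq:brevePhi-0}) to a horizontal tangent plane. I expect this to occur essentially only when $\ell=1$ (Case II of Figure~\ref{fig2}); for $\ell\ge2$ the factor $\cot(2^{1-\ell}\tilde\tau)$ is large and stays away from zero (Case I).

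Next I compute the kernel of $\breve T^{k,\ell,\ve}_{\la_*}(\breve T^{k,\ell,\ve}_{\la_*})^*$ as in Lemma~\ref{lem:0-TTstar}. I integrate out the central variable $v$ of the intermediate point, and change variables $\tau=\frac{\sigma'}{\sigma}\tau'+\vartheta$ to absorb the $\vartheta$-integration into a harmless smooth factor $h_{\la_*}$. I then freeze one horizontal coordinate: $x_1$ in Case I, and $x_2$ in Case II, following Remark~\ref{breakdown}. The two estimates to prove are analogues of Propositions~\ref{lem:0-L1} and~\ref{lem:0-L2}:
\begin{enumerate}
\item[(i)] a pointwise $L^1(L^2_s)\to L^\infty(L^2_s)$ bound of size $\la_*^{d-1}\langle\la_*|x_1-x_1'|\rangle^{-(d-2)/2}$, times an extra factor $\delta^{-1}$ or so reflecting the $2^{-n\ell}\sin(2^{1-\ell}\tilde\tau)^{-n}$ normalization and the group $\mathbb H_{n,2^{1-\ell}}$;
\item[(ii)] a uniform $L^2$ bound $O(\delta^{-c})$.
\end{enumerate}

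For (i) I will repeat the stationary phase chain of Lemma~\ref{lem:pointwiseQ0-precise}: polar coordinates in $y-x$, the spherical stationary phase in $\omega$ giving $(d_1-1)/2$ decay, then the $\rho$- and $\beta$-integrations. The phase now carries $\delta|\breve x|^2\cot(2^{1-\ell}\tilde\tau)\sim k\pi\,\delta 2^{\ell}$-type coefficients of unit size, so the nondegeneracy checks go through uniformly after rescaling.

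For (ii) I compute the Monge--Ampère determinant as in Lemma~\ref{lem:0-det}. The reductions via the $s$-row, the $u'$-column and the homogeneity relation $\sigma'\Psi_{\cdot\sigma'}=\tau'\Psi_{\cdot\tau'}$ carry over. The crux is the invertibility of the $(d_1-1)\times(d_1-1)$ blocks
\[
\Psi_{\underline y\underline x'}\propto g\,I+\tau'r\underline J,
\]
where $r=2^{1-\ell}$ is the rescaled symplectic form. This is where uniformity in $k,\ell$ is lost: $\tau'\sim k$ multiplies $r\underline J$ while $g$ behaves like $k\cot(2^{1-\ell}\tilde\tau)\sim k2^{\ell}$. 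One therefore gets lower bounds on the determinant that degrade like powers of $\delta$.

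Finally, I interpolate (i) and (ii), apply Hardy--Littlewood--Sobolev in $x_1$ at $p=\frac{2d}{d-2}$, and interpolate with $p=\infty$. This gives $\la_*^{d(\frac12-\frac1p)-\frac12}$ times a $\delta$-loss, which I aim to keep at $\delta^{-1/p}$. The main obstacle is step (ii), namely the $L^2$ bound for the frozen operators uniformly in $k$. As the introduction warns, the standard canonical graph argument is not uniform here. I would first try to exploit the explicit structure $g I+\tau' r\underline J$ by diagonalizing in complex coordinates, and estimate the resulting operator through a $\delta$-dependent almost-orthogonality in the frozen variable rather than a crude determinant bound. In Case II I would freeze $x_2$ and use that $B=\mathrm{diag}(1,\tilde J)$ is invertible, as in Section~\ref{sec:k=0-hor}.
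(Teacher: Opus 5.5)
There is a genuine gap. It sits exactly at the step you call the crux, and the $\delta$-bookkeeping around it cannot produce the stated exponent. Interpolating a pointwise bound for the frozen $TT^*$ operators with an $L^2$ bound $\delta^{-1}$, then applying Hardy--Littlewood--Sobolev, gives $\delta^{-2/p}$ for $TT^*$, i.e.\ $\delta^{-1/p}$ for $T$. Since this loss must vanish at $p=\infty$, you need two things: the pointwise estimate must be completely uniform in $k,\ell$, and the $L^2$ loss must be exactly $\delta^{-1}$. Your (i), with an extra $\delta^{-1}$, already gives $\delta^{-1/2}$ at $p=\infty$. Your (ii), with $\delta^{-c}$ for some $c>1$, gives $\delta^{-c/p}$. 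So neither version proves \eqref{eq:Tkl-breve}, and the summation in Proposition~\ref{prop:conditional} would then force a smaller $p$-range. In fact (i) has no loss at all: $2^{-n\ell}\sin(2^{1-\ell}\tilde\tau)^{-n}\sim 1$, and on $\mathbb H_{n,r}$ one still has $|\theta|\sim|\Delta x|$, so Proposition~\ref{lem:kl-L1} holds uniformly.

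For (ii), your diagnosis is wrong. A block $gI+cJ'$ with $J'$ skew and $g\neq0$ is never degenerate, since $|(gI+cJ')v|\ge|g|\,|v|$. Also, the sizes $g\sim k2^\ell$ and $\tau'\sim k$ you quote are unrescaled quantities. After the substitutions $\breve\tau=2^{\ell-1}\tan(2^{1-\ell}\tau)$ and $\breve\tau\mapsto1/\breve\tau$ (phase \eqref{eq:Phi-breve}), the relevant blocks, such as $2\tau I-\frac r2\underline{\underline J}$ with $|\tau|\sim1$, are uniformly invertible.

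The true $O(\delta^2)$ smallness of the Monge--Ampère determinant comes from the time and central variables. For $\eta\Psi$ in \eqref{eq:Psi-kl}, $\partial_s$ and $\partial_u$ equal $\eta/(\pi+\delta\varphi_\ell(\tau))$ and $\eta$. Hence the $s$- and $u$-rows, and likewise the $s'$- and $u'$-columns, are parallel up to $O(\delta)$. Neither complex diagonalization nor almost-orthogonality in the frozen variable touches this.

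The missing idea has three steps:
\begin{enumerate}
\item Take the partial Fourier transform in $u$ (Plancherel) to reduce to operators in $(\underline x,s)$ with $\mu\sim\la$.
\item Rescale $s,s'$ by $\delta^{-1}$ via Lemma~\ref{scaleins}; this costs exactly $\delta^{-1}$.
\item Conjugate by $e^{i\mu s/(\delta\pi)}$ to remove the large term $(s-s')/(\delta\pi)$.
\end{enumerate}
The resulting phase $\tilde\Theta$ has $|\tilde\Theta_{s\tau}|\sim1$ and a uniformly nondegenerate Monge--Ampère determinant. This is Lemma~\ref{eq:MA-kl}, and Lemma~\ref{eq:MA-k1} with $x_2$ frozen for $\ell=1$.

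This also needs a different $TT^*$ set-up from yours. The $u$-coefficient of the phase is now $\sigma(\pi+\delta\varphi_\ell(\tau))$, not proportional to $\tau$. So the $v$-integration localizes $\eta=\sigma(\pi+\delta\varphi_\ell(\tau))$ to $\eta'$, eliminating $\sigma'$ rather than $\tau$ as in Lemma~\ref{lem:0-TTstar}. Your substitution $\tau=\frac{\sigma'}{\sigma}\tau'+\vartheta$ does not yield a harmless factor here. The remaining frequency variables $(\tau,\tau',y)$ are exactly what the $s$-rescaling argument needs. Finally, you must freeze $\xi$ in $h_\la$ and pull out the unimodular factor $e^{i\la\xi(\psi(s')+u')}$, so that the amplitude stays uniformly smooth after rescaling $s'$.
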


Note that the estimate \eqref{eq:Tkl-breve} is stronger than \eqref{eq:Tkllambdas}, since the right-hand side involves $\|f\|_2$ rather than $\|f\|_p$. By Hölder’s inequality, exploiting the localization in \eqref{Tkle-breve}, we can immediately pass from $\|f\|_2$ to $\|f\|_p$ in this estimate, after absorbing a constant depending only on $\ve$. Moreover, note that if \eqref{eq:Tkl-breve} holds, then
\begin{align*}
  \Big\|\Big( \int_1^{1+\varepsilon} \big|\tilde \chi_\ve \big(( \chi_\ve f) & \ast_{2^{1-\ell}} \tilde{\tilde K}_{\lambda,1+\delta(\breve s-1)}^{k,\ell}\big)\big|^2 \, d\breve s\Big)^{\frac 12} \Big\|_p \\
 & \lesssim \delta^{\frac 12} \, \|\breve T^{k,\ell,\ve}_{\la_*}\|_{L^p\to L^p(L^2([1,1+\varepsilon]))} \, \|f\|_p \\
& \lesssim \delta^{\frac 12}  \, \delta^{-\frac 1p} \, \lambda_*^{d(\frac 12 - \frac 1p )-\frac 12} \, \|f\|_p \\
 & \lesssim  \lambda^{d(\frac 12 - \frac 1p )-\frac 12} \, (2^\ell k)^{-d(\frac 12 - \frac 1p )+\frac 1p} \, \|f\|_p.
\end{align*}

Our previous discussion, keeping \eqref{snorm} in mind, shows that this estimate, in turn, implies the estimate \eqref{eq:Tkllambdas}. The remaining part  of this and the following two sections will therefore be devoted to the proof of Proposition~\ref{prop:unit}. To that end, with a slight abuse of notation, we shall rename ${\lambda_*}$ back to $\lambda,$ and $(\breve x,\breve u,\breve s)$ back to $(x,u,s)$. From here on, we shall have to distinguish between Case~I where $\ell\ge 2,$ and Case~II where $\ell=1,$ since these cases  exhibit a quite different geometric behavior, as explained in the Introduction. 

\section{The key estimates for the non-horizontal parts given by $\ell\ge 2$}\label{CaseI}

As in the case $k=0$, we apply the Stein-Tomas $TT^*$ argument, so that the square function \eqref{eq:Tkl-breve} of Proposition~\ref{prop:unit} will be a consequence of the estimates in Propositions~\ref{lem:kl-L1} and~\ref{lem:kl-L2}.

Recall from \eqref{eq:kernel-breve} that, in the renamed coordinates,
$$ \breve K_{\lambda,s}^{k,\ell}(x,u) := \lambda^{n + \frac 32}\, 2^{-n\ell} \int_0^\infty \int_\R \, e^{i \lambda \sigma \breve \Phi_0} \,\tilde a_{\la,\de}(\sigma,s) \, \eta_1(\tau)   \sin (2^{1-\ell} \, \tau)^{-n} \, d  \tau \, d\sigma,
$$
with phase (recall that $\de=\de(k,\ell)=2^{1-\ell}k^{-1}$)
\[
\breve \Phi_0 = \delta^{-1}-1+s -\left(k\pi + 2^{1-\ell} \, \tau\right)\Big(\delta |x|^2 \cot( 2^{1-\ell} \, \tau) - 
\frac {u}{k} -4\delta^{-1} u_c(\tau_{k,\ell})\Big).
\]
Here, $s$ is assumed to be in $|1,1+\ve].$ 

Next, note that if $\ell\ge 2$ and $ \tau\in \supp \eta_1,$  then (compare also \eqref{supptau})
$$
2^{1-\ell}\frac 3{16} \pi\le  |2^{1-\ell} \, \tau| \le 2^{1-\ell} \frac 5{8}\pi\le \frac 5{16}\pi <\frac{\pi}{2},
$$
so that we may change coordinates via 
$$
\breve\tau = 2^{\ell-1} \tan(2^{1-\ell} \, \tau), \quad \text{i.e.,} \quad  2^{1-\ell} \tau =  \arctan(2^{1-\ell} {\breve\tau}).
$$
Observe that for $\ell=1$ this change of coordinates is not possible, since then $2^{1-\ell}\tau=\tau$ can become $\frac{\pi}{2}$ and we may run into a singularity of the tangent.

Using ${\sin(\arctan \alpha) = \alpha (1 + \alpha^2)^{-\frac 12}}$, with $\alpha=2^{1-\ell} {\breve\tau},$ we may rewrite 
\[
\breve K_{\lambda,s}^{k,\ell}(x,u) = \lambda^{n + \frac 32} \int_0^\infty \int_\R \, e^{i \lambda \sigma \breve \Phi_1} \,\tilde a_{\la,\de}(\sigma,s) \, \breve\eta_{1,\ell} (\breve \tau)   \, d  \breve\tau \, d\sigma,
\]
with phase 
\[
\breve \Phi_1 =\delta^{-1}-1+s -\Big(\pi + \frac{\arctan(2^{1-\ell} \breve \tau)}{k} \Big) \Big(\frac{| x|^2}{{\breve\tau}} - u  -2^{\ell+1} k^2 u_c(\tau_{k,\ell})\Big),
\]
and 
\[
\breve\eta_{1,\ell} (\breve\tau):=2^{-n}\, \eta_{1,\ell}\big(2^{\ell-1} \arctan(2^{1-\ell} \breve \tau)\big) \frac {(1 + (2^{1-\ell} {\breve\tau})^2)^{\frac n 2 -1}}{\breve \tau^n}.
\]
Note that the function $\breve\eta_{1,\ell}$ has a similar support as $\eta_1$, so that $|\breve\tau|\sim 1$ on its support, and has uniform bounds on its derivatives, independently of $\ell\ge 2$.

Finally, changing coordinates via ${\breve\tau}=1/\tau$ and setting
\[
 \varphi_\ell(\tau) := 2^{\ell-1} \arctan\Big(\frac{2^{1-\ell}}{ \tau}\Big),
\] 
we may write 
\begin{equation}\label{eq:kernel-bar-5}
\breve K_{\lambda,s}^{k,\ell}(x,u) = \lambda^{n + \frac 32} \int_0^\infty \int_\R \, e^{i \lambda \sigma \breve \Phi} \,\tilde a_{\la,\de}(\sigma,s) \, \breve\eta_{1,\ell} (\tau)   \, d  \tau \, d\sigma,
\end{equation}
with a slightly modified function $\breve\eta_{1,\ell} $ and phase 
\begin{equation}\label{eq:Phi-breve}
\breve \Phi := \delta^{-1}-1 + s - (\pi + \delta  \varphi_\ell(\tau)) \big( {\tau}|x|^2  - u-2^{\ell+1} k^2 u_c(\tau_{k,\ell})\big).
\end{equation}
Note also that every derivative of $\varphi_\ell$ is uniformly bounded on the support of $\breve\eta_{1,\ell},$ independently of $\ell\ge 2$.

\subsection{The $TT^*$ argument for the operators $\breve T^{k,\ell,\varepsilon}_\lambda$}\label{TT*}

Analogous to the case $k=0$, we will prove $L_{(x,u)}^{p'}(L_s^2) \to L_{(x,u)}^{p}(L_s^2)$ estimates for the operator $\breve T^{k,\ell,\varepsilon}_\lambda (\breve T^{k,\ell,\varepsilon}_\lambda)^*$.

\begin{lemma}\label{lem:kl-TTstar}
The integral kernel $\breve Q^{k,\ell,\varepsilon}_{\lambda}$ of $\breve T^{k,\ell,\varepsilon}_\lambda(\breve T^{k,\ell,\varepsilon}_\lambda)^*$ is given by
\begin{multline}\label{eq:Qkl}
\breve Q^{k,\ell,\varepsilon}_{\lambda}(x,u,s,x',u',s')
 = \lambda^{d+1}\,\chi_\varepsilon(x-x^0,u-u^0)\,\chi_\varepsilon(x'-x^0,u'-u^0) \\ \times \int_{\eta\sim 1} \int_{|\tau'|\sim 1} \int_{|\tau|\sim 1} \int_{\R^{d_1}} e^{i\lambda \eta \Psi} a(\eta,\tau,\tau',y,s,x',u',s')\, dy \, d\tau \, d\tau' d\eta, 
\end{multline}
with phase function
\begin{multline}
\Psi = \frac{\delta^{-1}-1 + s}{\pi + \delta \varphi_\ell(\tau)}
 - \frac{\delta^{-1}-1 + s'}{\pi + \delta \varphi_\ell(\tau')} 
 - \tau| x-y|^2 + \tau'| x'-y|^2 \\
 + \big( u - u' + \tfrac 1 2 \langle 2^{1-\ell} J(x-x'),y\rangle\big) \label{eq:Psi-kl}
\end{multline}
and full amplitude
\begin{multline}\label{eq:kl-TTstar-amplitude}
a(\eta,\tau,\tau',y,s,x',u',s')
= a_{1,\la,\de}(\eta,\tau,s)\,\chi_\varepsilon(y)\,\breve\eta_{1,\ell}(\tau) \, \breve\eta_{1,\ell}(\tau')\\
\times \frac{h_\lambda(\eta,\tau,\tau',s',x',u',y)}{(\pi+\delta\varphi_\ell(\tau))(\pi+\delta\varphi_\ell(\tau'))},
\end{multline}
where
\begin{equation}\label{eq:kl-TTstar-a}
a_{1,\la,\de}(\eta,\tau,s) 
= a_{\la}\Big(\big(1+\delta(s-1)\big)\frac \eta{\pi + \delta \varphi_\ell(\tau)}\Big)
\end{equation}
and
\begin{equation}
\label{hlambda}
h_\lambda(\eta,\tau,\tau',s',x',u',y) 
= \int_\R \, e^{i\lambda\xi\Psi_2} 
\lambda\, \hat{\chi_\varepsilon}(\lambda\xi)
\, \overline{a_{1,\la,\de}(\eta-\xi,\tau',s')}
\,d\xi,
\end{equation}
with
\begin{equation}\label{Psi2}
\Psi_2
= \frac{\delta^{-1}-1 + s'}{\pi + \delta \varphi_\ell(\tau')} - \tau'\left|x'-y\right|^2 +  2^{\ell+1} k^2 u_c(\tau_{k,\ell} ) + u' + \tfrac 1 2 \langle 2^{1-\ell} Jx',y\rangle.
\end{equation}
The full amplitude $a$ and all its partial derivatives are uniformly bounded on the support of the integral, with bounds controlled by finitely many derivatives of $a_\la$.
\end{lemma}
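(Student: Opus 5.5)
The computation will follow the proof of Lemma~\ref{lem:0-TTstar}, adapted to the phase \eqref{eq:Phi-breve} and the twisted convolution $\ast_{2^{1-\ell}}$.

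\emph{Setting up the composition.} First write $\breve T^{k,\ell,\varepsilon}_\lambda \varphi(z,s)=\tilde\chi_\varepsilon(z)\int \chi_\varepsilon(w)\breve K^{k,\ell}_{\lambda,s}(w^{-1}z)\varphi(w)\,dw$, where $w^{-1}z$ is computed in $\mathbb H_{n,2^{1-\ell}}$. For $z=(x,u)$ and $w=(y,v)$,
\[
w^{-1}z=\big(x-y,\ u-v+\tfrac12\langle 2^{1-\ell}Jx,y\rangle\big).
\]
Form the adjoint in the same way. The kernel of $\breve T(\breve T)^*$ is then
\[
\tilde\chi_\varepsilon(z)\tilde\chi_\varepsilon(z')\int \breve K_{\lambda,s}(w^{-1}z)\,\overline{\breve K_{\lambda,s'}(w^{-1}z')}\,\chi_\varepsilon^2(w)\,dw .
\]
Insert \eqref{eq:kernel-bar-5} for both factors, which gives a $(\sigma,\tau,\sigma',\tau')$ integral times $\lambda^{2n+3}=\lambda^{d+2}$. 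In the phase $\sigma\breve\Phi$, replace $\sigma$ by $\eta=\sigma(\pi+\delta\varphi_\ell(\tau))$, and likewise $\eta'=\sigma'(\pi+\delta\varphi_\ell(\tau'))$. This makes the phase linear in $u$:
\[
\sigma\breve\Phi=\eta\Big[\tfrac{\delta^{-1}-1+s}{\pi+\delta\varphi_\ell(\tau)}-\tau|x-y|^2+u-v+\tfrac12\langle 2^{1-\ell}Jx,y\rangle+2^{\ell+1}k^2u_c(\tau_{k,\ell})\Big].
\]
The Jacobians $(\pi+\delta\varphi_\ell)^{-1}$ produce the denominators in \eqref{eq:kl-TTstar-amplitude}. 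The amplitude $a_\lambda(s\sigma)$, after the time rescaling $s\mapsto 1+\delta(s-1)$ of \eqref{atilde}, becomes $a_{1,\lambda,\delta}$ as in \eqref{eq:kl-TTstar-a}.

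\emph{Integrating out $v$.} As in Lemma~\ref{lem:0-TTstar}, take $\chi_\varepsilon^2(w)=\chi_\varepsilon(y)\chi_\varepsilon(v)$. The only $v$-dependence is then $e^{-i\lambda(\eta-\eta')v}\chi_\varepsilon(v)$, and integrating it gives $\hat{\chi_\varepsilon}(\lambda(\eta-\eta'))$. Set $\xi=\eta-\eta'$, so $\eta'=\eta-\xi$. The phase splits as $\eta\Psi-\xi\Psi_2$ (up to the sign convention in \eqref{hlambda}), where $\Psi_2$ collects the primed terms given in \eqref{Psi2}. In this splitting the constant $2^{\ell+1}k^2u_c$ cancels in $\Psi$ but survives in $\Psi_2$. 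Pulling out one factor of $\lambda$, the $\xi$-integral is exactly $h_\lambda$ in \eqref{hlambda}. This leaves the prefactor $\lambda^{d+1}$ and the phase $\eta\Psi$ with $\Psi$ as in \eqref{eq:Psi-kl}.

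\emph{Bounding the amplitude.} The remaining step, and the one I expect to need the most care, is to show that $h_\lambda$ and all its derivatives are bounded uniformly in $\lambda,k,\ell$. The concern is that $\Psi_2$ contains the large constant $2^{\ell+1}k^2u_c(\tau_{k,\ell})\sim 2^\ell k$. However, this constant only contributes a unimodular factor $e^{i\lambda\xi C}$. Rescaling $\xi\mapsto\xi/\lambda$ turns $h_\lambda$ into
\[
\int \hat{\chi_\varepsilon}(\xi)\,e^{i\xi\Psi_2}\,\overline{a_{1,\lambda,\delta}(\eta-\xi/\lambda,\tau',s')}\,d\xi .
\]
Derivatives in $(\tau',s',x',u',y)$ fall on $e^{i\xi\Psi_2}$ and produce polynomial factors in $\xi$ times derivatives of $\Psi_2$. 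Those derivatives involve only $\delta\varphi_\ell'$, bounded quantities such as $\delta/(\pi+\delta\varphi_\ell)^2$, and $|x'-y|^2$, all of which are bounded; the constant itself is never differentiated. The polynomial factors in $\xi$ are absorbed by the rapid decay of $\hat{\chi_\varepsilon}$. Derivatives in $\eta$ fall only on $a_{1,\lambda,\delta}$. Finally, the derivatives of $\varphi_\ell$ and $\breve\eta_{1,\ell}$ are uniformly bounded for $\ell\ge2$, as noted after \eqref{eq:Phi-breve}. Together these give the claimed uniform control of the full amplitude $a$ by finitely many derivatives of $a_\lambda$.
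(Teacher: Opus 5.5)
Your proposal is correct and follows the paper's proof essentially step by step: the same composition formula on $\mathbb H_{n,2^{1-\ell}}$, the substitutions $\eta=\sigma(\pi+\delta\varphi_\ell(\tau))$ and $\eta'=\sigma'(\pi+\delta\varphi_\ell(\tau'))$, the $v$-integration producing $\hat{\chi_\varepsilon}(\lambda(\eta-\eta'))$, the change of variables $\xi=\eta-\eta'$, and the observation that only bounded derivatives of $\Psi_2$ enter the estimates for $h_\lambda$. Two minor corrections. First, with $\eta'=\eta-\xi$ the phase is exactly $\eta\Psi+\xi\Psi_2$, so it matches \eqref{hlambda} without any sign adjustment. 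Second, the $\tau'$-derivatives of the $\mathcal O(\delta^{-1})$ term $(\delta^{-1}-1+s')/(\pi+\delta\varphi_\ell(\tau'))$ are $\mathcal O(1)$, not $\mathcal O(\delta)$, because the prefactor $\delta^{-1}-1+s'$ cancels the factor $\delta$ in $\delta\varphi_\ell$; this cancellation is precisely the point the paper records explicitly.
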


\begin{proof}
Let us again set $r:=2^{1-\ell}$. Arguing in an analogous way as in the proof of Lemma~\ref{lem:0-TTstar}, the integral kernel 
$\breve Q_\lambda^{k,\ell,\varepsilon}(x,u,s,x',u',s')$ is given by
\begin{multline*}
\tilde \chi_\ve(x,u)\, \tilde \chi_\ve(x',u')\int_{\R^{d_1}} \int_{\R} \breve K_{\lambda,s}^{k,\ell}\big(x-y,u-v + \tfrac 1 2 \langle rJx,y\rangle\big) \\
\times \overline{ \breve K_{\lambda,s'}^{k,\ell}\big(x'-y,u'-v + \tfrac 1 2 \langle rJx',y\rangle\big)} \, \chi_\varepsilon(y) \, \chi_\varepsilon(v) \, dv \, dy,
\end{multline*}
with slightly modified functions $\chi_\ve.$ Assuming in the sequel the localizations given by $\tilde \chi_\ve(x,u)\, \tilde \chi_\ve(x',u'),$ we shall suppress these localization factors from here on. The above then equals
\begin{multline*}
\lambda^{d+2} \int_{\sigma'\sim1} \int_{\sigma\sim1}  \int_{|\tau'|\sim1} \int_{|\tau|\sim1} \, \int_{\R^{d_1}} \int_{\R} e^{i\lambda \breve \Psi} \, \tilde a_{\la,\de}(\sigma,s)\, \overline{\tilde a_{\la,\de}(\sigma',s')} \, \breve\eta_{1,\ell}(\tau) \\
\times \breve\eta_{1,\ell}(\tau')\, \chi_\varepsilon(y) \, \chi_\varepsilon(v) \, dv \, dy \, d\tau \, d\tau' \, d\sigma \, d\sigma',
\end{multline*}
where  the phase function $\breve\Psi$ is given by
\begin{align}
\breve\Psi = \ & \sigma \,\breve\Phi(\tau,x-y,u-v + \tfrac 1 2 \langle rJx,y\rangle,s) \label{eq:Phi-Phi'}\\
& - \sigma' \breve\Phi(\tau',x'-y, u'-v + \tfrac 1 2 \langle rJx',y\rangle,s').  \notag
\end{align}
Using \eqref{eq:Phi-breve}, and changing coordinates from $\sigma$ and $\sigma'$ to
\[
\eta = \sigma (\pi + \delta \varphi_\ell(\tau) ) \quad\text{and}\quad \eta' = \sigma' (\pi + \delta\varphi_\ell(\tau') ),
\]
we see that $\eta\sim 1$ and $\eta'\sim 1,$ and \eqref{eq:Phi-Phi'} becomes
\begin{multline}
 \eta \Big( \frac{\delta^{-1}-1 + s}{\pi + \delta \varphi_\ell(\tau)} - \tau| x-y|^2  +  \big( 2^{\ell+1} k^2 u_c(\tau_{k,\ell}) + u-v + \tfrac 1 2 \langle rJx,y\rangle\big)\Big) \label{eq:Psi-i}\\
 - \eta' \Big( \frac{\delta^{-1}-1 + s'}{\pi + \delta \varphi_\ell(\tau')} - \tau'\left|x'-y\right|^2  + \big(2^{\ell+1} k^2 u_c(\tau_{k,\ell}) + u'-v + \tfrac 1 2 \langle rJx',y\rangle\big)\Big).
\end{multline}
Note that the terms containing $v$ sum up to $(\eta'-\eta)v$. We may therefore first carry out the integration in $v$, obtaining
\[
\int_{\R} e^{i\lambda (\eta' -\eta) v} \chi_\varepsilon(v) \, dv = \, \hat{ \chi_\ve} \left( \la(\eta-\eta') \right).
\]
This yields a strong localization to $|\eta-\eta'|\lesssim\lambda^{-1}$. We next change coordinates from $\eta'$ to $\xi$ such that $\eta -\eta'= \xi$. Then the phase function in \eqref{eq:Psi-i} becomes $\breve\Psi=\eta \Psi+ \xi \Psi_2$, with $\Psi$ and $\Psi_2$ given by \eqref{eq:Psi-kl} and \eqref{Psi2}, respectively.

Hence, after carrying out the integration in $v$ and changing coordinates from $\eta'$ to $\xi$, we obtain the oscillatory integral \eqref{eq:Qkl}, with phase function given by \eqref{eq:Psi-kl} and full amplitude given by \eqref{eq:kl-TTstar-amplitude}--\eqref{hlambda}.

Finally, for the uniform bounds for the full amplitude and all its partial derivatives, note that $\pi+\delta\varphi_\ell(\tau')\sim 1$ and that all derivatives of $\delta^{-1}/(\pi+\delta\varphi_\ell(\tau'))$ with respect to $\tau'$ are $\mathcal O(1)$, uniformly in $\delta$. Moreover, the factor $\lambda\hat\chi_\varepsilon(\lambda\xi)$ localizes effectively to $|\xi|\lesssim\lambda^{-1}$.
\end{proof}

Given $x,x'\in \R^{d_1}$, we split coordinates as $x = (x_1,\underline x)$ and $x' = (x_1',\underline x')$. For fixed $x_1\in \R$, let $\breve T_{\lambda,x_1}^{k,\ell,\varepsilon}$ be the operator given by
\[
(\breve T_{\lambda,x_1}^{k,\ell,\varepsilon} \varphi)(\underline x,u,s) = (\breve T^{k,\ell,\varepsilon}_\lambda \varphi)(x,u,s).
\]
If we write $f(x',u',s') = f_{x_1'}(\underline x',u',s')$, then
\[
\big(\breve T_\lambda^{k,\ell,\varepsilon} (\breve T_\lambda^{k,\ell,\varepsilon})^* f\big)(x, u,s) = \int_\R \, \big( \breve T_{\lambda,x_1}^{k,\ell,\varepsilon} (\breve T_{\lambda,x_1'}^{k,\ell,\varepsilon} )^* \, f_{x_1'}\big) (\underline x,u,s) \, dx_1'.
\]
Note that the integral kernel $\breve Q_{\lambda,x_1,x_1'}^{k,\ell,\varepsilon}$ of $\breve T_{\lambda,x_1}^{k,\ell,\varepsilon} (\breve T_{\lambda,x_1'}^{k,\ell,\varepsilon} )^*$ is given by $\breve Q_\lambda^{k,\ell,\varepsilon}(x_1,\cdot,x_1',\cdot)$.
\smallskip

In analogy with Propositions \ref{lem:0-L1} and \ref{lem:0-L2} we shall here prove the following propositions:
\begin{proposition}\label{lem:kl-L1}
We have
\[
\|\breve T_{\lambda,x_1}^{k,\ell,\varepsilon} (\breve T_{\lambda,x_1'}^{k,\ell,\varepsilon} )^* \|_{L_{(\underline x,u)}^{1}(L^2_s)\to L_{(\underline x,u)}^\infty(L^2_s)}
\lesssim \lambda^{d-1} \, \langle\lambda\left|x_1'-x_1\right|\rangle^{-\frac{d_1-1}{2}} .
\]
\end{proposition}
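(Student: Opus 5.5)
As in the case $k=0$ (Proposition~\ref{lem:0-L1} via Proposition~\ref{lem:pointwiseQ0}), I would deduce the claimed $L^1(L^2_s)\to L^\infty(L^2_s)$ bound from Schur's test in the $s$-variables. It suffices to prove
\[
\sup_{s}\int_1^{1+\ve}|\breve Q^{k,\ell,\ve}_\la(x,u,s,x',u',s')|\,ds' + \sup_{s'}\int_1^{1+\ve}|\breve Q^{k,\ell,\ve}_\la|\,ds \lesssim \la^{d-1}\jp{\la|\Delta x|}^{-\frac{d_1-1}{2}},
\]
uniformly in $(x,u),(x',u')$ and in $k,\ell$, with $\ell\ge2$. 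Since $|\Delta x|\ge|x_1-x_1'|$, this gives the statement. As in Lemma~\ref{lem:pointwiseQ0-precise}, I would aim for a main term of size
\[
\la^{d}\jp{\la|\Delta x|}^{-\frac{d_1-1}2}\jp{\la(\Delta s-q)}^{-N},
\]
where $\partial_s q=\mathcal O(\ve)$ and $\partial_{s'}q=\mathcal O(\ve)$. The error terms should carry $\jp{\la|\Delta x|}^{-N}$. Integrating in $s'$, or in $s$ after using the inverse function theorem, then produces the factor $\la^{-1}$.

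For the pointwise analysis I start from \eqref{eq:Qkl} with phase \eqref{eq:Psi-kl}. The structure is better than for $k=0$, since $\Psi$ is now quadratic in $y$ with separate parameters $\tau,\tau'$. First substitute $y=x+y'$. Then $\Psi$ becomes
\[
(\tau'-\tau)|y'|^2+2\tau'\langle y',\Delta x\rangle+\tau'|\Delta x|^2+\langle y', 2^{-\ell}J\Delta x\rangle+\Delta w\text{-terms}+S(s,\tau)-S(s',\tau'),
\]
where $S(s,\tau)=(\de^{-1}-1+s)/(\pi+\de\varphi_\ell(\tau))$. Because $\partial_\tau S=\mathcal O(1)$ uniformly in $\de$, the $s$-dependence enters only through $\Delta s$ plus terms with $\mathcal O(1)$ derivatives.

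Next pass to polar coordinates $y'=\rho\omega$, with $\rho\sim1$ and $\omega$ near $-x^0/|x^0|$. The linear term has coefficient $2\tau'\Delta x+2^{-\ell}J\Delta x$, whose modulus is $\sim|\Delta x|$ since $\tau'\sim1$. Stationary phase on $S^{d_1-1}$ then gives $\jp{\la|\Delta x|}^{-\frac{d_1-1}2}$. In the nonstationary region $|\underline{\Delta x}|\gg\ve|\Delta x_1|$, integration by parts gives rapid decay instead.

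Then I substitute $\tau=\tau'+|\Delta x|\beta$, which gains the Jacobian factor $\la|\Delta x|$. The region $|\beta|\gg1$, and the region $|\beta|\ll1$, are handled by integration by parts in $\rho$, exactly as for $E^{\ve,2}$ in Section~\ref{pointwise0}. For $|\beta|\sim1$ the phase in $\rho$ is $-|\Delta x|\beta\rho^2\pm2|\Delta x|\rho\,c(\tau')$, which has a nondegenerate critical point of size $\sim|\Delta x|$. Stationary phase in $\rho$ then yields $\jp{\la|\Delta x|}^{-1/2}$.

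After evaluating at the critical point, the $\beta$-phase is $|\Delta x|\bigl(c^2/\beta+\beta\,\partial_\tau S+\ldots\bigr)$. I would then apply stationary phase in $\beta$, provided $\partial_\tau S(s,\tau')$ is bounded away from $0$ with a definite sign. This requires $\de^{-1}\cdot\de\varphi_\ell'(\tau')/\pi^2\sim\varphi_\ell'(\tau')\ne0$, which holds since $\varphi_\ell$ is a rescaled $\arctan$ with $\varphi_\ell'\sim -1/\tau^2$. This step gives a further $\jp{\la|\Delta x|}^{-1/2}$ and produces the function $q$. Combined with the Jacobian factor $\la|\Delta x|$, the net gain is $\jp{\la|\Delta x|}^{-\frac{d_1-1}{2}}$. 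The remaining integrals in $\tau'$ and $\eta$ are over sets of size $\mathcal O(1)$, and integrating by parts in $\eta$ produces the factor $\jp{\la(\Delta s-q)}^{-N}$.

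\textbf{Main obstacle.} The main difficulty is uniformity in $k,\ell$ through $\de$. Two points need care: the critical-point computations in $\rho$ and $\beta$ must be nondegenerate uniformly in $\de$, and $q$ must satisfy $\partial_s q=\mathcal O(\ve)$ even though $S$ contains $\de^{-1}$. For the second point, $\partial_s S=1/(\pi+\de\varphi_\ell)$ is independent of $\tau$ up to $\mathcal O(\de)$. I would therefore normalize by dividing the phase by this factor, or absorb it into $\eta$, before the $\Delta s$ analysis. I would also verify that the amplitude factor $h_\la$ from \eqref{hlambda} has uniformly bounded derivatives in $\rho,\omega,\beta$, which is what makes all the stationary phase steps legitimate.
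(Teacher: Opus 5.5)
Your proposal is correct and is essentially the paper's own proof (Proposition~\ref{lem:pointwiseQkl} via Lemma~\ref{lem:pointwiseQkl-prec}). The shared steps are Schur's test in $s,s'$, polar coordinates in $y-x$, the $\beta$-substitution with the regions $|\beta|\gg1$ and $|\beta|\ll1$ removed by integration by parts in $\rho$, stationary phase in $\omega$, $\rho$ and $\beta$ (using $\varphi_\ell'<0$ for the $\beta$ critical point), and finally integration by parts in $\eta$. The only difference is that you freeze $\tau'$ and set $\tau=\tau'+|\Delta x|\beta$, as the paper itself does in the case $\ell=1$, whereas for $\ell\ge2$ it freezes $\tau$ and sets $\tau'/\tau=1+|\Delta x|\beta$. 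Your choice is harmless and slightly cleaner, since the vector $\theta$ then does not depend on $\beta$.
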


\begin{proposition}\label{lem:kl-L2}
We have
\[
\|\breve T_{\lambda,x_1}^{k,\ell,\varepsilon} (\breve T_{\lambda,x_1'}^{k,\ell,\varepsilon} )^*\|_{L^2(\R^{d})\to L^2(\R^{d})} \lesssim \delta^{-1}.
\]
\end{proposition}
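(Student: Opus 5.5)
The plan is to follow the scheme of Lemma~\ref{lem:0-det}, now for the phase $\Psi$ in \eqref{eq:Psi-kl}. The frozen operator $\breve T_{\lambda,x_1}^{k,\ell,\varepsilon} (\breve T_{\lambda,x_1'}^{k,\ell,\varepsilon} )^*$ has the oscillatory kernel \eqref{eq:Qkl} with $x_1,x_1'$ fixed, with variables $(\underline x,u,s)$ and $(\underline x',u',s')$ and frequency variables $(\eta,\tau,\tau',y)$. By Hörmander's $L^2$ lemma for oscillatory integral operators with nondegenerate Monge--Ampère determinant (as in \cite{Hoermander-Acta1971}, \cite{GreenleafSeeger-Escorial}), an operator with phase $\lambda\eta\Psi$, amplitude bounded in $C^M$, and $\lambda^{d+1}$ normalization is bounded on $L^2$ with norm $\lesssim |\det|^{-1/2}$. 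The task is therefore to show that the determinant of the $(d+d_1+2)$-dimensional mixed Hessian of $\eta\Psi$ is $\gtrsim \delta^{2}$, or at least to produce the loss $\delta^{-1}$ in some other way.

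First I compute the matrix entries. The $s$-derivative gives $\Psi_{s\eta}=(\pi+\delta\varphi_\ell(\tau))^{-1}$ and $\Psi_{s\tau}=-\delta\varphi_\ell'(\tau)(\ldots)^{-2}$. The $u$-derivative gives $\Psi_{u\eta}=1$, and the $y$-rows contribute the terms $2\tau(x-y)-2\tau'(x'-y)-\tfrac r2 J(x-x')$, with $r=2^{1-\ell}$. Next I eliminate, as before, the $s$-row against the $s'$-column and the $u$-row against the $u'$-column. The cofactor expansion is then arranged as before. The dangerous entries are the couplings to $\tau$ and $\tau'$. Here $\partial_{\tau'}\Psi=-\delta^{-1}\cdot\delta\varphi_\ell'(\tau')(\pi+\delta\varphi_\ell)^{-2}+|x'-y|^2$, which is of unit size. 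In contrast, $\Psi_{s'\tau'}=O(\delta)$. This $O(\delta)$ entry is exactly where the $\delta^{-1}$ loss enters: the $s'$-column couples to $(\eta,\tau')$ only through $(\pi+\delta\varphi_\ell)^{-1}$ and $O(\delta)$ terms, so after column operations one factor $\delta$ survives in the determinant. Combined with the symmetric $s$-row, the determinant is of size $\delta^2$, which gives $\|\cdot\|\lesssim \delta^{-1}$.

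Concretely, I will use the $s$-row and the $u$-column to clear the $\eta$ variable. After that, the $\tau$-column meets the $s$-row only through the $O(\delta)$ entry, and I will factor it out. The remaining block is the $(y,\underline x,\underline x')$ block. It consists of $\Psi_{\underline x y}=2\tau I$ minus a skew term of size $r$, and similarly $\Psi_{y\underline x'}$, together with the $y_1$-row reduction using $|x_1-y_1|\sim|x_1'-y_1|\sim1$, exactly as in Lemma~\ref{lem:0-det}. Since $|\tau|,|\tau'|\sim1$ and $r\le 1/2$, the matrices $2\tau I_{d_1-1}-r\underline J$ are uniformly invertible, so the blocks $A$ and $B$ are comparable to $1$. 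The $\tau$- and $\tau'$-columns pair with the $\eta$-row via $\partial_\tau\Psi=-|x-y|^2+O(\delta)$, which is $\sim 1$ because $|x-y|\sim1$.

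The main obstacle is the bookkeeping showing that the determinant is exactly of order $\delta^2$ and not smaller. In particular, one has to check that no further cancellation occurs between the $(\eta,\tau)$ and $(s,u)$ rows: $\Psi_{\eta\eta}=0$, while the $\tau$-dependence enters both through $\delta\varphi_\ell$ and through $|x-y|^2$. If a direct expansion turns out to be messy, I will instead rescale $s=1+\delta^{-1}$-adapted, i.e.\ work with $\tilde s=\delta^{-1}$-free variable $\sigma$-time via $S=(\delta^{-1}-1+s)/(\pi+\delta\varphi_\ell)$. In the variables $(S,\tau)$ the determinant becomes $\sim1$, and the Jacobian of $s\mapsto S$ on the $L^2_s$ side, together with the $L^2$ normalization on both sides, yields the stated factor $\delta^{-1}$.
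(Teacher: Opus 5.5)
\textbf{Verdict: there is a genuine gap.} Your determinant computation is right, and the paper makes the same observation. Expanding along the $u$-row and the $u'$-column removes the $\eta$-row and $\eta$-column. After that, the $s$-row and the $s'$-column each contain a single nonzero entry, $\eta\Psi_{s\tau}$ and $\eta\Psi_{\tau's'}$, both of size $\delta$. The remaining block factors as $\det\Psi_{(\underline x,\tau),y}\cdot\det\Psi_{y,(\underline x',\tau')}\sim 1$, so the Monge--Amp\`ere determinant is $\sim\delta^2$.

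The gap is the step from this to the operator bound. Neither \cite{Hoermander-Acta1971} nor \cite{GreenleafSeeger-Escorial} gives a norm bound of the form $|\det|^{-1/2}\lambda^{-N/2}$. There the constant depends on a lower bound for the determinant in an unquantified way. The standard proof inverts the mixed Hessian while the higher derivatives of the phase stay of unit size, and this does not produce the sharp power $\delta^{-1}$. That exact power is what gives $\delta^{-1/p}$ in Proposition~\ref{prop:unit}, and hence summability in $k,\ell$; the paper says explicitly that the $\delta^2$ determinant forces a more refined argument.

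Your fallback does not repair this. The quantity $S=(\delta^{-1}-1+s)/(\pi+\delta\varphi_\ell(\tau))$ depends on the frequency variable $\tau$, so it is not a change of variables on the $L^2_s$ side. Moreover $\partial S/\partial s\sim 1$, so its Jacobian cannot produce $\delta^{-1}$.

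The missing idea is a genuine dilation $s\mapsto\delta^{-1}s$, $s'\mapsto\delta^{-1}s'$. By Lemma~\ref{scaleins} this costs exactly $\delta^{-1}$ and turns the $O(\delta)$ entries into entries of unit size. After the dilation, however, two terms acquire derivatives of size $\delta^{-1}$:
\begin{itemize}
\item the phase contains $\eta(s-s')/(\delta\pi)$;
\item the amplitude $h_\lambda$ contains $e^{i\lambda\xi\Psi_2}$, with $\Psi_2\ni(\delta^{-1}-1+s')/(\pi+\delta\varphi_\ell(\tau'))$.
\end{itemize}
So H\"ormander's lemma still gives no uniform bound. The paper removes both obstructions before rescaling:
\begin{itemize}
\item It writes $h_\lambda$ as a superposition in $\xi$ and pulls out the unimodular factor $e^{i\lambda\xi(\psi(s')+u')}$, which depends only on the input variables (see \eqref{hladecomp}, \eqref{tildeH}).
\item It drops the $u$-cutoffs and uses Plancherel in $u$ to freeze $\mu=\lambda\eta$.
\item Only then, with $\mu$ fixed, does it conjugate by $e^{i\mu s/(\delta\pi)}$ as in \eqref{Smutildeb}, deleting the large linear term.
\end{itemize}
For the resulting phase $\tilde\Theta$, in the variables $(\underline x,s),(\underline x',s')$ with frequency variables $(\tau,\tau',y)$, the determinant is $\sim 1$ with uniform bounds (Lemma~\ref{eq:MA-kl}). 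H\"ormander's lemma then gives $\lambda^{-d}$, and combined with Lemma~\ref{scaleins} this yields $\delta^{-1}$. Your argument needs some device of this kind, freezing $\eta$ before dilating $s$.
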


The square function estimate in Proposition~\ref{prop:unit}  follows from these two propositions by the same arguments as in the proof of Proposition~\ref{L:T0lambda}. The proofs of  Propositions \ref{lem:kl-L1} and \ref{lem:kl-L2} will be given in Sections \ref{sec:pointwise-inter} and \ref{eq:inter-L2}.

\section{Proof of the key estimates for $\ell\ge 2$} 

In this section, we prove Propositions \ref{lem:kl-L1} and \ref{lem:kl-L2}.

\subsection{Pointwise estimates for $\ell\ge 2$}\label{sec:pointwise-inter}

We follow essentially the scheme of the proof of Proposition~\ref{lem:0-L1}. By Schur's test, Proposition~\ref{lem:kl-L1} will again be an immediate consequence  of  the next analogue of Proposition~\ref{lem:pointwiseQ0}.

\begin{proposition}\label{lem:pointwiseQkl} 
If the constant $\ve>0$ is sufficiently small, then for all $s\in[1,1+\ve],$
\begin{equation}\label{sprimeintestkl}
\int_1^{1+\ve}| \breve Q^{k,\ell,\varepsilon}_\la(x,u,s,x',u',s')|\, ds'  
\le C(\ve,a_\la)\, \lambda^{d-1}
\, \langle\lambda\left|\Delta x\right|\rangle^{-\frac{d_1-1}2} ,
\end{equation}
and for all $s'\in[1,1+\ve],$
\begin{equation}\label{sintestkl}
\int_1^{1+\ve}| \breve Q^{k,\ell,\varepsilon}_{\lambda}(x,u,s,x',u',s')|\, ds 
\le C(\ve,a_\la)\, \lambda^{d-1}
\, \langle\lambda\left|\Delta x\right|\rangle^{-\frac{d_1-1}2} ,
\end{equation}
uniformly in $(x,u)$ and $(x',u')$, $k\ge 1$, and $\ell\ge 2$, where the constant $C(\ve,a_\la)$ can be chosen to be independent of $(x^0,u^0)$ and to depend only on the $C^M$-norm of $a_\la$ for some sufficiently large $M$. 
\end{proposition}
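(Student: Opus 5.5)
The plan is to mirror the proof of Proposition~\ref{lem:pointwiseQ0}. We aim for a decomposition $\breve Q^{k,\ell,\varepsilon}_\lambda=\breve Q_{1,\lambda}+\breve E_\lambda$ as in Lemma~\ref{lem:pointwiseQ0-precise}. The main term should satisfy
\[
|\breve Q_{1,\lambda}|\lesssim \lambda^{d}\langle\lambda|\Delta x|\rangle^{-\frac{d_1-1}{2}}\int\big\langle\lambda(\Delta s-q(s,\Delta x,\Delta w,\tau'))\big\rangle^{-N}\breve\eta_{1,\ell}(\tau')\,d\tau' ,
\]
where $\partial_s^j q=\mathcal O(\varepsilon)$ uniformly in $k,\ell,\delta$. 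The error term should satisfy the $\langle\lambda|\Delta x|\rangle^{-N}$ bounds. Once this is in hand, integrating in $s'$ or in $s$, using the inverse function theorem exactly as before, gives the factor $\lambda^{-1}$ and hence \eqref{sprimeintestkl}, \eqref{sintestkl}.

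First, freeze $\tau'$ in \eqref{eq:Qkl}. The $s$-dependence of the phase \eqref{eq:Psi-kl} is through $(\delta^{-1}-1+s)/(\pi+\delta\varphi_\ell(\tau))$. I will therefore change variables from $\tau$ to a new variable $\beta$ defined by
\[
\frac{\delta^{-1}-1+s}{\pi+\delta\varphi_\ell(\tau)}-\frac{\delta^{-1}-1+s}{\pi+\delta\varphi_\ell(\tau')} = c(\tau,\tau')\,(\tau-\tau').
\]
The difference quotient of $\delta^{-1}/(\pi+\delta\varphi_\ell)$ equals $-\varphi_\ell'/\pi^2+\mathcal O(\delta)$, which is $\sim 1$ since $\varphi_\ell'(\tau)\approx -\tau^{-2}$ uniformly for $\ell\ge 2$. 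This plays the role of the $\sigma'/\sigma$ variable in Section~\ref{sec:k=0-nonhor}. Set $\tau=\tau'+|\Delta x|\beta$ and translate $y\mapsto y+x$. The phase $\Psi$ then takes the form
\[
\Delta s\,(\ldots)-|\Delta x|\,c\,\beta-|\Delta x|\beta\,|y|^2+\tau'\bigl(2\langle y,\Delta x\rangle+|\Delta x|^2\bigr)+\Delta w+r\langle J\Delta x,y\rangle/2 ,
\]
with $r=2^{1-\ell}$. This is essentially the same structure as $\Psi_3$ there, but with an $r$-twisted $J$ and with $\tau\cot\tau$ replaced by the linear-in-$\tau$ quadratic term. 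The region $|\beta|\gg1$ is an error term, handled by integrating by parts in $\rho$ and then in $\eta$.

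Next, pass to polar coordinates $y=\rho\omega$. The $\omega$-term has coefficient vector $2\rho(\tau'\Delta x+\tfrac r2 J^{\intercal}\Delta x)$, whose length is $\sim|\Delta x|$ because $|\tau'|\sim1$ and $r\le 1/2$. Stationary phase on $S^{d_1-1}$ then gives $\langle\lambda\Delta x\rangle^{-\frac{d_1-1}{2}}$, with the nonstationary directions treated as errors. Then stationary phase in $\rho$ is nondegenerate for $\beta\sim1$ of the correct sign (the coefficient of $\rho^2$ is $\sim|\Delta x|\beta$), giving a further $\langle\lambda\Delta x\rangle^{-1/2}$; the other signs are errors. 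Finally, stationary phase in $\beta$ requires a nondegenerate critical point of the analogue of $F(s,\beta,a,b)$. Here one expects $F\approx c\beta+\tau'^2/\beta$ up to $\mathcal O(\varepsilon)$ perturbations, so the critical point is nondegenerate, yielding $\langle\lambda\Delta x\rangle^{-1/2}$. Together with the prefactor $\lambda|\Delta x|$ from the change of variables, the total is $\lambda^{d}\langle\lambda\Delta x\rangle^{-\frac{d_1-1}{2}}$. The critical value defines $q$, and integration by parts in $\eta$ produces the weight $\langle\lambda(\Delta s-q)\rangle^{-N}$.

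The main obstacle is uniformity in $k,\ell,\delta$. The $s$-coefficient $1/(\pi+\delta\varphi_\ell(\tau))$ is close to constant but not exactly so. Moreover, the large term $\delta^{-1}/(\pi+\delta\varphi_\ell(\tau))$ must be absorbed precisely into the $\beta$-variable so that no $\delta^{-1}$ survives in the derivatives of $q$ or in the amplitude. I would first verify, via Taylor expansion in $\delta$, that after the substitution all derivatives of the phase in $(\beta,\rho,\omega,s)$ are bounded independently of $\delta$. I would also check that the amplitude $h_\lambda$ of \eqref{hlambda} has bounded derivatives in the new variables. If this fails, I would instead freeze and divide out $\pi+\delta\varphi_\ell(\tau')$ to make the $s$-coefficient exactly $1$ before proceeding.
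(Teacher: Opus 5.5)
Your proposal is correct and is essentially the paper's argument. The paper freezes $\tau$ and substitutes $\tau'=\tau(1+|\Delta x|\beta)$, whereas you freeze $\tau'$ and set $\tau=\tau'+|\Delta x|\beta$; this is exactly the parametrization the paper uses in its $\ell=1$ analysis, and it keeps $\theta$ independent of $\beta$. In both versions the $\delta^{-1}$-terms collapse to an $\mathcal O(1)$ multiple of a difference quotient of $\varphi_\ell$, so the stationary-phase steps in $\omega,\rho,\beta$ and the final integration by parts in $\eta$ are uniform in $k$ and $\ell$.

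One small correction: the term $c|\Delta x|\beta$ enters with a $+$ sign and $c>0$, opposite to your display. Also, since $|\theta|^2=(4\tau'^2+r^2/4)|\Delta x|^2$, the $\beta$-phase is $c\beta+(\tau'^2+r^2/16)/\beta$, so the $r^2$-term is a bounded rather than $\mathcal O(\varepsilon)$ perturbation; it still leaves a nondegenerate critical point.
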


We will prove Proposition~\ref{lem:pointwiseQkl} by showing the following.

\begin{lemma}\label{lem:pointwiseQkl-prec}
We can decompose $\breve Q^{k,\ell,\varepsilon}_\la=\breve Q^{k,\ell,\varepsilon}_{1,\lambda}+E^{k,\ell,\varepsilon}_\la,$ where the main term 
$\breve Q^{k,\ell,\varepsilon}_{1,\la}$ can be estimated pointwise by 
\begin{multline}\label{eq:pointwiseQkl}
|\breve Q^{k,\ell,\varepsilon}_{1,\la}(x,u,s,x',u',s')| \le C_N(\ve,a_\la)\, \lambda^d
\, \langle\lambda\left|\Delta x\right|\rangle^{-\frac{d_1-1}2} \\
\times \int_\R \big\langle \lambda\, (\Delta s - q(s',\Delta x,\Delta w_\ell,\tau) ) \big\rangle^{-N} \chi_1(\tau)\, d\tau
\end{multline}
for any $N\in\N,$ where $\chi_1$ localizes to $|\tau|\sim 1,$ and where, with $r=2^{1-\ell}$,
\[
\Delta x = x-x',\, \Delta w_\ell = u-u' +\tfrac 12\langle rJ\Delta x,x\rangle \text{ and }\Delta s=s-s'.
\] 
Here $q$ is a smooth function satisfying estimates of the form $\partial^j_s q=\mathcal O(\ve)$ for every $j\in\N,$
and the  constant $C_N(\ve,a_\la)$ can be chosen to be independent of $\tau$ and to depend, for a given $N\in\N$, on $a_\la$ only through the $C^M$-norm of $a_\la$ for some sufficiently large $M$ (which may depend on $N$). 

For every $N\in \N$, for all $s\in[1,1+\ve],$
\[
\int_1^{1+\ve}| E^{k,\ell,\varepsilon}_\la(x,u,s,x',u',s')|\, ds'  
\le C_N(\ve,a_\la)\, \lambda^d
\, \langle\lambda\left|\Delta x\right|\rangle^{-N} ,
\]
and for all $s'\in[1,1+\ve],$
\[
\int_1^{1+\ve}| E^{k,\ell,\varepsilon}_\la(x,u,s,x',u',s')|\, ds 
\le C_N(\ve,a_\la)\, \lambda^d
\, \langle\lambda\left|\Delta x\right|\rangle^{-N}  ,
\]
uniformly in $(x,u)$ and $(x',u')$, $k\ge 1$, and $\ell\ge 2$.
\end{lemma}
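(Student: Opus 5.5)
We will follow the scheme of the proof of Lemma~\ref{lem:pointwiseQ0-precise}, with the roles of the variables adapted to the phase \eqref{eq:Psi-kl}. We freeze $\tau\in\supp\breve\eta_{1,\ell}$ (rather than $\tau'$, which is why $q$ depends on $s'$ and $\tau$) and study the oscillatory integral in $(\eta,\tau',y)$ with phase $\eta\Psi$. First we substitute $y=y'+x'$. The $y'$-dependent part of $\Psi$ is then
\[
(\tau'-\tau)|y'|^2+2\tau\langle y',\Delta x\rangle+\tfrac12\langle rJ\Delta x,y'\rangle ,
\]
up to terms independent of $y'$, namely $-\tau|\Delta x|^2$ and $\Delta w_\ell$, the latter modulo a harmless rearrangement of the $J$-term. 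Next we introduce polar coordinates $y'=\rho\omega$. The linear coefficient $2\tau\Delta x+\tfrac r2 J\Delta x$ has length $\sim|\Delta x|$, uniformly in $\ell\ge2$, because $J\Delta x\perp\Delta x$ and $|\tau|\sim1$. We then apply stationary phase on $S^{d_1-1}$ in the region $|\underline{\Delta x}|\lesssim\ve|\Delta x_1|$ and integrate by parts elsewhere. This gains $\langle\lambda\Delta x\rangle^{-(d_1-1)/2}$ and produces error terms that are handled as $E^{\varepsilon,3}_{\lambda,\tau'}$ was.

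We then change variables $\tau'=\tau+|\Delta x|\beta$, which gives the factor $\lambda|\Delta x|$. The $\rho$-phase becomes $\rho^2|\Delta x|\beta\pm2\rho\,\psi_1$ with $\psi_1\sim|\Delta x|$. The region $|\beta|\gg1$ and the regions with $|\beta|\ll1$ or the wrong sign of $\beta$ are non-stationary in $\rho$ and contribute to $E^{k,\ell,\varepsilon}_\la$. On the region $\beta\sim1$ with the right sign we use stationary phase in $\rho$, then in $\beta$.

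The $\beta$-dependence enters through the $s'$-term $(\delta^{-1}-1+s')/(\pi+\delta\varphi_\ell(\tau+|\Delta x|\beta))$, whose $\beta$-derivative is
\[
-\delta^{-1}(1+O(\delta))\cdot\delta\varphi_\ell'(\tau)|\Delta x|/\pi^2\sim|\Delta x| .
\]
Here we use $\varphi_\ell'(\tau)\sim1$ on $|\tau|\sim1$, uniformly in $\ell\ge2$, and this is exactly where the factor $\delta^{-1}$ cancels against $\delta$. After stationary phase in $\rho$, the $\beta$-phase is therefore $|\Delta x|F(s',\beta,|\Delta x|,\dots)$. Here $F$ is a smooth perturbation, in the small parameters $|\Delta x|,\ve$ and with $\ell$-uniform bounds, of a function of the form $c_1\beta+c_2/\beta$ with $c_1,c_2\sim1$ of the same sign. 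Hence there is a unique nondegenerate critical point $\beta_c$, and stationary phase gains $\langle\lambda\Delta x\rangle^{-1/2}$. Combined with the factor $\lambda|\Delta x|\langle\lambda\Delta x\rangle^{-d_1/2}$ obtained so far, this yields $\lambda^d\langle\lambda\Delta x\rangle^{-(d_1-1)/2}$.

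The remaining phase is $\eta[\Delta s/\pi-q(s',\Delta x,\Delta w_\ell,\tau)]$, where we rename $\Delta s/\pi$ after a harmless rescaling, and $q$ collects $|\Delta x|h(\cdot)$ and the $\Delta w_\ell$-term. The $s$-term contributes $\partial_s\Psi=1/(\pi+\delta\varphi_\ell)$ with no other $s$-dependence, while $q$ depends on $s'$ only through $O(\ve)$-bounded quantities, since $|\Delta x|\lesssim\ve$ and $s'\in[1,1+\ve]$. Integrating by parts $N$ times in $\eta$ then gives \eqref{eq:pointwiseQkl}, and the error terms inherit the extra decay $\langle\lambda\Delta x\rangle^{-N}$ together with a factor $\langle\lambda(\Delta s-q)\rangle^{-N}$. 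Integrating this in $s$ or in $s'$ produces the stated bounds for $E^{k,\ell,\varepsilon}_\la$; for the $s'$-integral we invert $s'\mapsto\Delta s-q$, whose derivative is $-1+O(\ve)$.

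The main obstacle is uniformity in $k$ and $\ell$. The large quantities $\delta^{-1}$ and $2^{\ell+1}k^2u_c(\tau_{k,\ell})$ must cancel or drop out. In the $s'$-term we use that $\tau$-derivatives of $\delta^{-1}/(\pi+\delta\varphi_\ell)$ are $O(1)$, as noted in Lemma~\ref{lem:kl-TTstar}. We must also check that the amplitude $h_\lambda$ from \eqref{hlambda}, which still contains $\delta^{-1}$ through $\Psi_2$, has $\ell$-uniform symbol bounds in $\tau'$ and $y$; we expect this from the localization $|\xi|\lesssim\lambda^{-1}$.
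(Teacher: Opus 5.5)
Your proposal is correct and follows essentially the same route as the paper. Like the paper, you freeze $\tau$, pass to polar coordinates in the translated $y$-variable, rescale $\tau'-\tau$ by $|\Delta x|\beta$, and apply stationary phase in $\omega$, $\rho$ and $\beta$. The $\beta$-step uses that $\delta^{-1}\cdot\delta\varphi_\ell'$ has unit size uniformly in $\ell\ge 2$, and you finish by integrating by parts in $\eta$.

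The differences are cosmetic. Centering at $x'$ instead of $x$ makes your spherical frequency vector $2\tau\Delta x+\tfrac r2 J\Delta x$ independent of $\tau'$, whereas the paper's $\tilde\theta$ depends on $\beta$. This slightly simplifies the $\rho$- and $\beta$-stationary phase steps and avoids tracking the sign of $\tau$.
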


The rest of this section will be  devoted to the proof of Lemma~\ref{lem:pointwiseQkl-prec}, which will be kept a bit more sketchy compared to the proof of Proposition~\ref{lem:pointwiseQ0} and will concentrate mostly on differences compared to the arguments used for Proposition~\ref{lem:pointwiseQ0}.

\subsubsection{Freezing $\tau$}
By Lemma~\ref{lem:kl-TTstar}, we have
\[
\breve Q^{k,\ell,\varepsilon}_{\lambda}
 =  \int_{|\tau|\sim1}  \breve Q^{k,\ell,\varepsilon}_{\lambda,\tau} \, d\tau ,
\]
where (suppressing again the localizing factors $\chi_\varepsilon(x-x^0,u-u^0)\,\chi_\varepsilon(x'-x^0,u'-u^0)$) 
\begin{equation}
\breve Q^{k,\ell,\varepsilon}_{\lambda,\tau} =\lambda^{d+1} \int_{\eta\sim 1} \int_{|\tau'|\sim 1} \int_{\R^{d_1}} e^{i\lambda \eta \Psi} a(\eta,\tau,\tau',y,s,x',u',s') \, dy \, d\tau' d\eta, \label{eq:q-kl}
\end{equation}
with phase $\Psi$ given by \eqref{eq:Psi-kl}. In the sequel, we assume that  $\tau$ with $|\tau|\sim 1$ is frozen.

\subsubsection{Changes of  variables and  stationary phase for a spherical integration}
Let us again use the abbreviation $r=2^{1-\ell}$. In the oscillatory integral \eqref{eq:q-kl}, we first change coordinates from $y$ to $y'=y-x$.  Since $|x|\sim 1$, we then have $|y'|\sim 1$. We use polar coordinates $y' = \rho\,\omega$, where $\rho\in(0,\infty)$ satisfies $\rho \sim 1$, and where  $\omega\in S^{d_1-1}$. Then  the phase function  $\Psi$ in \eqref{eq:Psi-kl} changes to
\[
\Psi_1
 = \frac{\delta^{-1}-1 + s}{\pi + \delta \varphi_\ell(\tau)}
 - \frac{\delta^{-1}-1 + s'}{\pi + \delta \varphi_\ell(\tau')}
+(\tau'-\tau)\rho^2 + \tau'|\Delta x|^2 + \rho \, \langle\theta ,\omega\rangle 
 + \Delta w_\ell,
\]
where
\[
\theta = 2\tau' \Delta x+\tfrac r2 J\Delta x.
\]
Since $J^2=-I_{d_1}$ and $J$ is skew-symmetric, we have
\[
| \theta|^2
= \Big(4 \, (\tau')^2+\frac {r^2}4 \Big) |\Delta x|^2 
\sim |\Delta x|^2.
\]
Note also that the factor $\chi_{\varepsilon}(\rho\omega + x)$ localizes the integration in $\omega$ to an $\mathcal{O} (\ve)$ neighborhood of the point $-x^0/|x^0|$ in $S^{d_1-1}$. Absorbing the polar-coordinate Jacobian and the cutoffs into a new full amplitude $a_1$, we obtain an oscillatory integral of the form
\[
\lambda^{d+1} \int_{\eta\sim 1} \int_{|\tau'|\sim 1} \int_{\rho\sim1}\int_{S^{d_1-1}}
 e^{i\lambda\eta\Psi_1}  a_1\,d\omega\,d\rho\,d\tau' d\eta.
\]
Next, we change coordinates from $\tau'$ to $\beta$ via
\[
\frac{\tau'}{\tau} = 1 + |\Delta x| \beta ,
\]
so that $\tau'-\tau = \beta\tau |\Delta x|$. This turns \eqref{eq:q-kl} into an oscillatory integral of the form
\begin{multline}\label{eq:change}
\lambda^{d+1}  |\Delta x| \int_{\eta\sim 1} \int_{S^{d_1-1}} \int_{\rho\sim 1}  \int_\R e^{i\lambda\eta \Psi_2} a_2(\eta,\tau,\beta,s,s',x,x',u',\rho,\omega)\\ \times \chi_\varepsilon(|\Delta x|\beta) \,
 d\beta\,  d\rho\, d\omega\,d\eta,
\end{multline}
where the Jacobian factor has been absorbed into $a_2$, and
\begin{multline*}
\Psi_2 = 
 \frac{\delta^{-1}-1 + s}{\pi + \delta \varphi_\ell(\tau)}
- \frac{ \delta^{-1}-1 + s'}{\pi + \delta \varphi_\ell\left(\tau + \beta\tau |\Delta x|\right)}
+\beta\tau \left|\Delta x\right|\rho^2\\
 + \rho \, \langle\tilde\theta ,\omega\rangle +\tau\left(1+ \beta|\Delta x|\right)|\Delta x|^2 + \Delta w_\ell,
\end{multline*}
with
\[
\tilde\theta = 2\tau \left(1+ \beta|\Delta x|\right) \Delta x+\tfrac r 2 J\Delta x.
\]
We write
\[
\Psi_2 = \frac{s}{\pi + \delta \varphi_\ell(\tau)}-\frac{s'}{\pi + \delta \varphi_\ell(\tau + \beta\tau |\Delta x|)}
+q_\ell(\tau, \beta|\Delta x|,\rho,\omega),
\]
where
\begin{multline*}
q_\ell(\tau, \beta|\Delta x|,\rho,\omega)
=(1-\delta)\, \frac{\varphi_\ell(\tau + \beta\tau |\Delta x|)-\varphi_\ell(\tau)}{(\pi + \delta \varphi_\ell(\tau + \beta\tau |\Delta x|))(\pi+\delta\varphi_\ell(\tau))}
+\beta\tau \left|\Delta x\right|\rho^2  \\
+ \rho \, \langle\tilde \theta ,\omega\rangle 
+\tau\left(1+ \beta|\Delta x|\right)|\Delta x|^2 + \Delta w_\ell.
\end{multline*}
The amplitude $a_2$ and all its partial derivatives are uniformly bounded on the support of the integral, with the same type of uniformity as the amplitude $a$ in Lemma~\ref{lem:kl-TTstar}.

Let us denote by $E^{\varepsilon,2}_{\lambda,\tau}$ the contribution of the region where $|\beta|\gg 1$ to \eqref{eq:change}. For $\beta$ in this region, we can integrate by parts in $\rho$ in order to gain negative powers $\jp{\lambda|\Delta x||\beta|}^{-N}.$ Applying subsequently integrations by parts in $\eta,$ we may write  
\begin{multline*}
E^{\varepsilon,2}_{\lambda,\tau}= \la^d \,\int_{\eta\sim 1} \int_{S^{d_1-1}} \int_{\rho\sim 1}  \int_{|\beta|\gg 1} e^{i\lambda\eta \Psi_2}  
\big\langle \lambda\, \Psi_2 \big\rangle^{-N} \,  (\la |\Delta x|)\jp{\lambda|\Delta x||\beta|}^{-N}\\
\times a_{2,N} \,  \chi_\ve(\left|\Delta x\right|\beta)\, d\beta\,  d\rho\, d\omega\,d\eta,
\end{multline*}
where $a_{2,N}$ is another amplitude that satisfies the corresponding uniform symbol bounds. 
From here on, we can argue as for the error term $\smash{E^{\varepsilon,2}_{\lambda,\tau'}}$ in the proof of Proposition~\ref{lem:pointwiseQ0} to see that $E^{\varepsilon,2}_{\lambda,\tau}$ is an error term which gives a contribution to 
$E^{k,\ell,\varepsilon}_\la.$ Note here that, in contrast to the situation in the latter proposition, only the term 
\[
\frac{s}{\pi + \delta \varphi_\ell(\tau)}-\frac{s'}{\pi + \delta \varphi_\ell(\tau + \beta\tau |\Delta x|)}
\]
depends on $s$ and $s'$, so after freezing the frequency variables $\tau,\beta, \omega, \rho,$ the estimates \eqref{sprimeintestkl} and \eqref{sintestkl} become clear.

We are thus reduced to considering the contribution by the  region where $|\beta|\lesssim 1$, that is, an oscillatory integral of the form
\[
Q^{\varepsilon,2}_{\lambda,\tau}=
\lambda^{d} (\la|\Delta x|)\,   \int_{\eta\sim 1} \int_{S^{d_1-1}} \int_{\rho\sim 1}  \int_{|\beta| \lesssim 1} e^{i\lambda\eta \Psi_2} \, \tilde a_2 \, \chi_\ve(\left|\Delta x\right|\beta)\, d\beta\,  d\rho\, d\omega\,d\eta,
\]
where, as often in what follows, we have suppressed the dependence of $Q^{\varepsilon,2}_{\lambda,\tau}$ on  $k,\ell$.

Next, performing stationary phase in $\omega$ when $|\underline{\Delta x}|\lesssim \ve |\Delta x_1|,$   and $N$ integrations by parts in $\omega$ along $S^{d_1-1}$ when $|\underline{\Delta x}|\gg  \ve |\Delta x_1|,$ we find that 
$$
Q^{\varepsilon,2}_{\lambda,\tau}=Q^{\varepsilon,3}_{\lambda,\tau}+E^{\varepsilon,3}_{\lambda,\tau}.
$$
Here, the first term is given by 
\begin{multline}
Q^{\varepsilon,3}_{\lambda,\tau}=
\lambda^{d} (\la|\Delta x|)\,   \int_{\eta\sim 1}  \int_{\rho\sim 1}  \int_{|\beta| \lesssim 1} 
\langle \lambda\eta \rho |\Delta x|\rangle^{-\frac{d_1-1}{2}} e^{i\lambda\eta \Psi^\pm_3}  a_3 \\
\times  \chi_\ve(\left|\Delta x\right|\beta)\, d\beta\,  d\rho\,d\eta, \label{eq:stat-rho-i}
\end{multline}
where $a_3$ is an amplitude satisfying the same type of uniform symbol bounds and in addition localizes to the region where 
$|\underline{\Delta x}|\lesssim \ve |\Delta x_1|,$ and the  phase is given by
\begin{multline*}
\Psi^\pm_3 = 
 \frac{\delta^{-1}-1 + s}{\pi + \delta \varphi_\ell(\tau)}
- \frac{ \delta^{-1}-1 + s'}{\pi + \delta \varphi_\ell\left(\tau + \beta\tau |\Delta x|\right)}
+\beta\tau \left|\Delta x\right|\rho^2 \\
\pm |\tilde \theta| \rho +\tau\left(1+ \beta|\Delta x|\right)|\Delta x|^2 + \Delta w_\ell.
\end{multline*}

The second term $E^{\varepsilon,3}_{\lambda,\tau}$  corresponding to the region where $|\underline{\Delta x}|\gg \ve |\Delta x_1|$ is again an error term, which can be handled in  a similar way as the one in \eqref{eq:Q0-stat} by means of integrations by parts in $\omega$ and $\eta$. It thus again just contributes to $E^{k,\ell,\varepsilon}_\la.$

\subsubsection{Stationary phase  for the integration in $\rho$}
Note that 
\[
\psi^\pm=\beta\tau \left|\Delta x\right|\rho^2\pm |\tilde \theta| \rho =\rho^2\psi_2+\rho \psi_1
\]
collects all terms of the phase $\Psi^\pm_3$ which depend on $\rho.$ Here, $|\psi_2|\sim \beta |\Delta x|,$ and, as we have seen,
$|\psi_1|\sim |\tilde \theta|\sim |\Delta x|.$

Thus, if $|\beta|\ll 1,$ then $|\partial_\rho\psi^{\pm}| \sim |\Delta x|,$  so we can again integrate by parts in $\rho$ in order  to gain  any factors $ \jp{\lambda\Delta x}^{-N}.$ This shows that  also the contributions from the region where  $|\beta|\ll 1$ can be handled as an error term in a similar way as before.

We are thus left with an oscillatory integral of the form
\begin{equation}
Q^{\varepsilon,4}_{\lambda,\tau}=
\lambda^{d} (\la|\Delta x|)\,   \int_{\eta\sim 1}  \int_{\rho\sim 1}  \int_{|\beta| \sim 1} 
\langle \lambda\eta \rho |\Delta x|\rangle^{-\frac{d_1-1}{2}} e^{i\lambda\eta \Psi^\pm_3}   \tilde a_3\, d\beta\,  d\rho\,d\eta.   \label{eq:stat-rho-ii}
\end{equation}
Note that
\[
\partial_\rho \psi^\pm
=2\beta \tau \left|\Delta x\right| \rho \pm |\tilde\theta|.
\]
Thus, if $|\beta|\sim 1$, then $\psi^\pm$ has a unique critical point
$\rho_c\sim 1$ in $\rho$ if and only if $\mp\beta\sim 1$. For the following discussion, we shall consider the phase $\Psi_3^-$; the case of the phase $\Psi_3^+$ can be treated analogously.

If  $-\beta\sim 1,$ we can then again integrate by parts in $\rho$ and see that the corresponding contribution can be treated as an error term as before. Thus, we shall assume in the sequel that $\beta\sim 1,$ and that $\psi^-$ does have a critical point for any such $\beta,$ which is then clearly given by
\[
\rho_c = \frac{|\tilde\theta|}{2\beta \tau|\Delta x|}=\Big(4\tau^2(1+\beta|\Delta x|)^2+\tfrac {r^2}4\Big)^{\frac{1}{2}}/(2\beta\tau).
\]
Note also that $|\partial_\rho^2 \psi^-|\sim |\Delta x|.$ Thus, applying stationary phase in $\rho$, we are reduced to the oscillatory integral 
\begin{equation}
Q^{\varepsilon,4}_{\lambda,\tau}=
\lambda^{d} (\la|\Delta x|)\,   \int_{\eta\sim 1}  \int_{\beta \sim 1} \,
\langle \lambda|\Delta x|\rangle^{-\frac{d_1}{2}} e^{i\lambda\eta \Psi_4} a_4 \,   d\beta\,d\eta,   \label{eq:stat-rhoc}
\end{equation}
where
\begin{multline*}
\Psi_4 = 
 \frac{\delta^{-1}-1 + s}{\pi + \delta \varphi_\ell(\tau)}
- \frac{ \delta^{-1}-1 + s'}{\pi + \delta \varphi_\ell\left(\tau + \beta\tau |\Delta x|\right)}
- \frac{4\tau^2(1+\beta|\Delta x|)^2+\tfrac {r^2}4}{4\beta\tau} \,|\Delta x|\\
+\tau\left(1+ \beta|\Delta x|\right)|\Delta x|^2 + \Delta w_\ell.
\end{multline*}
As in Section \ref{pointwise0}, the factor $\langle \lambda|\Delta x|\rangle^{-\frac{d_1}{2}}$ in the integral is here to be interpreted as a shorthand for a symbol of order $-\frac{d_1}{2}$ in $\la|\Delta x|$.

\subsubsection{Stationary phase for the integration in $\beta$}

Note first that 
\[
\partial_\beta \Psi_4=H(\beta, s',\tau, |\Delta x|)\,|\Delta x|,
\] 
where
\begin{multline*}
H = \frac{ 1+\delta (s'-1)}{\big(\pi + \delta \varphi_\ell(\tau + \beta\tau |\Delta x|)\big)^2}\, \tau
\varphi'_\ell\left(\tau + \beta\tau |\Delta x|\right) \\
+  \frac{4\tau^2(1+\beta |\Delta x|)^2+\tfrac {r^2}4}{4\beta^2\tau}  
+ \scriptO(|\Delta x|).
 \end{multline*}
 For fixed $\tau_0$ and $s'=1,|\Delta x|=0,$ we obtain
\[
H(\beta,1,\tau_0,0)
=\frac{\tau_0\varphi'_\ell(\tau_0)}
{(\pi+\delta\varphi_\ell(\tau_0))^2}
+\frac{4\tau_0^2+\tfrac{r^2}{4}}{4\beta^2\tau_0}.
\]
 This function has a unique zero $\beta_c(1,\tau_0,0)\sim 1.$ Moreover,
\[
\partial_\beta H(\beta,1,\tau_0,0)
=-2\frac{4\tau_0^2+\tfrac{r^2}{4}}
{4\beta^3\tau_0},
\qquad
\left|\partial_\beta H(\beta,1,\tau_0,0)\right|\sim 1.
\]
By the implicit function theorem, we thus see that for $|s'-1|$,
$|\Delta x|$, and $|\tau-\tau_0|$ sufficiently small, there is a
unique zero
\[
\beta_c(s',\tau,|\Delta x|)\sim 1
\]
of $H(\beta,s',\tau,|\Delta x|)$ in $\beta$, which is then also the
unique critical point of $\Psi_4$ in $\beta$.
 
Note that by means of a decomposition into intervals of length $\ve$,
we may indeed reduce to assuming that $\tau$ lies in an interval
$[\tau_0,\tau_0+\ve]$. Thus, if we assume that $\ve$ is sufficiently
small, the existence of the unique critical point
$\beta_c(s',\tau,|\Delta x|)\sim 1$ of
$H(\beta,s',\tau,|\Delta x|)$ is guaranteed. Moreover,
\[
\left|\partial_\beta^2\Psi_4\right|\sim|\Delta x|
\]
at this critical point.
Applying the method of stationary phase in $\beta$, we then obtain an oscillatory integral of the form
 \begin{equation}
Q^{\varepsilon,4}_{\lambda,\tau}=
\lambda^{d} (\la|\Delta x|)\,   \int_{\eta\sim 1}\,
\langle \lambda|\Delta x|\rangle^{-\frac{d_1+1}{2}} e^{i\lambda\eta \Psi_5}  a_5 \, d\eta,   
\end{equation}
with phase $\Psi_5=\Psi_4|_{\beta=\beta_c}$. Since 
\begin{align*}
& \frac{\delta^{-1}-1 + s}{\pi + \delta \varphi_\ell(\tau)}
- \frac{ \delta^{-1}-1 + s'}{\pi + \delta \varphi_\ell(\tau + \beta_c \tau |\Delta x|)}\\
& = \frac{1}{\pi + \delta \varphi_\ell(\tau)} \bigg[ s-s'+ \left(1-\delta+\delta s'\right)
 \frac{\varphi_\ell(\tau + \beta_c \tau |\Delta x|)-\varphi_\ell(\tau) }{\pi+\delta\varphi_\ell(\tau + \beta_c\tau |\Delta x|)}\bigg]\\
& =  \frac{1}{\pi + \delta \varphi_\ell(\tau)} \Big(\Delta s+\scriptO(|\Delta x|)\Big),
\end{align*}
we see that 
\[
\Psi_5=\frac{1}{\pi + \delta \varphi_\ell(\tau)} \left(\Delta s - q(s',\Delta x,\Delta w_\ell,\tau)\right),
\]
where
\[
q=\scriptO\big(|\Delta x|+|\Delta w_\ell|\big)=\scriptO(\ve),
\qquad
\partial_{s'}^j q=\scriptO(|\Delta x|),\quad j\geq1.
\]
By means of integrations by parts in $\eta,$ it now follows easily that the main term 
\[
\breve Q^{k,\ell,\varepsilon}_{1,\lambda}=
  \int_{|\tau|\sim1}  \breve Q^{\ve,4}_{\lambda,\tau} \, d\tau 
\]
satisfies estimates of the form \eqref{eq:pointwiseQkl}.
\qed

\subsection{Refined $L^2$ estimates for  $\ell\ge 2$}\label{eq:inter-L2}

In this section we shall prove Proposition~\ref{lem:kl-L2}. Recall from Lemma~\ref{lem:kl-TTstar} that the integral kernel $\breve Q^{k,\ell,\varepsilon}_{\lambda}$ of $\breve T^{k,\ell,\varepsilon}_\lambda(\breve T^{k,\ell,\varepsilon}_\lambda)^*$ can be written as
\begin{multline*}
\breve Q^{k,\ell,\varepsilon}_{\lambda}(x,u,s,x',u',s')
 = \lambda^{d+1}\,\chi_\varepsilon(x-x^0,u-u^0)\,\chi_\varepsilon(x'-x^0,u'-u^0) \\
\times \int_{\eta\sim 1} \int_{|\tau'|\sim 1} \int_{|\tau|\sim 1} \int_{\R^{d_1}}
e^{i\lambda \eta \Psi}
a(\eta,\tau,\tau',y,s,x',u',s')\,dy\,d\tau\,d\tau'\,d\eta,
\end{multline*}
with phase function (recall that $r=2^{1-\ell}$)
\begin{align*}
\Psi
= \frac{\delta^{-1}-1+s}{\pi+\delta\varphi_\ell(\tau)}
-\frac{\delta^{-1}-1+s'}{\pi+\delta\varphi_\ell(\tau')}
&-\tau|x-y|^2+\tau'|x'-y|^2 \\
&+\big(u-u'+\tfrac12\langle rJ(x-x'),y\rangle\big)
\end{align*}
and amplitude $a$ given by \eqref{eq:kl-TTstar-amplitude}, which comes with the factors
\[
a_{1,\la,\de}(\eta,\tau,s)
=
a_\la\Big(\big(1+\delta(s-1)\big)
\frac{\eta}{\pi+\delta\varphi_\ell(\tau)}\Big)
\]
and
\[
h_\lambda(\eta,\tau,\tau',s',x',u',y)
=
\int_\R e^{i\lambda\xi\Psi_2}
\lambda\,\hat\chi_\varepsilon(\lambda\xi)\,
\overline{a_{1,\la,\de}(\eta-\xi,\tau',s')}\,d\xi.
\]
Here $\Psi_2$ is given by \eqref{Psi2}. The function $h_\lambda$ and all its partial derivatives are uniformly bounded on the support of the integral, with bounds controlled by finitely many derivatives of $a_\la$.

Regretfully, compared to the proof of Proposition~\ref{lem:0-L2}, it turns out that here the Monge--Ampère determinant of $\Psi$ will only be of order $\mathcal{O}(\delta^2)$. To address this problem, we shall need to apply a more sophisticated argument, which exploits the special structure of the phase $\Psi$ and which is based on  a rescaling argument in the variable $s,$ in which $s$ will be replaced by $\delta^{-1} s$. 

For later use, let us write the full amplitude in the form
\[
a(\eta,\tau,\tau',y,s,x',u',s')
=
b(\eta,\tau,\tau',y,s)\,
h_\lambda(\eta,\tau,\tau',s',x',u',y),
\]
where
\begin{equation}\label{amplia}
b(\eta,\tau,\tau',y,s)
:=
a_{1,\la,\de}(\eta,\tau,s)\,\chi_\varepsilon(y)\,
\frac{\breve\eta_{1,\ell}(\tau)\,\breve\eta_{1,\ell}(\tau')}
{(\pi+\delta\varphi_\ell(\tau))(\pi+\delta\varphi_\ell(\tau'))}.
\end{equation}
Note that the scaling in $s$ turns $a_{1,\la,\de}(\eta,\tau,s)$ into
\begin{equation}\label{atilde3}
\breve a_{1,\la,\de}(\eta,\tau,s)=  a_{\la}\Big((1+s-\delta)\frac \eta{\pi + \delta \varphi_\ell(\tau)}\Big).
\end{equation}
Thus, all derivatives of the rescaled amplitude factor
\[
\breve b(\eta,\tau,\tau',y,s)
:=
b(\eta,\tau,\tau',y,\delta^{-1}s)
\]
are still uniformly bounded. Regretfully, the same is not immediately true for the rescaled function $h_\lambda$.

\subsubsection{Freezing $\xi$ in the defining integral for $h_\la$} 
Write 
\[
\Psi_2=\psi(s')+u'+\dot\Psi_2,
\qquad
\psi(s')=\frac{\delta^{-1}-1+s'}{\pi},
\]
where
\[
\dot \Psi_2
= -\frac{1+\delta(s'-1)}{\pi + \delta \varphi_\ell(\tau')}\frac {\varphi_\ell(\tau')}{\pi} - \tau'\left|x'-y\right|^2  +
2^{\ell+1} k^2 u_c(\tau_{k,\ell} ) + \tfrac 1 2 \langle rJx',y\rangle.
\]
We may then write
\begin{equation}\label{hladecomp}
h_\lambda(\eta,\tau,\tau',s',x',u',y)
=
\int_\R
e^{i\la\xi(\psi(s')+u')}
H_{\la,\xi}(\eta,\tau,\tau',s',x',y)\,d\xi,
\end{equation}
where
\[
H_{\la,\xi}(\eta,\tau,\tau',s',x',y)
:=
e^{i\la\xi\dot\Psi_2}
\lambda\,\hat\chi_\varepsilon(\lambda\xi)\,
\overline{a_{1,\la,\de}(\eta-\xi,\tau',s')}.
\]
Let us also put
\[
\breve H_{\la,\xi}(\eta,\tau,\tau',s',x',y)
:=
H_{\la,\xi}(\eta,\tau,\tau',\delta^{-1}s',x',y).
\]
Then, for every $M\in\N$, we have
\begin{equation}\label{tildeH}
\int_\R \|\breve H_{\la,\xi}\|_{C^M}\,d\xi
\lesssim_M 1,
\end{equation}
uniformly in $\la$, where the $C^M$ norm is taken in $(\eta,\tau,\tau',s',x',y)$, with $s'\sim \delta$.

For frozen $\xi$ (and $\ve$), let us then define
\begin{multline}\label{eq:Qklxi}
\breve Q^{k,\ell}_{\lambda,\xi}(x,u,s,x',u',s')
 := \lambda^{d+1}\,\chi_\varepsilon(x-x^0,u-u^0)\,\chi_\varepsilon(x'-x^0,u'-u^0)\\   \times \int_{\eta\sim 1} \int_{|\tau'|\sim 1} \int_{|\tau|\sim 1} \int_{\R^{d_1}} e^{i\lambda \eta \Psi} \, b(\eta,\tau,\tau',y,s)\\
\times H_{\la,\xi}(\eta,\tau,\tau',s',x',y)\,
 dy \, d\tau \, d\tau' \, d\eta, 
\end{multline}
so that
\[
\breve Q^{k,\ell,\varepsilon}_{\lambda}(x,u,s,x',u',s')=\int_\R \breve Q^{k,\ell}_{\lambda,\xi}(x,u,s,x',u',s') \, e^{i\la\xi(\psi(s')+u')} \, d\xi.
\]
For fixed $\xi$ and $x_1,x'_1$ frozen, let $S_{\lambda,\xi,x_1,x_1'}^{k,\ell}$ be the operator whose kernel is obtained from \eqref{eq:Qklxi} by replacing the two outer localization factors by $\chi_\varepsilon(x-x^0)\,\chi_\varepsilon(x'-x^0)$. The operators $S_{\lambda,\xi,x_1,x_1'}^{k,\ell}$ and $e^{i\la  \xi(\psi(s')+u')}\,S_{\lambda,\xi,x_1,x_1'}^{k,\ell}$ have the same operator norm on $L^2$. Thus, if
\begin{equation}\label{estxix1}
\|S_{\lambda,\xi,x_1,x_1'}^{k,\ell}\|_{L^2\to L^2}\lesssim \delta^{-1} \|\breve H_{\la,\xi}\|_{C^M},
\end{equation}
uniformly in $\xi,\la,$ etc., for some $M\in\N,$ then we can bound 
\[
\|\breve T_{\lambda,x_1}^{k,\ell,\varepsilon}
(\breve T_{\lambda,x_1'}^{k,\ell,\varepsilon})^*\|_{L^2\to L^2}\lesssim \delta^{-1}\int_\R \|\breve H_{\la,\xi}\|_{C^M}\, d\xi
\lesssim \delta^{-1},
\]
which will prove Proposition~\ref{lem:kl-L2}.

\subsubsection{Exploiting the partial Fourier transform in $u$}

Note the summand $u-u'$ in the phase function $\Psi,$ and let us change coordinates from $\eta$  to $\mu = \lambda\eta,$ so that $|\mu|\sim\la$. Then 
\begin{align*}
\int_{\R}f(\underline x',u',s') \, e^{i \lambda \eta(u-u')} \, d u' 
 = \scriptF_u f(\underline x', \lambda \eta,s') \, e^{i \lambda \eta u} 
 = f^{\mu}(\underline x',s') \, e^{i \mu u}
\end{align*}
Here,  $\scriptF_u $ denotes the partial Fourier transform in $u,$ in which we now have suppressed the factor $2\pi,$ as also for $f^\mu$.

For given $\xi$ and $\mu,$ let $S_{\lambda,\xi,\mu,x_1,x'_1}^{k,\ell}$ be the integral operator given by
\[
S_{\lambda,\xi,\mu,x_1,x'_1}^{k,\ell} \, g(\underline x,s) \\
 = \int_{\R^{d_1-1}} \int_0^\infty q^{k,\ell}_{\lambda,\xi,\mu,x_1,x'_1}(\underline x,s,\underline x',s') \, g(\underline x',s') \, d\underline x' \, ds',
\]
with integral kernel
\begin{multline*}
q^{k,\ell}_{\lambda,\xi,\mu,x_1,x'_1}
(\underline x,s,\underline x',s')
=
\chi_\varepsilon(x-x^0)\,
\chi_\varepsilon(x'-x^0)\\
\times \int_\R\,\int_\R\,\int_{\R^{d_1}}
e^{i\mu\Theta}\,
b(\tfrac\mu\la,\tau,\tau',y,s)\,
H_{\la,\xi}
(\tfrac\mu\la,\tau,\tau',s',x',y)\,
dy\,d\tau\,d\tau'.
\end{multline*}
 The phase $\Theta$ is given by
\[
\Theta
 = \frac{\delta^{-1}-1 + s}{\pi + \delta \varphi_\ell(\tau)} - \frac{\delta^{-1}-1 + s'}{\pi + \delta \varphi_\ell(\tau')} - \tau| x-y|^2 + \tau'| x'-y|^2 + \tfrac12 \langle rJ\Delta x,y\rangle.
\]
Then, by the change of coordinates from $\eta$ to $\mu,$ we see that  
\[
\big(S_{\lambda,\xi,x_1,x_1'}^{k,\ell}f\big)(\underline x,u,s)=\lambda^{d} \int_{\mu\sim \la} \big(S_{\lambda,\xi,\mu,x_1,x'_1}^{k,\ell} f^{\mu}\big)(\underline x,s) \, e^{i \mu u} \, d\mu. 
\]
Thus, by Plancherel in $u$, 
\[
\big\| S_{\lambda,\xi,x_1,x_1'}^{k,\ell} f \big\|_2 
\sim \lambda^d \Big( \int_{\mu\sim \la}  \, \big\| S_{\lambda,\xi,\mu,x_1,x'_1}^{k,\ell} f^{\mu}\big\|_2^2 \, d\mu\Big)^{\frac 1 2}.
\]
Suppose we can show that
\begin{equation}\label{Smuest}
\|S_{\lambda,\xi,\mu,x_1,x'_1}^{k,\ell} g \|_2 \lesssim \delta^{-1} \lambda^{-d} \|\breve H_{\la,\xi}\|_{C^M}\,\|g\|_2,
\end{equation}
uniformly for $\mu\sim \lambda$.
Then, again by Plancherel, 
\begin{align*}
\big\| S_{\lambda,\xi,x_1,x_1'}^{k,\ell} f \big\|_2 
& \lesssim \lambda^d \Big( \int_{\mu\sim \la}  \int_\R \, \int_{\R^{d_1-1}} \, \delta^{-2} \, \lambda^{-2d} \, \|\breve H_{\la,\xi}\|_{C^M}^2\, |f^{\mu}(\underline x,s)|^2 \, d \underline x \, d s \, d\mu \Big)^{\frac 1 2}\\
& \sim \delta^{-1} \, \|\breve H_{\la,\xi}\|_{C^M}\|f\|_2,
\end{align*} which would prove  \eqref{estxix1}, hence also Proposition~\ref{lem:kl-L2}.

\subsubsection{Scaling in $s$}\label{scaletrick}

We are thus left with showing estimate \eqref{Smuest}.
To this end, let $\breve S_{\lambda,\xi,\mu,x_1,x'_1}^{k,\ell}$ denote the scaled operator
\[
\breve S_{\lambda,\xi,\mu,x_1,x'_1}^{k,\ell} \, g(\underline x,s) \\
 = \int_{\R^{d_1-1}} \int_0^\infty \breve q^{k,\ell}_{\lambda,\xi,\mu,x_1,x'_1}(\underline x,s,\underline x',s') \, g(\underline x',s') \, d\underline x' \, ds',
\]
with integral kernel
\begin{multline*}
\breve q^{k,\ell}_{\lambda,\xi,\mu,x_1,x'_1}(\underline x,s,\underline x',s')
:=q^{k,\ell}_{\lambda,\xi,\mu,x_1,x'_1}(\underline x,\delta^{-1}s,\underline x',\delta^{-1}s')\\
= \chi_\varepsilon(x-x^0)\,
\chi_\varepsilon(x'-x^0)  \int_\R\, \int_\R\, \int_{\R^{d_1}}
e^{i\mu\breve\Theta}\,
\breve b(\tfrac\mu\la,\tau,\tau',y,s)\\
\times \breve H_{\la,\xi}
(\tfrac\mu\la,\tau,\tau',s',x',y)\,
\, dy \, d\tau \, d\tau'.
\end{multline*}
This kernel is obtained from $q^{k,\ell}_{\lambda,\xi,\mu,x_1,x'_1}$ by scaling $s$ and $s'$ by the factor $\delta^{-1}.$

The scaled phase function $\breve \Theta$ is given by 
\[
\breve\Theta
 = \frac {\delta^{-1}(s+1)-1}{\pi + \delta \varphi_\ell(\tau)} - \frac {\delta^{-1}(s'+1)-1}{\pi + \delta \varphi_\ell(\tau')} - \tau| x-y|^2 + \tau'| x'-y|^2 + \tfrac 12 \left\langle rJ\Delta x,y\right\rangle.
\]
Since
\[
\frac {\delta^{-1}(s+1)-1}{\pi + \delta \varphi_\ell(\tau)}-\frac {\delta^{-1}(s+1)-1}{\pi}=
-\frac {(1-\delta+s)\varphi_\ell(\tau)}{\pi(\pi + \delta \varphi_\ell(\tau))},
\]
we may write
\begin{align*}
\breve\Theta
& = \frac{s-s'}{\delta \pi}+\frac {(1-\delta+s')\varphi_\ell(\tau')}{\pi(\pi + \delta \varphi_\ell(\tau'))}-\frac {(1-\delta+s)\varphi_\ell(\tau)}{\pi(\pi + \delta \varphi_\ell(\tau))}\\
 &\hskip4cm - \tau| x-y|^2 + \tau'| x'-y|^2 + \tfrac 12 \left\langle rJ\Delta x,y\right\rangle.
\end{align*}
In order to remove the first summand in this phase, which has a possibly large coefficient $\delta^{-1},$ we pass to the modified operator $\tilde S_{\lambda,\xi,\mu,x_1,x'_1}^{k,\ell}$ defined by 
\begin{equation}\label{Smutildeb}
\tilde S_{\lambda,\xi,\mu,x_1,x'_1}^{k,\ell} \, g(\underline x,s) = e^{-i\frac{\mu s}{\delta \pi}} \,\breve S_{\lambda,\xi,\mu,x_1,x'_1}^{k,\ell}\big(e^{i\frac{\mu s'}{\delta \pi}} g\big)(\underline x,s).
\end{equation}
The operator $\tilde S_{\lambda,\xi,\mu,x_1,x'_1}^{k,\ell}$ is defined  in the same way as $\breve S_{\lambda,\xi,\mu,x_1,x'_1}^{k,\ell}$, but with the phase $\breve \Theta$ replaced by  the phase $\tilde \Theta$ given by
\begin{align*}
\tilde \Theta  =
\frac {(1-\delta+s')\varphi_\ell(\tau')}{\pi(\pi + \delta \varphi_\ell(\tau'))}-\frac {(1-\delta+s)\varphi_\ell(\tau)}{\pi(\pi + \delta \varphi_\ell(\tau))} 
 - \tau| x-y|^2 + \tau'| x'-y|^2 + \tfrac 12 \left\langle rJ\Delta x,y\right\rangle.
\end{align*}
Thus, 
\[
\tilde S_{\lambda,\xi,\mu,x_1,x'_1}^{k,\ell} g(\underline x,s) = \int_{\R^{d_1-1}} \int_0^\infty \tilde q^{k,\ell}_{\lambda,\xi,\mu,x_1,x'_1}
(\underline x,s,\underline x',s') \, g(\underline x',s') \,ds'\,d\underline x'.
\]
with
\begin{multline*}
\tilde q^{k,\ell}_{\lambda,\xi,\mu,x_1,x'_1}
(\underline x,s,\underline x',s')
=
\chi_\varepsilon(x-x^0)\,
\chi_\varepsilon(x'-x^0)\\
\times
\int_\R\,\int_\R\,\int_{\R^{d_1}}
e^{i\mu\tilde\Theta}\,
\breve b(\tfrac\mu\la,\tau,\tau',y,s)\,
\breve H_{\la,\xi}
(\tfrac\mu\la,\tau,\tau',s',x',y)\,
dy\,d\tau\,d\tau'.
\end{multline*}
Clearly, $\breve S_{\lambda,\xi,\mu,x_1,x'_1}^{k,\ell}$ and $\tilde  S_{\lambda,\xi,\mu,x_1,x'_1}^{k,\ell}$ have the same operator norms on $L^2,$ i.e.,
\[
\|\breve S_{\lambda,\xi,\mu,x_1,x'_1}^{k,\ell} \|_{2\to 2}=\|\tilde S_{\lambda,\xi,\mu,x_1,x'_1}^{k,\ell} \|_{2\to 2}.
\]
By means of the next lemma, we   shall conclude  that
\begin{equation}\label{redtobreve}
\| S_{\lambda,\xi,\mu,x_1,x'_1}^{k,\ell} \|_{2\to 2} \lesssim \delta^{-1} \|\breve  S_{\lambda,\xi,\mu,x_1,x'_1}^{k,\ell} \|_{2\to 2}.
\end{equation}
This reduces the estimate for $\|S_{\lambda,\xi,\mu,x_1,x'_1}^{k,\ell}\|_{2\to2}$ to that for $\|\tilde S_{\lambda,\xi,\mu,x_1,x'_1}^{k,\ell}\|_{2\to2}$.

Estimate \eqref{redtobreve} will follow easily from the  following general scaling result.
\begin{lemma}\label{scaleins}
Suppose $K=K(s,s')$ is a square integrable  integral kernel taking values in the space of bounded linear  operators $\scriptB(\scriptH)$ on a Hilbert space $\scriptH$, i.e., $K\in L^2(\R\times\R, \scriptB(\scriptH))$. Let $A$ be any measurable subset of $\R,$ and let $0<\delta<1$.   
Define the integral operators $T$ and $\breve T$ from $L^2(\R,\scriptH)$ to $L^2(\R,\scriptH)$ by 
$$
T f(s)=\bbone_{A}(s) \int_{\R} K(s,s')\, (\bbone_{A}\, f)(s')\, ds'
$$
and 
$$
\breve T f(s)= \int_{\R} K\Big(\frac s \delta,\frac {s'}\delta\Big) \, f(s')\, ds',
$$
for $f\in L^2(\R,\scriptH).$ Then
\[
\|T\|_{L^2(\R,\scriptH)\to L^2(\R,\scriptH)} \le \delta^{-1} \, \|\breve T\|_{L^2(\R,\scriptH)\to L^2(\R,\scriptH)}.
\]
\end{lemma}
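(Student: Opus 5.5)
The plan is to conjugate by an $L^2$-normalized dilation in $s$. This identifies $\breve T$, up to the scalar factor $\delta$, with the operator $S$ whose kernel is $K$ itself. The cut-offs $\bbone_A$ then only decrease norms.

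First, I would introduce the dilation $U_\delta f(s):=\delta^{-1/2} f(s/\delta)$ on $L^2(\R,\scriptH)$. It is unitary, since $\int\|\delta^{-1/2}f(s/\delta)\|_\scriptH^2\,ds=\int\|f(t)\|_\scriptH^2\,dt$, and its inverse is $U_\delta^{-1}g(t)=\delta^{1/2}g(\delta t)$. I would also set
\[
Sf(s)=\int_\R K(s,s')\,f(s')\,ds'.
\]
Next, I would compute $\breve T U_\delta f$ by substituting $s'=\delta t$:
\[
\breve T(U_\delta f)(s)=\int_\R K\Big(\frac s\delta,\frac{s'}\delta\Big)\,\delta^{-1/2}f\Big(\frac{s'}\delta\Big)\,ds'
=\delta^{1/2}\int_\R K\Big(\frac s\delta,t\Big)f(t)\,dt=\delta^{1/2}(Sf)\Big(\frac s\delta\Big)=\delta\,U_\delta(Sf)(s).
\]
Hence $S=\delta^{-1}U_\delta^{-1}\breve T U_\delta$, and since $U_\delta$ is unitary, $\|S\|=\delta^{-1}\|\breve T\|$. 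All operator norms here are taken on $L^2(\R,\scriptH)$.

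Finally, $T=P_A S P_A$, where $P_Af=\bbone_A f$ is an orthogonal projection of norm at most $1$. Therefore $\|T\|\le\|S\|=\delta^{-1}\|\breve T\|$.

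The only point needing care is that these operators are well defined and the substitutions are justified for operator-valued kernels. Since $K\in L^2(\R\times\R,\scriptB(\scriptH))$, the operators are Hilbert–Schmidt type and bounded, and the interchange is a plain change of variables in the Bochner integral. I do not expect a real obstacle. Note also that the hypothesis $\delta<1$ is not actually used; the identity holds for every $\delta>0$.

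For the application to \eqref{redtobreve}, I would take $\scriptH=L^2_{\underline x}$ and $K(s,s')$ the operator with kernel $q^{k,\ell}_{\lambda,\xi,\mu,x_1,x'_1}(\cdot,s,\cdot,s')$. For $A$ I would take the relevant $s$-interval; it is fine that this interval carries no particular structure.
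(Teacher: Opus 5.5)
Your argument is correct and is essentially the paper's. Both rest on the dilation $s\mapsto s/\delta$ together with the substitution $s'=\delta t$, which turns the kernel $K(\frac s\delta,\frac{s'}\delta)$ back into $K$ at the cost of a factor $\delta$.

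The only difference is cosmetic. You use the unitary normalization $U_\delta$ and peel off $\bbone_A$ as orthogonal projections, which gives the identity $\|S\|=\delta^{-1}\|\breve T\|$. The paper instead keeps the cut-offs inside and compares $\|(Tf)_\delta\|$ with $\|f_\delta\|$ for the unnormalized dilation $f_\delta(s)=f(s/\delta)$.
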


\begin{proof} For $f\in L^2(\R,\scriptH)$, put $f_\delta(s)=f(s/\delta).$ Then 
\begin{align*}
(Tf)_\delta (s) &=\bbone_A\Big(\frac s \delta\Big) \int_\R K\Big(\frac s \delta,s'\Big) (\bbone_A\, f)(s')\, ds'\\
&=\delta^{-1} \, \bbone_{A}\Big(\frac s \delta\Big) \int_\R K\Big(\frac s \delta,\frac{s'}\delta\Big) (\bbone_{A}\, f)\Big(\frac{s'}\delta\Big)\, ds'\\
&= \delta^{-1} \, \bbone_{A}\Big(\frac s \delta\Big) \, \breve T \big((\bbone_{A})_\delta \, f_\delta\big)(s).
\end{align*}
Thus,
\[
\|(Tf)_\delta\|_{L^2(\R,\scriptH)}
\le
\delta^{-1} \, \|\breve T\|_{L^2(\R,\scriptH)\to L^2(\R,\scriptH)} \,
\|f_\delta\|_{L^2(\R,\scriptH)},
\]
from which our asserted estimate is immediate.
\end{proof}

To prove \eqref{redtobreve}, let $\scriptH=L^2(\R^{d_1-1})$, and let $K(s,s')$ be the operator on $\scriptH$ with integral kernel
\[
\big(K(s,s')\big)(\underline x,\underline x')
:= q^{k,\ell}_{\lambda,\xi,\mu,x_1,x'_1}(\underline x,s,\underline x',s').
\]
Then the operator $S_{\lambda,\xi,\mu,x_1,x'_1}^{k,\ell}$ corresponds to the operator-valued integral kernel
\[
\bbone_{[1,1+\ve]}(s) \, K(s,s') \, \bbone_{[1,1+\ve]}(s').
\]
Here, recall that we had implicitly  always assumed that $s,s'\in [1,1+\ve]$, but suppressed the localizing factors $\bbone_{[1,1+\ve]}(s)$ and $\bbone_{[1,1+\ve]}(s')$ for a while. Therefore, choosing  $A=[1,1+\ve]$ in Lemma~\ref{scaleins},  we see that estimate \eqref{redtobreve} is indeed an  immediate consequence of Lemma~\ref{scaleins}.  

\medskip

Finally, to estimate the operator $\tilde S_{\lambda,\xi,\mu,x_1,x'_1}^{k,\ell}$, recall that its integral kernel is given by the oscillatory integral
\begin{multline*}
\tilde q^{k,\ell}_{\lambda,\xi,\mu,x_1,x'_1}
(\underline x,s,\underline x',s')
=
\chi_\varepsilon(x-x^0)\,
\chi_\varepsilon(x'-x^0)\\
\times
\int_\R\,\int_\R\,\int_{\R^{d_1}}
e^{i\mu\tilde\Theta}\,
\breve b(\tfrac\mu\la,\tau,\tau',y,s)\,
\breve H_{\la,\xi}
(\tfrac\mu\la,\tau,\tau',s',x',y)\,
dy\,d\tau\,d\tau'.
\end{multline*}
The scaled amplitude
\[
\breve b(\tfrac\mu\la,\tau,\tau',y,s)\,
\breve H_{\la,\xi}
(\tfrac\mu\la,\tau,\tau',s',x',y)
\]
is now smooth, with uniform bounds on the derivatives of the first factor and with $\breve H_{\la,\xi}$ satisfying \eqref{tildeH}. The corresponding phase $\tilde\Theta$ is smooth and of order $\mathcal{O}(1)$, and the same is true of all its derivatives.

\begin{lemma}\label{eq:MA-kl}
If $\ve>0$ is chosen sufficiently small, then
\[
\left|
\,\det
\begin{pmatrix}
\tilde\Theta_{(\underline x,s),(\underline x',s')} & \tilde\Theta_{(\underline x,s),(\tau,\tau',y)}\\
\tilde\Theta_{(\tau,\tau',y),(\underline x',s')} & \tilde\Theta_{(\tau,\tau',y),(\tau,\tau',y)}\\
\end{pmatrix}
\right|
\sim 1,
\]
uniformly in $k\ge 1$ and $\ell\ge 2$.
\end{lemma}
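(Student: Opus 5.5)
The plan is to compute the Monge--Ampère matrix of $\tilde\Theta$ explicitly. The $(\underline x,s)$ and $(\underline x',s')$ variables enter $\tilde\Theta$ very sparsely, so repeated cofactor expansions will reduce the determinant to a product of two $d_1\times d_1$ determinants. Each of these is a small perturbation of a block matrix whose invertibility is clear, in the spirit of the proof of Lemma~\ref{lem:0-det}.

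\emph{Step 1: the $s$ and $s'$ entries.} In $\tilde\Theta$ the variable $s$ appears only in the term $-\frac{(1-\delta+s)\varphi_\ell(\tau)}{\pi(\pi+\delta\varphi_\ell(\tau))}$. Hence the $s$-row has a single nonzero entry,
\[
\tilde\Theta_{s\tau}=-\frac{\varphi_\ell'(\tau)}{(\pi+\delta\varphi_\ell(\tau))^2},
\]
and likewise the $s'$-column has the single nonzero entry $\tilde\Theta_{\tau's'}=\varphi_\ell'(\tau')/(\pi+\delta\varphi_\ell(\tau'))^2$. I will check that $\varphi_\ell'(\tau)=-\tau^{-2}(1+(2^{1-\ell}/\tau)^2)^{-1}$, which is comparable to $1$ in modulus for $|\tau|\sim1$, uniformly in $\ell\ge2$ and $\delta\le1$. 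Expanding along the $s$-row and then the $s'$-column therefore removes the $s$-row, the $\tau$-column, the $s'$-column and the $\tau'$-row, at the cost of a factor of size $\sim1$.

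\emph{Step 2: the remaining matrix.} The remaining rows are $(\underline x,\tau,y)$ and the remaining columns are $(\underline x',\tau',y)$. I compute the entries:
\begin{itemize}
\item $\tilde\Theta_{\underline x\underline x'}=0$, $\tilde\Theta_{\underline x\tau'}=0$, $\tilde\Theta_{\tau\underline x'}=0$ and $\tilde\Theta_{\tau\tau'}=0$;
\item $\tilde\Theta_{\tau y}=2(x-y)^\intercal$ and $\tilde\Theta_{y\tau'}=-2(x'-y)$;
\item $\tilde\Theta_{\underline x y}$ consists of the rows $j\ge2$ of $2\tau I_{d_1}+\frac r2J^\intercal$;
\item $\tilde\Theta_{y\underline x'}$ consists of the columns $j\ge2$ of $-2\tau' I_{d_1}-\frac r2 J$ (up to the sign convention for $J$);
\item $\tilde\Theta_{yy}=2(\tau'-\tau)I_{d_1}$.
\end{itemize}
Thus the matrix has the block form
\[
\begin{pmatrix} 0 & M_1\\ M_2 & C\end{pmatrix},\qquad
M_1=\begin{pmatrix}\tilde\Theta_{\underline x y}\\ 2(x-y)^\intercal\end{pmatrix},\quad
M_2=\begin{pmatrix}\tilde\Theta_{y\underline x'} & -2(x'-y)\end{pmatrix},
\]
with both $M_1$ and $M_2$ square of size $d_1$. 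Its determinant is $\pm\det M_1\det M_2$. This is the crucial point: the possibly vanishing block $C=2(\tau'-\tau)I$ plays no role.

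\emph{Step 3: invertibility of $M_1,M_2$.} On the support we have $|x-x^0|,|x'-x^0|,|y|\lesssim\varepsilon$ with $x^0=(|x^0|,0,\dots,0)$ and $|x^0|\sim1$. Hence $x-y$ and $x'-y$ are within $O(\varepsilon)$ of $|x^0|e_1$. The matrix $2\tau I+\frac r2J^\intercal$ is invertible for $|\tau|\sim1$ and $r=2^{1-\ell}\le\frac12$. In fact it is $\frac r2J^\intercal$ acting as a perturbation of $2\tau I$, with $|(2\tau I+\frac r2 J^\intercal)v|^2=(4\tau^2+r^2/4)|v|^2$. Therefore its rows $2,\dots,d_1$ together with $e_1^\intercal$ form a matrix with determinant $\sim1$ in modulus; a priori $e_1$ could fail to be transversal because of the $J$-coupling, but since $r/2\le\frac14\ll|2\tau|$, the perturbation of $2\tau I$ is small, and transversality holds uniformly. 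Replacing $e_1$ by $(x-y)/|x-y|$ changes this only by $O(\varepsilon)$, so $|\det M_1|\sim1$. The same argument applies to $M_2$.

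All bounds are uniform in $k\ge1$ and $\ell\ge2$, since only $|\tau|,|\tau'|\sim1$, $r\le\frac12$, $\delta\le1$ and the bounds on $\varphi_\ell$ enter. The main obstacle I expect is careful sign and index bookkeeping in Steps 2--3, notably confirming that no other entries of the $\tau$-row or the $\tau'$-column survive the first expansion. I also need to ensure that the $J$-terms coming from $\frac12\langle rJ\Delta x,y\rangle$ do not spoil the transversality with $x-y$. This is where I would use the smallness of $r$ compared to $|\tau|\sim1$ together with the exact norm identity above.
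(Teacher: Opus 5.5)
Your proposal is correct and follows the paper's proof step by step. Both arguments expand along the $s$-row and $s'$-column using $|\tilde\Theta_{s\tau}|,|\tilde\Theta_{\tau's'}|\sim1$, factor the result into $\det\tilde\Theta_{(\underline x,\tau),y}\cdot\det\tilde\Theta_{y,(\underline x',\tau')}$ (so $\tilde\Theta_{yy}$ is irrelevant), and finish with a perturbation argument based on $x-y$ being $O(\varepsilon)$-close to $|x^0|e_1$. The only difference is cosmetic: the paper evaluates the relevant minor exactly as $2\tau\det(2\tau I_{d_1-2}-\tfrac r2\underline{\underline J})$, of modulus $2|\tau|(4\tau^2+r^2/4)^{(d_1-2)/2}\sim1$, so your appeal to $r/2$ being small compared with $2|\tau|$ is not actually needed.
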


\begin{proof}
We compute that 
\[
\begin{pmatrix}
\tilde\Theta_{(\underline x,s),(\underline x',s')} & \tilde\Theta_{(\underline x,s),(\tau,\tau',y)}\\
\tilde\Theta_{(\tau,\tau',y),(\underline x',s')} & \tilde\Theta_{(\tau,\tau',y),(\tau,\tau',y)}\\
\end{pmatrix}
 = 
\begin{pmatrix}
0 & 0 & \tilde\Theta_{\underline x\tau} & 0 & \tilde\Theta_{\underline xy} \\
0 & 0 & \tilde\Theta_{s\tau} & 0 & 0 \\
0 & 0 & \tilde\Theta_{\tau\tau} & 0 & \tilde\Theta_{\tau y} \\
\tilde\Theta_{\tau'\underline x'} & \tilde\Theta_{\tau's'} & 0 & \tilde\Theta_{\tau'\tau'} & \tilde\Theta_{\tau' y} \\
\tilde\Theta_{y\underline x'} & 0 & \tilde\Theta_{y\tau} & \tilde\Theta_{y\tau'} & \tilde\Theta_{yy} 
\end{pmatrix}.
\]
Recall that $r=2^{1-\ell}$ and $\varphi_\ell'(\tau)=-1/(\tau^2+r^2)$.
Moreover,
\[
\tilde\Theta_{s\tau}
=-\frac{\varphi_\ell'(\tau)}
{\big(\pi+\delta\varphi_\ell(\tau)\big)^2},
\]
so that 
$|\tilde\Theta_{s\tau}|\sim 1,$ and similarly $|\tilde\Theta_{\tau's'}|\sim 1.$
Using the $s'$-column and then the $s$-row for cofactor expansions, we thus see that the modulus of the determinant of the matrix above equals $|\tilde\Theta_{s\tau}| \, |\tilde\Theta_{\tau's'}|\sim 1$ times the modulus of
\[
\begin{vmatrix}
0 & 0 & \tilde\Theta_{\underline xy} \\
0 & 0 & \tilde\Theta_{\tau y} \\
\tilde\Theta_{y\underline x'}& \tilde\Theta_{y\tau'} & \tilde\Theta_{yy} 
\end{vmatrix}.
\]
The modulus of the above determinant equals $|\det( \tilde\Theta_{y,(\underline x',\tau')} )| \, |\det( \tilde\Theta_{(\underline x,\tau),y} )|$. Due to symmetry, it suffices to show that the latter determinant is comparable to 1. 
\smallskip

We write $\underline x = (x_2,\underline{\underline x})$ and $y = (y_1,y_2,\underline{\underline y}),$
and present  $J$  as a block matrix with respect to the coordinate blocks $(y_1,y_2,\underline{\underline y})$ in the form 
\[
J=\begin{pmatrix}
0 &  -1& 0 \\
1 & 0 &0\\
0 & 0& \underline{\underline J} \\
\end{pmatrix},
\]
where $\underline{\underline J}=J_{n-1}.$ 
It will be convenient to rather consider the matrix $E:=\tilde\Theta_{(\tau,\underline x),y}$ in place of $\tilde\Theta_{(\underline x,\tau),y}$, which, up to sign, has the same determinant. 
Then
 \[
E = 
\begin{pmatrix}
2(x_1-y_1) & 2(x_2-y_2) & 2(\underline{\underline x}-\underline{\underline y})\\ 
-\tfrac r2 & 2\tau &0\\
0  & 0& 2\tau I_{d_1-2} - \tfrac r2\underline{\underline J} \\
\end{pmatrix}.
\]
Since $|x_1-y_1|\sim 1$ and $|\underline x-\underline y|\lesssim \ve$, assuming that $\ve$ is sufficiently small we see that
\[
|\det E|\sim \begin{vmatrix}
 2\tau &0\\
0& 2\tau I_{d_1-2} - \tfrac r2\underline{\underline J} \\
\end{vmatrix}.
\]
Here $|\tau|\sim 1,$ and since $\underline{\underline J}$ is skew-symmetric, we see that
\[
|\det E|
\sim |2\tau|\, |\det (2\tau I_{d_1-2} - \tfrac r2\underline{\underline J})|\sim 1\cdot 1.
\]
Hence $|\det \tilde\Theta_{(\underline x,\tau),y} |\sim 1$.
Consequently, the Monge--Ampère matrix of $\tilde\Theta$ also has determinant comparable to $1$.
\end{proof}

Since the oscillatory integral operator $\tilde S_{\lambda,\xi,\mu,x_1,x'_1}^{k,\ell}$ acts on functions of $d_1$ space coordinates and is defined by means of $d_1+2$ frequency variables, Lemma~\ref{eq:MA-kl} implies that for a sufficiently large $N,$ 
\[
 \|\tilde S_{\lambda,\xi,\mu,x_1,x'_1}^{k,\ell}\|_{2\to 2}\lesssim \la^{-\frac 12\big(d_1+(d_1+2)\big)}\, \|\breve H_{\la,\xi}\|_{C^N}=\la^{-d}\, \\|\breve H_{\la,\xi}\|_{C^N},
\] 
uniformly for $k\ge 1$, $\ell\ge 2$, and $|\mu|\sim\la$ (see \cite{Hoermander-Acta1971} and \cite{GreenleafSeeger-Escorial}). 

In combination with \eqref{redtobreve}, this implies estimate \eqref{Smuest}. The proof of Proposition~\ref{lem:kl-L2} is thus complete.

\section{Estimates  for the horizontal parts given by  $\ell=1$}\label{CaseII}

In this case, we have $\de=\de(k,1)=k^{-1}$. Ignoring a factor $2^{-n},$   \eqref{eq:kernel-breve} shows that  in the renamed coordinates we have 
\[
\breve K_{\lambda,s}^{k,1}(x,u) 
= \lambda^{n + \frac 32}\,  \int_0^\infty \int_\R \, e^{i \lambda \sigma \breve \Phi_0} \,\tilde a_{\la,\de}(\sigma,s) \, \eta_1(\tau)   \sin (\tau)^{-n} \, d  \tau \, d\sigma,
\]
with phase 
\[
\breve \Phi_0 = \delta^{-1}-1+s -\left(k\pi + \tau\right)\Big(\delta |x|^2 \cot(\tau) - 
\frac {u}{k} -4\delta^{-1} u_c(\tau_{k,1})\Big).
\]
Here, $s$ is assumed to be in $|1,1+\ve].$ Recall that
\[
\supp\eta_1
\subset
\big[{-\tfrac58\pi},{-\tfrac3{16}\pi}\big]
\cup
\big[\tfrac3{16}\pi,\tfrac58\pi\big].
\]
Hence, the factor $(\sin \tau)^{-n}$ is harmless and can be hidden in the amplitude function $\eta_1$ by slightly modifying this function. Note that $\pm \tfrac \pi 2\in \pm [\frac 3{16}\pi,\frac 58\pi\big]$, and recall that the parameters $\tau=\pm \frac \pi 2$ (i.e., $k\pi\pm \frac12\pi$ in the original expression for $\smash{K_{\lambda,s}^{k,1}}$) correspond to horizontal points. Thus, in the sense of Subsection \ref{nh+h}, the contribution of each of the subintervals $\pm [\frac 3{16}\pi,\frac 58\pi\big]$ to $\smash{\breve K_{\lambda,s}^{k,1}}$ defines a ``horizontal part''.

\begin{remark}\label{hpoints}
Even though this seems to suggest that for any $k$ there are two different horizontal points associated with the $k$th ``zigzag'' of $\Sigma_s,$ there is in fact only one such horizontal point. Recall to this end  that  the cut-offs $\eta_0(\cdot -k)$ and $\eta_0(\cdot -(k+1))$ overlap, and note that $k\pi+\frac \pi 2= (k+1)\pi -\frac \pi 2.$
\end{remark}

We next decompose $\eta_1=\eta_{+}+\eta_{-},$ where the smooth bump functions $\eta_{\pm}$  are such that $\supp \eta_{\pm} \subset \pm [\frac 3{16}\pi, \frac 58\pi]$ and decompose accordingly
\[
\breve K_{\lambda,s}^{k,1}=\breve K_{\lambda,s}^{k,+}+\breve K_{\lambda,s}^{k,-},
\]
where 
\[
\breve K_{\lambda,s}^{k,\pm}(x,u)
 = \lambda^{n + \frac 32}\,  \int_0^\infty \int_\R \, e^{i \lambda \sigma \breve \Phi_0} \,\tilde a_{\la,\de}(\sigma,s) \, \eta_{\pm}(\tau) \, d  \tau \, d\sigma.
\]
The corresponding contributions  to $\breve T^{k,1,\varepsilon}_\lambda$ are denoted  by $\breve T^{k,\pm,\varepsilon}_\lambda$.

Changing  coordinates via $\tau=\pm \tfrac\pi2+\tilde\tau$, so that $\cot(\tau)=-\tan(\tilde \tau)$, and setting $\tilde \eta_{\pm}(\tilde \tau):=\eta_{\pm}(\pm\tfrac\pi 2+\tilde\tau)$, we may rewrite
\begin{equation}\label{eq:kernel-kpm}
\breve K_{\lambda,s}^{k,\pm}(x,u) = \lambda^{n + \frac 32}\,  \int_0^\infty \int_\R \, e^{i \lambda \sigma \tilde \Phi^\pm} \,\tilde a_{\la,\de}(\sigma,s) \, \tilde\eta_{\pm}(\tilde \tau)\, d  \tilde\tau \, d\sigma,
\end{equation}
with phase 
\begin{equation}\label{eq:Phi-pm}
\tilde \Phi^\pm = \delta^{-1}-1+s +\left(k\pi \pm \tfrac12\pi+\tilde\tau\right) \Big(\delta |x|^2 \tan(\tilde\tau)+\frac {u}{k} +4\delta^{-1} u_c(\tau_{k,1})\Big),
\end{equation}
where 
$$
\supp \tilde\eta_\pm\subset\pm \big[-\tfrac{5}{16}\pi,\tfrac{1}{8}\pi\big]\subset (-\tfrac{3}{8}\pi,\tfrac{3}{8}\pi)
$$
and $|\pm \tfrac12\pi+\tilde\tau|\le \tfrac 58 \pi$ on  the support.

\smallskip
Finally, changing coordinates from $\tilde \tau $ to  $\tau =\tan (\tilde\tau),$ and putting here 
$$\varphi_1(\tau):=\arctan{\tau},$$
we may write $\breve K_{\lambda,s}^{k,\pm}$ in a very similar way as in \eqref{eq:kernel-bar-5} as
\begin{equation}\label{eq:kernel-lis1}
\breve K_{\lambda,s}^{k,\pm}(x,u) = \lambda^{n + \frac 32}\,  \int_0^\infty \int_\R \, e^{i \lambda \sigma \breve \Phi^\pm} \,\tilde a_{\la,\de}(\sigma,s) \, \breve\eta_{\pm}(\tau)\, d  \tau \, d\sigma,
\end{equation}
with phase 
\begin{equation}\label{eq:Phi-breve1}
\breve \Phi^\pm := \delta^{-1}-1 + s + (\pi \pm \delta\tfrac{\pi} 2+\delta   \varphi_1(\tau)) \big( {\tau}|x|^2 +u
+ 4 k^2 u_c(\tau_{k,1})\big).
\end{equation}
Note that here 
\begin{equation}\label{etapm}
\supp \breve\eta_\pm\subset \tan \big([-\tfrac{3}{8}\pi,\tfrac{3}{8}\pi]\big),
\end{equation}
so that $|\varphi_1(\tau)|\le \tfrac 38 \pi <\frac\pi 2\le \pi \pm \delta\tfrac{\pi} 2$ on $ \supp \breve\eta_\pm$, and
\[
\varphi'_1(\tau)\sim 1 \quad\text{and}\quad
|\varphi^{(j)}_1(\tau)|\lesssim1 \text{ for } j\ge 2.
\]
To simplify the notation, let us put $\pi_\de:=\pi \pm \delta\tfrac{\pi} 2,$ so that 
\[
\breve K_{\lambda,s}^{k,\pm}(x,u)=\breve K_{\lambda,s}^k (x,u)
:= \lambda^{n + \frac 32}\,  \int_0^\infty \int_\R \, e^{i \lambda \sigma \breve \Phi} \,\tilde a_{\la,\de}(\sigma,s) \, \breve\eta_0(\tau)\, d  \tau \, d\sigma,
\]
with phase  
\[
\breve \Phi := \delta^{-1}-1 + s + (\pi_\delta+\delta  \varphi_1(\tau)) \big( {\tau}|x|^2 +u
+ 4 k^2 u_c(\tau_{k,1})\big)
\]
and $\breve\eta_0:=\breve\eta_\pm$. The  form of the  oscillatory integral  $\breve K_{\lambda,s}^k $ and its phase are  thus very similar to \eqref{eq:kernel-bar-5} and \eqref{eq:Phi-breve}, except that here the amplitude is no longer  supported where $|\tau| \sim 1$, but  satisfies \eqref{etapm} and thus includes $\tau=0$. However, the Jacobian factor arising from the change of variables $\tau=\tan(\tilde\tau)$ is comparable to 1 on the support of $\breve\eta_{0}$.

Even though $\breve K_{\lambda,s}^{k,1}$ is in fact the sum of the two terms corresponding to the signs $\pm,$ with  a slight abuse of notation we shall from here on simply assume that $\breve K_{\lambda,s}^{k,1}=\breve K_{\lambda,s}^k$, with the understanding that the following arguments are applied separately to each fixed sign. Note also that for $\ell=1$ we  are still working on the Heisenberg group $\mathbb H_n,$ since  here $2^{1-\ell}=1,$ so that we did not  perform an isotropic scaling but only a parabolic, hence automorphic, scaling by $1/k$.

\medskip

We next look at the analogue of the $TT^*$ argument from Subsection \ref{TT*}:

\begin{lemma}\label{lem:k1-TTstar} 
Let $\delta:=k^{-1}.$ The integral kernel $\breve Q^{k,1,\varepsilon}_{\lambda}$ of $\breve T^{k,1,\varepsilon}_\lambda(\breve T^{k,1,\varepsilon}_\lambda)^*$ is given by
\begin{multline}\label{eq:Qk1}
\breve Q^{k,1,\varepsilon}_{\lambda}(x,u,s,x',u',s')
 = \lambda^{d+1}\,\chi_\varepsilon(x-x^0,u-u^0)\,\chi_\varepsilon(x'-x^0,u'-u^0) \\ \times \int_{\eta\sim 1} \int_{|\tau'|\lesssim 1}\int_{|\tau|\lesssim 1} \int_{\R^{d_1}} e^{i\lambda \eta \Psi} a(\eta,\tau,\tau',y,s,x',u',s')\,dy \,d\tau\, d\tau'\, d\eta, 
\end{multline}
with phase function
\begin{multline}
\Psi = \frac{\delta^{-1}-1 + s}{\pi_\delta + \delta \varphi_1(\tau)}
 - \frac{\delta^{-1}-1 + s'}{\pi_\delta+ \delta \varphi_1(\tau')} 
 +\tau | x-y|^2 - \tau'| x'-y|^2  \\
 +( u - u' + \tfrac 1 2 \langle J(x-x'),y\rangle) \label{eq:Psi-k1}
\end{multline}
and amplitude
\begin{multline}\label{eq:k1-TTstar-amplitude}
a(\eta,\tau,\tau',y,s,x',u',s')
=a_{1,\la,\de}(\eta,\tau,s)\,\chi_\varepsilon(y)\,\breve\eta_0(\tau)\,\breve\eta_0(\tau') \\
\times \frac{h_\lambda(\eta,\tau,\tau',s',x',u',y)}{\big(\pi_\delta+\delta\varphi_1(\tau)\big) \big(\pi_\delta+\delta\varphi_1(\tau')\big)},
\end{multline}
where
\begin{equation}\label{eq:k1-TTstar-a}
a_{1,\la,\de}(\eta,\tau,s)
= a_\la\Big(\big(1+\delta(s-1)\big)
\frac{\eta}{\pi_\delta+\delta\varphi_1(\tau)}\Big)
\end{equation}
and
\begin{equation}\label{eq:k1-hlambda}
h_\lambda(\eta,\tau,\tau',s',x',u',y)
=\int_{\R}
e^{i\lambda\xi\Psi_2}
\lambda\,\widehat{\chi_\varepsilon}(\lambda\xi)
\,\overline{a_{1,\la,\de}(\eta-\xi,\tau',s')}
\,d\xi,
\end{equation}
with
\begin{equation}\label{eq:k1-Psi2}
\Psi_2
= \frac{\delta^{-1}-1+s'}{\pi_\delta+\delta\varphi_1(\tau')}
+\tau'|x'-y|^2+4k^2u_c(\tau_{k,1})+u'
+\tfrac12\langle Jx',y\rangle.
\end{equation}
The amplitudes $a$ and $h_\lambda$, together with all their partial derivatives, are uniformly bounded on the support of the integral, with bounds depending only on finitely many derivatives of $a_\lambda$.
\end{lemma}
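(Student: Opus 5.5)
The computation follows the proof of Lemma~\ref{lem:kl-TTstar} verbatim, with $r=2^{1-\ell}$ replaced by $r=1$ (so we stay on $\mathbb H_n$ with the original convolution), $\pi+\delta\varphi_\ell$ replaced by $\pi_\delta+\delta\varphi_1$, and the sign of the quadratic terms reversed, since $\breve\Phi$ now contains $+(\pi_\delta+\delta\varphi_1(\tau))(\tau|x|^2+u+4k^2u_c(\tau_{k,1}))$. We first write the kernel of $\breve T^{k,1,\varepsilon}_\lambda(\breve T^{k,1,\varepsilon}_\lambda)^*$ as
\[
\tilde\chi_\ve(x,u)\tilde\chi_\ve(x',u')\int\!\!\int \breve K^{k}_{\lambda,s}\big(x-y,u-v+\tfrac12\langle Jx,y\rangle\big)\,\overline{\breve K^{k}_{\lambda,s'}\big(x'-y,u'-v+\tfrac12\langle Jx',y\rangle\big)}\,\chi_\ve(y)\chi_\ve(v)\,dv\,dy .
\]
As in Lemma~\ref{lem:0-TTstar}, we assume the cutoff in $w=(y,v)$ factors. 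Inserting \eqref{eq:kernel-lis1} for both kernels gives a $\lambda^{d+2}$-normalized oscillatory integral in $(\sigma,\sigma',\tau,\tau',y,v)$ with phase $\sigma\breve\Phi(\tau,\dots,s)-\sigma'\breve\Phi(\tau',\dots,s')$.

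Next we substitute $\eta=\sigma(\pi_\delta+\delta\varphi_1(\tau))$ and $\eta'=\sigma'(\pi_\delta+\delta\varphi_1(\tau'))$. Since $|\varphi_1|\le\frac38\pi<\frac\pi2\le\pi_\delta$ on the support, we have $\eta,\eta'\sim1$, and the Jacobians $(\pi_\delta+\delta\varphi_1)^{-1}$ are smooth and bounded; they go into the amplitude, producing the denominator in \eqref{eq:k1-TTstar-amplitude}. The amplitudes $\tilde a_{\la,\de}$ become $a_{1,\la,\de}(\eta,\tau,s)$ and $a_{1,\la,\de}(\eta',\tau',s')$. The phase is then
\[
\eta\Big(\tfrac{\delta^{-1}-1+s}{\pi_\delta+\delta\varphi_1(\tau)}+\tau|x-y|^2+4k^2u_c+u-v+\tfrac12\langle Jx,y\rangle\Big)-\eta'\big(\cdots\big)',
\]
and the $v$-dependence reduces to $(\eta'-\eta)v$.

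Integrating in $v$ produces $\widehat{\chi_\ve}(\lambda(\eta-\eta'))$. We then change variables $\eta'=\eta-\xi$. The phase splits as $\eta\Psi+\xi\Psi_2$: the $\eta$-coefficient collects the differences in \eqref{eq:Psi-k1}, where the constant $4k^2u_c$ cancels, and the $\xi$-coefficient is exactly the primed bracket \eqref{eq:k1-Psi2}. The $\xi$-integral then defines $h_\lambda$ as in \eqref{eq:k1-hlambda}, and one factor $\lambda$ is absorbed into $\lambda\widehat{\chi_\ve}(\lambda\xi)$, which leaves $\lambda^{d+1}$.

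For the uniform bounds, the factor $\lambda\widehat{\chi_\ve}(\lambda\xi)$ localizes to $|\xi|\lesssim\lambda^{-1}$ with rapid decay, and $\lambda\xi\Psi_2$ is differentiated only in bounded variables. The one possibly large piece is the term $\delta^{-1}/(\pi_\delta+\delta\varphi_1(\tau'))$. Its derivatives in $\tau'$ equal $\delta^{-1}\cdot\delta\cdot O(1)=O(1)$, uniformly in $\delta=k^{-1}$, so every derivative of $\lambda\xi\Psi_2$ in $(\eta,\tau,\tau',y,x')$ is $O(\lambda|\xi|)$, which the decay of $\widehat{\chi_\ve}$ absorbs. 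The $s'$-derivatives are likewise $O(1)$. This is the only point that needs care; everything else is the same bookkeeping as in Lemma~\ref{lem:kl-TTstar}, now with $|\tau|,|\tau'|\lesssim1$ in place of $\sim1$.
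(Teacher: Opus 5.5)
Your proposal is correct and takes the route the paper intends. The paper omits this proof, saying it follows the proof of Lemma~\ref{lem:kl-TTstar} almost verbatim, and your bookkeeping carries that out with the right sign changes. That includes the substitution $\eta=\sigma(\pi_\delta+\delta\varphi_1(\tau))$, the $v$-integration giving $\widehat{\chi_\ve}(\lambda(\eta-\eta'))$, the split $\eta\Psi+\xi\Psi_2$, and the uniformity argument via $\partial_{\tau'}^j[\delta^{-1}/(\pi_\delta+\delta\varphi_1(\tau'))]=O(1)$. The only slip is that $\delta^{-1}/(\pi_\delta+\delta\varphi_1(\tau'))$ is not the only large piece of $\Psi_2$: it also contains the constant $4k^2u_c(\tau_{k,1})$, of size about $k$. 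That constant is independent of all variables, so it is harmless for the derivative bounds.
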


We shall skip the proof, since it is obtained by following almost verbatim the proof of Lemma~\ref{lem:kl-TTstar}, up to the indicated minor modifications.

\medskip

Next, in contrast to the case $\ell\ge2$, we shall here freeze the variable $x_2$. Thus, given $x,x'\in\R^{d_1}$, we set
\[
\underline x:=(x_1,x_3,\dots,x_{d_1})
\]
and define, for fixed $x_2\in\R$, the operator
$\breve T_{\lambda,x_2}^{k,1,\varepsilon}$ by
\[
(\breve T_{\lambda,x_2}^{k,1,\varepsilon}\varphi)(\underline x,u,s) = (\breve T^{k,1,\varepsilon}_\lambda \varphi)(x,u,s).
\]
If we write $f(x',u',s' ) = f_{x_2'}(\underline x',u',s')$, then
\[
\big(\breve T_\lambda^{k,1,\varepsilon} (\breve T_\lambda^{k,1,\varepsilon})^* f\big)(x, u,s)
= \int_\R \, \big( \breve T_{\lambda,x_2}^{k,1,\varepsilon} (\breve T_{\lambda,x_2'}^{k,1,\varepsilon} )^* \, f_{x_2'}\big) (\underline x,u,s) \, dx_2'.
\]
Note that the integral kernel $\breve Q_{\lambda,x_2,x_2'}^{k,1,\ve}$ of $\breve T_{\lambda,x_2}^{k,1,\varepsilon} (\breve T_{\lambda,x_2'}^{k,1,\varepsilon} )^*$ is given by $\breve Q_\lambda^{k,1,\ve}(x_2,\cdot,x_2',\cdot)$.

\smallskip

In analogy with Propositions \ref{lem:kl-L1} and \ref{lem:kl-L2}, we shall here prove the following propositions:
\begin{proposition}\label{lem:k1-L1}
We have
\[
\|\breve T_{\lambda,x_2}^{k,1,\varepsilon} (\breve T_{\lambda,x_2'}^{k,1,\varepsilon} )^* \|_{L_{(\underline x,u)}^{1}(L^2_s)\to L_{(\underline x,u)}^\infty(L^2_s)}
\lesssim \lambda^{d-1} \, \langle\lambda\left|x_2'-x_2\right|\rangle^{-\frac{d_1-1}{2}} .
\]
\end{proposition}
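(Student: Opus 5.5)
By Schur's test it suffices to establish the analogue of Proposition~\ref{lem:pointwiseQkl} for $\breve Q^{k,1,\varepsilon}_\lambda$. Concretely, the aim is to show
\[
\int_1^{1+\ve}|\breve Q^{k,1,\varepsilon}_\la|\,ds' + \int_1^{1+\ve}|\breve Q^{k,1,\varepsilon}_\la|\,ds \lesssim \la^{d-1}\langle\la|\Delta x|\rangle^{-\frac{d_1-1}2},
\]
uniformly in $k$ and in the remaining variables. Since $|\Delta x|\ge|x_2-x_2'|$, freezing $x_2,x_2'$ then gives the claim. The pointwise bound itself should come from the same decomposition as in Lemma~\ref{lem:pointwiseQkl-prec}: a main term dominated by
\[
\la^d\langle\la|\Delta x|\rangle^{-\frac{d_1-1}2}\int\langle\la(\Delta s-q)\rangle^{-N}\,d\tau
\]
with $\partial_s^jq=\mathcal O(\ve)$, plus error terms carrying $\langle\la|\Delta x|\rangle^{-N}$. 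Integrating $\langle\la(\Delta s-q)\rangle^{-N}$ in $s$ or $s'$ gains $\la^{-1}$, via the inverse function theorem exactly as in the proof of Proposition~\ref{lem:pointwiseQ0}.

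The steps are as follows. First freeze $\tau$, then substitute $y'=y-x=\rho\omega$ in \eqref{eq:Qk1}. The $\rho$-dependent part of the phase \eqref{eq:Psi-k1} becomes $(\tau-\tau')\rho^2+\rho\langle\theta,\omega\rangle$ with $\theta=-2\tau'\Delta x+\tfrac12 J\Delta x$. Because $J$ is skew-symmetric with $J^2=-I$, we get $|\theta|^2=(4\tau'^2+\tfrac14)|\Delta x|^2\sim|\Delta x|^2$, and this holds even at $\tau'=0$. This nondegeneracy is precisely what the term $\tfrac r2J\Delta x$ supplied for $\ell\ge2$, so the lack of a lower bound $|\tau|\sim1$ does no harm here. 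Next set $\tau'=\tau+\beta|\Delta x|$. This additive substitution replaces the multiplicative $\tau'/\tau=1+\beta|\Delta x|$ used before, which is no longer available when $\tau$ may vanish. The $s,s'$ terms then give
\[
\frac{s}{\pi_\de+\de\varphi_1(\tau)}-\frac{s'}{\pi_\de+\de\varphi_1(\tau')}
\]
plus an $\mathcal O(|\Delta x|)$ correction. After that:
\begin{itemize}
\item the region $|\beta|\gg1$ is an error term by integration by parts in $\rho$;
\item applying stationary phase in $\omega$ (or integration by parts when $|\underline{\Delta x}|\gg\ve|\Delta x_1|$) yields $\langle\la\Delta x\rangle^{-\frac{d_1-1}2}$;
\item the region $|\beta|\ll1$ is again an error term by integration by parts in $\rho$;
\item stationary phase in $\rho$ on $|\beta|\sim1$, where $\partial_\rho^2\sim\beta|\Delta x|$, gives a further factor $\langle\la\Delta x\rangle^{-1/2}$;
\item stationary phase in $\beta$ gives another factor $\langle\la\Delta x\rangle^{-1/2}$;
\item finally, integration by parts in $\eta$ produces $\langle\la(\Delta s-q)\rangle^{-N}$.
\end{itemize}
The prefactor $\la|\Delta x|$ from the Jacobian is absorbed by these last two half-power gains, as in the earlier cases.

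The main obstacle is the stationary phase in $\beta$. After eliminating $\rho$, the $\beta$-dependent phase is approximately
\[
|\Delta x|\Big[c\,\de\,\varphi_1'(\tau+\beta|\Delta x|)\beta-\frac{|\tilde\theta|^2}{4\beta|\Delta x|^2}\Big],
\]
using $\de\varphi_1'\sim\de$. Because $\varphi_1$ enters with the factor $\de=k^{-1}$, the linear term is only of size $\de$ while the $\beta^{-1}$ term is of size $1$. A critical point would therefore sit at $|\beta|\sim\de^{-1/2}$ rather than at $|\beta|\sim1$, and the earlier implicit function argument does not apply verbatim. My plan is to rescale $\beta=\de^{-1/2}\tilde\beta$, or equivalently to first localize the relevant size of $\beta$ dyadically. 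On the scale where the critical point lies, I will check that the Hessian is comparable to $|\Delta x|\de^{1/2}$ and that the resulting amplitude loss cancels against the changed Jacobian $\de^{-1/2}$. If $\de$ is so small that $\de^{-1/2}|\Delta x|\gtrsim\ve$, the cutoff $\chi_\ve(|\Delta x|\beta)$ removes that range. In that case the phase has no critical point, and repeated integration by parts in $\beta$ and $\rho$ makes this contribution an error term.

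For the resulting $q$ we have $\partial_s^jq=\mathcal O(\ve)$, since $q=\mathcal O(|\Delta x|+|\Delta w|)$. With this, the Schur argument concludes exactly as for Proposition~\ref{lem:kl-L1}.
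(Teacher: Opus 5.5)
Your overall scheme matches the paper's proof: Schur's test with $|\Delta x|\ge|x_2-x_2'|$, polar coordinates $y-x=\rho\omega$, an additive substitution linking $\tau$ and $\tau'$, stationary phase in $\omega$, $\rho$ and $\beta$, and integration by parts in $\eta$. However, the step you single out as the main obstacle rests on a miscalculation, and the remedy you propose would not work.

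When you split off the $s,s'$-dependent part, you treated the following term as a mere $\mathcal O(|\Delta x|)$ correction:
\[
(\de^{-1}-1)\Big[\frac{1}{\pi_\de+\de\varphi_1(\tau)}-\frac{1}{\pi_\de+\de\varphi_1(\tau')}\Big]
=(1-\de)\,\frac{\varphi_1(\tau')-\varphi_1(\tau)}{(\pi_\de+\de\varphi_1(\tau))(\pi_\de+\de\varphi_1(\tau'))}.
\]
As a size statement this is true. But the term depends on $\beta$, and its $\beta$-derivative is of order $|\Delta x|$ with coefficient $1-\de$, which is of size one. That is exactly the order of the $\beta$-derivative of $-|\tilde\theta|^2/(4\beta|\Delta x|)$. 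It is the analogue of the first summand of $q_\ell$ in the case $\ell\ge2$, and it cannot be dropped.

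Keeping it, the coefficient of $\de\varphi_1'$ is $\de^{-1}-1+s$ (or $s'$ in your parametrization), not $s$. Since $(\de^{-1}-1+s)\de=1-\de+\de s\in[1,1+\ve]$, the paper's computation gives $\partial_\beta\Psi_4=|\Delta x|\,H$ with
\[
H(\beta,1,\tau_0',0)=-\frac{\varphi_1'(\tau_0')}{(\pi_\de+\de\varphi_1(\tau_0'))^2}+\frac{\frac14+4(\tau_0')^2}{4\beta^2}.
\]
Both terms are of size one uniformly in $k$, because $\varphi_1'\sim1$ on $\supp\breve\eta_0$ and $\pi_\de+\de\varphi_1\sim1$. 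Hence there is a unique zero $\beta_c\sim1$ with $|\partial_\beta H|\sim1$. The implicit-function argument from the case $\ell\ge2$ then applies verbatim, giving the factor $\langle\la|\Delta x|\rangle^{-1/2}$. No rescaling of $\beta$ by $\de^{-1/2}$ is needed.

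The rescaling plan is also inconsistent with your own earlier steps. You already disposed of $|\beta|\gg1$ by integration by parts in $\rho$, where $|\partial_\rho\psi|\sim|\beta||\Delta x|$. So a critical point at $|\beta|\sim\de^{-1/2}$ would lie in a region you had declared an error term. With the correct phase there is no critical point there at all, since the order-one linear term dominates. Following your plan, you would do stationary phase where nothing is stationary. Meanwhile, on $|\beta|\sim1$, where the true main term $\la^d(\la|\Delta x|)\langle\la|\Delta x|\rangle^{-\frac{d_1+1}2}$ arises, your model would wrongly suggest non-stationarity and an error term.

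A minor point: for $\ell\ge2$ the lower bound $|\theta|\sim|\Delta x|$ comes from $|\tau'|\sim1$, since $\frac r2J\Delta x$ is small there. It is only for $\ell=1$ that $\frac12J\Delta x$ supplies this lower bound.
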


\begin{proposition}\label{lem:k1-L2}
We have
\[
\|\breve T_{\lambda,x_2}^{k,1,\varepsilon} (\breve T_{\lambda,x_2'}^{k,1,\varepsilon} )^*\|_{L^2(\R^{d})\to L^2(\R^{d})} \lesssim \delta^{-1}.
\]
\end{proposition}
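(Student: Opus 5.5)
I will follow the scheme of Subsection~\ref{eq:inter-L2}, adapted to the $\ell=1$ phase \eqref{eq:Psi-k1} with $x_2,x_2'$ frozen instead of $x_1,x_1'$.

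\emph{Removing $h_\lambda$ and passing to fixed central frequency.} First I will write $\Psi_2$ in \eqref{eq:k1-Psi2} as $\psi(s')+u'+\dot\Psi_2$ with $\psi(s')=(\delta^{-1}-1+s')/\pi_\delta$. Exactly as in \eqref{hladecomp}--\eqref{tildeH}, this gives $h_\lambda=\int e^{i\lambda\xi(\psi(s')+u')}H_{\lambda,\xi}\,d\xi$, where $\int\|\breve H_{\lambda,\xi}\|_{C^M}\,d\xi\lesssim1$ after rescaling $s'$ by $\delta^{-1}$. The unimodular factor $e^{i\lambda\xi(\psi(s')+u')}$ does not affect $L^2$ norms. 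It therefore suffices to prove the analogue of \eqref{estxix1} for fixed $\xi$. Next I take a Fourier transform in $u$ with $\mu=\lambda\eta\sim\lambda$ and use Plancherel. This reduces matters to bounding, uniformly in $\mu\sim\lambda$, the operators $S^{k,1}_{\lambda,\xi,\mu,x_2,x_2'}$ acting on functions of $(\underline x,s)$ with $\underline x=(x_1,x_3,\dots,x_{d_1})$. The phase is
\[
\Theta=\frac{\delta^{-1}-1+s}{\pi_\delta+\delta\varphi_1(\tau)}-\frac{\delta^{-1}-1+s'}{\pi_\delta+\delta\varphi_1(\tau')}+\tau|x-y|^2-\tau'|x'-y|^2+\tfrac12\langle J\Delta x,y\rangle,
\]
and the required bound is $\delta^{-1}\lambda^{-d}$.

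\emph{Scaling in $s$.} I then apply Lemma~\ref{scaleins} with $A=[1,1+\ve]$ and $\mathscr H=L^2(\R^{d_1-1})$, which costs the factor $\delta^{-1}$. After conjugating by $e^{\pm i\mu s/(\delta\pi_\delta)}$, the large linear term disappears and the phase becomes
\[
\tilde\Theta=\frac{(1-\delta+s')\varphi_1(\tau')}{\pi_\delta(\pi_\delta+\delta\varphi_1(\tau'))}-\frac{(1-\delta+s)\varphi_1(\tau)}{\pi_\delta(\pi_\delta+\delta\varphi_1(\tau))}+\tau|x-y|^2-\tau'|x'-y|^2+\tfrac12\langle J\Delta x,y\rangle.
\]
This phase is $O(1)$ with all derivatives, and the amplitudes are uniformly smooth. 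It then suffices to show that the Monge--Amp\`ere determinant of $\tilde\Theta$ in $(\underline x,s),(\underline x',s')$ and $(\tau,\tau',y)$ is $\sim1$. Given that, the Hörmander estimate \cite{Hoermander-Acta1971} with $d_1$ space and $d_1+2$ frequency variables yields the bound $\lambda^{-d}$.

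\emph{Monge--Amp\`ere determinant.} This is the main obstacle, because $\tau$ now ranges over a neighbourhood of $0$. As in Lemma~\ref{eq:MA-kl}, $|\tilde\Theta_{s\tau}|\sim|\varphi_1'(\tau)|\sim1$ and $|\tilde\Theta_{s'\tau'}|\sim1$. Cofactor expansion then reduces the determinant to $|\det\tilde\Theta_{(\tau,\underline x),y}|\,|\det\tilde\Theta_{y,(\tau',\underline x')}|$. I will compute $E=\tilde\Theta_{(\tau,\underline x),y}$ with $\underline x=(x_1,x_3,\dots)$:
\begin{itemize}
\item the $\tau$-row is $-2(x-y)$, dominated by its $y_1$-entry because $|x_1-y_1|\sim1$ and the remaining entries are $O(\ve)$;
\item the $x_1$-row is $-2\tau e_1+\tfrac12(Je_1)^{\intercal}$, i.e.\ it carries $-\tfrac12$ in the $y_2$-slot;
\item the rows for $x_3,\dots$ form $-2\tau I_{d_1-2}+\tfrac12\underline{\underline J}^{\intercal}$.
\end{itemize}
Expanding along the $y_1$ column, up to $O(\ve)$ errors, leaves a block-triangular matrix with $y_2$-entry $\pm\tfrac12$ and lower block $-2\tau I-\tfrac12\underline{\underline J}$. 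This lower block is invertible uniformly for all real $\tau$, since $\underline{\underline J}$ is skew, so that $\det=\pm(4\tau^2+\tfrac14)^{n-1}$. Hence $|\det E|\sim1$ even at $\tau=0$. This is exactly why $x_2$, rather than $x_1$, must be frozen (cf.\ Remark~\ref{breakdown}): freezing $x_1$ would leave a factor $\tau$, which degenerates at $\tau=0$, the horizontal points. The $y,(\tau',\underline x')$ block is handled symmetrically. Combining these steps gives Proposition~\ref{lem:k1-L2}.
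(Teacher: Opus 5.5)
Your proposal is correct and follows the paper's proof essentially step for step: the $\xi$-decomposition of $h_\lambda$, Plancherel in $u$ at fixed $\mu\sim\lambda$, the rescaling $s\mapsto\delta^{-1}s$ via Lemma~\ref{scaleins} together with the conjugation that removes the term $(s-s')/(\delta\pi_\delta)$, and H\"ormander's $L^2$ bound once the Monge--Amp\`ere determinant of $\tilde\Theta$ is shown to be $\sim 1$. Your determinant computation with $x_2$ frozen also matches the paper's: the $x_1$-row supplies a $\tau$-independent entry in the $y_2$-slot, and the block $-2\tau I-\tfrac12\underline{\underline J}$ is uniformly invertible, including at the horizontal parameter $\tau=0$. (The $y_2$-entry is actually $+\tfrac12$, since $Je_1=e_2$, but the sign is immaterial.)
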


The square function estimate in Proposition~\ref{prop:unit}  follows from these two propositions by the same arguments as in the proof of Proposition~\ref{L:T0lambda}. The proofs of  Propositions \ref{lem:k1-L1} and \ref{lem:k1-L2} will be given in Sections \ref{sec:pointwise-inter1} and \ref{eq:inter-L21}.

\subsection{Pointwise estimates for $k\ge 1$ and $\ell=1$}\label{sec:pointwise-inter1}

We follow again the strategy of the proofs of Propositions \ref{lem:0-L1} and \ref{lem:kl-L1} by establishing analogous pointwise estimates as in Proposition~\ref{lem:pointwiseQkl}. Since the following arguments are very similar to those of Sections \ref{pointwise0} and \ref{sec:pointwise-inter}, we keep it again a bit more sketchy and mainly focus on the main contributions of the oscillatory integral.

\subsubsection{Freezing $\tau'$} 

By Lemma~\ref{lem:k1-TTstar}, we have
\[
\breve Q^{k,1,\varepsilon}_{\lambda}
 =  \int_{|\tau'|\lesssim 1} \breve Q^{k,1,\varepsilon}_{\lambda,\tau'} \, d\tau' ,
\]
where, suppressing again the localizing factors $\chi_\varepsilon(x-x^0,u-u^0)\,\chi_\varepsilon(x'-x^0,u'-u^0)$,
\[
\breve Q^{k,1,\varepsilon}_{\lambda,\tau'}
 = \lambda^{d+1} \int_{\eta\sim 1} \int_{|\tau|\lesssim 1} \int_{\R^{d_1}} e^{i\lambda \eta \Psi} a(\eta,\tau,\tau',y,s,x',u',s')\,dy \, d\tau\, d\eta, 
\]
with phase function given by \eqref{eq:Psi-k1}, that is,
\begin{align}\label{eq:ell1-Psi}
\Psi = \frac{\delta^{-1}-1 + s}{\pi_\delta + \delta \varphi_1(\tau)}
 - \frac{\delta^{-1}-1 + s'}{\pi_\delta + \delta \varphi_1(\tau')} 
 & + \tau\,| x-y|^2 - \tau' | x'-y|^2  \\
 & + ( u - u' + \tfrac 1 2 \langle J(x-x'),y\rangle) , \notag
\end{align}
where $\delta=k^{-1}$ and $\varphi_1(\tau)=\arctan \tau$.
In the following, we assume that $\tau'$ is frozen.

\subsubsection{Changing variables and stationary phase for a spherical integration}

In the oscillatory integral above, we change variables $y= x +y'$ and write $y'=\rho\omega$, where $\rho\sim 1$ and $\omega\in S^{d_1-1}$. Then, using $| x -y|^2=|y'|^2=\rho^2$ and
\[
| x'-y|^2
= |\Delta x + y'|^2
= |\Delta x|^2 + \rho^2 + 2\rho\langle \Delta x,\omega\rangle,
\qquad \Delta x = x-x',
\]
the phase \eqref{eq:ell1-Psi} becomes
\begin{align}\label{eq:Psi1-k1-ell1}
\Psi_1
=
\frac{\delta^{-1}-1+s}{\pi_\delta+\delta\varphi_1(\tau)}
-\frac{\delta^{-1}-1+s'}{\pi_\delta+\delta\varphi_1(\tau')}
+(\tau-\tau')\rho^2
& -\tau' |\Delta x |^2 \\
& +\rho\langle\theta,\omega\rangle
+\Delta w, \notag
\end{align}
where $\theta = \tfrac12J\Delta x -2\tau' \Delta x$ and $\Delta w = u-u' + \tfrac 12\langle J\Delta x,x\rangle$.
Note that
\begin{equation}\label{eq:theta-size}
|\theta|^2=\left(\tfrac14+4\,(\tau')^2\right)|\Delta x |^2\sim |\Delta x |^2.
\end{equation}
If we absorb the Jacobian factor $\rho^{d_1-1}$ and all cutoffs into a new amplitude function $a_1$, we are reduced to an oscillatory integral of the form
\begin{equation}\label{eq:Qtauprime-prebeta}
\lambda^{d+1}\int_{\eta\sim1}\int_{|\tau|\lesssim 1}\int_{\rho\sim1}\int_{S^{d_1-1}}
e^{i\lambda\eta \Psi_1}\, a_1 \,d\omega\,d\rho\,d\tau \,d\eta.
\end{equation}
We now change coordinates from $\tau$ to $\beta$ by
\[
\tau = \tau' +\beta|\Delta x|.
\]
Note that $|\beta \Delta x|\lesssim 1$ on the support of the oscillatory integral above. Hence, we are reduced to an oscillatory integral of the form
\begin{equation}\label{eq:Qbeta-ell1}
\lambda^{d+1}\,|\Delta x|
\int_{\eta\sim1}\int_{\rho\sim1}\int_{S^{d_1-1}}\int_{\R}
e^{i\lambda\eta \Psi_2 }\,
  a_2\, \chi_0(\beta|\Delta x|)\,
d\beta\,d\omega\,d\rho\,d\eta,
\end{equation}
where $\chi_0$ is a smooth cutoff supported where $|\beta\Delta x|\lesssim 1$, and $\Psi_2 $ is given by
\begin{align}\label{eq:Psi2-k1-ell1}
\Psi_2 
 = \frac{\delta^{-1}-1+s}{\pi_\delta+\delta\varphi_1(\tau'+\beta|\Delta x|)}
 -\frac{\delta^{-1}-1+s'}{\pi_\delta+\delta\varphi_1(\tau')} 
 +\beta\,|\Delta x|\,\rho^2
 &- \tau' |\Delta x|^2 \\
 &+ \rho \langle  \theta , \omega\rangle
 + \Delta w. \notag
\end{align}
Note that the vector $\theta$ is independent of $\beta$. For the region where $|\beta|\gg 1$, we can integrate by parts in $\rho$, so this part of the integral is an error term. The main contribution is given by the region where $|\beta|\lesssim 1$. Thus, since also $|\Delta x|\lesssim \varepsilon$, the integral \eqref{eq:Qbeta-ell1} reduces to 
\begin{equation}\label{eq:Qbeta-ell1-main}
\lambda^{d+1}\,|\Delta x|
\int_{\eta\sim1}\int_{\rho\sim1}\int_{S^{d_1-1}}\int_{|\beta|\lesssim 1}
e^{i\lambda\eta \Psi_2 }\,
  \tilde a_2\,\chi_\varepsilon(\beta|\Delta x|)\,
d\beta\,d\omega\,d\rho\,d\eta.
\end{equation}
Next, we focus on the integration in $\omega$. Recall that $y=x+y'$ and $y\in \supp \chi_\varepsilon$ in our original coordinates. Thus, we have $|\rho\omega+x|\lesssim \varepsilon$, which localizes $\omega$ to an $O(\varepsilon)$ neighborhood of $-x^0/|x^0|$, whose first component dominates all others. Writing $\theta=(\theta_1,\underline \theta)$, for the region where $|\underline{\theta}|\gg \varepsilon\,|\theta_1(\tau')|$, we can argue via integration by parts, which only contributes to the error term. For $|\underline{\theta}|\lesssim \varepsilon\,|\theta_1(\tau')|$, we perform stationary phase in $\omega$ which reduces \eqref{eq:Psi2-k1-ell1} to an oscillatory integral of the form
\begin{equation}\label{eq:Qbeta-ell1-stat}
\lambda^{d+1}\,|\Delta x|
\int_{\eta\sim1}\int_{\rho\sim1}\int_{|\beta|\lesssim 1} \langle \lambda\eta\rho|\theta| \rangle^{-\frac{d_1-1}{2}}
e^{i\lambda\eta \Psi_3^{\pm}}\,
   a_3\, \chi_\varepsilon(\beta|\Delta x|)\,
d\beta\,d\rho\,d\eta,
\end{equation}
where the factor $\langle \lambda\eta\rho|\theta| \rangle^{-\frac{d_1-1}{2}}$ actually denotes a symbol of order $-\frac{d_1-1}{2}$, $a_3$ is a smooth amplitude satisfying the same symbol bounds as $\tilde a_2$, and
\begin{align}\label{eq:Psi3-k1-ell1}
\Psi_3^{\pm}
 = \frac{\delta^{-1}-1+s}{\pi_\delta+\delta\varphi_1(\tau'+\beta|\Delta x|)}
 -\frac{\delta^{-1}-1+s'}{\pi_\delta+\delta\varphi_1(\tau')} 
 +\beta\,|\Delta x|\,\rho^2
& - \tau' |\Delta x|^2 \\
& \pm \rho|\theta|
 + \Delta w, \notag
\end{align}
where the sign $\pm$ depends on the stationary point $\pm\theta/|\theta|$ which is contained in the $\mathcal O(\varepsilon)$ cap.

\subsubsection{Stationary phase for the integration in $\rho$}

Next, note that
\[
\psi^{\pm}=\beta\,|\Delta x|\,\rho^2 \pm |\theta|\,\rho
= \rho^2\,\psi_2 + \rho\,\psi_1^{\pm}
\]
collects all the terms in \eqref{eq:Psi3-k1-ell1} that depend on $\rho$. Here $|\psi_2|\sim |\beta\Delta x|$, and, by \eqref{eq:theta-size}, we have $|\psi_1^{\pm}|\sim |\theta|\sim |\Delta x|$. Thus, if $|\beta|\ll 1$, then, for $\rho\sim 1$, we have $|\partial_\rho\psi^{\pm}|\gtrsim |\Delta x|$, and we can integrate by parts in $\rho$, whence this region again only contributes to the error term. Hence \eqref{eq:Qbeta-ell1-stat} reduces to an integral of the form
\begin{equation}\label{eq:ell1-rhoSP-pre}
\lambda^{d}(\lambda|\Delta x|)\int_{\eta\sim 1}\int_{\rho\sim 1}\int_{|\beta|\sim 1}
\langle \lambda |\Delta x|\rangle^{-\frac{d_1-1}{2}}
e^{i\lambda\eta \Psi_3^{\pm}}\,
\tilde a_3\, d\beta\, d\rho\, d\eta,
\end{equation}
where $\tilde a_3$ satisfies the same symbol bounds as $a_3$. Next, consider again
\[
\partial_\rho\psi^{\pm} = 2\beta|\Delta x|\rho \pm |\theta|.
\]
Since $|\beta|\sim 1$, $\psi^{\pm}$ has a unique critical point $\rho_c\sim 1$ in $\rho$ if and only if $\mp\beta\sim 1$. In the following, we consider only the phase $\Psi_3^{-}$ (the case of $\Psi_3^{+}$ can be treated analogously). If $\beta\sim -1$, then $\psi^{-}$ has no critical point on $\rho\sim 1$ and we can again integrate by parts in $\rho$, showing that the corresponding contribution is an error term. For the main contribution where $\beta\sim 1$, the function $\psi^{-}$ does have a critical point for any such $\beta$, which is given by
\[
\rho_c=\frac{|\theta|}{2\beta|\Delta x|}\sim 1.
\]
Moreover, note that $|\partial_\rho^2\psi^{-}|=2\beta|\Delta x|\sim |\Delta x|$ for $\beta \sim 1$. Thus, applying stationary phase in $\rho$ reduces us to the oscillatory integral
\begin{equation}\label{eq:ell1-rhoSP}
\lambda^{d}(\lambda|\Delta x|)\int_{\eta\sim 1}\int_{\beta\sim 1}
\langle \lambda|\Delta x|\rangle^{-\frac{d_1}{2}}
e^{i\lambda\eta \Psi_4 }\,
a_4 \, d\beta\, d\eta,
\end{equation}
where $a_4$ is a smooth amplitude satisfying the same symbol bounds as $\tilde a_3$, and
\begin{multline}\label{eq:ell1-Psi4}
\Psi_4 
=
\frac{\delta^{-1}-1+s}{\pi_\delta+\delta\varphi_1(\tau'+\beta|\Delta x|)}
-\frac{\delta^{-1}-1+s'}{\pi_\delta+\delta\varphi_1(\tau')}
-\tau' |\Delta x|^2
-\frac{|\theta|^2}{4\beta|\Delta x|}
+\Delta w.
\end{multline}

\subsubsection{Stationary phase for the integration in $\beta$}
Note that
\[
\partial_\beta \Psi_4 = |\Delta x|\, H(\beta,s,\tau', |\Delta x|),
\]
where, recalling that $|\theta|^2=(\frac14+4\,(\tau')^2)|\Delta x |^2$ by \eqref{eq:theta-size},
\[
H = -(\delta^{-1}-1+s)\,
\frac{\delta \varphi_1'(\tau'+\beta|\Delta x|)}
{(\pi_\delta+\delta\varphi_1(\tau'+\beta|\Delta x|))^2}
+\frac{\frac14+4\,(\tau')^2}{4\beta^2}.
\]
Thus, for fixed $\tau_0'$ and $s=1$, $|\Delta x|=0$, we obtain
\[
H(\beta,1,\tau_0',0)
=
-\frac{\frac1{1+(\tau_0)^2}}{(\pi_\delta+\delta \arctan \tau_0')^2}
+\frac{\frac14+4\,(\tau_0')^2}{4\beta^2}.
\]
Note that this function has a unique zero $\beta_c(1,\tau_0',0)\sim 1$. On the other hand,
\[
\partial_\beta H  = -\frac{\frac14+4\,(\tau')^2}{2\beta^3}+\mathcal O(|\Delta x|).
\]
Thus, $|\partial_\beta H| \sim 1$ for $|\beta|\sim 1$, and the implicit function theorem implies that for $|s-1|$, $|\Delta x|$, and $|\tau'-\tau_0'|$ sufficiently small, there is a unique zero $\beta_c=\beta_c(s,\tau',|\Delta x|)$ for $H$. Note that we can assume that $\tau'$ lies in an interval $[\tau_0',\tau_0'+\varepsilon]$ by decomposing the $\tau'$ support into $\mathcal O(\varepsilon^{-1})$ many subintervals.

Since $\partial_\beta^2\Psi_4 (\beta)\sim |\Delta x|$ for $|\beta|\sim 1$, applying stationary phase in $\beta$, we see that \eqref{eq:ell1-rhoSP} reduces to an oscillatory integral of the form
\begin{equation}\label{eq:ell1-betaSP-post}
\lambda^{d}(\lambda|\Delta x|)\int_{\eta\sim 1}
\langle \lambda|\Delta x|\rangle^{-\frac{d_1+1}{2}}
e^{i\lambda\eta \Psi_5}\,
a_5\, d\eta,
\end{equation}
where $a_5$ is a smooth amplitude satisfying the same symbol bounds as $a_4$, and the new phase is given by $\Psi_5=\Psi_4 |_{\beta=\beta_c}$.

Finally, note that
\[
\frac{\delta^{-1}-1+s}{\pi_\delta+\delta\varphi_1(\tau'+\beta_c|\Delta x|)}
-\frac{\delta^{-1}-1+s'}{\pi_\delta+\delta\varphi_1(\tau')}
 = \frac{\Delta s}{\pi_\delta+\delta\varphi_1(\tau')} + \mathcal O(|\Delta x|).
\]
Thus, integration by parts in $\eta$ shows that we arrive at an analogous estimate to that in Proposition~\ref{lem:pointwiseQkl} for the main contribution of the oscillatory integral.

\subsection{Refined $L^2$ estimates for $k\ge 1$ and $\ell=1$}\label{eq:inter-L21}

In this section we  prove Proposition~\ref{lem:k1-L2}. We shall closely follow the arguments  from Section \ref{eq:inter-L2}.

\subsubsection{ Freezing $\xi$ in the defining integral for $h_\la$} 
Write again
\[
\Psi_2=\psi(s')+u'+\dot\Psi_2,
\qquad
\psi(s')=\frac{\delta^{-1}-1+s'}{\pi_\delta},
\]
where now
\[
\dot\Psi_2
= -\frac{1+\delta(s'-1)}{\pi_\delta+\delta\varphi_1(\tau')}
\frac{\varphi_1(\tau')}{\pi_\delta}
+\tau'|x'-y|^2+4k^2u_c(\tau_{k,1})
+\tfrac12\langle Jx',y\rangle.
\]
Writing the amplitude in \eqref{eq:k1-TTstar-amplitude} as
\[
a(\eta,\tau,\tau',y,s,x',u',s')=b(\eta,\tau,\tau',y,s)\,
 h_\lambda(\eta,\tau,\tau',s',x',u',y),
\]
where
\[
b(\eta,\tau,\tau',y,s):=a_{1,\la,\de}(\eta,\tau,s)\,\chi_\varepsilon(y)\,
\frac{\breve\eta_0(\tau)\,\breve\eta_0(\tau')}
{\big(\pi_\delta+\delta\varphi_1(\tau)\big)
 \big(\pi_\delta+\delta\varphi_1(\tau')\big)},
\]
we decompose
\[
h_\lambda(\eta,\tau,\tau',s',x',u',y) =\int_\R e^{i\la\xi(\psi(s')+u')}
H_{\la,\xi}(\eta,\tau,\tau',s',x',y)\,d\xi,
\]
where
\[
H_{\la,\xi}(\eta,\tau,\tau',s',x',y)
:=e^{i\la\xi\dot\Psi_2}\,
\lambda\,\widehat\chi_\varepsilon(\lambda\xi)\,
\overline{a_{1,\la,\de}(\eta-\xi,\tau',s')}.
\]
For frozen $\xi$ (and $\ve$), we define
\begin{multline}
\breve Q^{k,1}_{\lambda,\xi}(x,u,s,x',u',s')
:=\lambda^{d+1}\,\chi_\varepsilon(x-x^0,u-u^0)\,
\chi_\varepsilon(x'-x^0,u'-u^0)\\
\times\int_{\eta\sim1}\int_{|\tau'|\lesssim1}\int_{|\tau|\lesssim1}
\int_{\R^{d_1}}e^{i\lambda\eta\Psi}\,
 b(\eta,\tau,\tau',y,s)\\
\times H_{\la,\xi}(\eta,\tau,\tau',s',x',y)\,
 dy\,d\tau\,d\tau'\,d\eta. \label{eq:Qk1xi}
\end{multline}
Arguing as in the case $\ell\ge 2,$ in order to prove Proposition~\ref{lem:k1-L2} it will then suffice to bound the operator $S_{\lambda,\xi,x_2,x_2'}^{k,1}$ with integral kernel $\breve Q_{\lambda,\xi,x_2,x_2'}^{k,1}$ by
\begin{equation}\label{estxix11}
\|S_{\lambda,\xi,x_2,x_2'}^{k,1}\|_{L^2\to L^2}\lesssim \delta^{-1} \|\breve H_{\la,\xi}\|_{C^M}
\end{equation}
for some $M\in \N,$ uniformly in $\xi,\la,$ etc., where again
\[
\breve H_{\la,\xi}(\eta,\tau,\tau',s',x',y)
:=H_{\la,\xi}(\eta,\tau,\tau',\delta^{-1}s',x',y).
\]

\subsubsection{ Exploiting the partial Fourier transform in $u$}
Following the arguments in Section \ref{eq:inter-L2}, for fixed $\xi$ and $\mu$ of size $\mu\sim \la,$ we define $S_{\lambda,\xi,\mu,x_2,x'_2}^{k,1}$   to be the integral operator  given by
\[
S_{\lambda,\xi,\mu,x_2,x'_2}^{k,1} \, g(\underline x,s) \\
 = \int_{\R^{d_1-1}} \int_0^\infty q^{k,1}_{\lambda,\xi,\mu,x_2,x'_2}(\underline x,s,\underline x',s') \, g(\underline x',s') \, d\underline x' \, ds',
\]
with integral kernel
\begin{multline*}
q^{k,1}_{\lambda,\xi,\mu,x_2,x_2'}
(\underline x,s,\underline x',s')
=\chi_\varepsilon(x-x^0)\,\chi_\varepsilon(x'-x^0)\\
\times\int_{|\tau'|\lesssim 1}\int_{|\tau|\lesssim 1}\int_{\R^{d_1}}
 e^{i\mu\Theta}\,
 b(\tfrac\mu\la,\tau,\tau',y,s)\\
\times H_{\la,\xi}(\tfrac\mu\la,\tau,\tau',s',x',y)\,
 dy\,d\tau\,d\tau',
\end{multline*}
where the phase $\Theta$ is given by
\[
\Theta
 = \frac{\delta^{-1}-1 + s}{\pi_\delta + \delta \varphi_1(\tau)}
 - \frac{\delta^{-1}-1 + s'}{\pi_\delta + \delta \varphi_1(\tau')} +\tau| x-y|^2 -  \tau'| x'-y|^2 \notag \\
 + \tfrac 1 2 \langle J \Delta x,y\rangle. 
\]

\subsubsection{Scaling in $s$}

As before, let $\breve S_{\lambda,\xi,\mu,x_2,x_2'}^{k,1}$ denote the integral operator whose integral kernel is obtained from that of
$S_{\lambda,\xi,\mu,x_2,x_2'}^{k,1}$ by replacing $s$ and $s'$ with
$\delta^{-1}s$ and $\delta^{-1}s'$, respectively. Its phase can be written as
\begin{align*}
\breve\Theta
=\frac{s-s'}{\delta\pi_\delta}
+\frac{(1-\delta+s')\varphi_1(\tau')}
{\pi_\delta(\pi_\delta+\delta\varphi_1(\tau'))}
-\frac{(1-\delta+s)\varphi_1(\tau)}
{\pi_\delta(\pi_\delta+\delta\varphi_1(\tau))}
& +\tau|x-y|^2\\
& -\tau'|x'-y|^2 
+\tfrac12\langle J\Delta x,y\rangle.
\end{align*}
By replacing  this phase by the one in which the first term is removed, i.e., by
\begin{align*}
\tilde\Theta
=\frac{(1-\delta+s')\varphi_1(\tau')}
{\pi_\delta(\pi_\delta+\delta\varphi_1(\tau'))}
-\frac{(1-\delta+s)\varphi_1(\tau)}
{\pi_\delta(\pi_\delta+\delta\varphi_1(\tau))}
& + \tau|x-y|^2 \\ 
& - \tau'|x'-y|^2
+\tfrac12\langle J\Delta x,y\rangle,
\end{align*}
we obtain the related operator $\tilde S_{\lambda,\xi,\mu,x_2,x'_2}^{k,1}.$ 

By means of a similar conjugation trick  as in \eqref{Smutildeb} and the   rescaling trick based on Lemma~\ref{scaleins} in Subsection \ref{scaletrick}, we can finally reduce the proof of estimate \eqref{estxix11} to showing
\begin{equation}\label{Smuest1}
\|\tilde S_{\lambda,\xi,\mu,x_2,x'_2}^{k,1} \|_{2\to 2}  \lesssim \lambda^{-d}  \|\breve H_{\la,\xi}\|_{C^M},
\end{equation}
uniformly in $\xi$ and $\mu\sim \lambda$.

The scaled amplitude of both operators $\breve S_{\lambda,\xi,\mu,x_2,x_2'}^{k,1}$ and
$\tilde S_{\lambda,\xi,\mu,x_2,x_2'}^{k,1}$ is given by
\[
\breve b(\tfrac\mu\la,\tau,\tau',y,s)\,
\breve H_{\la,\xi}(\tfrac\mu\la,\tau,\tau',s',x',y),
\]
where $\breve b(\eta,\tau,\tau',y,s):=b(\eta,\tau,\tau',y,\delta^{-1}s)$. The amplitude is smooth, with uniform bounds on the derivatives of
$\breve b$, while $\breve H_{\la,\xi}$ satisfies the estimates in \eqref{tildeH}. The phase $\tilde\Theta$ is smooth and of order $\mathcal O(1)$, and the same is true of all its derivatives.

Thus, the proof of estimate \eqref{Smuest1} will be an immediate consequence of the following analogue of Lemma~\ref{eq:MA-kl}.

\begin{lemma}\label{eq:MA-k1}
If $\ve>0$ is chosen sufficiently small, then
\[
\left|
\,\det
\begin{pmatrix}
\tilde\Theta_{(\underline x,s),(\underline x',s')} & \tilde\Theta_{(\underline x,s),(\tau,\tau',y)}\\
\tilde\Theta_{(\tau,\tau',y),(\underline x',s')} & \tilde\Theta_{(\tau,\tau',y),(\tau,\tau',y)}\\
\end{pmatrix}
\right|
\sim 1,
\]
uniformly in $k\ge1$.
\end{lemma}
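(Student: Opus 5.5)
I will follow the proof of Lemma~\ref{eq:MA-kl} closely. The only change is that the frozen coordinate is now $x_2$, so $\underline x=(x_1,x_3,\dots,x_{d_1})$. First I write out the Monge--Ampère matrix for the variables $(\underline x,s)$, $(\underline x',s')$ and the frequency variables $(\tau,\tau',y)$. Since $\tilde\Theta$ has no mixed terms in $(s,s')$, $(s,\tau')$, $(s',\tau)$, $(\tau,\tau')$, $(\underline x,\tau')$, $(\underline x',\tau)$ or $(\underline x,\underline x')$, the matrix has the same sparsity pattern as before. The entries that link time and frequency are
\[
\tilde\Theta_{s\tau}=-\frac{\varphi_1'(\tau)}{(\pi_\delta+\delta\varphi_1(\tau))^2},\qquad
\tilde\Theta_{s'\tau'}=\frac{\varphi_1'(\tau')}{(\pi_\delta+\delta\varphi_1(\tau'))^2}.
\]
Since $\varphi_1'(\tau)=(1+\tau^2)^{-1}\sim1$ on $\supp\breve\eta_0$ by \eqref{etapm}, and $\pi_\delta+\delta\varphi_1\sim1$, both entries have modulus $\sim1$ uniformly in $k$. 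Expanding along the $s'$-column and the $s$-row removes the $(s,\tau)$ and $(s',\tau')$ pairs. What remains is $|\det\tilde\Theta_{y,(\underline x',\tau')}|\,|\det\tilde\Theta_{(\underline x,\tau),y}|$. By symmetry it is enough to show $|\det E|\sim1$ for $E=\tilde\Theta_{(\tau,\underline x),y}$.

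Next I compute $E$. Here $\tilde\Theta_{\tau y}=-2(x-y)$, which is nonzero only through $|x_1-y_1|\sim1$, since $|\underline{x}-\underline y|$ and $|x_2-y_2|$ are $\mathcal O(\ve)$. The $\underline x$-rows come from $\tau|x-y|^2+\frac12\langle J\Delta x,y\rangle$, so $\tilde\Theta_{x_jy}=-2\tau e_j+\frac12(Je_j)^{\intercal}$. In the block form of $J$ with respect to $(y_1,y_2,\underline{\underline y})$ used in the proof of Lemma~\ref{eq:MA-kl}, we get the following. The $x_1$-row is $(-2\tau,\ \frac12,\ 0)$. The $\underline{\underline x}$-rows are $(0,\ 0,\ -2\tau I_{d_1-2}+\frac12\underline{\underline J}^{\intercal})$, up to sign conventions.

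The $\tau$-row is $(-2(x_1-y_1),\mathcal O(\ve),\mathcal O(\ve))$ with $|x_1-y_1|\sim1$. I expand along it using the $y_1$-column, treating the $\mathcal O(\ve)$ entries as perturbations once $\ve$ is small. Deleting the $y_1$-column and the $\tau$-row leaves a block-triangular matrix with diagonal entries $\frac12$ (the $(x_1,y_2)$ entry) and $-2\tau I+\frac12\underline{\underline J}^{\intercal}$. The skew-symmetry of $\underline{\underline J}$ gives
\[
|\det(-2\tau I+\tfrac12\underline{\underline J}^{\intercal})|=\prod(4\tau^2+\tfrac14)^{1/2}\ge 4^{-(n-1)}\sim1,
\]
uniformly in $\tau$. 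This bound holds even at $\tau=0$, which is exactly the horizontal point where $g(\tau')=0$ broke the $x_1$-freezing in Remark~\ref{breakdown}. Hence $|\det E|\sim|x_1-y_1|\cdot\frac12\cdot1\sim1$.

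The main point is therefore that the $x_2$-freezing makes the lower-right block invertible because of the $\frac12 J$ twist, not because of $\tau$. Under $x_1$-freezing one would instead face $2\tau$, which vanishes at $\tau=0$. The step I expect to need the most care is checking that the $\mathcal O(\ve)$ entries in the $\tau$-row, which pair with the $y_2$ and $\underline{\underline y}$ columns, do not spoil the expansion. They only add $\mathcal O(\ve)$ to a determinant of size $\sim1$, so a small enough $\ve$, independent of $k$, suffices. All the constants involved ($\varphi_1'$, $\pi_\delta$, $\delta\le1$) are uniform in $k\ge1$, which gives the uniformity claimed in the statement.
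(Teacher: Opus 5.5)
Your proposal is correct and follows essentially the same route as the paper. Both arguments use the same cofactor expansion along the $s'$-column and $s$-row via $|\tilde\Theta_{s\tau}|,|\tilde\Theta_{\tau's'}|\sim1$, the same symmetric reduction to $E=\tilde\Theta_{(\tau,\underline x),y}$, and the same use of the $\tfrac12$ entry from the $J$-twist in the $x_1$-row together with skew-symmetry of $\underline{\underline J}$, which works uniformly in $\tau$, including $\tau=0$. One small simplification: $E$ is exactly block upper triangular (the $\underline{\underline x}$-rows vanish in the $y_1,y_2$ columns), so no perturbation argument is needed, since $|\det E|=|x_1-y_1+4\tau(x_2-y_2)|\,|\det(-2\tau I-\tfrac12\underline{\underline J})|$.
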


\begin{proof}
The matrix above equals
\[
\begin{pmatrix}
0 & 0 &\tilde\Theta_{\underline x\tau}&0&
   \tilde\Theta_{\underline x y}\\
0 &  0&\tilde\Theta_{s\tau}&0&0\\
0 & 0 & \tilde\Theta_{\tau\tau}&0&
   \tilde\Theta_{\tau y}\\
\tilde\Theta_{\tau'\underline x'}&
   \tilde\Theta_{\tau's'}& 0&
   \tilde\Theta_{\tau'\tau'}&
   \tilde\Theta_{\tau' y}\\
\tilde\Theta_{y\underline x'} & 0&
   \tilde\Theta_{y\tau}&
   \tilde\Theta_{y\tau'}&
   \tilde\Theta_{yy}
\end{pmatrix}.
\]
Note that $\varphi_1'(\tau)=1/(1+\tau^2)\sim1$ on the support of $\breve\eta_0$. We have
\[
\tilde\Theta_{s\tau}
=-\frac{\varphi_1'(\tau)}{(\pi_\delta+\delta\varphi_1(\tau))^2},
\qquad
\tilde\Theta_{\tau's'}
=\frac{\varphi_1'(\tau')}{(\pi_\delta+\delta\varphi_1(\tau'))^2},
\]
and therefore $|\tilde\Theta_{s\tau}|\sim1$ and
$|\tilde\Theta_{\tau's'}|\sim1$.
By cofactor expansion, as in the first part of the proof of
Lemma~\ref{eq:MA-kl}, we are reduced to considering the matrix 
\[
\begin{pmatrix} 0&0&\tilde\Theta_{\underline x y}\\
0&0&\tilde\Theta_{\tau y}\\
\tilde\Theta_{y\underline x'}& \tilde\Theta_{y\tau'}& \tilde\Theta_{yy}
\end{pmatrix}.
\]
By symmetry, we are thus reduced to showing that
\[
|\det(\tilde\Theta_{(\underline x,\tau),y})|\sim1.
\]
To this end, it will be convenient to flip  the first two coordinates of $x$ and write $x=(x_2,x_1,\underline{\underline x})$ and $\underline x = (x_1,\underline{\underline x}),$ and accordingly $y = (y_2,y_1,\underline{\underline y})$, and to write  $J$  as a block matrix with respect to the coordinate blocks $(y_2,y_1,\underline{\underline y})$ in the form 
\[
J=\begin{pmatrix}
0 &  1& 0 \\
-1 & 0 &0\\
0 & 0& \underline{\underline J} \\
\end{pmatrix},
\]
where $\underline{\underline J}=J_{n-1}$. It will be convenient to consider the matrix $E:=\tilde\Theta_{(\tau,\underline x),y}$ in place of $\tilde\Theta_{(\underline x,\tau),y}$, which, up to sign, has the same determinant. Then
\[
E = 
\begin{pmatrix}
2(y_2-x_2) & 2(y_1-x_1)& 2(\underline{\underline y}-\underline{\underline x})\\ 
 \frac{1}{2} & -2\tau &0\\
0  & 0&  -2\tau I_{d_1-2}  - \tfrac 12\underline{\underline J} 
\end{pmatrix}.
\]
Here, $|\tau|\lesssim 1$, and moreover $|x_1-y_1|\sim 1, |x_2-y_2|\lesssim \ve $ and $|\underline{\underline x}- \underline {\underline y}|\lesssim \ve$. Hence, for $\ve>0$ sufficiently small,
\[
|{\det E}|\sim 1\cdot  |\det (-2\tau I_{d_1-2}  - \tfrac 12\underline{\underline J})|\sim 1,
\]
since $\underline{\underline J}$ is skew-symmetric and  $\underline{\underline J}^2=-I_{d_1-2}.$ Hence $|\det \tilde\Theta_{(\underline x,\tau),y} |\sim 1.$
\end{proof}

\appendix

\section{Proof of the decay estimates in Proposition~\ref{prop:decay-away}}\label{decayproof}

In this section, we shall give the proof of Proposition~\ref{prop:decay-away}. To this end, recall from Section \ref{decay} that the critical points of the phase
\[
\tilde \Phi(x,u,s,\sigma,\tau) = \sigma(s-|x|^2 g(\tau) + 4u\tau)
\]
in \eqref{eq:phase-kl-alt} are given by 
\[
|x|_c(s,\tau): = \sqrt{s} \, \Big|\frac{\sin\tau}{\tau}\Big| \quad\text{and}\quad 
u_c(s,\tau): = - s \, \frac{\tau-\sin\tau\cos\tau}{ 4 \tau^2},
\]
where the factor $\eta_\ell(\tau-k \pi)$ localizes to the union  $J_{k,\ell}$ of two intervals on which $|\tau-k \pi| \sim 2^{-\ell}.$

We assume that $s \in[1,4],$ and set
\[
\Delta x^2 (s,\tau): = |x|^2-|x|_{c}^2 (s,\tau) \quad\text{and}\quad \Delta u(s,\tau): = u-u_c(s,\tau).
\]
Recall finally that $\tau_{k,\ell} \in J_{k,\ell}$ is fixed. 

\begin{lemma}\label{lem:decay-away-1}
Suppose that $k,\ell\ge 1$ and $s \in[1,4],$ Then the following statements hold: 
\begin{enumerate}
 \item If $|x|^2 \leq \frac{1}{100}(\tfrac{2^{-\ell}}{k})^2$ or $|x|^2 \ge 10 \, (\tfrac{2^{-\ell}}{k})^2 $, then
 \[
 |\Delta x^2 (s,\tau)| \gtrsim(\tfrac{2^{-\ell}}{k})^2 \quad\text{for all } \tau \in J_{k,\ell}.
 \]
\item There is some constant $R_0 > 0$ such that, if $\left|\Delta u(s,\tau_{k,\ell})\right| \ge R_0  \tfrac{2^{-\ell}}{k^2}$, then
\[
\left|\Delta u(s,\tau)\right| \gtrsim \tfrac{2^{-\ell}}{k^2} \quad \text{for all } \tau \in J_{k,\ell}.
\]
\end{enumerate}
\end{lemma}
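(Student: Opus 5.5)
Both statements are elementary estimates on the explicit functions $|x|_c^2(s,\tau)=s\sin^2\tau/\tau^2$ and $u_c(s,\tau)=-s(\tau-\sin\tau\cos\tau)/(4\tau^2)$ for $\tau\in J_{k,\ell}$. So the plan is to first record the size of $|x|_c$ and the variation of $u_c$ over $J_{k,\ell}$, and then read off (1) and (2) by the triangle inequality.

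For (1), write $\tau=k\pi+h$ with $\frac38\pi 2^{-\ell}\le |h|\le \frac54\pi 2^{-\ell}$, using \eqref{supptau}. Then $|\sin\tau|=|\sin h|$, and since $|h|\le \frac58\pi$ we have $|\sin h|\in[\frac{2}{\pi}|h|\cdot c,|h|]$ with an explicit constant. Also $\tau\in[(k-\tfrac58)\pi,(k+\tfrac58)\pi]$.

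The lower bound for $|x|_c^2$ requires some care with the constants. A direct check gives $|x|_c^2\ge s\,\sin^2(\frac38\pi 2^{-\ell})/((k+\frac58)\pi)^2$. Using $\sin(\frac38\pi 2^{-\ell})\ge \sin(\frac{3\pi}{16})\cdot 2^{1-\ell}$ for $\ell\ge1$, by concavity of $\sin$ on $[0,\pi/2]$, and $k+\frac58\le \frac{13}{8}k$, this is $\ge c_1(2^{-\ell}/k)^2$ with $c_1=4\sin^2(3\pi/16)\cdot 64/(169\pi^2)\approx 0.048>\frac1{100}$. For the upper bound, $|x|_c^2\le 4\,(\frac54\pi2^{-\ell})^2/((k-\frac58)\pi)^2\le 4\cdot\frac{25}{16}\cdot\frac{64}{9}(2^{-\ell}/k)^2$. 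This exceeds $10$, so it cannot be used as it stands. I expect the inner bound to be fine, and the outer bound is the step I expect to be the main obstacle: it needs the constants tracked more carefully. I would use $\sin h\le h$ together with $s\le4$ only through the factor $\sqrt{s}$ appropriately. If this still fails, the honest fix is to note that $D^{k,\ell}$ (and $\sqrt{10}$) may be enlarged to a fixed larger constant, or that $s\in[1,1+\varepsilon]$ is what is actually used. In any case the conclusion follows from the two-sided bound $c_1(2^{-\ell}/k)^2\le|x|_c^2\le C_1(2^{-\ell}/k)^2$, applied with $|x|^2$ outside $[c_1/2,\,2C_1](2^{-\ell}/k)^2$. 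Then $|\Delta x^2|\ge \frac12 c_1(2^{-\ell}/k)^2$ resp.\ $\ge |x|^2/2$.

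For (2), differentiate to get $\partial_\tau u_c(s,\tau)=-s\,\frac{\partial}{\partial\tau}\frac{\tau-\sin\tau\cos\tau}{4\tau^2}$. The numerator derivative is $1-\cos2\tau=2\sin^2\tau$, so
\[
\partial_\tau u_c=-s\Big(\frac{\sin^2\tau}{2\tau^2}-\frac{\tau-\sin\tau\cos\tau}{2\tau^3}\Big)=\mathcal O\big(2^{-2\ell}k^{-2}+k^{-2}\big).
\]
Since $J_{k,\ell}$ has diameter $\lesssim 2^{-\ell}$, the mean value theorem gives $|u_c(s,\tau)-u_c(s,\tau_{k,\ell})|\le C_2\,2^{-\ell}/k^2$ for all $\tau\in J_{k,\ell}$ (both components lie within distance $\frac52\pi2^{-\ell}$ of one another). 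Choosing $R_0=2C_2$, the triangle inequality yields $|\Delta u(s,\tau)|\ge|\Delta u(s,\tau_{k,\ell})|-C_2 2^{-\ell}k^{-2}\ge C_2 2^{-\ell}k^{-2}$, which is (2).
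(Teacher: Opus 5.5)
Part (2) and the inner half of (1) are correct. Your mean value argument even supplies the bound $|u_c(s,\tau)-u_c(s,\tau_{k,\ell})|\lesssim 2^{-\ell}k^{-2}$, which the paper only quotes from Section~\ref{decay}, and your lower bound $|x|_c^2\ge c_1 s\,(2^{-\ell}/k)^2$ with $c_1>\frac1{100}$ is right. The gap is the outer half of (1). The lemma asserts that $|x|^2\ge 10\,(2^{-\ell}/k)^2$ already forces $|\Delta x^2(s,\tau)|\gtrsim(2^{-\ell}/k)^2$. Your upper bound $|x|_c^2\le C_1(2^{-\ell}/k)^2$ with $C_1\approx 44$ does not give this, and you leave the step open. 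Neither fallback proves the stated lemma. Enlarging $\sqrt{10}$ would be harmless downstream, but it proves a different statement. Restricting to $s$ near $1$ does not rescue your estimates either: $|\sin h|\le|h|$ together with $|\tau|\ge(k-\frac58)\pi\ge\frac38 k\pi$ still only gives $\frac{25}{16}\cdot\frac{64}{9}\approx 11.1>10$ at $s=1$. Also, $s$ enters $|x|_c^2=s\sin^2\tau/\tau^2$ linearly, so nothing is gained by passing through $\sqrt{s}$.

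The statement is true but has very little room, so the missing ingredient is a sharp upper bound for $F:=s\,\frac{\sin^2\tau}{\tau^2}\,(2^\ell k)^2$ over $s\le 4$ and $\tau=k\pi+h\in J_{k,\ell}$. Split into cases.
If $h>0$, then $|\sin h|\le |h|$ and $|\tau|\ge k\pi$ give $F\le 4\cdot\frac{25}{16}=6.25$.
If $h<0$ and $k\ge 2$: for $\ell=1$ use $\sin^2 h\le 1$ and $|\tau|\ge \frac{11}{16}k\pi$, giving $F\le \frac{4096}{121\pi^2}\approx 3.4$; for $\ell\ge2$ use $|\sin h|\le|h|$ and $|\tau|\ge\frac{27}{32}k\pi$, giving $F\le 6.25\,(\frac{32}{27})^2\approx 8.8$.
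If $h<0$ and $k=1$, then $\tau\in(0,\pi)$, where $\sin\tau/\tau$ is decreasing, so the worst point is $\tau=(1-\frac54 2^{-\ell})\pi$. This gives $F\le 16\sin^2(\frac38\pi)/(\frac38\pi)^2\approx 9.84$ for $\ell=1$, $F\le 64\sin^2(\frac{5}{16}\pi)/(\frac{11}{16}\pi)^2\approx 9.49$ for $\ell=2$, and $F\le 8.8$ for $\ell\ge3$ as before.
Hence $|\Delta x^2|\ge 0.15\,(2^{-\ell}/k)^2$ in the outer case, and crude bounds cannot work since the worst configuration ($k=\ell=1$, $\tau$ near $\frac38\pi$, $s=4$) comes within $2\%$ of the threshold $10$.

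The paper takes the same route you do: explicit two-sided bounds for $\sin^2\tau/\tau^2$ on $J_{k,\ell}$, done directly for $\ell=1$ and via $|\sin^2 t-t^2|\le t^4/3$ for $\ell\ge 2$. Be aware that its displayed constants do not reach $10$ for all $s\in[1,4]$ either. Its $\ell=1$ bound $\frac{64}{9\pi^2}k^{-2}$ gives only $F<2.9\,s$. In the case $\ell\ge2$, the expression it evaluates at $\alpha=\frac54$ equals $\approx 4.37$ rather than $1.092$, so that case yields only $F<4.4\,s$. This is harmless in the paper, because only $s\in[1,1+2^{-\ell}/k]$ is used in Corollary~\ref{decaysmalls}. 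The lemma as stated, however, needs the finer case analysis above.
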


\begin{proof}
Recall that $\left|u_c(s,\tau)-u_c (s,\tau_{k,\ell}) \right| \lesssim \tfrac{2^{-\ell}}{k^2}$ for $\tau\in J_{k,\ell}$.
Thus, the second part of the lemma follows immediately since
\[
|\Delta u(s,\tau)| \ge \left|\Delta u(s,\tau_{k,\ell})\right| -\left|u_c(s,\tau)-u_c(s,\tau_{k,\ell}) \right| \gtrsim \tfrac{2^{-\ell}}{k^2}
\]
if $\left|\Delta u(s,\tau_{k,\ell})\right| \ge R_0  \tfrac{2^{-\ell}}{k^2}$ for sufficiently large $R_0 > 0$.
To show the first part of the lemma, recall that 
\[
\Delta x^2 (s,\tau): = |x|^2-|x|_{c}^2 (s,\tau) = |x|^2-s \, \frac{\sin^2\tau}{\tau^2}.
\]
Thus, we need to bound $(\sin ^2 \tau)/\tau^2 $ from above and below.

Recall from the discussion preceding \eqref{eq:Tlakl-def} that $\eta_\ell(\tau) = \eta_{0}(2^{\ell-1} \tau)-\eta_{0}(2^{\ell} \tau)$ for $\ell \ge 1$, where $\eta_0$ is supported in ${(-\frac{5 \pi}{8}, \frac{5 \pi}{8})}$ with $\eta_0(\tau) = 1$ for ${\tau \in(-\frac{3 \pi}{8}, \frac{3 \pi}{8})}$. Hence, if $\tau\in J_{k,l}$, we can write $\tau = (k \pm 2^{-\ell} \alpha) \pi$ for some $\frac{3}{8} \le \alpha<\frac{5}{4}$.

We distinguish the cases $\ell = 1$ and $\ell \ge 2$. For $\ell =1$ obverse that for $\tau\in J_{k,l}$
\[
\frac{\sin ^2 \tau}{\tau^2 }
= \frac{\sin ^2 (\frac12 \alpha \pi)}{((k \pm \frac12 \alpha) \pi)^2 } 
\le \frac{1}{k^2} \frac{1}{((1-\frac\alpha 2)\pi)^2}
\le \frac{64}{9\pi^2} \frac{1}{k^2}
< \frac{1}{k^2}.
\]
On the other hand,
\[
\frac{\sin ^2 \tau}{\tau^2 }
\ge \frac{1}{k^2} \frac{\sin ^2 (\frac12 \alpha \pi)}{((1+\frac58)\pi)^2}
\le \left.\frac{1}{k^2} \frac{\sin ^2 (\frac12 \alpha \pi)}{((1+\frac58)\pi)^2}\right|_{\alpha=\frac38}
> 0.011\, \frac{1}{k^2}
\]
Now suppose that $k\ge 1$ and $\ell\ge 2$. Using the Taylor series expansion $\sin ^2 \tau = (1-\cos (2 \tau))/2 = \tau^2 -\tau^{ 4 }/{3} \pm \cdots$, we see that
\[
\frac{\sin ^2 \tau}{\tau^2 } = \frac{\sin ^2 (2^{-\ell} \alpha \pi)}{((k \pm 2^{-\ell} \alpha) \pi)^2 }
\sim\Big(\frac{2^{-\ell}}{k}\Big)^2 \frac{\alpha^2 }{(1 \pm \frac{2^{-\ell}}{k} \alpha)^2 }.
\]
The error of this approximation is sufficiently small: Note that
\[
\Big(\frac{d}{d \tau}\Big)^{ 4 } \sin ^2 \tau = \Big(\frac{d}{d \tau}\Big)^{ 4 } \frac{1-\cos (2 \tau)}{2} = 8 \cos (2 \tau).
\]
Hence $|\sin ^2 \tau-\tau^2 | \leq 8 \, | \tau|^{ 4 }/ 4 ! = | \tau|^{ 4 }/3$ for all $\tau\in\R$. Thus
\begin{align*}
\Big|\frac{\sin ^2 (2^{-\ell} \alpha \pi)}{((k \pm 2^{-\ell} \alpha) \pi)^2 }-\Big(\frac{2^{-\ell}}{k}\Big)^2 \frac{\alpha^2 }{(1 \pm \frac{2^{-\ell}}{k} \alpha)^2 }\Big|
& \le \frac{\frac{1}{3}\left(2^{-\ell} \alpha \pi\right)^{ 4 }}{((k \pm 2^{-\ell} \alpha) \pi)^2 } \\
& = \Big(\frac{2^{-\ell}}{k}\Big)^2 \frac{\frac{1}{3}(2^{-\ell} \pi)^2 \alpha^{ 4 }}{(1 \pm \frac{2^{-\ell}}{k} \alpha)^2 }.
\end{align*}
Since $k\ge 1$ and $\ell\ge 2$, we obtain
\[
\begin{aligned}
& \frac{\sin ^2 \tau}{\tau^2 }
\le\Big(\frac{2^{-\ell}}{k}\Big)^2 \frac{\alpha^2 + \frac{1}{3}(2^{-\ell} \pi)^2 \alpha^{ 4 }}{(1 \pm \frac{2^{-\ell}}{k} \alpha)^2 }
\le\Big(\frac{2^{-\ell}}{k}\Big)^2 \frac{\alpha^2 + \frac{1}{3}\left(\frac \pi 4\right)^2 \alpha^{ 4 }}{(1 - \frac{1}{4} \alpha)^2 } \\
& =\Big(\frac{2^{-\ell}}{k}\Big)^2 \frac{\alpha^2 + \frac{1}{48} \pi^2 \alpha^{ 4 }}{(1-\frac{1}{4} \alpha)^2 } \le\left.\Big(\frac{2^{-\ell}}{k}\Big)^2 \frac{\alpha^2 + \frac{1}{48} \pi^2 \alpha^{ 4 }}{(1-\frac{1}{4} \alpha)^2 }\right|_{\alpha = \frac{5}{4}}
< 1.092 \, \Big(\frac{2^{-\ell}}{k}\Big)^2 ,
\end{aligned}
\]
since $\alpha\mapsto 1/(1-\frac{1}{4} \alpha)^2 $ and $\alpha\mapsto \alpha^2 + \frac{1}{48} \pi^2 \alpha^{ 4 } $ are increasing functions.

On the other hand,
\[
\frac{\sin ^2 \tau}{\tau^2 } \geq\Big(\frac{2^{-\ell}}{k}\Big)^2 \frac{\alpha^2 -\frac{1}{3}(2^{-\ell} \pi)^2 \alpha^{ 4 }}{(1 \pm \frac{2^{-\ell}}{k} \alpha)^2 }.
\]
Note that the enumerator on the right hand side above is positive. Hence
\[
\frac{\alpha^2 -\frac{1}{3}(2^{-\ell} \pi)^2 \alpha^{ 4 }}{(1 \pm \frac{2^{-\ell}}{k} \alpha)^2 }
\geq \frac{\alpha^2 -\frac{1}{48} \pi^2 \alpha^{ 4 }}{(1 + \frac{1}{4} \alpha)^2 }
= \frac{\alpha}{(1 + \frac{1}{4} \alpha)^2 }\left(\alpha-\frac{1}{48} \pi^2 \alpha^{3}\right) = :R(\alpha).
\]
Recall that $\frac{3}{8}\le \alpha < \frac 5 4$. Note that
\[
\begin{aligned}
& \frac{d}{d\alpha} \Big(\frac{\alpha}{ (1 + \frac{1}{4} \alpha)^2 } \Big)
 = \frac{(1 + \frac{1}{4} \alpha)^2 -\alpha(1 + \frac{1}{4} \alpha)}{(1 + \frac{1}{4} \alpha)^{ 4 }} = \frac{1-\frac{1}{4} \alpha}{(1 + \frac{1}{4} \alpha)^{3}} > 0, \\
& \frac{d}{d\alpha} \Big(\alpha-\frac{1}{48} \pi^2 \alpha^{3}\Big)
 = 1-\frac{1}{ 16 } \pi^2 \alpha^2 \geq 1-\frac{1}{ 16 } \pi^2 \Big(\frac{5}{4}\Big)^2 = 0.03 \ldots > 0.
\end{aligned}
\]
Hence $R(\alpha) \geq R\left(\frac{3}{16}\right) > 0.028$. Altogether, we have
\[
0.028 \, \Big(\frac{2^{-\ell}}{k}\Big)^2 < \frac{\sin ^2 \tau}{\tau^2 } < 1.092 \, \Big(\frac{2^{-\ell}}{k}\Big)^2 
\quad\text{for all } \tau \in J_{k,\ell}.
\]
This finishes the proof since $\Delta x^2 (s,\tau) = |x|^2-|x|_{c}^2 (s,\tau) = |x|^2-s \, (\sin^2\tau)/\tau^2$.
\end{proof}

Recall next from \eqref{nablaPhi} that
\[
 \nabla_{\sigma,\tau} ^{\mathsf T}\tilde \Phi (x,u,s,\sigma,\tau)
 = A_{\sigma, \tau}\binom{\Delta x^2 (s,\tau)}{\Delta u(s,\tau)}.
 \]
We want to integrate by parts to obtain rapid decay in terms of $\la\Delta x^2 (s,\tau)$ and $\la\Delta u(s,\tau)$. 
\smallskip

To that end, observe that if we choose $a=(a_1,a_2)$ so that $aA_{\sigma, \tau}=(1,0),$ then $(a \nabla_{\sigma,\tau}^{\mathsf T})\tilde \Phi (x,u,s,\sigma,\tau)=\Delta x^2 (s,\tau),$ hence $(a \nabla_{\sigma,\tau}^{\mathsf T})\, e^{i \lambda \tilde \Phi}=i\la \Delta x^2 (s,\tau) \, e^{i \lambda \tilde \Phi}.$ From \eqref{Ainvers} we obtain 
\[
a=\Big(-\frac {\sin^2\tau}{\tau^2}, \frac {\sin^2\tau}{\sigma \tau}\Big)
\]
Thus, if we define the vector field 
\[
V: = -\frac {\sin^2\tau}{\tau^2} \partial_{\sigma}+\frac {\sin^2\tau}{\sigma\tau}\partial_{\tau},
\]
then 
\begin{equation}\label{VPhi}
V( e^{i \lambda \tilde \Phi}) =i\la \Delta x^2 (s,\tau)e^{i \lambda \tilde \Phi}.
\end{equation}
Similarly, if we choose $b=(b_1,b_2)$ so that $bA_{\sigma, \tau}=(0,1),$ then $$(b \nabla_{\sigma,\tau}^{\mathsf T})\tilde \Phi (x,u,s,\sigma,\tau)=\Delta u (s,\tau),$$ hence $(b \nabla_{\sigma,\tau}^{\mathsf T})\, e^{i \lambda \tilde \Phi}=i\la \Delta u(s,\tau) \, e^{i \lambda \tilde \Phi}.$ From \eqref{Ainvers} we obtain 
$$
b=\frac 1 {\sigma\tau^2}\Big(\frac{1}{4}\sigma(\tau-\sin\tau\cos\tau), \frac{1}{4}\tau\sin\tau\cos\tau\Big).
$$
Thus, if we define the vector field 
\[
W: = \varphi(\tau) \partial_{\sigma} + \psi(\sigma, \tau) \partial_{\tau},
\]
where
\[
\varphi(\tau): = \frac{\tau-\sin \tau \cos \tau }{\tau^2 } \quad\text{and}\quad \psi(\sigma, \tau): = \frac{\sin \tau \cos \tau }{\sigma \tau},
\]
then 
\begin{equation}\label{WPhi}
W (e^{i \lambda  \tilde\Phi} )=4 i\la \Delta u(s,\tau)e^{i \lambda \tilde \Phi}.
\end{equation}

Recall also from \eqref{eq:K-kl-alt} that 
$$
K_{\lambda,s}^{k,\ell}(x, u) = \lambda^{n + \frac 32} \int_0^\infty \int_0^\infty e^{i\lambda\tilde \Phi(x,u,s,\sigma,\tau)} \, a_\lambda(s\sigma) \, \eta_\ell(\tau -k \pi)\Big(\frac{\tau}{\sin\tau}\Big)^n \, d\sigma \, d\tau.
$$

\begin{lemma}\label{lem:decay-away-2}
For every $N,M\in\N$, the kernel $K_{\lambda,s}^{k,\ell}$ can be written as a finite series
\[
K_{\lambda,s}^{k,\ell} = \sum_{\alpha\in \N^2 \times \Z\times \N^6} K_{\lambda,s,N,M,\alpha}^{k,\ell}
\]
of summands of the form
\begin{multline*}
K_{\lambda,s,N,M,\alpha}^{k,\ell}(x, u)
 = 
\lambda^{n + \frac{3}{2}-N-M} \int_0^\infty \int_0^\infty  e^{i \lambda \tilde \Phi} \,\partial_\sigma^{\alpha_1} (a_{\lambda}(s \sigma))\, \partial_\tau^{\alpha_2}(\eta_\ell(\tau-k \pi))\\
\times \left(\frac{\tau}{\sin \tau}\right)^{n} \frac{\sigma^{\alpha_3} \left(\sin \tau\right)^{\alpha_5} P_{\alpha_6}(\cos\tau,\sin\tau)}{\tau^{\alpha_ 4 } \left( \Delta x^2 (s,\tau) \right)^{\alpha_7}\left( \Delta u(s,\tau) \right)^{\alpha_8} } \, s^{\alpha_9} \, d\sigma \, d \tau,
\end{multline*}
where $P_{\alpha_6}\in \Z[X,Y]$ is a polynomial with integer coefficients and $\alpha = (\alpha_1,\dots,\alpha_9)\in \N^2 \times \Z\times \N^6$ satisfies the relations
\begin{itemize}
 \item $\alpha_ 4 -2\alpha_7-2\alpha_8\ge -N-M$,
 \item $\alpha_5-\alpha_2-2\alpha_7-\alpha_8\ge -N-M$,
 \item $\alpha_7\ge N$ and $\alpha_8\ge M$.
\end{itemize}
Moreover, if $N = 0$, then $\alpha_7 = 0$, and if $M = 0$, then $\alpha_8 = 0$.
\end{lemma}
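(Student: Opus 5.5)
We argue by induction on $N+M$, using the vector fields $V$ and $W$ together with the identities \eqref{VPhi} and \eqref{WPhi}. The base case $N=M=0$ is the formula \eqref{eq:K-kl-alt} itself: take $\alpha_1=\alpha_2=\alpha_3=\alpha_4=\alpha_5=\alpha_7=\alpha_8=\alpha_9=0$ and $P_{\alpha_6}=1$. The constraints are then trivially satisfied.

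For the induction step we increase $N$ by one (the step for $M$ is analogous, with $W$ in place of $V$). Since $\Delta x^2(s,\tau)$ does not depend on $\sigma$, \eqref{VPhi} gives
\[
e^{i\lambda\tilde\Phi}=\frac{1}{i\lambda\,\Delta x^2(s,\tau)}\,V\big(e^{i\lambda\tilde\Phi}\big).
\]
We insert this into a given summand $K^{k,\ell}_{\lambda,s,N,M,\alpha}$ and integrate by parts in $(\sigma,\tau)$. There are no boundary terms, because $a_\lambda(s\sigma)$ and $\eta_\ell(\tau-k\pi)$ have compact support away from the endpoints. The result is $-(i\lambda)^{-1}\int e^{i\lambda\tilde\Phi}\,V^{t}(\cdots)$, where
\[
V^{t}F=-\partial_\sigma\Big(-\frac{\sin^2\tau}{\tau^2}F\Big)-\partial_\tau\Big(\frac{\sin^2\tau}{\sigma\tau}F\Big),
\]
and $F$ is the old amplitude multiplied by $1/\Delta x^2$. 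This produces the extra factor $\lambda^{-1}$. We then expand by the Leibniz rule. Each resulting term is of the same type, and it remains to track the exponents.

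The multiplication by $1/\Delta x^2$ and the coefficient of $V$ contribute the following changes. The $\partial_\sigma$-part gives $\alpha_7\mapsto\alpha_7+1$, $\alpha_4\mapsto\alpha_4+2$, $\alpha_5\mapsto\alpha_5+2$. The derivative then falls either on $a_\lambda(s\sigma)$, giving $\alpha_1+1$, or on $\sigma^{\alpha_3}$, giving $\alpha_3-1$. This is why $\alpha_3\in\Z$: the $\partial_\tau$-part of $V$ carries $\sigma^{-1}$. Neither $\Delta x^2$ nor $\Delta u$ depends on $\sigma$, so nothing else is affected.

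The $\partial_\tau$-part gives $\sigma^{-1}$, $\alpha_4+1$, $\alpha_5+2$, $\alpha_7+1$, and the $\tau$-derivative falls on one of several factors:
\begin{enumerate}
\item on $\eta_\ell$: $\alpha_2+1$;
\item on $\tau^{-\alpha_4}$: $\alpha_4+1$;
\item on $(\sin\tau)^{\alpha_5}$: $\alpha_5-1$, with a $\cos\tau$ absorbed into $P$;
\item on $P(\cos,\sin)$: a new polynomial $P$;
\item on $(\tau/\sin\tau)^n$: this yields $n\big(\tau^{-1}-\cos\tau/\sin\tau\big)$ times the same factor, that is, $\alpha_4+1$, or $\alpha_5-1$ with $\cos\tau$ absorbed into $P$;
\item on $(\Delta x^2)^{-\alpha_7-1}$: we use $\partial_\tau\Delta x^2=-s\,\partial_\tau(\sin^2\tau/\tau^2)$, which gives $\alpha_7+1$, $s$-power $+1$, and either ($\tau^{-2}$, $\sin\cos$) or ($\tau^{-3}$, $\sin^2$);
\item on $(\Delta u)^{-\alpha_8}$: we use $\partial_\tau u_c=-s\,\partial_\tau\big[(\tau-\sin\tau\cos\tau)/(4\tau^2)\big]$, which expands into terms with $\tau^{-2}$ times $(1-\cos 2\tau)$, i.e.\ $2\sin^2\tau$, and $\tau^{-3}(\tau-\sin\tau\cos\tau)$, giving $\alpha_8+1$.
\end{enumerate}

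In each case we verify that the two linear quantities $\alpha_4-2\alpha_7-2\alpha_8$ and $\alpha_5-\alpha_2-2\alpha_7-\alpha_8$ decrease by at most $1$. For example, the $\partial_\sigma$-part has net change $2-2=0$ in the first and $2-2=0$ in the second. The $\partial_\tau$-part with the derivative on $\eta_\ell$ has net changes $1-2=-1$ and $2-1-2=-1$. The $\Delta u$-derivative case, combined with the $\partial_\tau$-coefficient, has first quantity $+1+2-2-2=-1$ and second $+2+2-2-1=+1$, since the $\sin^2$ term keeps it positive.

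The main obstacle is the bookkeeping of the term $\tau-\sin\tau\cos\tau$ in the $W$-step and in $\partial_\tau\Delta u$. It is not a monomial in $\sin$, $\cos$, $\tau$, so we split it as $\tau\cdot 1$ plus $\sin\tau\cos\tau$. The piece $\tau$ lowers $\alpha_4$ by one, with $\alpha_5$ unchanged. We must check that this still meets the first constraint: with $\alpha_4$ changing by $+2-1$ and $\alpha_8+1$, the net change is $-1$. The second constraint is the tight one, since $\alpha_5$ stays and $\alpha_8$ rises, giving a change of $-1$; so we must use the version $\alpha_5$ with weight $\alpha_8$ coefficient $1$, which is exactly the stated constraint.

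Finally, the side conditions hold by construction. The exponent $\alpha_7$ increases only in $V$-steps, and in $W$-steps only through a $\tau$-derivative falling on an already present factor $(\Delta x^2)^{-\alpha_7}$, so $\alpha_7\ge N$, and $\alpha_7=0$ when $N=0$; similarly $\alpha_8\ge M$, and $\alpha_8=0$ when $M=0$. Finiteness of the series is clear, since each step produces boundedly many terms.
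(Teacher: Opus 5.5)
Your proposal is correct and follows essentially the same route as the paper. Both argue by induction on $N+M$: one inserts $e^{i\lambda\tilde\Phi}=(i\lambda\,\Delta x^2)^{-1}V(e^{i\lambda\tilde\Phi})$ or $(4i\lambda\,\Delta u)^{-1}W(e^{i\lambda\tilde\Phi})$, integrates by parts, and checks case by case (derivative on $\eta_\ell$, on powers of $\tau$ and $\sin\tau$, on $P$, on $\Delta x^2$, on $\Delta u$, with $\tau-\sin\tau\cos\tau$ split into two pieces) that both linear quantities drop by at most one. The stray sign in front of $(i\lambda)^{-1}\int e^{i\lambda\tilde\Phi}V^{t}(\cdots)$ and the constants $1/i$, $1/(4i)$ are harmless; they only change numerical coefficients, just as in the paper.
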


\begin{proof}
We do induction over $|N| + |M|$. For $|N| + |M| = 0$ we have $K_{\lambda,s}^{k,\ell} = K_{\lambda,\alpha}^{k,\ell}$ if $P_{\alpha_6} = 1$ and $\alpha_j = 0$ for all $j$.

 Now suppose that the statement holds for the tuple $(N,M)$. We will show that the statement then also holds for $(N + 1,M)$ and $(N,M + 1)$. We use the notation
\begin{multline*}
K_{\lambda,s,N,M,\alpha}^{k,\ell}[E](x, u)
 = \lambda^{n + \frac{3}{2}-N-M} \int_0^\infty \int_0^\infty E(x,u,s,\tau,\sigma) \, \partial_\sigma^{\alpha_1} (a_{\lambda}(s \sigma)) \\
 \times \partial_\tau^{\alpha_2}(\eta_\ell(\tau-k \pi))
 \left(\frac{\tau}{\sin \tau}\right)^{n} \frac{\sigma^{\alpha_3} \left(\sin \tau\right)^{\alpha_5} P_{\alpha_6}(\cos\tau,\sin\tau)}{ \tau^{\alpha_ 4 } \left( \Delta x^2 (s,\tau) \right)^{\alpha_7}\left( \Delta u(s,\tau) \right)^{\alpha_8} } \, s^{\alpha_9} \, d\sigma \, d \tau.
\end{multline*}
With this notation, $K_{\lambda,s,N,M,\alpha} = K_{\lambda,s,N,M,\alpha}^{k,\ell}[e^{i\lambda \tilde \Phi}]$.
 By \eqref{VPhi}, this yields
\begin{align*}
& K_{\lambda,s,N,M,\alpha}^{k,\ell}[i\,e^{i\lambda \tilde \Phi}]
 = K_{\lambda,s,N + 1,M,\alpha + e_7}^{k,\ell}[V(e^{i\lambda \tilde \Phi})] \\ 
& = -K_{\lambda,s,N + 1,M,\alpha + 2e_ 4 + 2e_5 + e_7}^{k,\ell}[\partial_\sigma(e^{i\lambda \tilde \Phi})] + K_{\lambda,s,N + 1,M,\alpha-e_3 + e_ 4 + 2e_5 + e_7}^{k,\ell}[\partial_\tau(e^{i\lambda \tilde \Phi})].
\end{align*}
When integrating by parts  with respect to $\sigma$ in the first integral and employing the Leibniz  rule, we see that $\partial_\sigma$ either hits $\partial_\sigma^{\alpha_1} (a_{\lambda}(s \sigma))$, which increases $\alpha_1$, or $\sigma^{\alpha_3}$, which decreases $\alpha_3$. By induction hypothesis,
\begin{align*}
(\alpha_ 4 + 2)-2(\alpha_7 + 1)-2\alpha_8 & > -(N + 1)-M, \\
(\alpha_5 + 2)-\alpha_2-2(\alpha_7 + 1)-\alpha_8 & > -(N + 1)-M.
\end{align*}
For the second integral, when  integrating  by parts is $\tau$ and  employing the Leibniz   rule again, we make the following observations:
\begin{itemize}
\item[(i)] When $\partial_\tau$ hits $\partial_\tau^{\alpha_2}(\eta_\ell(\tau-k \pi))$, then $\alpha_2$ is additionally increased by 1. However, the induction hypothesis still implies
\[
(\alpha_5 + 2)-(\alpha_2 + 1)-2(\alpha_7 + 1)-2\alpha_8 \ge -(N + 1)-M.
\]
\item[(ii)] When $\partial_\tau$ hits $\tau^{n}$ or $1/\tau^{\alpha_ 4 }$, then $\alpha_ 4 $ is additionally increased by 1. This additional gain is not relevant for our purposes.

\item[(iii)] When $\partial_\tau$ hits $1/(\sin\tau)^{n}$ or $(\sin \tau)^{\alpha_5}$, then $\alpha_5$ changes additionally by  $-1$, but $\alpha_2$ is unchanged, and 
\[
(\alpha_5 + 1)-\alpha_2-2(\alpha_7 + 1)-\alpha_8  \ge -(N + 1)-M.
\]
\item[(iv)] When $\partial_\tau$ hits $P_{\alpha_6}(\cos\tau,\sin\tau)$, we just pass to another polynomial.

\item[(v)] When $\partial_\tau$ hits $(\Delta x^2 (s,\tau))^{-(\alpha_7+1)}$, we get
\[
\partial_\tau (\Delta x^2 (s,\tau))^{-(\alpha_7+1)}
 =( \alpha_7+1) (\Delta x^2 (s,\tau))^{-(\alpha_7 + 1)} \partial_\tau\Big( s \, \frac{\sin^2\tau}{\tau^2} \Big).
\]
Thus $\alpha_7$ is increased in total by 2. If $\partial_\tau$ hits the enumerator  of the factor $\sin^2\tau/\tau^2$, $\alpha_ 4 $ and $\alpha_5$ are both increased in total by 3, and if $\partial_\tau$ hits the denominator   of the factor $\sin^2\tau/\tau^2$, then both $\alpha_ 4 $ and $\alpha_5$ are increased in total by 4.   Note that
\begin{align*}
(\alpha_ 4 + 3 )-2(\alpha_7 + 2)-2\alpha_8 & \ge -(N + 1)-M, \\
(\alpha_5 + 3)-\alpha_2-2(\alpha_7 + 2)-\alpha_8 & \ge -(N + 1)-M,
\end{align*}
and if we increase both $\alpha_4$ and $\alpha_5$ by 4, we even get strict inequalities.

\item[(vi)] When $\partial_\tau$ hits $(\Delta u(s,\tau))^{-\alpha_8}$, we get
\[
\partial_\tau \left(\Delta u(s,\tau)\right)^{-\alpha_8}
 = \alpha_8 \left(\Delta u(s,\tau)\right)^{-(\alpha_8 + 1)} \partial_\tau\Big( s \, \frac{\tau-\sin\tau\cos\tau}{4\tau^2} \Big).
\]
Similar as in (v), $\alpha_8$ is increased by 1, while the factor $\partial_\tau( (\tau-\sin\tau\cos\tau)/\tau^2 )$ leads to two summands where $\alpha_ 4 $ or $\alpha_5$ is additionally increased.  If $\partial_\tau$ hits $1/\tau,$ or the enumerator of $-\sin\tau\cos\tau/\tau^2,$ then $\alpha_4$ is increased in total by $3$. Note here that
\begin{align*}
(\alpha_ 4 + 3 )-2(\alpha_7 + 1)-2(\alpha_8 + 1) &\ge -(N + 1)-M,\\
(\alpha_5 + 2)-\alpha_2-2(\alpha_7 + 1)-(\alpha_8 + 1) &\ge -(N + 1)-M.
\end{align*}
If $\partial_\tau$ hits  the denominator of $-\sin\tau\cos\tau/\tau^2,$ then $\alpha_4$ is increased in total by $4,$ and we even get strict inequalities.
\end{itemize}

This verifies the statement for $(N + 1,M)$. We can argue similarly for $(N,M + 1)$ by using \eqref{WPhi}. 
This yields
\begin{multline*}
 K_{\lambda,s,N,M,\alpha}^{k,\ell}[4i\,e^{i\lambda \tilde \Phi}]
 = K_{\lambda,s,N,M + 1,\alpha + e_8}[W(e^{i\lambda \tilde \Phi})] \\
 = K_{\lambda,s,N,M + 1,\alpha + e_ 4 + e_8}[\partial_\sigma(e^{i\lambda \tilde \Phi})]
-K_{\lambda,s,N,M + 1,\beta + 2e_ 4 + e_5 + e_8}[\partial_\sigma(e^{i\lambda \tilde \Phi})] \\
+ K_{\lambda,s,N,M + 1,\gamma - e_3 + e_ 4 + e_5 + e_8}[\partial_\tau(e^{i\lambda \tilde \Phi})]
\end{multline*}
with some new polynomials $P_{\beta_6},P_{\gamma_6}\in \Z[X,Y]$ and $\alpha_j = \beta_j = \gamma_j$ for all $j\neq 6$. For the first two summands, observe that
\begin{align*}
(\alpha_ 4 + 1)-2\alpha_7-2(\alpha_8 + 1) &\ge -N-(M + 1),\\
(\alpha_5 + 1)-\alpha_2-2\alpha_7-(\alpha_8 + 1) & > -N-(M + 1).
\end{align*}
For the third summand, we can argue similarly to (i)--(iv) above. For case (v), observe that
\begin{align*}
(\alpha_ 4 + 3)-2(\alpha_7 + 1)-2(\alpha_8 + 1) &\ge -N-(M + 1), \\
(\alpha_5 + 2)-\alpha_2-2(\alpha_7 + 1)-(\alpha_8 + 1) &\ge -N-(M + 1).
\end{align*}
For case (vi), observe that
\begin{align*}
(\alpha_ 4 + 3)-2\alpha_7-2(\alpha_8 + 2) &\ge -N-(M + 1), \\
(\alpha_5 + 1)-\alpha_2-2\alpha_7-(\alpha_8 + 2) &\ge -N-(M + 1).
\end{align*}
Altogether, we have verified the statement also for $(N,M+1),$ which  finishes the induction.
\end{proof}

\begin{proof}[Proof of Proposition~\ref{prop:decay-away}]
We use polar coordinates with $\rho = |x|$. Note that $K_{\lambda,s}^{k,l}$ and $K_{\lambda,s,N,M,\alpha}^{k,\ell}$ are radial symmetric in $x$. In the following, we write $K_{\lambda,s}^{k,l}(x,u) = \mathcal K_{\lambda,s}^{k,l} (\rho,u) $ and $K_{\lambda,s,N,M,\alpha}^{k,\ell}(x,u) = \mathcal K_{\lambda,s,N,M,\alpha}^{k,\ell}(\rho,u)$.
Let
\begin{align*}
I_{\lambda,1}^{k,\ell} &: = \{ \rho\in (0,\infty):
\tfrac{1}{100}(\tfrac{2^{-\ell}}{k})^2 < \rho^2 < 10 \, (\tfrac{2^{-\ell}}{k})^2 
\} \\
I_{\lambda,2}^{k,\ell}(s,\tau_{k,\ell}) &: = 
\{ u \in \R:
\left|\Delta u (s,\tau_{k,\ell})\right| < R_0  \tfrac{2^{-\ell}}{k^2}\}.
\end{align*}
Moreover, let 
\begin{align*}
E_{\lambda,1}^{k,\ell}(s,\tau_{k,\ell})
&: = \{(\rho,u)\in(0,\infty) \times \R :\rho\notin I_{\lambda,1}^{k,\ell} \text{ and } u \in I_{\lambda,2}^{k,\ell}(s,\tau_{k,\ell}) \},\\
E_{\lambda,2}^{k,\ell}(s,\tau_{k,\ell})
&: = \{(\rho,u)\in(0,\infty) \times \R :\rho\notin I_{\lambda,1}^{k,\ell} \text{ and } u \notin I_{\lambda,2}^{k,\ell}(s,\tau_{k,\ell}) \},\\
E_{\lambda,3}^{k,\ell}(s,\tau_{k,\ell})
&: = \{(\rho,u)\in(0,\infty) \times \R : \rho\in I_{\lambda,1}^{k,\ell} \text{ and } u \notin I_{\lambda,2}^{k,\ell}(s,\tau_{k,\ell}) \}.
\end{align*}
It suffices to prove the bound 
\[
\iint_{E_{\lambda,j}^{k,\ell}(s,\tau_{k,\ell})} |\mathcal K_{\lambda,s}^{k,\ell}(\rho,u)| \, \rho^{d_1-1} d\rho \, du\le C_N
\Big(\frac{\lambda}{2^\ell k}\Big)^{-N}\Big(\frac{2^{\ell}}{k}\Big)^{\frac{1}{2}}\quad \text{for all } j\in\{1,2,3\}.
\]
Suppose that $j\in\{1,2,3\}$ is fixed. Let $N,M\in\N$. We will choose $N$ and $M$ later depending on $j$. By Lemma~\ref{lem:decay-away-2}, given $N,M\in\N$, we may write the kernel $\mathcal K_{\lambda,s}^{k,\ell}$ as a finite series
\[
\mathcal K_{\lambda,s}^{k,\ell} = \sum_{\alpha\in \N^2 \times \Z\times \N^6} \mathcal K_{\lambda,s,N,M,\alpha}^{k,\ell},
\]
where $P_{\alpha_6}\in \Z[X,Y]$ and $\alpha = (\alpha_1,\dots,\alpha_7)\in \N^2 \times \Z\times \N^6$ satisfies
\begin{itemize}
 \item $\alpha_ 4 -2\alpha_7-2\alpha_8\ge -N-M$,
 \item $\alpha_5-\alpha_2-2\alpha_7-\alpha_8\ge -N-M$,
 \item $\alpha_7\ge N$ and $\alpha_8\ge M$.
\end{itemize}
Clearly, it suffices to bound each of the summands $\mathcal K_{\lambda,s,N,M,\alpha}^{k,\ell}$ separately. We have
\begin{multline}
 \iint_{E_{\lambda,j}^{k,\ell}(s,\tau_{k,\ell})} | \mathcal K_{\lambda,s,N,M,\alpha}^{k,\ell}(\rho,u)| \, \rho^{d_1-1} d\rho \, du\\
 = \lambda^{n + \frac{3}{2}-N-M} \iint_{E_{\lambda,j}^{k,\ell}(s,\tau_{k,\ell})}\bigg| \int_0^\infty \int_0^\infty  e^{i \lambda \tilde \Phi} \partial_\sigma^{\alpha_1} (a_{\lambda}(s \sigma)) \partial_\tau^{\alpha_2}(\eta_\ell(\tau-k \pi))\left(\frac{\tau}{\sin \tau}\right)^{n}  \\
  \hskip3cm\times \frac{\sigma^{\alpha_3} \left(\sin \tau\right)^{\alpha_5} P_{\alpha_6}(\cos\tau,\sin\tau)}{\tau^{\alpha_ 4 } \left( \Delta \rho^2(s,\tau) \right)^{\alpha_7}\left( \Delta u (s,\tau) \right)^{\alpha_8} } \, s^{\alpha_9} \, d\sigma \, d \tau \bigg| \, \rho^{d_1-1} d\rho \, du.\label{KNM}
\end{multline}
Let
\begin{align*}
\tilde I_{\lambda,1}^{k,\ell}(s,\tau) &: = \{ \rho\in (0,\infty):
\left|\Delta \rho^2(s,\tau)\right| \lesssim (\tfrac{2^{-\ell}}{k})^2 
\} , \\
\tilde I_{\lambda,2}^{k,\ell}(s,\tau) &: = 
\{ u \in \R:
\left|\Delta u (s,\tau)\right| \lesssim \tfrac{2^{-\ell}}{k^2}\}.
\end{align*}
By Lemma~\ref{lem:decay-away-1}, we have $(I_{\lambda,1}^{k,\ell})^c\subseteq (\tilde I_{\lambda,1}^{k,\ell}(s,\tau))^c$ and $(I_{\lambda,2}^{k,\ell}(s,\tau_{k,\ell}))^c\subseteq (\tilde I_{\lambda,2}^{k,\ell}(s,\tau))^c$.   For $\tau\in J_{k,\ell}$ let 
\begin{align*}
\tilde E_{\lambda,1}^{k,\ell}(s,\tau)
&: = \{(\rho,u)\in(0,\infty) \times \R :\rho\notin \tilde I_{\lambda,1}^{k,\ell}(s,\tau) \text{ and } u \in \tilde I_{\lambda,2}^{k,\ell}(s,\tau) \},\\
\tilde E_{\lambda,2}^{k,\ell}(s,\tau)
&: = \{(\rho,u)\in(0,\infty) \times \R :\rho\notin \tilde I_{\lambda,1}^{k,\ell}(s,\tau) \text{ and } u \notin \tilde I_{\lambda,2}^{k,\ell}(s,\tau) \},\\
\tilde E_{\lambda,3}^{k,\ell}(s,\tau)
&: = \{(\rho,u)\in (0,\infty) \times \R : \rho^2\sim (\tfrac{2^{-\ell}}{k})^2 \text{ and } u \notin \tilde I_{\lambda,2}^{k,\ell}(s,\tau)\}.
\end{align*}
Note that $\rho\in I_{\lambda,1}^{k,\ell}$ implies $\rho^2\sim (\tfrac{2^{-\ell}}{k})^2 $. On the other hand, if $ u \in I_{\lambda,2}^{k,\ell}(s,\tau_{k,\ell})$, then
\[
|\Delta u (s,\tau)| \le |\Delta u (s,\tau_{k,\ell} )| + | u _c(s,\tau)- u _c(s,\tau_{k,\ell} )| \lesssim \tfrac{2^{-\ell}}{k^2}.
\]
Thus $I_{\lambda,2}^{k,\ell}(s,\tau_{k,\ell})\subseteq \tilde I_{\lambda,2}^{k,\ell}(s,\tau)$, hence  
$E_{\lambda,1}^{k,\ell}(s,\tau_{k,\ell}) \subseteq \tilde E_{\lambda,1}^{k,\ell}(s,\tau).$
 
 Similarly, if $ u \notin I_{\lambda,2}^{k,\ell}(s,\tau_{k,\ell})$, then by choosing $R_0$ sufficiently large, we  have 
 \[
|\Delta u (s,\tau)| \ge |\Delta u (s,\tau_{k,\ell} )| - | u _c(s,\tau)- u _c(s,\tau_{k,\ell} )| \gtrsim \tfrac{2^{-\ell}}{k^2}, 
\]
hence 
$(I_{\lambda,2}^{k,\ell}(s,\tau_{k,\ell}))^c\subseteq (\tilde I_{\lambda,2}^{k,\ell}(s,\tau))^c$, hence  
$E_{\lambda,2}^{k,\ell}(s,\tau_{k,\ell}) \subseteq \tilde E_{\lambda,2}^{k,\ell}(s,\tau).$

In a similar way, we see that $E_{\lambda,3}^{k,\ell}(s,\tau_{k,\ell}) \subseteq \tilde E_{\lambda,3}^{k,\ell}(s,\tau)$.

Hence, for every $j\in\{1,2,3\}$,
\[
E_{\lambda,j}^{k,\ell}(s,\tau_{k,\ell}) \subseteq \tilde E_{\lambda,j}^{k,\ell}(s,\tau).
\]
Using Fubini's theorem, we may bound the term in \eqref{KNM}  by
\begin{multline*}
\lambda^{n + \frac{3}{2}-N-M} \int_{|\tau-k\pi|\sim 2^{\ell}} \int_{\sigma\sim 1} \iint_{\tilde E_{\lambda,j}^{k,\ell}(s,\tau)} \frac{(2^{\ell})^{\alpha_2} (2^\ell k)^{n} (2^{-\ell})^{\alpha_5} }{k^{\alpha_ 4 } \left| \Delta \rho^2(s,\tau) \right|^{\alpha_7}\left| \Delta u (s,\tau) \right|^{\alpha_8} } \\ \times \rho^{d_1-1} d\rho \, d u \, d\sigma \, d \tau.
\end{multline*}
Recall that $\Delta \rho^2(s,\tau) = \rho^2 - \rho^2_c(s,\tau)$ with $\rho^2_c(s,\tau)\sim (\tfrac{2^{-\ell}}{k})^2$, and $\Delta u (s,\tau) = u - u _c(s,\tau)$ with $u_c(s,\tau)\sim k^{-1}.$

Thus, if we choose $N,M$ large enough, since $\alpha_7\ge N$ and $\alpha_8\ge M$, we obtain via homogeneity
\[
\int_{\left|\Delta \rho^2(s,\tau)\right| \gtrsim (\frac{2^{-\ell}}{k})^2 } | \Delta \rho^2(s,\tau) |^{-\alpha_7} \rho^{d_1-1} d\rho \sim \Big(\frac{2^{-\ell}}{k}\Big)^{-2\alpha_7 + d_1},
\]
and 
\[
\int_{ |\Delta u (\tau)| \gtrsim \frac{2^{-\ell}}{k^2} } | \Delta u (s,\tau) |^{-\alpha_8} d u \sim \Big(\frac{2^{-\ell}}{k^2}\Big)^{-\alpha_8 + 1}.
\]
Suppose $j = 1$. In that case, we choose $M = 0$ and $N$ sufficiently large. By Lemma~\ref{lem:decay-away-2}, we then have $\alpha_8 = 0$, $\alpha_ 4 -2\alpha_7\ge -N$ and $\alpha_5-\alpha_2-2\alpha_7\ge -N$. Thus, the above term is bounded by
\begin{align*}
&\lambda^{n + \frac{3}{2}-N}
\frac{(2^{\ell})^{\alpha_2} (2^\ell k)^{n} (2^{-\ell})^{\alpha_5} }{k^{\alpha_ 4 } }
\Big(\frac{2^{-\ell}}{k}\Big)^{-2\alpha_7 + d_1}
\frac{2^{-\ell}}{k^2} \\
& = \lambda^{n + \frac{3}{2}-N} \frac{(2^{-\ell})^{\alpha_5-\alpha_2 - 2 \alpha_7 + 1 + d_1-n}}{k^{\alpha_ 4 -2\alpha_7+ d_1-n + 2}} 
 \le \Big( \frac{\lambda}{2^\ell k} \Big)^{n + \frac{3}{2}-N} \Big(\frac{2^{\ell}}{k}\Big)^{\frac 12}.
\end{align*}
Suppose $j = 2$. In that case, we choose $N$ and $M$ sufficiently large. Since $\alpha_ 4 -2\alpha_7-2\alpha_8\ge -N-M$ and $\alpha_5-\alpha_2-2\alpha_7-\alpha_8\ge -N-M$, we obtain the bound
\begin{align*} 
& \lambda^{n + \frac{3}{2}-N-M}
\frac{(2^{\ell})^{\alpha_2} (2^\ell k)^{n} (2^{-\ell})^{\alpha_5} }{k^{\alpha_ 4 } }
\Big(\frac{2^{-\ell}}{k}\Big)^{-2\alpha_7 + d_1}
\Big(\frac{2^{-\ell}}{k^2}\Big)^{-\alpha_8 + 1}\\
& = \lambda^{n + \frac{3}{2}-N-M} \frac{(2^{-\ell})^{\alpha_5-\alpha_2-2\alpha_7-\alpha_8 +1+ d_1-n}}{k^{\alpha_ 4 -2\alpha_7-2\alpha_8 + d_1-n + 2}} 
\le \Big( \frac{\lambda}{2^\ell k} \Big)^{n + \frac{3}{2}-N-M} \Big(\frac{2^{\ell}}{k}\Big)^{\frac 12}.
\end{align*}
Suppose $j = 3$. In that case, we choose $N = 0$ and $M$ sufficiently large. By Lemma~\ref{lem:decay-away-2}, we have $\alpha_7 = 0$, $\alpha_ 4 -2\alpha_8\ge -M$ and $\alpha_5-\alpha_2-\alpha_8\ge -M$. Thus, we obtain the bound 
\begin{align*} 
& \lambda^{n + \frac{3}{2}-M}
\frac{(2^{\ell})^{\alpha_2} (2^\ell k)^{n} (2^{-\ell})^{\alpha_5} }{k^{\alpha_ 4 } }
\Big(\frac{2^{-\ell}}{k}\Big)^{d_1} 
\Big(\frac{2^{-\ell}}{k^2}\Big)^{-\alpha_8 + 1} \\
& = \lambda^{n + \frac{3}{2}-M} \frac{(2^{-\ell})^{\alpha_5-\alpha_2-\alpha_8 +  1 +d_1-n}}{k^{\alpha_ 4 -2\alpha_8 + d_1-n + 2}} 
\le \Big( \frac{\lambda}{2^\ell k} \Big)^{n + \frac{3}{2}-M} \Big(\frac{2^{\ell}}{k}\Big)^{\frac 12}.
\end{align*}
This finishes the proof of Proposition~\ref{prop:decay-away}.
\end{proof}

\section{The rank $d_1-1$ cone condition  on  Heisenberg  groups}\label{cinecurv}

Recall from Proposition~\ref{lem:decomp}  that the convolution kernels $\sK_{\lambda,t}^{0}(x, u)$ and $\sK_{\lambda,t}^{k,\ell}(x, u)$ are oscillatory integrals, with common phase
\[
\Phi(x,u,t,\sigma,\tau)
=\sigma\big(t^2-|x|^2 g(\tau)+4u\tau\big),
\quad \text{with } g(\tau)=\tau\cot\tau,
\]
where $\sigma\sim 1$ and $\tau\in \R.$
Thus, the phase $\Psi$ of the integral kernel of the corresponding convolution operators is given by
\begin{align*}
\Psi(x,u, y,v,t,\sigma, \tau)
& :=\Phi((y,v)^{-1}(x,u),t,\sigma,\tau) \\
& \phantom{:}= \Phi(x-y,u-v+\tfrac 1 2 \langle Jx,y\rangle,t,\sigma,\tau).
\end{align*}
Thus, 
$$
\Psi(x,u, y,v,t,\sigma, \tau)=\sigma\Big( t^2-|x-y|^2 g(\tau) +4\big (u-v+\tfrac 1 2 \langle Jx,y\rangle\big)\tau\Big).
$$
Let 
$$A_\Psi:=\{(x,u, y,v,t,\sigma, \tau): \Psi'_{\sigma,\tau}(x,u, y,v,t,\sigma, \tau)=0,\, \sigma\sim 1\},$$
and set
$$
\Gamma:=\{(x,u,t,\Psi'_x,\Psi'_u,\Psi'_t): (x,u, y,v,t,\sigma, \tau)\in A_{\Psi}\}.
$$
For $(x,u,t)$ fixed, with, say, $t>0,$  let $\Gamma_{(x,u,t)}$ be  the corresponding  set 
\[
\Gamma_{(x,u,t)}:=\{(\Psi'_x,\Psi'_u,\Psi'_t): (x,u, y,v,t,\sigma, \tau)\in A_{\Psi}\}.
\]
We want to show the following.

\begin{lemma}\label{cine}
Let $(x,u)\in\bbH_n$ and $t>0$. Then $\Gamma_{(x,u,t)}$ is an open part of a conic surface which has $d_1-1=2n-1$ non-vanishing principal curvatures at every point.
\end{lemma}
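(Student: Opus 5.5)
We parametrize $\Gamma_{(x,u,t)}$ explicitly and compute its second fundamental form. With $w=(y,v)$ and $z=(x,u)$ fixed, the critical set $A_\Psi$ is given by
\[
t^2-|x-y|^2g(\tau)+4\tau w=0,\qquad -|x-y|^2g'(\tau)+4w=0,
\]
where $w=u-v+\tfrac12\langle Jx,y\rangle$. Given $(y,\tau)$, these two equations determine $t$ and $v$, so we use $(y,\sigma,\tau)$, or rather $Y=x-y\in\R^{2n}$ together with $\sigma,\tau$, as parameters; for fixed $t$ this removes one more dimension. The $x$- and $u$-derivatives are
\[
\Psi'_u=4\sigma\tau,\qquad \Psi'_x=\sigma\bigl(-2g(\tau)Y+2\tau Jy\bigr),\qquad \Psi'_t=2\sigma t.
\]
Since $y=x-Y$, we get $Jy=Jx-JY$. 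The points of $\Gamma_{(x,u,t)}$ are therefore
\[
\sigma\bigl(-2g(\tau)Y-2\tau JY+2\tau Jx,\;4\tau,\;2t\bigr),
\]
subject to the constraint coming from the two critical equations. Eliminating $w$ gives $t^2=|Y|^2(g(\tau)-\tau g'(\tau))=|Y|^2\tau^2/\sin^2\tau$, that is, $|Y|=t|\sin\tau/\tau|$, which is the profile $\rho(\tau)$ from Nachman's description.

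So, for fixed $(x,u,t)$, the set $\Gamma_{(x,u,t)}$ is the cone generated by $\sigma>0$ over the $2n$-dimensional set
\[
\Bigl\{\bigl(-2(g(\tau)I+\tau J)Y+2\tau Jx,\;4\tau,\;2t\bigr):\ |Y|=t|\sin\tau/\tau|\Bigr\},
\]
parametrized by $\tau$ and $\omega=Y/|Y|\in S^{2n-1}$. The last coordinate is the constant $2t$, so the cone is determined by the hypersurface section $\{\xi_t=2t\}$, which sits in $\R^{2n+1}_{(\xi_x,\xi_u)}$. Since $(g(\tau)I+\tau J)$ has modulus $|\tau/\sin\tau|$ times a rotation, the section is
\[
S=\Bigl\{\bigl(-2t\,R_\tau\omega+2\tau Jx,\;4\tau\bigr):\ \omega\in S^{2n-1}\Bigr\},
\]
where $R_\tau=\operatorname{sgn}\cdot(\cos\tau\, I+\sin\tau\, J)$ is orthogonal. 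In other words, $S$ is the union over $\tau$ of spheres of fixed radius $2t$ in $\xi_x$, with centers $2\tau Jx$ moving linearly in $\xi_u=4\tau$. This is a sheared cylinder $\{(\xi_x,\xi_u): |\xi_x-\tfrac12\xi_u Jx|=2t\}$. A cone has $d-1$ potential curvatures, one of which (the radial one) always vanishes, so the claim reduces to showing that the section $S$, a hypersurface of dimension $2n$ in $\R^{2n+1}$, has exactly $2n-1$ non-vanishing principal curvatures.

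The next step is to verify that the sheared cylinder $|\xi_x-\xi_u a|=2t$, with $a=\tfrac12 Jx$ a fixed vector, has rank $2n-1$. This is an affine image of the round cylinder $S^{2n-1}\times\R$ under the linear map $(\xi_x,\xi_u)\mapsto(\xi_x+\xi_u a,\xi_u)$. The rank of the second fundamental form is invariant under linear maps, and the round cylinder has $2n-1$ nonzero curvatures and one flat direction, so the rank is exactly $2n-1$. To carry this out, write the defining function $F=|\xi_x-\xi_u a|^2$. Its Hessian restricted to the tangent space has rank $2n-1$: it is $2$ times the quadratic form $|h_x-h_u a|^2$, whose kernel on the tangent space is spanned by the generator $(a,1)$.

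The main obstacle is bookkeeping rather than any deep estimate. I need to confirm that the parametrization of $\Gamma_{(x,u,t)}$ is an immersion on the relevant region, in particular near $\tau=k\pi$ where $\sin\tau=0$; these values are excluded by the support conditions. I also need to check that the cone axis direction and the ruling $(a,1)$ together give exactly the two flat directions. Near the horizontal points $\tau=k\pi+\pi/2$ the description above remains uniform, because the rotation $R_\tau$ does not degenerate there. This is consistent with the Introduction's remark that the cone condition survives at those points.
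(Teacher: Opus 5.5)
Your argument is correct, and it reaches the conclusion by a somewhat different route than the paper.

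The paper first uses left-invariance to move to $(x,u)=(0,0)$. There the section of the cone at $\Psi'_t=2t$ is a round cylinder over a sphere in $\xi_x$, and nothing remains to be computed. You stay at an arbitrary base point and obtain a sheared cylinder $|\xi_x-\xi_u a|=2t$, with $a$ a fixed multiple of $Jx$. You then verify the rank $2n-1$ by hand from the Hessian of $|\xi_x-\xi_u a|^2$ restricted to the tangent space, which is positive semidefinite with kernel spanned by the ruling $(a,1)$.

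Your sheared cylinder is exactly the image of the paper's round cylinder under the linear map that left translation induces on cotangent fibers. So your computation makes explicit what the paper's appeal to left-invariance uses implicitly: changing the base point acts linearly on the fibers, and therefore preserves both the conic structure and the rank of the second fundamental form.

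The second difference is the parametrization. The paper writes $y=|y|\omega$ and inverts $\tau\mapsto\sin\tau/\tau$ only locally, $\tau=q_{\tau_*}(|y|)$. This excludes the discrete set of parameters where that map has vanishing derivative. Your parametrization by $(\tau,\omega)$ is global, so it shows directly that all of $\Gamma_{(x,u,t)}$, including those cusp parameters, lies in the cone. Only $\tau\in\pi\Z\setminus\{0\}$, where $g$ is singular, must be excluded.

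There are two small slips, neither affecting the conclusion.
First, since $\nabla_x\langle Jx,y\rangle=J^\intercal y=-Jy$, the $J$-term in $\Psi'_x$ is $-2\sigma\tau Jy$; compare the paper's $2\sigma(g(\tau)y-\tau Jy)$ at $x=0$. This flips the sign of the center $2\tau Jx$ and of the $\sin\tau\, J$ term in $R_\tau$.
Second, a conic hypersurface in $\R^{d+1}$ has $d$ principal curvatures, not $d-1$. The radial one vanishes, so $d-1=2n$ remain to be accounted for by the $2n$-dimensional section. Your reduction to the section is right; only the count as you state it is off by one.
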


\begin{proof}
In view of the left-invariance of the sub-Laplacian $L$, it suffices to prove this at the point $(x,u)=(0,0)$. We compute the critical points of $\Psi$ with respect to the frequency variables $\sigma,\tau$ at $(x,u)=(0,0)$. Since $\sigma>0$, these are given by the solutions to the equations
\begin{align}\label{crit1}
t^2-|y|^2 g(\tau) -4v\tau & = 0,\\
   -|y|^2 g'(\tau) -4v & = 0. \label{crit2}
\end{align}
Moreover,
\[
\Psi'_x = 2\sigma\big(g(\tau)y-\tau Jy\big),\qquad
\Psi'_u = 4\sigma \tau,\qquad
\Psi'_t = 2\sigma t.
\]
By \eqref{crit2}, $4v=-|y|^2 g'(\tau),$ so by \eqref{crit1} and \eqref{htau},
\[
0=t^2-|y|^2 \big(g(\tau)-\tau g'(\tau)\big)=t^2-|y|^2 h(\tau) =t^2-|y|^2\Big(\frac {\tau}{\sin \tau}\Big)^2.
\]
Hence, using polar coordinates $y=\rho\,\omega$, where $\rho:=|y|$ and $\omega\in S^{d_1-1}$, we obtain
\begin{equation}\label{eq:crit-y}
y=t\,\Big|\frac{\sin\tau}{\tau}\Big|\,\omega.
\end{equation}
This equation should be solved as a smooth function of $\rho.$  Since the mapping $\tau \mapsto (\sin \tau)/\tau$ is by far not injective, we can  find  such solutions $\tau=q_{\tau_*}(\rho)$ only locally near a fixed point $\tau_*$ (with the exception of a discrete set of points $\tau_*$ at which the derivative of   $\tau \mapsto (\sin \tau)/\tau$ vanishes).

Then, \[ 4v=-|y|^2g'(\tau) =-t^2\Big(\frac{\sin\tau}{\tau}\Big)^2g'(\tau). \] 
Thus, if  $(\sin \tau)/\tau>0,$ say, using \eqref{eq:crit-y}, we obtain
\[
\Psi'_x=2\sigma t\left( \cos(\tau) \, \omega -  \sin(\tau) \, J\omega\right)
=2\sigma t \, e^{-\tau J}\omega.
\]
So this local part of $\Gamma_{(0,0,t)}$ is of the form 
\begin{align*}
\Gamma^{\tau_*}_{(0,0,t)}&= \Big\{2\sigma \Big(t\, e^{-q_{\tau_*}(|y|)J}\frac y{|y|},2q_{\tau_*}(|y|),t\Big): a_*<|y|<b_*\Big\}\\
&= \Big\{2\sigma \Big(t\frac y{|y|},2q_{\tau_*}(|y|),t\Big): a_*<|y|<b_*\Big\},
\end{align*}
where $a_*,b_*$ depend on $\tau_*.$
In particular, $\Gamma^{\tau_*}_{(0,0,t)}$ is an open subset of the cone
\begin{equation}\label{conetilde}
\tilde \Gamma:=\big\{\sigma \big(t \omega,\tau,t\big): \sigma>0, \tau\in\R, \omega\in S^{d_1-1}\big\}\subset \R^{d}\times \R.
\end{equation}
The   section  of this cone at $\sigma=1$ is given by $tS^{d_1-1}\times \R\times\{t\},$ so that it has exactly $d_1-1$ non-vanishing principal curvatures at every point.
\end{proof}

\bibliographystyle{amsplain}

\end{document}